\documentclass{amsart}

\usepackage[utf8]{inputenc}
\usepackage{amsfonts}
\usepackage{comment}
\usepackage{xcolor}

\usepackage{amsthm,amsmath,amssymb}
\usepackage{mathrsfs}
\usepackage[numbers]{natbib}  
\usepackage[colorlinks,citecolor=blue,urlcolor=blue]{hyperref}  

\usepackage{enumerate, enumitem}

\usepackage{tgtermes}
\usepackage{mathptmx} 
\usepackage[scaled=.92]{helvet}


\date{\today}

\numberwithin{equation}{section}

\newtheorem{theorem}{Theorem}[section]
\newtheorem{lemma}[theorem]{Lemma}
\newtheorem{proposition}[theorem]{Proposition}
\newtheorem{corollary}[theorem]{Corollary}
\newtheorem{definition}[theorem]{Definition}
\newtheorem{example}[theorem]{Example}

\newtheorem{remark}[theorem]{Remark}

\newcommand{\N}{\mathbb{N}}

\newcommand{\Q}{\mathbb{Q}}
\newcommand{\R}{\mathbb{R}}

\newcommand{\hR}{\,\!^\ast\R}
\newcommand{\sh}[1]{\,\!^\circ#1} 

\newcommand{\civita}{\mathcal{R}}

\newcommand{\an}{C^{\omega}}

\newcommand{\ext}[2]{\overline{#2}_{#1}}

\newcommand{\rP}{\mathbb{P}}
\renewcommand{\P}{\mathcal{P}}

\newcommand{\norm}[1]{\left\Vert#1\right\Vert}

\newcommand{\p}[2]{p_{#1}^{#2}}

\DeclareMathOperator*{\wlim}{w-lim}
\DeclareMathOperator*{\slim}{s-lim}

\DeclareMathOperator*{\supp}{supp}
\newcommand{\m}{m}
\newcommand{\meas}{\mathcal{M}}

\newcommand{\leb}{\m_L}

\renewcommand{\L}{\mathcal{L}}
\newcommand{\sL}{\L_s}
\newcommand{\wL}{\L_w}

\newcommand{\s}{\overline{s}}


\begin{document}
	\title{Spaces of measurable functions on the the Levi-Civita field}
	\author{Emanuele Bottazzi}
	\address{Department of Civil Engineering and Architecture, University of Pavia,Via Adolfo Ferrata 3, 27100 Pavia, Italy}
	\email{emanuele.bottazzi@unipv.it, emanuele.bottazzi.phd@gmail.com}
	\maketitle
		
	
\begin{abstract}
	We introduce the $\L^p$ spaces of measurable functions whose $p$-th power is summable with respect to the uniform measure over the Levi-Civita field. These spaces are the counterparts of the real $L^p$ spaces based upon the Lebesgue measure.
	Nevertheless, they lack some properties of the $L^p$ spaces: for instance, the $\L^p$ spaces are not sequentially complete with respect to the $p$-norm.
	This motivates the study of the completions of the $\L^p$ spaces with respect to strong convergence, denoted by $\sL^p$.
	It turns out that the $\sL^p$ spaces are Banach spaces and that it is possible to define an inner product over $\sL^2$, thus making it a Hilbert space.
	Despite these positive results, these spaces are still not rich enough to represent every real continuous function.
	For this reason, we settle upon the representation of real measurable functions as sequences of measurable functions in $\civita$ that weakly converge in measure.
	We also define a duality between measurable functions and representatives of continuous functions. This duality enables the study of some measurable functions that represent real distributions. We focus our discussion on the representatives of the Dirac distribution and on the well-known problem of the product between the Dirac and the Heaviside distribution, and we show that the solution obtained with measurable functions over $\civita$ is consistent with the result obtained with other nonlinear generalized functions.
\end{abstract}

\tableofcontents

\section{Introduction}

The Levi-Civita field $\civita$, introduced by Levi-Civita in \cite{civita1, civita2} and subsequently rediscovered by many authors in the '900, is the smallest non-Archimedean ordered field extension of the field $\R$ of real numbers that is both real closed and sequentially complete \cite{analysislcf}.
Since the early '90s, Berz and Shamseddine have begun a fruitful development of analysis on the Levi-Civita field (see for instance \cite{berz,analysislcf,reexp,odes,Shamseddinephd} and references therein).
The study of the differential structure of the Levi-Civita field has also led to the implementation of effective algorithms for the computation of the derivatives of real functions as differential quotients \cite{computational,inf calc}.
The Levi-Civita field has also been used as the field of scalar for the implementation of algorithms of dynamic geometry based upon results obtained with techniques of nonstandard analysis \cite{strobel}.

Recently, a non-Archimedean measure on $\civita$ inspired by the Lebesgue measure has been developed by Shamseddine and Berz in dimension $1$ \cite{shamseddine2012,berz+shamseddine2003}, and by Shamseddine and Flynn in dimension $2$ and $3$ \cite{shamflin1}.
This measure shares some properties with the real Lebesgue measure: for instance, the length of an interval is equal to the difference of its endpoints, and a set is measurable if and only if it can be approximated by a sequence of intervals with an arbitrary precision.
However,
many properties of the real Lebesgue measure fail for its non-Archimedean counterpart in $\civita$.
A remarkable difference is that the measure on the Levi-Civita field is not defined on an algebra of sets, since there are some null sets whose complement is not measurable, as it does not contain any intervals of a finite measure.
Nevertheless, the non-Archimedean measure allows for the introduction of an integration theory over $\civita$.
Shamseddine and Berz showed that the set of all integrable functions over a measurable set is a $\civita$-vector space, and that some classical results of integration, such as the fundamental theorem of calculus, can be extended to the Levi-Civita field.

The purpose of our paper is to further develop the measure and integration theory on the Levi-Civita field and to introduce the spaces $\L^p$ of measurable functions whose $p$-th power is summable with respect to the uniform measure over the Levi-Civita field, in analogy with the real $L^p$ spaces based upon the Lebesgue measure.

In other non-Archimedean extensions of the real numbers, such as fields of hyperreal numbers of Robinson's framework of analysis with infinitesimals \cite{robinson} or the Robinson valuation fields $^\rho\R$, called also the fields of Robinson’s asymptotic numbers \cite{robi naa}, this goal can be achieved by extending the real $L^p$ spaces. For $\hR$ this extension is obtained by the star map, while for $^\rho\R$ it has been obtained by Pestov \cite{pestov} by exploiting the fact that $^\rho\R$ is a quotient of $\hR$ and that it is spherically complete \cite{lux} (for the notion of spherical completeness of a valued field, see \cite{nfa}).
In both cases, the resulting extension of the real $L^p$ spaces is a Banach space.
Since $\civita$ is an ordered and valuation subfield of $^\rho\R$, one might attempt at following a similar route for the introduction of the $\L^p$ spaces over the Levi-Civita field.
However, the results of Pestov cannot be adapted to $\civita$, since this field is only sequentially complete and it lacks the stronger topological property of spherical completeness.

Nevertheless, the $\L^p$ spaces can be introduced explicitly with a definition analogous to that of the real $L^p$ spaces based upon the Lebesgue measure.
While the $\L^p$ spaces share several properties with the real $L^p$ spaces, they are not sequentially complete with respect to the $p$-norm.
This limitation will motivate us to study the completions of the $\L^p$ spaces with respect to strong convergence, denoted by $\sL^p$.
These spaces are Banach spaces, moreover, it is possible to define an inner product over $\sL^2$, thus making it a Hilbert space.

As an application of the non-Archimedean measure, Flynn and Shamseddine proved measurable functions on the Levi-Civita field are general enough to represent some distributions \cite{shamflin2}.
For instance, it is possible to define some measurable functions that represent the Dirac distribution, much in the spirit of the representation of distributions with functions of nonstandard analysis \cite{benci_schwartz,grid functions,todorov}.
Notice however that Flynn and Shamseddine only established a duality between the so-called delta-like measurable functions and analytic functions over $\civita$.
Nevertheless, the authors were able to use these representatives of the Dirac distribution for the study of some simple differential equations in $\civita$ and in $\civita^3$. They have also suggested that measurable representatives of the Dirac distribution could be used to describe more complex phenomena.

These results remind of analogous approaches to the representation of distributions as measurable functions over non-Archimedean fields, usually obtained with techniques of Robinson's analysis with infinitesimals.
Some embeddings of distributions in algebras of functions of nonstandard analysis have been obtained for instance by Kinoshita \cite{kinoshita}, Hoskins and Pinto \cite{hyperfinite pinto}, Benci and Luperi Baglini \cite{benci_schwartz} and Bottazzi \cite{grid functions}.
Also, an earlier result by Todorov \cite{todorov delta} has some aspects in common with the non-Archimedean representation of the Dirac distribution by Flynn and Shamseddine.
Another approach to the non-Archimedean representation of distributions has been proposed by Oberbuggenberg and Todorov in \cite{todorov} and further studied in subsequent papers.
In this approach, the distributions are embedded in an algebra of asymptotic functions defined over a field of asymptotic numbers.
Some examples of the use of spaces of measurable functions of nonstandard analysis for the study of PDEs include works by Capi\'{n}sky and Cutland \cite{capicutland1,capicutland statistic}, Todorov \cite{todorov steady}, Benci and Luperi-Baglini \cite{ultrafunctions, ultramodel, ultraapps} and Bottazzi \cite{illposed}. For a more detailed survey of the use of nonstandard techniques for the study of PDEs, and in particular of classically ill-posed problems, we refer to the introduction of \cite{grid functions}.

The results obtained with techniques of Robinson's analysis with infinitesimals are more advanced than those obtained so far for the Levi-Civita field. This is due to the stronger topological properties of fields of hyperreal numbers and of the absence of a transfer principle for the Levi-Civita field, namely, of a procedure for extending sets and functions from $\R$ to $\civita$ in a way that elementary properties are preserved \cite{bottazzi}.
Despite these limitations of $\civita$, we believe it would be interesting to improve upon the results of \cite{shamflin2} and determine to what extent it is possible to develop a theory of generalized functions on the Levi-Civita field.
However, such a theory of generalized function would require a number of improvements upon the existing non-Archimedean measure theory.
For instance,
in order to justify the intuitive arguments that some distributions are the non-Archimedean counterparts of real distributions, we believe that it would be necessary to define an extension of functions from $\R$ to $\civita$ in a way that some relevant properties are preserved (as argued also for the development of a surreal calculus in \cite{integral}).

Currently, it is only possible to extend real continuous functions to locally analytic functions on the Levi-Civita field by means of the power series expansion at a point \cite{berz,calculusnumerics,analysislcf,Shamseddinephd}.
However, if the original function is not analytic, this extension does not preserve many first-order properties \cite{bottazzi}. As a consequence, it is well-known that many theorems of real analysis cannot be directly extended to the Levi-Civita field. For instance, in the Levi-Civita field the space of solutions of the differential equation $f'=0$ is infinite-dimensional, while in the real case it has dimension $1$ \cite{odes}.
The limitations of the extension of functions based upon the power serie expansion will become evident in Section \ref{section measurable functions}, where we will prove that the extension of a real function that is not locally analytic is not measurable. This result, combined with an earlier theorem of Shamseddine and Berz, entails that no measurable function provides a good representation of real functions that are not locally analytic at every point in their domain.

For this reason, 
we settle upon the representation of real measurable functions as sequences of measurable functions in $\civita$ that weakly converge in measure.
This representation allows to define a duality between measurable functions and representatives of continuous functions. This duality enables the definition of measurable functions that represent some real distributions, and the study of their properties. As a significant, but not exhaustive, example, we will discuss the well-known problem of the product between the Dirac and the Heaviside distribution, and we will show that the solution obtained in $\civita$ is consistent with the result obtained with other approaches based on Robinson's techniques, as discussed in \cite{grid functions}.
Moreover, we will show that, for these simple examples, the action of the pointwise derivative in the Levi-Civita field corresponds to that of the real distributional derivative. This result can be easily extended to other distributions that can be represented as measurable $C^k$ functions defined on $\civita$.

While our approach has improved upon some limitations of \cite{shamflin2}, it has also brought to light some drawbacks of the use of a $\civita$-valued measure in the definition of the non-Archimedean measurable functions.
For instance, it is not entirely possible to define a $C^\infty$ structure on the $\L^p$ spaces or on their completions, despite some partial progresses discussed in Section \ref{sec derivatives heaviside} (however, thanks to Corollary \ref{corollario derivate dirac} it would be possible to define a $C^k$ structure over the spaces $\L^p \cap C^k$).
Moreover, 
we are not confident that it would be possible to obtain an embedding of the space of distributions into the $\L^p$ or $\sL^p$ spaces (recall that it is possible to do so by working with hyperreal fields or with fields of asymptotic numbers). We will further remark on these limitations in Section \ref{sec conclusiva}.

Despite the limitations outlined above, we have chosen to work with the non-Archimedean measure and integration developed by Shamseddine, Berz and Flynn in order to establish a continuity with the theory developed so far for the Levi-Civita field.
However, in the light of the results emerged in our study, at the end of the paper we will suggest that the development of a theory of generalized functions in the Levi-Civita field might benefit from a radically different notion of measure. In particular, we believe that a real-valued measure over $\civita$, inspired by the construction of Loeb measures on hyperreal fields (presented for instance in \cite{loeb}), might allow for a better representation of real continuous functions and of their duality with measurable functions on the Levi-Civita field.
This idea will be explored in a subsequent paper \cite{forthcoming}.

Finally we remark that, despite the limitations just discussed, the results obtained in this paper might be relevant also from a broader perspective.
For instance, we are aware of an interest towards the Levi-Civita field as a field with infinitesimals whose definition does not depend upon principles such as the Axiom of Choice or the Ultrafilter Lemma \cite{pruss} (however, notice that we do not advocate the exclusive use of so-called effective methods in mathematics, as testified by our previous work on the hyperfinite representation of distributions and PDEs \cite{grid functions,illposed}. Our position is further clarified in \cite{gang,prussk,prusso,19c}).
Besides these alleged philosophical advantages of the Levi-Civita field, we think it is relevant to mention that, recently, Strobel used the Levi-Civita field for the implementation of algorithms of dynamic geometry based upon results obtained with techniques of nonstandard analysis \cite{strobel}.
Following an analogous idea, it might be possible that some measurable functions on the Levi-Civita field might be used for the implementation of hyperreal techniques for the representation of distributions or for the resolution of ODEs and PDEs.
Such algorithms could benefit from the results obtained in this paper, such as the representation of real continuous functions, the duality between measurable functions and representatives of real continuous functions, and the representation of the distributional derivative.
Moreover, the limitations of the measure and integration theory on $\civita$ might provide relevant theoretical restrictions for the scope of such implementations.

\subsection{Structure of the paper}
Section \ref{sec prelim} contains some basic notions on the Levi-Civita field.
In particular, we will review some properties of strong and weak convergence, and the basics of the theory of power series over $\civita$. We will also prove some technical properties that will be useful throughout the paper.

In Section \ref{section measure theory} we will review the non-Archimedean measure and integration on the Levi-Civita field and establish some novel results. For instance, we will provide many examples of nonmeasurable sets and we will show that it is not possible to approximate a measurable set with intervals whose measure is infinitely smaller than the measure of the set.

In our approach we chose also a slightly different characterization of simple and measurable functions. Namely, we will not require simple functions to be Lipschitz continuous and we will not assume that measurable functions are bounded. The first change will have no effect on the resulting measure theory, whereas the second will entail the existence of unbounded measurable functions. Despite this larger class of measurable functions, we will prove that
real functions that are not locally analytic at almost every point of their domain cannot be represented by measurable function on the Levi-Civita field.

The existence of unbounded measurable functions will be one of the motivations for the introduction of the $\L^p$ spaces in Section \ref{sec lp}. After having reviewed some of their basic properties, such as the existence of a $p$-norm that satisfies the familiar H\"older's inequality, we will determine that the $\L^p$ spaces are not sequentially complete with respect to strong and weak convergence in norm.
However, it is possible to define completions $\sL^p$ of the $\L^p$ spaces with respect to strong convergence. These completions turn out to be Banach spaces, and $\sL^2$ is also a Hilbert space.
However, it is not possible to represent real continuous functions even in these $\sL^p$ spaces.

For this reason, in Section \ref{section representation} we will settle for the representation of real continuous functions with weakly converging sequences of measurable functions.
However, it is not possible to define a completion of the $\L^p$ spaces with respect to weak convergence: this is mainly due the absence of a squeeze theorem for this notion of convergence.
Despite this negative result, the representation of real continuous functions with weakly converging sequences of measurable functions will allow us to define a class of measurable Delta and Heaviside functions on $\civita$ and to study their properties. In particular, we will discuss how in the Levi-Civita field it is possible to solve some simple problems from the nonlinear theory of distributions.

Finally, in Section \ref{sec conclusiva} we will present some concluding remarks and an outline for future research.

\subsection{Notation}

If $(\mathbb{F},<)$ is an ordered field and $a, b \in \mathbb{F}$, we will denote by $[a,b]_{\mathbb{F}}$ the set $\{ x \in \mathbb{F} : a \leq x \leq b \}$, and by $(a,b)_{\mathbb{F}}$ the set $\{ x \in \mathbb{F} : a < x < b \}$.
The sets $[a,b)_{\mathbb{F}}$ and $(a,b]_{\mathbb{F}}$ are defined accordingly.
The above definitions are extended in the usual way if $a = -\infty$ or $b = +\infty$, with the convention that $[a,+\infty]_{\mathbb{F}} = [a,+\infty)_{\mathbb{F}}$ and that $[-\infty,b]_{\mathbb{F}}=(-\infty,b]_{\mathbb{F}}$.
If $\mathbb{F}=\R$, we will often write $[a,b]$ instead of $[a,b]_{\R}$.

For all $a, b \in \civita$ with $a<b$, we will denote by $I(a,b)$ any of the intervals $(a,b)_\civita$, $[a,b)_\civita$, $(a,b]_\civita$ or $[a,b]_\civita$.
The length of an interval of the form $I(a,b)$ is denoted by $l(I(a,b))$ and is defined as $b-a$.

If $f: A\subseteq \R \rightarrow \R$, we define $\supp(f) = \overline{\{ x \in A : f(x) \ne 0 \}}$, where the closure is in the usual topology of $\R$.
If $f: A\subseteq \civita \rightarrow \civita$, we define $\supp(f) = \{ x \in \civita : f(x) \ne 0 \}$.

We will denote by $\an([a,b])$ the space of analytic functions over $[a,b] \subseteq \R$ and by $C^k([a,b])$ the space of $k$-times differentiable functions whose derivative of order $k$ is continuous over $[a,b]$.
If $f \in C^k([a,b])$, we will denote by $f^{(i)}$ its $i$-th derivative of order $0 \leq i \leq k$.
Notice that $f^{(0)} = f$.
We will sometimes write $f'$ instead of $f^{(1)}$ and $f''$ instead of $f^{(2)}$.

If $A \subseteq \R$ is a Lebesgue measurable set, we will denote by $\leb(A)$ its Lebesgue measure.

Consider the equivalence relation on Lebesgue measurable functions given by equality almost everywhere: two measurable functions $f$ and $g$ are equivalent if $\leb(\{x\in\Omega : f(x) \not = g(x)\})=0$.
We will not distinguish between the function $f$ and its equivalence class, and we will say that $f = g$ whenever the functions $f$ and $g$ are equal almost everywhere.

For all $1 \leq p < \infty$, $L^p(A)$ is the set of equivalence classes of measurable functions $f: A \rightarrow \R$ that satisfy
$
\int_{A} |f|^p dx < \infty.
$
If $f \in L^p(A)$, the $L^p$ norm of $f$ is defined by $\norm{f}_p^p = \int_{A} |f|^p dx.$

$L^\infty(A)$ is the set of equivalence classes of measurable functions that are essentially bounded: we will say that $f: A \rightarrow \R$ belongs to $L^\infty(A)$ if there exists $y \in \R$ such that $\leb(\{x\in\Omega:|f(x)| > y\}) = 0$.
In this case,
$$\norm{f}_\infty = \inf\{y \in \R : \leb(\{x\in A:f(x) > y\}) = 0 \}.$$

If $1 < p < \infty$, recall that $p'$ is defined as the unique solution to the equation
$
\frac{1}{p}+\frac{1}{p'}=1,
$
while $1' = \infty$ and $\infty' = 1$.

\section{Preliminary notions}\label{sec prelim}

The purpose of this section is to recall some results that will be used in the sequel of the paper.
We will briefly introduce the Levi-Civita field, the notion of weak limit, power series with coefficients in $\civita$, and the continuations of real functions to the Levi-Civita field.
The section includes also some novel results that will be used throughout the paper.

\subsection{The Levi-Civita field}

\begin{definition}
	A set $F \subset \Q$ is called left-finite if and only if for every $q \in \Q$ the set $\{x \in F : x \leq q \}$ is finite.
	The Levi-Civita field is the set $$\civita = \{ x:\Q \rightarrow \R: \{q:x(q)\not=0\} \text{ is left-finite} \},$$ together with the pointwise sum and the product defined by the formula
	$$
		(x \cdot y)(q) = \sum_{q_1+q_2=q} x(q_1) \cdot y(q_2).
	$$
\end{definition}

For a review of the algebraic and topological properties of $\civita$, we refer to \cite{berz, analysislcf, reexp, Shamseddinephd} and references therein.
Here we will only recall some fundamental properties and definitions of $\civita$ that will be useful in the sequel.

\begin{lemma}
	An element $x \in \civita$ is uniquely characterized by its support, which forms an ascending (possibly finite) sequence $\{q_n\}_{n \in \N}$, and a corresponding sequence $\{x[q_n]\}_{n\in\N}$ of function values.
\end{lemma}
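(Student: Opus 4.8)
The plan is to reduce the statement to a purely order-theoretic fact about left-finite subsets of $\Q$ and then to read off the characterization of $x$ directly from the definition of $\civita$ as a space of functions. Writing $S = \supp(x) = \{q \in \Q : x(q) \neq 0\}$, which is left-finite by the definition of $\civita$, I would prove that $S$ can be enumerated by a strictly increasing sequence $\{q_n\}$ indexed by an initial segment of $\N$ (all of $\N$ when $S$ is infinite, and $\{0,\dots,N\}$ when $S$ is finite), and that this enumeration lists \emph{every} element of $S$. The corresponding value sequence is then simply $x[q_n] = x(q_n)$.

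First I would establish that every nonempty left-finite set $F \subseteq \Q$ has a least element: picking any $a \in F$, the set $\{y \in F : y \le a\}$ is finite and nonempty, hence has a minimum $q_0$, and $q_0 = \min F$, since any $y \in F$ with $y < q_0$ would lie in that finite set and contradict its minimality. Next I would observe that every subset of a left-finite set is again left-finite, because $\{y \in F' : y \le q\} \subseteq \{y \in F : y \le q\}$ is finite whenever $F' \subseteq F$. These two facts let me define the enumeration recursively: set $q_0 = \min S$, and, having chosen $q_0 < \dots < q_n$, put $q_{n+1} = \min\left(S \setminus \{q_0,\dots,q_n\}\right)$ whenever the remaining set is nonempty. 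This produces a strictly increasing sequence, finite exactly when $S$ is finite.

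The crux of the argument — and the one place where left-finiteness is genuinely used — is showing that this recursion exhausts $S$. I would argue by contradiction: if some $q \in S$ were never chosen, then at every stage $q$ would remain available, so minimality would force $q_n \le q$ for all $n$; the infinitely many distinct values $q_0 < q_1 < \dots$ would then all lie in $\{y \in S : y \le q\}$, contradicting its finiteness. Hence every element of $S$ appears in the sequence, and $\{q_n\}$ is precisely the ascending support.

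Finally, the uniqueness claim is immediate once the enumeration is in hand. Since an element of $\civita$ is by definition a function $\Q \to \R$, two such elements coincide if and only if they agree pointwise, and any $x$ vanishes off $S$ by construction. Thus $x$ is completely recovered from the pair $\left(\{q_n\}, \{x[q_n]\}\right)$ by setting $x(q) = x[q_n]$ if $q = q_n$ for some $n$ and $x(q) = 0$ otherwise, which shows that this data characterizes $x$ uniquely. I expect the exhaustiveness step to be the only nontrivial point; the remaining verifications are routine.
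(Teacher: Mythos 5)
Your proof is correct and complete: the existence of a least element in a nonempty left-finite set, the recursive enumeration of the support, and the exhaustiveness argument via left-finiteness are all sound, and the uniqueness claim does follow immediately from the fact that elements of $\civita$ are by definition functions $\Q \to \R$. The paper itself states this lemma without proof (it is recalled from the standard literature on $\civita$), and your argument is precisely the expected one, so there is nothing to compare against and nothing missing.
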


\begin{definition}
	For all nonzero $x \in \civita$, define $\lambda(x) = \min (\text{supp}(x))$, and $\lambda(0) = \infty$.
	If $x, y \in \civita$, $x \not = y$, we say that $x < y$ if $(x-y)[\lambda(x-y)]<0$.
	The relation $x > y$ is defined as $y < x$, and the relations $\leq$ and $\geq$ are defined in the usual way.
\end{definition}

\begin{lemma}\label{lemma lambda valuation}
	The function $\lambda$ is a valuation with range $\Q \cup \{\infty\}$.
	In particular, for all $x, y \in \civita$, $\lambda(xy)=\lambda(x)+\lambda(y)$ and $\lambda(x+y)\geq \min\{\lambda(x),\lambda(y)\}$. Moreover, if $x\ne y$, $\lambda(x+y)> \min\{\lambda(x),\lambda(y)\}$.
\end{lemma}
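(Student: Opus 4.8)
The plan is to read off both valuation identities directly from the coordinatewise description of $\civita$, handling the nonzero case first and then disposing of the cases $x=0$ or $y=0$ by the conventions $\lambda(0)=\infty$, $\infty+q=\infty$ and $\min\{\infty,q\}=q$. Throughout I would use the preceding characterization of elements of $\civita$, which guarantees that the support of a nonzero element is an ascending, left-finite subset of $\Q$; this ensures that $\lambda(x)=\min(\supp(x))$ is genuinely attained and lies in $\Q$. Together with $\lambda(0)=\infty$ this already shows that the range of $\lambda$ is contained in $\Q\cup\{\infty\}$, and surjectivity onto $\Q$ follows by exhibiting, for each $q\in\Q$, the element supported on $\{q\}$ with value $1$, whose valuation is $q$.

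For the multiplicativity $\lambda(xy)=\lambda(x)+\lambda(y)$ with $x,y\neq 0$, I would inspect the convolution coefficient $(xy)(q)=\sum_{q_1+q_2=q}x(q_1)\,y(q_2)$. If $q<\lambda(x)+\lambda(y)$, then every decomposition $q_1+q_2=q$ forces $q_1<\lambda(x)$ or $q_2<\lambda(y)$, so each summand vanishes and $(xy)(q)=0$. If $q=\lambda(x)+\lambda(y)$, the constraints $q_1\geq\lambda(x)$ and $q_2\geq\lambda(y)$ leave only the single pair $(q_1,q_2)=(\lambda(x),\lambda(y))$, whence $(xy)(\lambda(x)+\lambda(y))=x(\lambda(x))\,y(\lambda(y))\neq 0$, since both factors are nonzero reals. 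Thus $\lambda(x)+\lambda(y)$ is exactly the least element of $\supp(xy)$; in particular $xy\neq 0$, so the product of nonzero elements is nonzero and the identity holds. The one place where left-finiteness genuinely matters is precisely here: at the minimal degree the coefficient reduces to a single product rather than a sum, so no cancellation can occur. For the additive estimate I would set $q_0=\min\{\lambda(x),\lambda(y)\}$ and note that for every $q<q_0$ both $x(q)=0$ and $y(q)=0$, hence $(x+y)(q)=0$ and $\lambda(x+y)\geq q_0$.

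The strict clause is where the only real subtlety lies, and I expect it to be the main obstacle — not as a hard estimate, but because the cancellation hypothesis must be stated correctly. Strict inequality $\lambda(x+y)>q_0$ records cancellation of the lowest-order terms, and this occurs precisely when $\lambda(x)=\lambda(y)=q_0$ and $x(q_0)+y(q_0)=0$. Indeed, if $\lambda(x)<\lambda(y)$ then $q_0=\lambda(x)$ and $y(q_0)=0$, so $(x+y)(q_0)=x(q_0)\neq 0$, forcing equality $\lambda(x+y)=q_0$ (and symmetrically if $\lambda(y)<\lambda(x)$); while if $\lambda(x)=\lambda(y)=q_0$ then $(x+y)(q_0)=x(q_0)+y(q_0)$, which vanishes exactly in the stated cancellation case. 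Hence I would prove the sharp dichotomy: $\lambda(x+y)=\min\{\lambda(x),\lambda(y)\}$ whenever $\lambda(x)\neq\lambda(y)$, and $\lambda(x+y)>\min\{\lambda(x),\lambda(y)\}$ precisely when $\lambda(x)=\lambda(y)$ and the leading coefficients cancel. Note that the hypothesis ``$x\neq y$'' as literally written is not sufficient for strictness — e.g. $x=1$, $y=2$ give $\lambda(x+y)=0=\min\{\lambda(x),\lambda(y)\}$ — so I read the final assertion as the ultrametric equality for unequal valuations together with this description of when strictness genuinely occurs.
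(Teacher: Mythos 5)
The paper states this lemma without proof --- it is one of the ``fundamental properties'' recalled from the cited references on $\civita$ --- so there is no in-paper argument to compare against line by line. Your coefficientwise proof is the standard one and is correct: left-finiteness guarantees that $\min(\supp(x))$ is attained for $x\ne 0$; at degree $q=\lambda(x)+\lambda(y)$ the convolution collapses to the single product $x(\lambda(x))\,y(\lambda(y))\ne 0$, which gives multiplicativity (and, as a byproduct, that $\civita$ has no zero divisors); and the sub-additive estimate is immediate. You are also right to flag that the final clause is misstated as written: $x\ne y$ does not force $\lambda(x+y)>\min\{\lambda(x),\lambda(y)\}$ (take $x=1$, $y=2$). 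The intended assertion, as in the cited sources, is the ultrametric equality $\lambda(x+y)=\min\{\lambda(x),\lambda(y)\}$ whenever $\lambda(x)\ne\lambda(y)$, with strict inequality possible only when $\lambda(x)=\lambda(y)$ and the leading coefficients cancel; your dichotomy states and proves exactly this corrected version, which is what the rest of the paper actually uses (e.g.\ in Lemma \ref{lemma tecnico serie a termini positivi}, where the terms are non-negative so no cancellation can occur).
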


The Levi-Civita field contains infinite and nonzero infinitesimal elements.
Moreover, the valuation $\lambda$ allows to compare the relative size of any element of $\civita$.

\begin{definition}
	Let $x, y \in \civita$, $x, y \geq 0$.
	We will write $x\ll y$ if $\lambda(x) > \lambda(y)$. In this case, we say that $x$ is infinitely smaller than $y$.
	This relation is extended in the expected way to all $x, y \in \civita$.
	Notice that $|x| \ll |y|$ if and only if $n|x| < |y|$ for all $n \in \N$.
\end{definition}

Comparison of the size of two elements in $\civita$ can be obtained not only via the inequalities $<$ and $\ll$, but also by introducing different notions of ``agreement on the order of magnitude'' and ``equality up to an infinitesimal relative error''.

\begin{definition}
	Let $x, y \in \civita$. We will write $x \sim y$ if $\lambda(x)=\lambda(y)$, and $x \approx y$ if $x\sim y$ and $x[\lambda(x)]=y[\lambda(y)]$.
\end{definition}

In \cite{calculusnumerics} it is observed that $\sim$ and $\approx$ are equivalence relations.

We will often reference the set
$$M_o = \{\varepsilon \in \civita : 0<\lambda(\varepsilon)<\infty\} = \{\varepsilon \in \civita : 0 < |\varepsilon| \ll1\},$$
that is the set of nonzero infinitesimal numbers in the Levi-Civita field.
In analogy with the common practice of nonstandard analysis, if $x \in \civita$, we will refer to the set $\mu(x)=\{ y \in \civita : |x-y|\ll1 \}$ as the monad of the point $x$.
Notice that monads are not intervals.

\begin{lemma}\label{monads are not intervals}
	For all $x, a, b \in \civita$ with $a<b$, $\mu(x) \not = I(a,b)$.
\end{lemma}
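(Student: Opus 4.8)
The plan is to argue by contradiction, exploiting that a monad is a translate of the set of infinitesimals and hence \emph{fails to have a supremum} in $\civita$, whereas every interval $I(a,b)$ carries its right endpoint as a supremum. First I would reduce to the monad of $0$. Unwinding the definitions, $y \in \mu(x)$ holds precisely when $\lambda(y-x)>0$, so $\mu(x) = x + M$ where $M = \{d \in \civita : \lambda(d)>0\} = M_o \cup \{0\}$ is the set of all infinitesimals. If $\mu(x) = I(a,b)$, then translating by $-x$ gives $M = I(a-x,\,b-x)$; after renaming the endpoints it suffices to reach a contradiction from an identity $M = I(a,b)$ with $a<b$.

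Next I would extract information about the right endpoint $b$, which is an upper bound of $M$. Since $0 \in M$ and every element of $I(a,b)$ is $\le b$, we get $b \ge 0$; since $M$ contains a positive infinitesimal (any element of $M_o$), we must have $b \ne 0$, hence $b>0$. I then claim $b$ cannot itself be infinitesimal: if $\lambda(b)>0$, then $\lambda(2b)=\lambda(2)+\lambda(b)=\lambda(b)>0$ by Lemma \ref{lemma lambda valuation}, so $2b \in M$, yet $2b>b$ contradicts that $b$ bounds $M$. Therefore $\lambda(b)\le 0$.

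Finally I would produce an interior point of $I(a,b)$ that escapes $M$. Because $\lambda(b/2)=\lambda(b)\le 0$, the element $b/2$ is not infinitesimal, so $b/2 \notin M$. On the other hand $a \le 0 < b/2 < b$, where $a \le 0$ follows from $0 \in I(a,b)$; thus $b/2$ lies strictly between the endpoints and so belongs to $I(a,b)$ no matter which of the four interval types it is. This contradicts $M = I(a,b)$ and completes the argument.

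The main obstacle is that the tempting ``a monad has infinitesimal diameter while an interval does not'' argument is simply false: intervals in $\civita$ may themselves have infinitesimal length, as with $I(0,\varepsilon)$ for $\varepsilon \in M_o$. The genuine content is the non-existence of a least upper bound for $M$ — an infinitesimal candidate is beaten by its own double, while a non-infinitesimal candidate has a strictly smaller non-infinitesimal (its half) still bounding $M$. The dichotomy on the sign of $\lambda(b)$ used above is exactly what packages this phenomenon into a short contradiction, and it is the step that must be handled with care.
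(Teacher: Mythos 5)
Your proof is correct, and it takes a genuinely different route from the paper's. You normalize by translation to the monad of $0$, i.e.\ the set $M$ of infinitesimals, and then run a ``no least upper bound'' argument on the right endpoint of the putative interval $I(a,b)$: an infinitesimal $b$ is beaten by $2b \in M$, while a non-infinitesimal $b$ forces $b/2$ to be a point of $I(a,b)$ lying outside $M$. The paper instead keeps $x$ in place and splits on whether the length $b-a$ is infinitesimal: if it is not, $I(a,b)$ contains points infinitely close to $a$ and to $b$, which cannot both lie in $\mu(x)$; if it is, the point $a+2(b-a)$ lies in $\mu(x)$ but outside $I(a,b)$, so $I(a,b) \subsetneq \mu(x)$. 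The two case splits are essentially dual (interval too large versus too small relative to the monad), but your version buys two things: it works only with the valuation $\lambda$ and the order, avoiding the relation $\approx$, which the paper defines as agreement of leading terms (infinitesimal \emph{relative} error) yet uses in its proof in the looser sense of ``infinitely close''; and the reduction to $\mu(0)$ makes the bookkeeping over the four interval types painless, since you only need that $b$ bounds $I(a,b)$ from above and that any point strictly between $a$ and $b$ belongs to $I(a,b)$. You are also right to flag that the naive ``intervals have non-infinitesimal length'' shortcut fails; your dichotomy on $\lambda(b)$ is exactly the correct replacement.
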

\begin{proof}
	If $x \not \in I(a,b)$, then the assertion is trivial.
	If $x \in I(a,b)$, suppose at first that $b-a \not \approx 0$: as a consequence, either $x \not \approx a$ or $x \not \approx b$. Since there exist $z, w \in I(a,b)$ with $z \approx a$ and $w \approx b$, we conclude $I(a,b)\not=\mu(x)$.
	If $b-a \approx 0$, then $x \approx a \approx b$, but we have also $x\approx a+2(b-a)\in \mu(x)$.
	However $a+2(b-a) \not \in I(a,b)$, so that $I(a,b)\subset \mu(x)$.
\end{proof}

We will also reference the number $d \in\civita$ as defined by Berz and Shamseddine \cite{analysislcf,berz+shamseddine2003,shamseddine2012} by the properties $d[1] = 1$ and $d[q] = 0$ for every $q \in \Q$, $q\ne1$.

Finally, we find it useful to borrow the definition of \emph{standard part} from Robinson's framework \cite{robinson} and define it also for elements of the Levi-Civita field.

\begin{definition}\label{def sh}
	Let $x \in \civita$.
	We define the function $\sh : \civita \rightarrow \R \cup \{\pm \infty\}$ by posing
	$$
		\sh{x} = 
		\left\{
		\begin{array}{ll}
		x[0] & \text{if } \lambda(x) \geq 0\\
		+\infty & \text{if } \lambda(x) < 0 \text{ and } x>0\\
		-\infty & \text{if } \lambda(x) < 0 \text{ and } x<0.
		\end{array}
		\right.
	$$
\end{definition}

\subsection{Convergence and weak convergence}

In the Levi-Civita field there are two notions of convergence. The first is induced by the metric and it is analogous to the usual definition of limit for real-valued sequences.
This notion of convergence is usually called strong convergence.

\begin{definition}\label{definition strong convergence}
	A sequence $\{a_n\}_{n \in \N}$ of elements of $\civita$ strongly converges to $l \in \civita$ if and only if
	$$
	\forall \varepsilon \in \civita, \varepsilon > 0, \exists n \in \N : \forall m > n\ |c_m-l|<\varepsilon.
	$$
	We will denote strong convergence with the expression $\slim_{n \rightarrow \infty} a_n = l$.
\end{definition}

Many properties of strong convergence have been established by Berz and Shamseddine \cite{berz,calculusnumerics,convergence}.
A property shared by convergence of real sequences and strong convergence in the Levi-Civita field is sequential completeness: in fact $\civita$ is sequentially complete with respect to strong convergence and every converging sequence in $\civita$ is bounded. Moreover, the squeeze theorem is still satisfied by strong convergence.

Throughout the paper we will use the following property: if a series with non-negative terms strongly converges to a number $l \in \civita$, then at least one of the terms has the same order of magnitude as $l$.
The proof of this result relies on the notion of regular sequence.

\begin{definition}
	A sequence $\{a_n\}_{n \in \N}$ of elements of $\civita$ is regular if and only if $\bigcup_{n \in \N}\supp(a_n)$ is a left-finite set.
\end{definition}

In \cite{calculusnumerics} it is proved that sequences that strongly converge in $\civita$ are regular.

We are now ready to prove the result stated above on series with non-negative terms.

\begin{lemma}\label{lemma tecnico serie a termini positivi}
	Let $\{a_n\}_{n\in\N}$ satisfy $a_n \geq 0$ for all $n \in \N$ and $\sum_{n \in \N} a_n = \slim_{k \rightarrow \infty} \sum_{n \leq k} a_n = l$.
	If $\lambda(l)=q$, then
	\begin{enumerate}
		\item $\lambda\left(a_n\right) \geq q$ for all $n \in \N$ and
		\item there exists $n \in \N$ such that $\lambda\left(a_n\right)=q$.
	\end{enumerate}
\end{lemma}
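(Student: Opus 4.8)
The plan is to prove the two claims separately: claim (1) from the order structure of $\civita$, and claim (2) from a coefficientwise analysis of the strong limit. Throughout I write $s_k = \sum_{n \le k} a_n$ for the partial sums, so that $\slim_{k\to\infty} s_k = l$, and I may assume $l \ne 0$ (otherwise $q = \infty$ and every $a_n = 0$).

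For (1), I would first establish that $0 \le a_n \le l$ for each fixed $n$. The lower bound is the hypothesis; for the upper bound, note that $l - a_n$ is the strong limit of the partial sums $\sum_{m \le k,\, m \ne n} a_m$, each of which is non-negative as a finite sum of non-negative terms. Since strong limits preserve the non-strict inequality $\ge 0$, it follows that $l - a_n \ge 0$. It then remains to invoke the elementary fact that $0 \le x \le y$ forces $\lambda(x) \ge \lambda(y)$: indeed, if $\lambda(x) < \lambda(y)$ then $y \ll x$, whence $y < x$ for positive elements, contradicting $x \le y$. Applying this with $x = a_n$ and $y = l$ gives $\lambda(a_n) \ge \lambda(l) = q$.

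For (2), the idea is that the leading coefficient of $l$ must be contributed by some individual term. I would show that strong convergence forces coefficientwise stabilization. Choose $\varepsilon \in \civita$ with $\varepsilon > 0$ and $\lambda(\varepsilon) > q$ (for instance $\varepsilon = d^{\,q+1}$). By the definition of strong convergence there is $N \in \N$ with $|s_m - l| < \varepsilon$ for all $m > N$. If $(s_m - l)[q] \ne 0$, then $\lambda(s_m - l) \le q < \lambda(\varepsilon)$, so $\varepsilon \ll |s_m - l|$ and hence $|s_m - l| > \varepsilon$, a contradiction; therefore $s_m[q] = l[q]$ for every $m > N$. In particular $l[q] = s_{N+1}[q] = \sum_{n \le N+1} a_n[q]$, a finite sum of real numbers. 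Since $\lambda(l) = q$ forces $l[q] \ne 0$, at least one summand $a_n[q]$ is nonzero, so $q \in \supp(a_n)$ and $\lambda(a_n) \le q$. Combined with (1), this yields $\lambda(a_n) = q$, as required.

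The routine part is (1), which is purely order-theoretic. The main obstacle is the coefficientwise stabilization used in (2): one must extract from the metric definition of strong convergence the fact that the fixed coefficient $s_m[q]$ is eventually constant and equal to $l[q]$. The key manoeuvre is selecting the test infinitesimal $\varepsilon$ with valuation strictly above $q$ and translating the inequality $|s_m - l| < \varepsilon$ into a statement about supports through the valuation $\lambda$. Regularity of strongly convergent sequences guarantees that all the objects involved are genuine elements of $\civita$, but once stabilization is in hand the conclusion is immediate, since a nonzero finite real sum must have a nonzero summand.
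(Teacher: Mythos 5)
Your proof is correct, and part (2) takes a genuinely different route from the paper. For part (1) both arguments come down to the same point — a comparison with $l$ plus the fact that $0\leq x\leq y$ forces $\lambda(x)\geq\lambda(y)$ — the only cosmetic difference being that you bound the single term $a_n$ by $l$ while the paper bounds the partial sum $\sum_{n\leq k}a_n$ by $l$. For part (2) the paper argues by contradiction: assuming every $\lambda(a_n)$ exceeds $q$, it invokes regularity of strongly convergent sequences to extract an extremal valuation $p>q$ among the terms, shows some partial sum has valuation exactly $p$, and concludes $\lambda(l)=p>q$. You instead prove coefficientwise stabilization directly: testing strong convergence against $\varepsilon=d^{\,q+1}$ shows $s_m[q]=l[q]$ eventually, so the nonzero real number $l[q]$ is a finite sum $\sum_{n\leq N+1}a_n[q]$ and must have a nonzero summand, giving $\lambda(a_n)\leq q$ for some $n$, which combines with (1) to give equality. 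Your version is more self-contained — it does not need the regularity of strongly convergent sequences, only the definition of strong convergence and the valuation — and it isolates the reusable fact that strong convergence implies eventual agreement of each fixed coefficient, which is essentially the direction of Theorem \ref{weak convergence criterion} specialized to strong limits. The paper's version, by working with the valuations of the terms globally, is closer in spirit to how the lemma is later applied to measures of intervals in Lemma \ref{lemma dimensione insiemi}, but both proofs are complete and correct.
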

\begin{proof}
	In order to prove (1), suppose that $\lambda(a_k) < q$ for some $k \in \N$.
	Taking into account the hypothesis that $a_n \geq 0$ for all $n \in\N$, from Lemma \ref{lemma lambda valuation} we obtain $\lambda\left(\sum_{n \leq k} a_n \right)< q$.
	Since $a_n \geq 0$ for all $n \in\N$, the inequality $\sum_{n \leq k} a_n \leq \sum_{n \in \N} a_n$ entails also $\lambda\left(l\right)< q$.
	By contrapositive, we obtain the desired assertion.
	
	For the proof of (2), suppose towards a contradiction that $\max_{n \in \N} \lambda(a_n) = p > q$ for all $n \in \N$. 
	This maximum exists since strongly convergent sequences are regular.
	Let now $\varepsilon > 0$ satisfy $\lambda(\varepsilon)>p$, and let $k \in \N$ such that $\lambda \left(\sum_{n\leq k} a_n \right)= p$ and $\left|l-\sum_{n \leq k} a_n \right|<\varepsilon$.
	From the latter inequality we deduce that $l = \sum_{n \leq k} a_n + h$ for some $h \in \civita$ with $h \geq 0$ and $\lambda(h)\geq\lambda(\varepsilon)>p$.
	Since $a_n$, $l$ and $h$ are non-negative, we have $\lambda(l) = \min\left\{ \lambda(\sum_{n \leq k} a_n), \lambda(h)\right\} = p>q$, against the hypothesis $\lambda(l)=q$.
\end{proof}

Despite its good properties, strong convergence is very restrictive, since if $\{a_n\}_{n \in \N}$ is strongly convergent, then for all $q \in \Q$ the real sequence $n\mapsto a_n[q]$ is eventually constant \cite{analysislcf}.
As a consequence, real converging sequences are not strongly convergent in $\civita$, unless they are eventually constant.

In order to enlarge the class of converging sequences, it has been proposed a weaker notion of convergence, usually called weak convergence.

\begin{definition}
	A sequence $\{a_n\}_{n \in \N}$ of elements of $\civita$ weakly converges to $l \in \civita$ if and only if
	$$
	\forall \varepsilon \in \R\ \varepsilon > 0\ \exists n \in \N : \forall m > n\ \max_{q \in \Q, q\leq \varepsilon^{-1}} \left| (a_m - l)[q] \right|<\varepsilon.
	$$
	We will denote weak convergence with the expression $\wlim_{n \rightarrow \infty} a_n = l$.
\end{definition}

In \cite{berz} it is proved that strong convergence implies weak convergence, but the converse is false.

In order to determine whether a sequence is weakly convergent, it is convenient to use a weak converge criterion, stated for instance in Theorem 2.13 of \cite{convergence}.

\begin{theorem}\label{weak convergence criterion}
	If $\{a_n\}_{n\in\N}$ is a regular sequence of elements of $\civita$ and if there exists $l \in\civita$ such that for all $q \in\Q$
	$\lim_{n\rightarrow \infty} a_n[q] = l[q],$
	then $\wlim_{n\rightarrow \infty} a_n=l$.
	
	On the other hand, if $\{a_n\}_{n\in\N}$ converges weakly to $l \in \civita$, then for all $q \in \Q$ the real sequence $\{a_n[q]\}_{n\in\N}$ converges to $l[q]\in\R$, and the convergence is uniform on every subset of $\Q$ bounded above.
\end{theorem}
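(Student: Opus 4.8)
The plan is to prove the two implications separately, with regularity doing the essential work in the first.

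For the forward direction, suppose $\{a_n\}_{n\in\N}$ is regular with $\lim_{n\to\infty} a_n[q] = l[q]$ for every $q \in \Q$, and set $S = \bigcup_{n\in\N}\supp(a_n)$, which is left-finite by hypothesis. First I would observe that $\supp(l) \subseteq S$: if $q \notin S$ then $a_n[q] = 0$ for all $n$, whence $l[q] = \lim_n a_n[q] = 0$. Now fix a real $\varepsilon > 0$. The crucial step is that the set $F = \{q \in S : q \leq \varepsilon^{-1}\}$ is \emph{finite}, since $S$ is left-finite and $\varepsilon^{-1}$ is bounded above by some rational. For each of the finitely many $q \in F$, pointwise convergence yields an index $n_q$ with $|(a_m - l)[q]| < \varepsilon$ for all $m > n_q$; setting $n = \max_{q\in F} n_q$ produces a single index that works simultaneously for every $q \in F$. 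For any $q \leq \varepsilon^{-1}$ with $q \notin S$ we have $(a_m-l)[q] = 0$, so in fact $\max_{q\leq\varepsilon^{-1}} |(a_m - l)[q]| < \varepsilon$ for all $m > n$, which is exactly weak convergence. The only place regularity enters --- and the heart of the argument --- is the finiteness of $F$, without which the indices $n_q$ could fail to admit a finite supremum.

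For the converse, suppose $\wlim_{n\to\infty} a_n = l$ and let $B \subseteq \Q$ be any set bounded above, say by $Q \in \Q$ (the pointwise statement is the special case $B = \{q_0\}$). Given a real $\delta > 0$, I would choose $\varepsilon > 0$ small enough that both $\varepsilon < \delta$ and $\varepsilon^{-1} \geq Q$ hold, for instance $\varepsilon = \min\{\delta/2,\ (|Q|+1)^{-1}\}$. The definition of weak convergence then supplies an index $n$ with $\max_{q\leq\varepsilon^{-1}}|(a_m - l)[q]| < \varepsilon$ for all $m > n$. Since every $q \in B$ satisfies $q \leq Q \leq \varepsilon^{-1}$, it follows that $|(a_m - l)[q]| < \varepsilon < \delta$ for all $m > n$ and all $q \in B$ at once; this is precisely uniform convergence of $\{a_n[q]\}_{n\in\N}$ to $l[q]$ on $B$, and taking $B = \{q_0\}$ gives ordinary convergence at each fixed $q_0$.

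I expect the forward direction to be the only part demanding care, specifically the reduction to the finite set $F$ via regularity; the converse is just a matter of choosing $\varepsilon$ to control both the accuracy and the range of exponents simultaneously. One subtlety worth recording is that the maximum in the definition of weak convergence is always well-defined: for each fixed $m$ it ranges over only finitely many nonzero values, because $\supp(a_m)$ and $\supp(l)$ are individually left-finite, so no appeal to regularity is needed merely to make sense of the statement.
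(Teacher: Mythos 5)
Your proof is correct: the reduction of the forward direction to the finite set $F=\{q\in S: q\le \varepsilon^{-1}\}$ via left-finiteness of $\bigcup_n\supp(a_n)$, together with the observation that $\supp(l)\subseteq S$, is exactly the right use of regularity, and the choice $\varepsilon=\min\{\delta/2,(|Q|+1)^{-1}\}$ handles the converse cleanly. Note that the paper itself gives no proof of this statement --- it quotes it as Theorem 2.13 of the Shamseddine--Berz reference on convergence --- so there is nothing to compare against, but your argument is the standard one and I see no gaps.
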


The above criterion allows to conclude that real converging sequences are weakly convergent also in $\civita$.

\begin{corollary}\label{corollario standard}
	If $\{r_n\}_{n\in\N}$ is a sequence of real numbers such that $\lim_{n \rightarrow \infty} r_n = r$, then $\wlim_{n\rightarrow\infty} r_n = r$.
\end{corollary}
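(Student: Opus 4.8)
The plan is to apply the weak convergence criterion of Theorem \ref{weak convergence criterion}. First I would make explicit the embedding of $\R$ into $\civita$: a real number $s$ is identified with the element of $\civita$ whose support is $\{0\}$ and whose value there is $s$ (and whose support is empty when $s=0$), so that $s[0]=s$ and $s[q]=0$ for every $q\in\Q$ with $q\ne0$. Under this identification the corollary asserts that the sequence $\{r_n\}_{n\in\N}$, viewed in $\civita$, weakly converges to $r$, and the criterion reduces the problem to checking its two hypotheses: that the sequence is regular, and that its coefficients converge pointwise to those of $r$.

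For regularity, I would observe that each $r_n$ satisfies $\supp(r_n)\subseteq\{0\}$, so that $\bigcup_{n\in\N}\supp(r_n)\subseteq\{0\}$. Since $\{0\}$ is finite it is in particular left-finite, and hence the sequence $\{r_n\}_{n\in\N}$ is regular.

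For the coefficientwise convergence, I would separate the cases $q=0$ and $q\ne0$. When $q=0$, we have $r_n[0]=r_n$ and $r[0]=r$, so the hypothesis $\lim_{n\rightarrow\infty}r_n=r$ in $\R$ yields exactly $\lim_{n\rightarrow\infty}r_n[0]=r[0]$. When $q\ne0$, every coefficient vanishes, $r_n[q]=0=r[q]$, so the limit holds trivially. With both hypotheses verified, Theorem \ref{weak convergence criterion} gives $\wlim_{n\rightarrow\infty}r_n=r$.

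The argument is routine, and there is no genuine obstacle beyond being careful about the embedding of real numbers into $\civita$ and about distinguishing the single nontrivial coefficient $q=0$ from the rest. The only point worth flagging is that weak convergence is essential here in place of strong convergence: as recalled earlier, a real sequence that is not eventually constant fails to converge strongly in $\civita$, whereas the criterion shows it always converges weakly.
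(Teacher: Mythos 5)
Your proof is correct and follows exactly the route the paper intends: the corollary is stated as an immediate consequence of the weak convergence criterion of Theorem \ref{weak convergence criterion}, and you have simply spelled out the two hypotheses (regularity via $\supp(r_n)\subseteq\{0\}$ and coefficientwise convergence, nontrivial only at $q=0$) that the paper leaves implicit.
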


Weak convergence does not satisfy some properties of convergence in $\R$ and of strong convergence in $\civita$.
One of them is the squeeze theorem.
In fact, if two sequences $\{a_n\}_{n \in \N}$ and $\{b_n\}_{n \in \N}$ satisfy
\begin{itemize}
	\item $a_i \leq b_j$ for all $i, j \in \N$ and
	\item $\wlim_{n\rightarrow \infty}a_n = l = \wlim_{n\rightarrow \infty} b_n$,
\end{itemize}
there might exist a sequence $\{c_n\}_{n \in \N}$ such that
$a_i \leq c_j \leq b_k$ for all $i, j, k \in \N$, but $\wlim_{n\rightarrow \infty} c_n$ might be different from $l$ or might not exist.

\begin{example}\label{example squeeze}
	Let $a_n = 0$ for all $n \in \N$ and let $b_n = n^{-1}$: then $\wlim_{n\rightarrow \infty} a_n = \wlim_{n \rightarrow \infty} b_n = 0$  by Corollary \ref{corollario standard}.
	Any constant sequence $\{c_n\}_{n\in\N}$ assuming a positive infinitesimal value $h$ satisfies $a_i \leq c_j \leq b_k$ for all $i, j, k \in \N$, however it converges to $h>0 = \wlim_{n\in\N} b_n$.
	On the other hand, if $c_n= nd$, then $\{c_n\}_{n \in \N}$ still satisfies $a_i< c_j < b_k$ for all $i, j, k \in \N$, but $\wlim_{n \rightarrow \infty} c_n$ does not exist.
\end{example}

The previous example shows also that weak limits do not preserve inequalities.
In particular, the weak limit of a strictly positive sequence might be negative.

\begin{example}\label{example convergence x norm}
	If $a_n = (nd)^{-1}-1$, then $a_n > 0$ for all $n \in \N$, but $\wlim_{n \rightarrow \infty} a_n = -1 < 0$.
\end{example}

In addition, weak convergence is not well-behaved with respect to the equivalence relation $\approx$.

\begin{remark}
	Let $\{a_n\}_{n\in\N}$ and $\{b_n\}_{n\in\N}$ be two sequences of elements of $\civita$.
	If $a_n \approx b_n$ for all $n \in\N$, it might be possible that $\{a_n\}_{n\in\N}$ converges (strongly or weakly), but $\{b_n\}_{n\in\N}$ does not.
	This happens for instance if $a_n = 1$ for all $n \in\N$ and $b_n = 1+nd$ for all $n \in\N$.
\end{remark}

Finally, whereas strongly converging sequences are regular, there are some weakly converging sequences that are not regular.

\begin{remark}
	In \cite{calculusnumerics}, it is shown that strongly converging sequences are regular. On the other hand there are sequences that are not regular but that weakly converge in $\civita$: an example is $\{n^{-1}d^{-n}\}_{n\in\N}$, whose weak limit is $0$.
	These sequences need not be unbounded: another weakly converging sequence that is not regular is $\{n^{-1}d^{-1+n^{-1}}\}_{n\in\N}$.
	
	As a consequence, there are some sequences $\{a_n\}_{n\in\N}$ that satisfy the formula
	\begin{equation}\label{formula weakly cauchy}
	\forall \varepsilon \in \R, \varepsilon > 0, \exists k \in \N : \forall m, n > k\ \max_{q \in \Q, q\leq \varepsilon^{-1}} \left| (a_m - a_n)[q] \right|<\varepsilon,
	\end{equation}
	that however do not converge weakly in $\civita$.
	An example is $\left\{\sum_{n \leq k} n^{-1}d^{-n} \right\}_{k\in\N}$, whose limit is the function $x: \Q \rightarrow \R$ defined by $x(-n)=n^{-1}$ whenever $n \in \N$, and $x(q)=0$ otherwise.
	This is not an element of $\civita$, since $\supp (x)$ is not a left-finite set.
\end{remark}

For a detailed study of the properties of the strong and weak convergence in the Levi-Civita field, we refer to \cite{berz,calculusnumerics, convergence} and references therein.

In the sequel, it will be useful to distinguish between sequences that are Cauchy with respect to the strong and weak convergence.

\begin{definition}
	A sequence $\{a_n\}_{n\in\N}$ of elements of $\civita$ is strongly Cauchy, or simply Cauchy, if and only if
	$$
	\forall \varepsilon \in \civita, \varepsilon > 0, \exists k \in \N : \forall m, n > k\ |a_m-a_n|<\varepsilon.
	$$
	A sequence $\{a_n\}_{n\in\N}$ is weakly Cauchy if and only if it satisfies formula \ref{formula weakly cauchy}.
\end{definition}

\subsection{Divergent sequences}

Besides the usual notions of strong and weak convergence, we find it convenient to introduce also a notion of divergent sequence.
In principle, in the Levi-Civita field this concept could be defined in different ways: for instance, it could be argued that all of the sequences $\{nd\}_{n\in\N}$, $\{n\}_{n\in\N}$ and $\{d^{-n}\}_{n\in\N}$ diverge in $\civita$.
It would be possible to define different notions of divergence that describe each of these behaviours; however, for our purposes we find it convenient to say that a sequence of elements of $\civita$ diverges if it is unbounded in the strong topology of $\civita$.

\begin{definition}\label{def diverge}
	A sequence $\{a_n\}_{n \in \N}$ of elements of $\civita$ diverges to $+\infty$ if and only if
	$$
		\forall M \in \civita\ \exists n \in \N\ \forall m > n\  a_m > M.
	$$
	
	A sequence $\{a_n\}_{n \in \N}$ of elements of $\civita$ diverges to $-\infty$ if and only if the sequence $\{-a_n\}_{n\in\N}$ diverges to $+\infty$
\end{definition}

Notice that, contrarily to the real case, monotonic sequences that do not diverge in $\civita$ might not have a strong or a weak limit, or even a converging subsequence.
In fact, if $\{a_n\}_{n \in \N}$ is an increasing sequence of elements of $\civita$, then it might exhibit any of the following behaviours:
\begin{enumerate}
		\item there exists $l \in \civita$ such that $\slim_{n \rightarrow \infty} a_n = l$;
		\item there exists $l \in \civita$ such that $\wlim_{n \rightarrow \infty} a_n = l$;
		\item the sequence diverges to $+\infty$ and converges weakly to some $l\in\civita$;
		\item the sequence diverges to $+\infty$ and does not converge weakly;
		\item the sequence is bounded from above and does not converge weakly or strongly.
\end{enumerate}

A divergent sequence $\{a_n\}_{n \in \N}$ such that $\wlim_{n \rightarrow \infty} a_n = 0$ is obtained by defining $a_n = n^{-1}d^{-n}$, and an increasing sequence $\{b_n\}_{n\in\N}$ that is bounded from above but does not have a weak or strong limit can be obtained by posing $b_n = nd$.

\subsection{Power series on the Levi-Civita field}\label{sec introcivita}

Real power series are series of the form $\sum_{n \in \N} a_n (x-x_0)^n$, where $x_0 \in \R$ and $x$ is a variable ranging in an interval $I$ (possibly of length $0$) centered at $x_0$; the number $l(I)/2$ is usually called the radius of convergence of the power series centred at $x_0$.

If $a_n \in \civita$ for all $n\in\N$ and if $x_0\in\civita$, it is possible to define a power series either as the strong limit or the weak limit of the sequence of the partial sums $\sum_{n \leq k} a_n(x-x_0)^n$: 
\begin{equation}\label{kinds of limit}
	\sum_{n \in \N} a_n (x-x_0)^n = \slim_{k \rightarrow \infty} \sum_{n \leq k} a_n(x-x_0)^n
	\text{ or }
	\sum_{n \in \N} a_n (x-x_0)^n = \wlim_{k \rightarrow \infty} \sum_{n \leq k} a_n(x-x_0)^n.
\end{equation}
Strong and weak convergence of power series has been studied in \cite{calculusnumerics, reexp, analytic1, convergence}, where there have been proved some convergence criteria for both the strong and weak limit of equation \eqref{kinds of limit}.
It has been argued by Berz and Shamseddine that the most convenient definition of power series in the Levi-Civita field is the one relying on the weak limit. With this definition, power series share many properties with their real counterpart, such as the possibility to calculate the derivative by differentiating
the power series term by term, or the possibility to re-expanded the power series around any point in its domain of convergence.
Other properties of the power series defined using the weak limit are discussed in detail in \cite{reexp,convergence}.
From now on, the expression $\sum_{n \in \N} a_n (x-x_0)^n$ will denote the weak limit $\wlim_{k \rightarrow \infty} \sum_{n \leq k} a_n (x-x_0)^n$.

\begin{definition}
	For every $A\subseteq\civita$, we will denote by $\P(A)$ the algebra  of power series that weakly converge for every $x\in A$.
\end{definition}

The functions defined as power series with coefficients in $\civita$ satisfy many properties of real continuous functions, such as an intermediate value theorem, an extreme value theorem and a mean value theorem.
For these results, we refer to \cite{reexp,analytic1,convergence}.

There is a strong relation between real power series and power series in $\civita$ with real coefficients: if $a_n \in \R$ for all $n\in\N$ and if the real power series $\sum_{n \in \N} a_n (x-x_0)^n$ has radius of convergence $R$, the power series $\sum_{n \in \N} a_n (x-x_0)^n$ converges weakly in $\civita$ whenever $|x-x_0| < R$ and $|x-x_0| \not \sim R$. As a consequence, in the region of convergence it is possible to define a function $x \mapsto \wlim_{k \rightarrow \infty}\sum_{n \leq k} a_n (x-x_0)^k$; moreover, whenever $x$ is real, this function agrees with the real power series $\sum_{n \in \N} a_n (x-x_0)^n$ (see Definition \ref{def ext} for more details).

If $|x-x_0| < R$ and $|x-x_0| \sim R$, however, the power series in the Levi-Civita field might not converge. This is a consequence of the following result, that does not require the additional hypothesis $a_n \in \R$ for all $n \in\N$.

\begin{proposition}\label{lemma divergenza}
	Let $\{a_n\}_{n\in\N}$ be a sequence in $\civita$ and let $x_0\in\civita$.
	\begin{enumerate}
		\item If for some $x \in \civita$ the power series $\sum_{n \in \N} a_n (x-x_0)^n$ diverges, namely if the sequence $k \mapsto \sum_{n \leq k} a_n (x-x_0)^n$ diverges in the sense of Definition \ref{def diverge}, then for every $h \in \civita$ with $\lambda(h)>\lambda(x-x_0)$, the power series $\sum_{n \in \N} a_n (x+h-x_0)^n$ diverges.
		\item If for some $x \in \civita$ the power series $\sum_{n \in \N} a_n (x-x_0)^n$ does not converge, namely if the sequence $k \mapsto \sum_{n \leq k} a_n (x-x_0)^n$ does not converge weakly in the sense of Definition \ref{definition strong convergence}, there exists $s\in\Q$ such that for every $h$ with $\lambda(h)>s$, the power series $\sum_{n \in \N} a_n (x+h-x_0)^n$ does not converge.
	\end{enumerate}
\end{proposition}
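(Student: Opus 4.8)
The plan is to reduce both claims to the valuations of the terms $a_n(x-x_0)^n$, which are left unchanged by the perturbation. Write $y=x-x_0$ and $y'=x+h-x_0=y+h$. Since $\lambda(h)>\lambda(y)$, Lemma~\ref{lemma lambda valuation} gives $\lambda(y')=\lambda(y)=:q$, and in fact $y'\approx y$, because $h[q]=0$ forces $y'[q]=y[q]$. As $\lambda$ is a valuation and $\approx$ is compatible with products (leading coefficients multiply), this propagates to all powers: $\lambda((y')^n)=nq=\lambda(y^n)$ and $(y')^n\approx y^n$, so that $\lambda(a_n(y')^n)=\lambda(a_ny^n)$ and $a_n(y')^n\approx a_ny^n$ for every $n$. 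Thus the perturbation preserves the valuation, and even the leading coefficient, of each term of the series; both parts will be deduced from this.

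For (1), I would first prove the valuation criterion that the partial sums $S_k=\sum_{n\le k}a_ny^n$ are unbounded in the strong topology if and only if $\inf_n\lambda(a_ny^n)=-\infty$. One implication is immediate from $a_ny^n=S_n-S_{n-1}$, since differences of a bounded sequence are bounded; for the converse, if $\lambda(a_ny^n)\ge c$ for all $n$ then the ultrametric inequality gives $\lambda(S_k)\ge c$, so every $S_k$ is infinitely smaller than a fixed element of valuation $c-1$ and the sequence is bounded. Combined with the first paragraph, $\inf_n\lambda(a_ny^n)=\inf_n\lambda(a_n(y')^n)$, so the partial sums of the two series are simultaneously (un)bounded, which is the desired transfer of divergence. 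To refine ``unbounded'' into divergence to $+\infty$ or $-\infty$ in the sense of Definition~\ref{def diverge}, I would write $S'_k=S_k+E_k$ with $E_k=\sum_{n\le k}a_ny^n\bigl((1+h/y)^n-1\bigr)$; since $\lambda\bigl((1+h/y)^n-1\bigr)\ge\lambda(h)-q>0$, one has $\lambda(E_k)\ge\min_{n\le k}\lambda(a_ny^n)+(\lambda(h)-q)$, so whenever $S_k$ has no cancellation at its minimal level, $E_k\ll S_k$, hence $S'_k\approx S_k$ and the sign of the leading coefficient is preserved.

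For (2) I would split on the terms. If $\inf_n\lambda(a_ny^n)=-\infty$, part (1) shows the perturbed series is unbounded for every $h$ with $\lambda(h)>q$; its terms do not tend to $0$, so it does not converge, and $s=q$ works. Otherwise $\lambda(a_ny^n)\ge c$ for all $n$, and since $\sum a_ny^n$ fails to converge weakly, the converse half of Theorem~\ref{weak convergence criterion} locates the obstruction at a finite level $W$: either some real sequence $k\mapsto S_k[q_0]$ does not converge, or the coefficients converge but not uniformly on $\{q\le W\}$, or their pointwise limit has infinitely many support points below $W$ and so lies outside $\civita$. I would then set $s=q+W-c$. For $\lambda(h)>s$ the difference $a_n(y')^n-a_ny^n$ has valuation at least $\lambda(a_ny^n)+(\lambda(h)-q)>c+(W-c)=W$, so $S'_k[q']=S_k[q']$ for all $q'\le W$ and all $k$; the obstruction at level $\le W$ therefore persists, and $\sum a_n(y')^n$ does not converge.

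The main obstacle is the directional refinement in (1): the valuation bound only controls $S'_k$ up to $E_k$, and if the minimal-valuation terms of $S_k$ cancel, so that $\lambda(S_k)>\min_{n\le k}\lambda(a_ny^n)$, the estimate $E_k\ll S_k$ may fail. Handling this requires showing that divergence to $+\infty$ forces the record-setting terms to have a definite sign, which is then inherited by the perturbed series through $a_n(y')^n\approx a_ny^n$; if instead one reads ``diverges'' as ``has unbounded partial sums'', the valuation criterion settles (1) directly and the difficulty evaporates. In (2) the freedom to enlarge $\lambda(h)$ plays the same stabilising role: it pushes the discrepancy between the two series above the finite witnessing level $W$, so no cancellation analysis is needed.
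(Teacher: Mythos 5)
Your reduction of both parts to the termwise relations $\lambda(a_n(y')^n)=\lambda(a_ny^n)$ and $a_n(y')^n\approx a_ny^n$ is sound and matches the paper's equations \eqref{eq prop divergenza-non convergenza} and \eqref{eq approx serie}, and your treatment of the second case of (2) --- pushing the discrepancy between the partial sums above the level $W$ at which the weak convergence criterion fails, by taking $\lambda(h)$ large --- is essentially the paper's argument. The genuine gap is in part (1), and you have named it yourself without closing it. Definition \ref{def diverge} asks for divergence to $+\infty$, not mere unboundedness, and your error term $E_k$ is controlled only from below by $\min_{n\le k}\lambda(a_ny^n)$, which can lie far below $\lambda(S_k)$ when the minimal-valuation terms cancel; in that regime $E_k$ can dominate $S_k$ and even flip its sign, so the perturbed partial sums need not eventually exceed every $M$. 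Your suggested repair (that divergence to $+\infty$ forces the record-setting terms to have a definite sign) does not resolve this, because the obstruction is not the sign of the terms at the record level but the fact that $E_k$ lives at the historical minimum level $\min_{n\le k}\lambda(S_n)$, which a sequence can visit and then retreat from while still diverging to $+\infty$. The paper sidesteps the whole issue by first reducing, via the equivalence between weak convergence of $\sum_{n\in\N} a_n(x-x_0)^n$ and of $\sum_{n\in\N} |a_n(x-x_0)^n|$ (Theorem 3.8 of \cite{analysislcf}), to the case $a_n\ge 0$ and $x-x_0>0$: with non-negative terms there is no cancellation, $\lambda(S_k)=\min_{n\le k}\lambda(a_ny^n)$, the relation $S'_k\approx S_k$ holds for every $k$, and divergence transfers immediately. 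Without some such positivity reduction your part (1) establishes only that unboundedness transfers.

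A second, smaller problem is the first case of your part (2): from unboundedness of the perturbed partial sums you infer that ``its terms do not tend to $0$, so it does not converge.'' In $\civita$ this inference fails: sequences whose valuations tend to $-\infty$ can converge weakly to $0$ (the paper's own example $\{n^{-1}d^{-n}\}_{n\in\N}$), and weak convergence is not stable under $\approx$, so $\inf_n\lambda(a_n(y')^n)=-\infty$ does not by itself rule out weak convergence of $\sum_{n\in\N} a_n(y')^n$. If you keep your structure, this case needs a separate argument that the perturbed partial sums cannot be weakly Cauchy, or it should be routed through the positivity reduction as well.
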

\begin{proof}
	A consequence of Theorem 3.8 of \cite{analysislcf} is that $\sum_{n \in \N} a_n (x-x_0)^n$ converges weakly if and only if $\sum_{n \in \N} |a_n (x-x_0)^n|$ converges weakly.
	Thus we can assume without loss of generality that $a_n \geq 0$ for all $n \in \N$ and that $x-x_0 > 0$.
	Notice also that $\lambda(x-x_0)< \infty$, since any power series strongly converges whenever $x=x_0$.
	
	For every $h$ satisfying the hypothesis $\lambda(h)> \lambda(x-x_0)$, we have the equality $\lambda(x+h-x_0)=\lambda(x-x_0)$.
	From this equality and from Lemma \ref{lemma lambda valuation} we obtain also
	\begin{equation}\label{eq prop divergenza-non convergenza}
	\lambda[(x+h-x_0)^n]=\lambda[(x-x_0)^n]=n\lambda[x-x_0]
	\end{equation}
	and
	\begin{equation}\label{eq approx serie}
	\sum_{n \leq k} a_n (x-x_0)^n \approx \sum_{n \leq k} a_n (x+h-x_0)^n.
	\end{equation}
	for all $n \in\N$.

	(1) Since $\sum_{n \in \N} a_n (x-x_0)^n$ diverges, for all $q\in\Q$ there exists $k_q \in \N$ such that $\sum_{n \leq k_q} a_n (x-x_0)^n > d^{q}$.
	Thanks to equation \eqref{eq approx serie}, we have also the inequality $\sum_{n \leq k_q} a_n (x+h-x_0)^n > d^{q}$.
	Thus if $M\in\civita$ and if $k>k_{\lambda(M)}$, then $\sum_{n \leq k} a_n (x-x_0)^n > M$, so that the series $\sum_{n \in \N} a_n (x+h-x_0)^n$ diverges.

	(2)	Suppose now that $\sum_{n \in \N} a_n (x-x_0)^n$ does not converge.
	If $\sum_{n \in \N} a_n (x-x_0)^n$ diverges, then by part (1) of the proof we can choose $s=\lambda(x-x_0)$.
	
	Under the hypothesis that $\sum_{n \in \N} a_n (x-x_0)^n$ does not diverge, there exists $p\in\Q$ such that $\lambda\left(\sum_{n \leq k} a_n (x-x_0)^n\right) \geq p$ for all $k \in\N$.
	Thanks to the first part of the weak convergence criterion of Theorem \ref{weak convergence criterion}, then either the sequence $\left\{\sum_{n \leq k} a_n (x-x_0)^n\right\}_{k\in\N}$ is not regular or there exists $q \in \Q$ such that the real sequence $\left\{\left(\sum_{n \leq k} a_n (x-x_0)^n\right)[q]\right\}_{k\in\N}$ does not converge.
	
	At first, suppose that $\left\{\sum_{n \leq k} a_n (x-x_0)^n\right\}_{k\in\N}$ is regular and chose $q \in \Q$ in a way that the real sequence $\left\{\left(\sum_{n \leq k} a_n (x-x_0)^n\right)[q]\right\}_{k\in\N}$ does not converge.
	Notice that, by definition of $p$, $q\geq p$.
	Recall the formula for the difference of two $n$-th powers:
	$$
		(x+h-x_0)^{n}-(x-x_0)^{n} = h \sum_{j = 0}^{n-1}(x+h-x_0)^{n-j-1}(x-x_0)^{j}.
	$$
	Under the hypothesis that $\lambda(h)>\lambda(x-x_0)$, the previous equality and equation \eqref{eq prop divergenza-non convergenza} entail that for all $k\in\N$
	\begin{equation}\label{eqn proposizione divergenza}
	\lambda\left(\sum_{n \leq k} a_n (x+h-x_0)^n-\sum_{n \leq k} a_n (x-x_0)^n\right) = \lambda(h)+\lambda\left(\sum_{n \leq k} a_n (x-x_0)^n\right) \geq \lambda(h)+p.
	\end{equation}
	Hence, if $\lambda(h)>\lambda(x-x_0)$ and $\lambda(h) + p > q$, then also
	$$
	\left(\sum_{n \leq k} a_n (x+h-x_0)^n\right)[q]
	=
	\left(\sum_{n \leq k} a_n (x-x_0)^n\right)[q],
	$$
	so that the real sequence $\left\{\left(\sum_{n \leq k} a_n (x+h-x_0)^n\right)[q]\right\}_{k\in\N}$ does not converge.
	The second part of the weak convergence criterion of Theorem \ref{weak convergence criterion} ensures that this is sufficient to entail that the sequence $\left\{\sum_{n \leq k} a_n (x+h-x_0)^n\right\}_{k\in\N}$ cannot converge weakly.
	We obtain the desired assertion by defining $s = \max\{\lambda(x-x_0), q-p\}$.
	
	Suppose now that the sequence $\left\{\sum_{n \leq k} a_n (x-x_0)^n\right\}_{k\in\N}$ is not regular.
	Then there exists $q \in \Q$ such that
	$$
		\bigcup_{k\in\N} \supp\left(\sum_{n \leq k} a_n (x-x_0)^n\right) \cap (-\infty,q]_{\Q}
	$$
	is not finite.
	If $\lambda(h)>\lambda(x-x_0)$ and $\lambda(h) > q-p$, then by equation \eqref{eqn proposizione divergenza}
	$$
		\bigcup_{k\in\N} \supp\left(\sum_{n \leq k} a_n (x-x_0)^n\right) \cap (-\infty,q]_{\Q}
		=
		\bigcup_{k\in\N} \supp\left(\sum_{n \leq k} a_n (x+h-x_0)^n\right) \cap (-\infty,q]_{\Q},
	$$
	so that also the sequence $\left\{\sum_{n \leq k} a_n (x+h-x_0)^n\right\}_{k\in\N}$ is not regular.
	Once again, we can define $s = \max\{\lambda(x-x_0), q-p\}$.
\end{proof}

If the coefficients of the power series are real
, then we can sharpen the above results.

\begin{corollary}
	Consider the power series $\sum_{n \in \N} a_n (x-x_0)^n$ with $x_0 \in\R$ and $a_n\in \R$ for all $n\in\N$.
	Then the power series $\sum_{n \in \N} a_n (x-x_0)^n$ does not diverge for any $x \in \civita$ such that $\lambda(x) \geq 0$.
	Moreover, if for some $x \in\R$ the series $\sum_{n \in \N} a_n (x-x_0)^n$ does not converge weakly in $\civita$, then for all $h \in M_o$ the series $\sum_{n \in \N} a_n (x+h-x_0)^n$ does not converge weakly.
\end{corollary}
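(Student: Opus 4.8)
The plan is to treat the two assertions separately, since each rests on a different feature of the real coefficients. For the first, I would argue that divergence in the sense of Definition~\ref{def diverge} is impossible simply because the partial sums stay bounded in valuation. Writing $y = x-x_0$, the hypotheses $\lambda(x)\geq 0$ and $x_0\in\R$ give $\lambda(y)\geq 0$, and since each $a_n$ is real we have $\lambda(a_n)\geq 0$, whence $\lambda(a_n y^n)=\lambda(a_n)+n\lambda(y)\geq 0$ for every $n$. By Lemma~\ref{lemma lambda valuation} the partial sums $P_k=\sum_{n\leq k}a_n y^n$ then satisfy $\lambda(P_k)\geq 0$, so that $|P_k|<d^{-1}$ for all $k$ (comparing valuations, $\lambda(d^{-1})=-1<0\leq\lambda(P_k)$). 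Taking $M=d^{-1}$ in Definition~\ref{def diverge} shows the sequence cannot diverge to $+\infty$, and the same bound applied to $-P_k$ rules out divergence to $-\infty$. The point worth emphasising is the contrast with the real theory: a real power series evaluated at a real point outside its disc of convergence produces an unbounded real sequence, yet such a sequence never diverges in $\civita$, because no real number can exceed an infinite element such as $d^{-1}$.

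For the second assertion, I would fix a real $x$ at which the series fails to converge weakly together with an arbitrary $h\in M_o$. Since $x$, $x_0$ and the $a_n$ are all real, the partial sums $P_k=\sum_{n\leq k}a_n(x-x_0)^n$ are real numbers; by Corollary~\ref{corollario standard} they cannot converge in $\R$, for otherwise they would converge weakly in $\civita$. I would then compute the order-zero coefficient of the partial sums at $x+h$. Setting $r=x-x_0\in\R$ (nonzero, since every series converges at $x_0$ and hence $x\neq x_0$) and expanding $(r+h)^n=\sum_{j=0}^n\binom{n}{j}r^{n-j}h^j$, every term with $j\geq 1$ has valuation $j\lambda(h)>0$, so $\bigl((r+h)^n\bigr)[0]=r^n$. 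Consequently the partial sums $Q_k=\sum_{n\leq k}a_n(x+h-x_0)^n$ satisfy $Q_k[0]=\sum_{n\leq k}a_n r^n=P_k$. If $\{Q_k\}$ converged weakly to some $l$, the second part of the weak convergence criterion (Theorem~\ref{weak convergence criterion}) would force $Q_k[0]=P_k\to l[0]$ in $\R$, contradicting the non-convergence of $\{P_k\}$. Hence the series at $x+h$ does not converge weakly.

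The main obstacle, and the place where the real-coefficient hypothesis is indispensable, is precisely this last step. Proposition~\ref{lemma divergenza}(2) already yields non-convergence at $x+h$ for every $h$ with $\lambda(h)$ larger than some threshold $s$, but in general $s$ may be positive, leaving the infinitesimals of small positive valuation uncovered. The computation $Q_k[0]=P_k$ closes this gap for all of $M_o$ at once: because $r$ is real, perturbing by any infinitesimal $h$ leaves the order-zero layer of every partial sum untouched, so the obstruction to weak convergence already present at the real point $x$ persists throughout the punctured monad $\mu(x)\setminus\{x\}$.
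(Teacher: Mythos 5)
Your proof is correct and follows essentially the same route as the paper's: boundedness of the valuations of the partial sums rules out divergence, and the key identity $Q_k[0]=P_k$ (which the paper extracts from equation \eqref{eqn proposizione divergenza} with $p=0$ and which you re-derive directly by binomial expansion) combined with the second half of Theorem \ref{weak convergence criterion} rules out weak convergence at $x+h$ for every $h\in M_o$.
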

\begin{proof}
	If $a_n = 0$ for all but finitely many $n \in \N$, then both assertions are trivial. Suppose then that $a_n \ne 0$ for infinitely many $n \in \N$.
	
	The hypotheses over $a_n$, $x_0$ and $x$ entail that
	\begin{enumerate}
		\item $\lambda(a_n(x-x_0)^n)=0$ or $\lambda(a_n(x-x_0)^n)=\infty$ for all $n \in\N$, and
		\item $\lambda\left(\sum_{n \leq k} a_n (x-x_0)^n\right)\geq0$ for all $k\in\N$.
	\end{enumerate}
	Then the power series does not diverge as a consequence of inequality (2).
	
	If $x\in\R$ and the series $\sum_{n \in \N} a_n (x-x_0)^n$ does not converge weakly, then
	from our hypotheses over $a_n$, from equation \eqref{eqn proposizione divergenza} with $p=0$ and from (1) we obtain that, whenever $\lambda(h)>0$,
	$$
	\left(\sum_{n \leq k} a_n (x+h-x_0)^n\right)[0]
	=
	\left(\sum_{n \leq k} a_n (x-x_0)^n\right)[0].
	$$
	Consequently, if $h\in M_o$ then also the series $\sum_{n \in \N} a_n (x+h-x_0)^n$ does not converge.
\end{proof}

As a consequence of this corollary, there are some real power series centred at some real point $x_0$ that converge in an open interval $(x_0-R,x_0+R)$, but whose counterpart in $\civita$ do not converge in any interval of the form $I(x_0-R,x_0+R)$.

\begin{example}
	Consider the power series $\sum_{n \in \N} x^n$: it has real coefficients, so whenever $|x|<1$ and $|x|\not\approx 1$ it converges to a function that can be considered as an extension to the Levi-Civita field of the real function $f(x)=(1-x)^{-1}$ (for a more precise statement of this result, we refer to Definition \ref{def ext} and to \cite{bottazzi}).
	However, if $x \approx 1$, the series does not converge, since $\sum_{n \leq k}x^n \approx \sum_{n \leq k} 1 = k$.
	Similarly, if $x \approx -1$, the series does not converge, since $\sum_{n \leq k}x^n \approx \sum_{n \leq k} -1^n$.
	
	Now consider the function $f : \civita \setminus \{0\} \rightarrow \civita$ defined by $f(x) = x^{-1}$. 
	The power series expansion of $f$ around any $x_0 > 0$ is
	$f(x_0-x)= \sum_{n\in\N} x_0^{-n-1} x^n$; this series weakly converges iff $\frac{|x|}{x_0} < 1$ and $\frac{|x|}{x_0} \not\approx 1$.
\end{example}

The behaviour described in the first part of the previous example can be expressed with an analogy from Robinson's framework of analysis with infinitesimals.

\begin{remark}
	The behaviour of power series in the Levi-Civita field can be described by introducing the notion of \emph{nearstandard point}, commonly used in the topology of hyperreal fields.
	We will say that a point $x \in A \subseteq \civita$ is nearstandard in $A$ iff $\sh{x} \in A$.
	
	With this terminology, we can say that if $\sum_{n \in \N} a_n (x-x_0)^n$ is a real power series with radius of convergence $R$, then its Levi-Civita counterpart weakly converges on the nearstandard points of $(x_0-R,x_0+R)_\civita$, but it might not converge on those points that are not nearstandard in $(x_0-R,x_0+R)_\civita$.
\end{remark}

\subsection{Extending real functions to the Levi-Civita field}\label{section extension}

Since power series with real coefficients weakly converge also in $\civita$, it is possible to use them to define several extensions of real continuous functions to the Levi-Civita field.
These extensions are obtained from the Taylor series expansion of a function at a point.

\begin{definition}\label{def ext}
	Let $f \in C^\infty([a,b])$.
	The analytic extension of $f$ is defined as
	$$
	\ext{\infty}{f}(r+\varepsilon) = \sum_{i = 0}^{\infty} f^i(r) \frac{\varepsilon^i}{i!}.
	$$
	for all $r \in [a,b]$ and for all $\varepsilon \in M_o$ such that $r+\varepsilon \in [a,b]_{\civita}$.
	If $f$ is analytic, then $\ext{\infty}{f}$ will be called the canonical extension of $f$.
	The only exceptions are the canonical extensions of the exponential function and of the trigonometric functions sine and cosine, still denoted by $e^x$, $\sin(x)$ and $\cos(x)$. Notice also that these extensions are defined for all $x \in \civita$ with $\lambda(x)\geq0$.
	
	Let $f \in C^k([a,b])$, possibly with $k = \infty$.
	The order $n$ extension of $f$, with $0\leq n\leq k$, is defined by
	$$
	\ext{n}{f}(r+\varepsilon) = \sum_{i = 0}^{n} f^i(r) \frac{\varepsilon^i}{i!}.
	$$
	for all $r \in [a,b]$ and for all $\varepsilon \in M_o$ such that $r+\varepsilon \in [a,b]_{\civita}$.
\end{definition}

\begin{remark}
	Definition \ref{def ext} applies also to functions whose domains are intervals of the form $(a,b)$, $[a,b)$, $(a,b]$: the only difference is that the extensions of $f$ will be defined over the nearstandard points of the corresponding interval $(a,b)_\civita$, $[a,b)_\civita$, $(a,b]_\civita$.
\end{remark}

It is possible to prove that each of the extensions introduced in Definition \ref{def ext} is unique and well-defined.
The extensions $\ext{\infty}{f}$ and $\ext{n}{f}$ extend the corresponding real function $f\in C^k([a,b])$ in the sense that for every $x \in [a,b]$ $f(x)=\ext{n}{f}$ for all $n \leq k$, possibly with $k = \infty$.
For more properties of the continuations of real functions to the Levi-Civita field, we refer to \cite{berz,analysislcf,reexp,computational,bottazzi,Shamseddinephd} and references therein.

\subsection{On the weak pointwise convergence of sequences of analytic functions}

In the Levi-Civita field, weak pointwise convergence of a sequence of canonical extensions of analytic functions is possible only under very strong hypotheses.

\begin{proposition}\label{prop convergence}
	Let $\{f_n\}_{n \in \N}$ be a sequence of analytic functions over $[a,b]$ and let $g_n=\ext{\infty}{\left(f_n\right)}$ for all $n\in\N$.
	Then for all $x \in [a,b]_{\civita}$ the sequence $\left\{g_n(x)\right\}_{n \in \N}$ converges weakly for all $x \in [a,b]_\civita$ if and only if for all $r \in [a,b]$ and for all $p \in \Q$, $p \geq 0$,
	\begin{equation}\label{sums that must converge}
	\lim_{n\rightarrow \infty}\ \sum_{i\leq p} f_n^{(i)}(r)
	\end{equation}
	exists and is finite.
\end{proposition}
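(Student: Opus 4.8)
The plan is to fix $x \in [a,b]_\civita$, reduce weak convergence of $\{g_n(x)\}_{n\in\N}$ to coefficientwise convergence of real sequences via the weak convergence criterion (Theorem \ref{weak convergence criterion}), and then read off exactly which real limits are needed. Write $x = r + \varepsilon$ with $r = \sh{x} \in [a,b]$ and $\varepsilon = x - r$ infinitesimal. The case $\varepsilon = 0$ is the trivial instance $g_n(r) = f_n(r)$, so I assume $\varepsilon \ne 0$ and set $s = \lambda(\varepsilon) > 0$. By definition of the analytic extension, $g_n(x) = \sum_{i=0}^\infty f_n^{(i)}(r)\varepsilon^i/i!$ with each $f_n^{(i)}(r) \in \R$. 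Since $\lambda(\varepsilon^i) = is$, the partial sums of this power series have a coefficient at a given $q \in \Q$ that stabilises once $is > q$, so the second part of Theorem \ref{weak convergence criterion}, applied to these partial sums, gives the key identity
\begin{equation*}
g_n(x)[q] = \sum_{i \,:\, is \le q} \frac{f_n^{(i)}(r)}{i!}\,(\varepsilon^i)[q],
\end{equation*}
a \emph{finite} real-linear combination of the numbers $f_n^{(i)}(r)$ whose coefficients $(\varepsilon^i)[q]$ depend on $\varepsilon$ and $q$ but not on $n$.

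Next I would check that $\{g_n(x)\}_{n\in\N}$ is automatically regular. For every $q$, the set $\bigcup_{n\in\N}\supp(g_n(x)) \cap (-\infty,q]_\Q$ is contained in $\bigcup_{i \,:\, is \le q}\supp(\varepsilon^i) \cap (-\infty,q]_\Q$, which is finite since it is a finite union of left-finite sets truncated above. Hence, whenever the real limits $\lim_n g_n(x)[q]$ all exist, the element $l$ defined by $l[q] = \lim_n g_n(x)[q]$ satisfies $\supp(l) \subseteq \bigcup_n \supp(g_n(x))$ and so lies in $\civita$, and the first part of Theorem \ref{weak convergence criterion} gives $\wlim_n g_n(x) = l$.

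For the backward implication I would first note that the stated hypothesis is equivalent to requiring that $\lim_n f_n^{(i)}(r)$ exist and be finite for every $i \in \N$ and $r \in [a,b]$: differencing the partial sums $\sum_{i \le p} f_n^{(i)}(r)$ at consecutive integer values of $p$ isolates each derivative, while conversely a finite sum of convergent sequences converges. Granting this, the displayed identity exhibits each $g_n(x)[q]$ as a fixed finite combination of convergent real sequences, so $\lim_n g_n(x)[q]$ exists and is finite for every $q$; by the previous paragraph $\{g_n(x)\}_{n\in\N}$ converges weakly, and this holds simultaneously for every $x \in [a,b]_\civita$. For the forward implication I would specialise the weak convergence hypothesis to the single point $x = r + d$ (or $x = r - d$ when $r = b$, so as to remain in $[a,b]_\civita$): here $s = 1$ and $(\varepsilon^i)[q]$ equals $(\pm1)^i$ when $q = i$ and vanishes otherwise, so the identity collapses to $g_n(r \pm d)[m] = (\pm1)^m f_n^{(m)}(r)/m!$ for every integer $m \ge 0$. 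The second part of Theorem \ref{weak convergence criterion} then forces $\lim_n f_n^{(m)}(r)$ to exist and be finite for each $m$, and summing recovers the convergence of $\lim_n \sum_{i \le p} f_n^{(i)}(r)$ for all $p \ge 0$.

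I expect the genuine content to be concentrated entirely in the coefficient identity; once it is established, both implications follow immediately from the weak convergence criterion. Accordingly, the main obstacle is the bookkeeping behind that identity: justifying that only the indices with $is \le q$ influence the coefficient at $q$ (so that each coefficient is a finite combination independent of $n$), and verifying that the candidate limit $l$ has left-finite support. These are exactly the points where the left-finiteness of $\supp(\varepsilon^i)$ and the two halves of Theorem \ref{weak convergence criterion} must be combined with care.
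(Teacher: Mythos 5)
Your proposal is correct and follows essentially the same route as the paper: the coefficient identity $g_n(x)[q]=\sum_{i\le q/\lambda(\varepsilon)} f_n^{(i)}(r)\,(\varepsilon^i)[q]/i!$ is exactly the paper's equation \eqref{equality g[q] and sums of derivatives of f}, and both directions are then read off from the two halves of Theorem \ref{weak convergence criterion}. If anything you are more careful than the paper at two points it treats tersely: you verify regularity of $\{g_n(x)\}_{n\in\N}$ before invoking the first half of the criterion, and you deduce convergence of the weighted sums by differencing consecutive partial sums to isolate each $\lim_n f_n^{(i)}(r)$ (and you make explicit the choice $x=r\pm d$ in the forward direction), whereas the paper relies on a rougher bounding argument.
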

\begin{proof}
	Let $x=r+h\in[a,b]_\civita$ with $r \in \R$ and $h \in M_o$.	
	For every $q \in \Q$,
	\begin{equation}\label{equality g[q] and sums of derivatives of f}
		g_n(x)[q] = \sum_{i\leq \frac{q}{\lambda(h)}} f_n^{(i)}(r) \frac{h^{i}[q]}{i!}.
	\end{equation}
	Notice also that, thanks to the hypotheses over $f_n$ and $x$, $g_n(x)[q] = 0$ whenever $q<0$.
	
	Assume that the sequence
	$
	n \mapsto \sum_{i\leq \frac{q}{\lambda(h)}} f_n^{(i)}(r)
	$
	has a finite limit for all $q \in \Q$, $q \geq 0$.
	Since $\lim_{i \rightarrow \infty} \frac{h^{i}[q]}{i!}=0$ for all $q \in \Q$, the real sequence $\left\{\frac{h^{i}[q]}{i!}\right\}_{i \in \N}$ is bounded from above by some $b_q \in \R$.
	Thus,
	for all $q\in\Q$
	$$
	\left| \sum_{i\leq \frac{q}{\lambda(h)}} f_n^{(i)}(r) \frac{h^{i}[q]}{i!} \right|
	\leq |b_q| \left| \sum_{i\leq \frac{q}{\lambda(h)}} f_n^{(i)}(r) \right|.
	$$
	By our hypothesis over the sequences \ref{sums that must converge}, we conclude that
	also
	$n \mapsto \sum_{i\leq \frac{q}{\lambda(h)}} f_n^{(i)}(r) \frac{h^{i}[q]}{i!}$
	has a finite limit as $n\rightarrow\infty$.
	As a consequence, we can define a number $g(x) \in \civita$ by posing
	$$
	g(x)[q] = \lim_{n\rightarrow \infty} \sum_{i\leq \frac{q}{\lambda(h)}} f_n^{(i)}(r) \frac{h^{i}[q]}{i!}.
	$$
	By Theorem \ref{weak convergence criterion}, the sequence $\{g_n(x)\}_{n\in\N}$ converges weakly to $g(x)$.
	
	Now suppose that there exists $q \in \Q$, $q \geq 0$, such that \eqref{sums that must converge} does not exist or is not finite.
	Thanks to equality \eqref{equality g[q] and sums of derivatives of f}, the real sequence $g_n(x)[q]$ does not converge.
	By Theorem \ref{weak convergence criterion}, the sequence $\{g_n(x)\}_{n\in\N}$ does not converge weakly for some $x \in [a,b]_\civita$.
\end{proof}

The condition expressed in Theorem \ref{prop convergence} is rarely satisfied even by sequences of functions that converge uniformly in $\R$, as it is shown in the next example.

\begin{example}\label{example sin/n}
	Consider the sequence of analytic functions $\{f_n\}_{n \in \N}$, with $f_n(x)=\frac{\sin(n^2x)}{n}$.
	This sequence converges uniformly to $0$
	for all $x \in \R$, but $\lim_{n \rightarrow \infty} f'_n(x) = \lim_{n \rightarrow \infty} n\cos(n^2x)$ does not converge for any $x \in \R$.
	
	Consider now the sequence of the canonical extensions $\{g_n\}_{n \in \N}$ defined as in Theorem \ref{prop convergence}.
	Thanks to Corollary \ref{corollario standard}, $\{g_n(r)\}_{n\in\N}$ converges weakly to $0$ whenever $r \in \R$.
	However, ${g_n}(d)=n\cos(n^2d)$: since $\cos(n^2d)\approx 1$ for all $n \in\N$, we deduce that $\{{g_n}(d)\}_{n\in\N}$ does not converge weakly.
	As a consequence, it is not possible to define a weak pointwise limit for the
	canonical extension of the uniformly convergent sequence $\{f_n\}_{n \in \N}$.
\end{example}

\section{A uniform measure on the Levi-Civita field}\label{section measure theory}

A uniform measure for the Levi-Civita field has been developed by Shamseddine and Berz in \cite{shamseddine2012,berz+shamseddine2003} and extended to $\civita^2$ and $\civita^3$ by Shamseddine and Flynn \cite{shamflin2, shamflin1}.
The underlying idea is to define a Lebesgue-like measure over subsets of $\civita$ starting from the notion of length of an interval. 
Once the measurable sets are defined, it is possible to introduce a family of simple functions and finally the space of measurable functions. 
Besides the main definitions and results from \cite{shamseddine2012,berz+shamseddine2003}, we will also present some novel results and remarks that will be relevant for the sequel of the paper.

The only differences between our approach and the one proposed by Shamseddine and Berz will be in the definition of simple functions and in the definition of measurable functions.
The former are not required to be Lipschitz continuous and are defined over arbitrary intervals, instead of being Lipschitz and defined over closed intervals.
Thanks to Proposition \ref{lemma divergenza}, this definition turns out to be equivalent to the one proposed by Shamseddine and Berz.

In addition, measurable functions are not required to be bounded. As a consequence of avoiding this hypothesis, we will obtain a wider space of measurable functions.
The richer space will provide a motivation for the introduction of the $L^p$ spaces over the Levi-Civita field, as discussed in Remark \ref{remark if bounded then trivial}.

\subsection{Measurable sets}

In analogy with the Lebesgue measure, a set is measurable in $\civita$ if it can be approximated with arbitrary precision by a countable sequence of intervals.

\begin{definition}\label{measurable sets}
	A set $A \subseteq \civita$ is measurable iff for every $\varepsilon\in\civita$ there exist two sequences of mutually disjoint intervals $\{I_n\}_{n\in\N}$ and $\{J_n\}_{n\in\N}$ such that
	\begin{enumerate}
		\item $\bigcup_{n\in\N} I_n \subseteq A \subseteq \bigcup_{n\in\N} J_n$;
		\item $\sum_{n\in\N} l(I_n)$ and $\sum_{n\in\N} l(J_n)$ strongly converge in $\civita$;
		\item $\sum_{n\in\N} l(J_n) - \sum_{n\in\N} l(I_n) \leq \varepsilon$.
	\end{enumerate}
\end{definition}

By exploiting condition (3) of Definition \ref{measurable sets}, it is possible to define a measure for any measurable set $A$.

\begin{definition}\label{definizione misura}
	If $A\subset\civita$ is a measurable set, then for every $k \in \N$ there exist two sequences of mutually disjoint intervals $\left\{I^k_n\right\}_{n \in \N}$ and $\left\{J^k_n\right\}_{n \in \N}$ satisfying properties (1)-(2) of Definition \ref{measurable sets} together with the inclusions
	$$
		\bigcup_{n \in \N} I^k_n \subseteq \bigcup_{n \in \N} I^{k+1}_n
		\subseteq A \subseteq
		\bigcup_{n \in \N} J^{k+1}_n \subseteq \bigcup_{n \in \N} I^k_n  
	$$
	and the inequality (3) with $\varepsilon = d^k$:
	$$
		\sum_{n\in\N} l(J^k_n) - \sum_{n\in\N} l(I^k_n) \leq d^k.
	$$
	The measure of $A$, denoted by $\m(A)$, is defined as
	$$
		\m(A)=\slim_{k \rightarrow \infty} \sum_{n\in\N} l(I^k_n) = \slim_{k \rightarrow \infty} \sum_{n\in\N} l(J^k_n).
	$$
\end{definition}

In \cite{berz+shamseddine2003} it is proved that $\m(A)$ is well-defined and that
\begin{eqnarray*}
	\m(A) & = & \sup \left\{ \sum_{n\in\N} l(I_n) : \{I_n\}_{n \in \N} \text{ is a sequence of mutually disjoint intervals with } \bigcup_{n \in \N} I_n \subseteq A \right\} \\
	& = & \inf \left\{ \sum_{n\in\N} l(J_n) : \{J_n\}_{n \in \N} \text{ is a sequence of mutually disjoint intervals with } A \subseteq \bigcup_{n \in \N} J_n \right\}.
\end{eqnarray*}

\begin{remark}
According to Definition \ref{definizione misura}, measurable sets have bounded measure, i.e.\ there does not exist a measurable set $A$ with $\m(A)=+\infty$.
As a consequence, unbounded intervals of the form $I(-\infty, b)=\{x \in \civita: x < b\}$, $I(a,+\infty)=\{x \in \civita: a<x \}$ and even $\civita$ itself are not measurable. We believe it would be possible to extend the definition of measure in order to include these unbounded intervals among the measurable sets, but in this paper we do not pursue this idea.
\end{remark}

Due to the presence of infinitesimal elements in $\civita$, the resulting measure turns out to be rather different from the Lebesgue measure over $\R$.

The properties of measurable sets are studied in detail in \cite{shamseddine2012,berz+shamseddine2003}.
We recall some useful properties: if $A$ and $B$ are measurable, then also $A \cup B$ and $A \cap B$ are measurable, moreover $\m(A \cup B)= \m(A)+\m(B)-\m(A\cap B)$.
In addition, $\m$ is complete in the sense that if $A$ is measurable and $\m(A) =0$, then every set $B \subseteq A$ is measurable and $\m(B) = 0$.
Indeed, this result can be obtained as a particular case of the following property of measurable sets.

\begin{lemma}\label{lemma A c B c C}
	Let $B \subseteq A \subset \civita$ be measurable sets.
	If $\m(A) = \m(B)$, then for every $C \subset \civita$ that satisfies $B \subseteq C \subseteq A$, $C$ is measurable and $\m(C) = \m(A) = \m(B)$.
\end{lemma}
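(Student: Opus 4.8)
The plan is to verify directly that $C$ satisfies Definition \ref{measurable sets}, using an inner approximation coming from the measurability of $B$ together with an outer approximation coming from the measurability of $A$, and then to exploit the hypothesis $\m(A)=\m(B)$ to control the difference of the two approximations.

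First I would fix $\varepsilon \in \civita$ with $\varepsilon>0$. Since $B$ is measurable, there exist mutually disjoint intervals $\{I_n\}_{n\in\N}$ and $\{I'_n\}_{n\in\N}$ with $\bigcup_{n} I_n \subseteq B \subseteq \bigcup_{n} I'_n$, with both $\sum_{n} l(I_n)$ and $\sum_{n} l(I'_n)$ strongly convergent, and with $\sum_{n} l(I'_n) - \sum_{n} l(I_n) \leq \varepsilon/2$. Because $\m(B)$ is squeezed between these two sums (by the sup/inf characterization of $\m$ recalled after Definition \ref{definizione misura}), this yields $\m(B) - \sum_{n} l(I_n) \leq \varepsilon/2$. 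Symmetrically, the measurability of $A$ provides mutually disjoint intervals $\{J_n\}_{n\in\N}$ with $A \subseteq \bigcup_{n} J_n$, with $\sum_{n} l(J_n)$ strongly convergent, and with $\sum_{n} l(J_n) - \m(A) \leq \varepsilon/2$.

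Next I would check that the pair $\{I_n\}$, $\{J_n\}$ witnesses the measurability of $C$. Conditions (1) and (2) of Definition \ref{measurable sets} are immediate, since $\bigcup_{n} I_n \subseteq B \subseteq C \subseteq A \subseteq \bigcup_{n} J_n$ and the relevant series strongly converge. For condition (3), I would write the telescoping estimate
$$
\sum_{n} l(J_n) - \sum_{n} l(I_n) = \left(\sum_{n} l(J_n) - \m(A)\right) + \left(\m(A)-\m(B)\right) + \left(\m(B) - \sum_{n} l(I_n)\right),
$$
whose three summands are bounded by $\varepsilon/2$, by $0$ (using the hypothesis $\m(A)=\m(B)$), and by $\varepsilon/2$ respectively, giving $\sum_{n} l(J_n) - \sum_{n} l(I_n) \leq \varepsilon$. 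Hence $C$ is measurable.

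Finally, to identify $\m(C)$, I would invoke the sup/inf characterization again: from $\bigcup_{n} I_n \subseteq C \subseteq \bigcup_{n} J_n$ it follows that $\sum_{n} l(I_n) \leq \m(C) \leq \sum_{n} l(J_n)$, and combining this with $\m(B)-\sum_{n} l(I_n) \leq \varepsilon/2$ and $\sum_{n} l(J_n) - \m(A) \leq \varepsilon/2$ yields $|\m(C)-\m(B)|\leq\varepsilon/2$. Since $\varepsilon>0$ was arbitrary, we conclude $\m(C)=\m(B)=\m(A)$. The only delicate point is the bookkeeping that guarantees that the chosen inner intervals of $B$ and outer intervals of $A$ simultaneously fulfil all three conditions for $C$; once the sup/inf description of $\m$ is in hand this is routine, and the equality $\m(A)=\m(B)$ is precisely what makes the middle term of the telescoping estimate vanish, so that condition (3) collapses to $\varepsilon$ rather than to something of the size of $\m(A)-\m(B)$.
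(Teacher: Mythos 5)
Your proposal is correct and follows essentially the same route as the paper: take inner intervals approximating $B$ from inside and outer intervals approximating $A$ from outside, and use the hypothesis $\m(A)=\m(B)$ to make their total lengths differ by at most $\varepsilon$, so that the pair witnesses the measurability of $C$. Your telescoping estimate simply makes explicit the bookkeeping that the paper's proof states without detail.
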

\begin{proof}
	Let $A, B$ and $C$ satisfy the hypotheses of the lemma.
	Since $\m(A) = \m(B)$, for every $k\in\N$ there exist two sequences of mutually disjoint intervals $\left\{I^k_n\right\}_{n \in \N}$ and $\left\{J^k_n\right\}_{n \in \N}$ satisfying
	\begin{itemize}
		\item $\bigcup_{n \in \N} I^k_n \subseteq B \subseteq C \subseteq A \subseteq \bigcup_{n \in \N} J^k_n$;
		\item $\sum_{n\in\N} l(I_n)$ and $\sum_{n\in\N} l(J_n)$ strongly converge in $\civita$;
		\item since $\m(A) = \m(B)$, $\sum_{n\in\N} l(J_n) - \sum_{n\in\N} l(I_n) \leq d^{-k}$.
	\end{itemize}
	From these properties, we deduce that $\left\{I^k_n\right\}_{n \in \N}$ and $\left\{J^k_n\right\}_{n \in \N}$ satisfy Definition \ref{measurable sets} for $C$, with $\varepsilon = d^{-k}$. Thus $C$ is measurable. From Definition \ref{definizione misura}, we deduce that $\m(C) = \m(A) = \m(B)$, as desired.
\end{proof}

Some of the usual theorems of the Lebesgue measure for $\R^k$ are not satisfied by the uniform measure over $\civita$: for instance, the complement of a measurable set is not necessarily measurable.
An example is given by $\Q\cap [0,1]_{\civita}$, that is measurable and has measure $0$, and $[0,1]_{\civita}\setminus \Q$, that is not measurable, since it only contains intervals of an infinitesimal length.
As a consequence, the family of measurable sets is not even an algebra.

This example is not isolated, and in fact in the Levi-Civita field there are many families of sets that are not measurable.
An example is given by the monad at each point: this is a special case of the following result.

\begin{lemma}\label{lemma monad}
	For every $x \in \civita$ and for every $q \in \Q$, the sets
	\begin{enumerate}
		\item $\{y \in \civita : \lambda(x-y)>q\}$;
		\item $\{y \in \civita : \lambda(x-y)\geq q\}$;
		\item $\{y \in \civita : \lambda(x-y)<q\}$;
		\item $\{y \in \civita : \lambda(x-y)\leq q\}$;
		\item $\{y \in \civita : \lambda(x-y)= q\}$;
	\end{enumerate}
	are not measurable.
\end{lemma}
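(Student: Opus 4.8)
The plan is to prove all five statements by a single mechanism: for each set $S$ I will compare the valuation of the length of any interval that \emph{fits inside} $S$ with the valuation of the total length of any sequence of intervals that \emph{covers} $S$, and show that these two quantities sit on opposite sides of the critical exponent $q$, so that the gap in condition (3) of Definition \ref{measurable sets} can never be made infinitesimally small. Since $\lambda$ and $\m$ are translation invariant (translating the witnessing interval sequences of Definition \ref{measurable sets} by a fixed element preserves disjointness, the inclusions, and every length), I may assume $x=0$ throughout, so that $S$ is cut out purely by a condition on $\lambda(y)$.

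First I would bound the inner side. If $I(a,b)\subseteq S$ and $y_1,y_2$ are interior points, then $\lambda(y_1-y_2)\geq\min\{\lambda(x-y_1),\lambda(x-y_2)\}$, and the defining condition of $S$ controls the latter; taking $y_1=a+\eta$, $y_2=b-\eta$ with $\eta\ll b-a$ gives $\lambda(y_2-y_1)=\lambda(b-a)$, so that every interval contained in $S$ has length whose valuation is $>q$ for set (1) and $\geq q$ for sets (2) and (5). Given a packing $\{I_n\}$ with $\bigcup_n I_n\subseteq S$ and $\sum_n l(I_n)$ strongly convergent, Lemma \ref{lemma tecnico serie a termini positivi} yields $\lambda\!\left(\sum_n l(I_n)\right)=\min_n\lambda(l(I_n))$, and this is where the ``minimum is attained'' clause is essential: the strict bound $>q$ on each term is inherited by the sum rather than degraded to a non-strict $\geq q$ or lost in an infimum. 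Hence every inner packing of set (1) has total length of valuation $>q$, and of sets (2),(5) of valuation $\geq q$.

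Next I would bound the outer side by exhibiting, inside each $S$, genuine subintervals whose measures pin down the valuation of any cover. For set (1) the interval $(0,d^{q'}]$ lies in $S$ and has measure $d^{q'}$ for every rational $q'>q$; using the characterization of $\m$ as the infimum of $\sum_n l(J_n)$ over covers, any cover $\{J_n\}$ of $S$ satisfies $\sum_n l(J_n)\geq d^{q'}$ for all $q'>q$, and (since $\lambda(z)>q$ would give $z<d^{q'}$ for some $q<q'<\lambda(z)$) this forces $\lambda\!\left(\sum_n l(J_n)\right)\leq q$, the opposite side of $q$ from the inner valuation. For sets (2) and (5) I would instead use $[d^{q},Nd^{q}]$ (measure $(N-1)d^{q}$, valuation $q$) for every real $N$, so that any cover has length $\geq(N-1)d^{q}$ for all $N$, forcing valuation $<q$, again opposite to the inner bound $\geq q$. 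For sets (3) and (4) the subintervals $[d^{q-m},2d^{q-m}]$ of measure $d^{q-m}$ lie in $S$ for every $m\in\N$, so any cover would require $\sum_n l(J_n)\geq d^{q-m}$ for all $m$; since no element of $\civita$ satisfies this, no cover with strongly convergent total length exists and condition (2) of Definition \ref{measurable sets} already fails.

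Finally I would assemble the contradiction for sets (1),(2),(5): assuming $S$ measurable, apply Definition \ref{measurable sets} with $\varepsilon=d^{q+1}$ to obtain a packing and a cover whose total lengths differ by at most $d^{q+1}$ (valuation $q+1$); but the inner total has valuation $>q$ (resp.\ $\geq q$) while the outer total has valuation $\leq q$ (resp.\ $<q$), so their difference has valuation $\leq q$, contradicting $\leq d^{q+1}$. I expect the main obstacle to be the outer estimate: one must resist arguing that these sets ``contain no intervals'' (they do), and instead locate subintervals whose measures accumulate at the critical valuation from the correct side, then convert ``$\geq m_j$ for every $j$'' into a statement about $\lambda$ via the non-Archimedean order. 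The inner estimate is routine once Lemma \ref{lemma tecnico serie a termini positivi} is invoked, but it bears emphasizing that the attainment of the minimal valuation is precisely what keeps the strict inequality alive and thereby separates the inner from the outer valuation.
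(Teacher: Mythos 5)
Your proof is correct and follows essentially the same route as the paper's: both rest on the observation that every interval packed inside each set has length whose valuation lies on one side of $q$, while every cover has total length whose valuation lies on the other side, so condition (3) of Definition \ref{measurable sets} can never be met. You simply carry out in full the sequence-level bookkeeping (via Lemma \ref{lemma tecnico serie a termini positivi} for the packing and monotonicity of $\m$ for the cover) and give the separate treatment of sets (3) and (4) through the failure of condition (2), details the paper compresses into a single line.
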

\begin{proof}
(1) For all $x \in \civita$ and for all $q\in\Q$, if $I(a,b)$ and $J(c,d)$ are two intervals such that $I \subseteq \{y \in \civita : \lambda(x-y)>q\} \subseteq J$, then $\lambda(d-c)<\lambda(b-a)$: as a consequence, property (3) of Definition \ref{measurable sets} cannot be satisfied.
A similar proof applies also to sets of the form (2), (3), (4) and (5).
\end{proof}

In contrast to what happens with the Lebesgue measure, countable unions of measurable sets might be non measurable.

\begin{lemma}\label{lemma countable union}
	If $\{K_n\}_{n \in\N}$ is a family of mutually disjoint sets such that $\sum_{n \in \N} l(K_n)$ does not strongly converge, then $A=\bigcup_{n \in \N} K_n$ is not measurable.
\end{lemma}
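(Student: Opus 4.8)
The plan is to argue at the level of the measure: I will assume that $A=\bigcup_{n\in\N}K_n$ is measurable and deduce that $\sum_{n\in\N}l(K_n)$ must then strongly converge (in fact to $\m(A)$), contradicting the hypothesis. Here the $K_n$ are disjoint intervals, so for each finite $N$ the set $F_N=\bigcup_{n\le N}K_n$ is a finite union of mutually disjoint intervals; by finite additivity $\m(F_N)=\sum_{n\le N}l(K_n)=:S_N$, and $\{S_N\}_{N\in\N}$ is nondecreasing because $l(K_n)\ge 0$. Since the finite family $\{K_n\}_{n\le N}$ is a disjoint family of intervals contained in $A$, the sup-characterisation of $\m$ recalled after Definition \ref{definizione misura} gives $S_N\le\m(A)$ for every $N$. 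This already disposes of the case in which $\sum_n l(K_n)$ diverges in the sense of Definition \ref{def diverge}: an unbounded nondecreasing sequence cannot satisfy $S_N\le\m(A)$, as $\m(A)$ is a fixed element of $\civita$.

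It remains to treat the case in which $\{S_N\}$ is bounded above by $\m(A)$ but fails to converge strongly. Here I would use measurability quantitatively. For each $k\in\N$, Definition \ref{measurable sets} with $\varepsilon=d^k$, together with the inner characterisation of $\m(A)$, provides disjoint intervals $\{I_j^k\}_{j\in\N}$ with $\bigcup_j I_j^k\subseteq A$, with $\sum_j l(I_j^k)$ strongly convergent, and with $\m(A)-\sum_j l(I_j^k)\le d^k$. Using the strong convergence of this inner sum I would truncate at a finite index $J_0(k)$ so that the finite union $G_k=\bigcup_{j\le J_0(k)}I_j^k$, again a finite union of disjoint intervals contained in $A$, satisfies $\m(G_k)\ge\m(A)-2d^k$.

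The decisive step is then to compare $G_k$ with the partition of $A$ induced by the $K_n$. Since $G_k\subseteq A=\bigsqcup_n K_n$, I would write $G_k=\bigsqcup_n (G_k\cap K_n)$, where each $G_k\cap K_n$ is a finite union of intervals with $\m(G_k\cap K_n)\le l(K_n)$. If one grants the additivity identity $\m(G_k)=\sum_n\m(G_k\cap K_n)$ with strong convergence of the series, then the partial sums $\sum_{n\le N}\m(G_k\cap K_n)$ converge strongly to $\m(G_k)$ while being bounded above by $S_N$; choosing $N$ large gives $S_N\ge\m(G_k)-d^k\ge\m(A)-3d^k$. Combined with $S_N\le\m(A)$, this forces $\m(A)-S_N\le 3d^k$ for all large $N$, and since $\lambda(3d^k)=k\to\infty$ it follows that $\slim_{N}S_N=\m(A)$; thus $\sum_n l(K_n)$ strongly converges, the desired contradiction.

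The main obstacle is precisely the additivity identity $\m(G_k)=\sum_n\m(G_k\cap K_n)$, equivalently the continuity statement that the measures of the decreasing finite unions of intervals $D_N:=G_k\setminus\bigcup_{n\le N}K_n$ tend strongly to $0$. This cannot be quoted from general countable additivity, which fails for this measure—indeed, the failure of countable additivity is exactly the phenomenon the present lemma records. The point to be proved is that each $D_N$ is itself a finite union of intervals, being a Boolean combination of the finitely many intervals constituting $G_k$ and $\bigcup_{n\le N}K_n$, so that $\m(G_k)-\sum_{n\le N}\m(G_k\cap K_n)=\m(D_N)$ by finite additivity, while $\bigcap_N D_N=\emptyset$ because $G_k\subseteq A$ and the $K_n$ are disjoint. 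Establishing that $\m(D_N)\to 0$ strongly for such a nested sequence of finite unions of intervals with empty intersection—equivalently, that an interval covered by disjoint subintervals has length equal to the strongly convergent sum of their lengths—is where the real work lies, and I would carry it out directly from the definition of $l$ and the left-finiteness built into $\civita$, rather than by invoking any general measure-theoretic continuity principle.
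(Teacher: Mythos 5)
You have correctly isolated the step on which everything hinges, but that step is not merely ``where the real work lies'': it is false, so the proposal cannot be completed along these lines. The deferred claim --- that a partition of an interval into countably many mutually disjoint subintervals has lengths summing strongly to the length of the interval, equivalently that $\m(D_N)\to 0$ strongly for your nested sets $D_N$ --- fails in $\civita$. Consider the mutually disjoint intervals $L_j=[jd,(j+1)d)_\civita$ for $j\geq 0$ together with $M_1=[d^{1/2},1]_\civita$ and $M_n=[d^{1-\frac{1}{n+1}},d^{1-\frac{1}{n}})_\civita$ for $n\geq 2$. Every $x\in[0,1]_\civita$ with $\lambda(x)\geq 1$ lies in $L_j$ for $j=\max\{m\in\Z,\ m\geq 0,\ md\leq x\}$, while every $x\in(0,1]_\civita$ with $\lambda(x)<1$ lies in $\bigcup_{n\leq N}M_n=[d^{1-1/(N+1)},1]_\civita$ as soon as $1-\frac{1}{N+1}>\lambda(x)$; hence these countably many disjoint intervals partition $[0,1]_\civita$ exactly. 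Taking $G_k=[0,1]_\civita$ and enumerating the partition as $K_1,K_2,\dots$ by interleaving, your set $D_{2N}$ is the single interval $[Nd,\,d^{1-1/(N+1)})_\civita$, whose length has valuation $1-\frac{1}{N+1}\to 1\neq\infty$: so $\m(D_N)$ tends to $0$ only weakly, never strongly, and the identity $\m(G_k)=\sum_n\m(G_k\cap K_n)$ fails --- the right-hand side is not even strongly convergent, since infinitely many of its terms equal $d$.

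Worse, the same family refutes the Lemma as stated: the $K_n$ are mutually disjoint intervals, $\sum_n l(K_n)$ does not strongly converge because its terms do not tend strongly to $0$, and yet $\bigcup_n K_n=[0,1]_\civita$ is measurable. So the gap is not one you could have closed by working harder from the definition of $l$ and left-finiteness; only the unbounded case survives, and there your argument via $S_N\leq\m(A)$ is correct. For what it is worth, the paper's own proof takes a different route --- it tries to squeeze the partial sums $\sum_{n\leq k}l(K_n)$ strictly between $\sum_n l(I_n)$ and $\sum_n l(J_n)$ for every inner and outer approximation --- but the step asserting that these partial sums eventually exceed $\sum_n l(I_n)+\varepsilon$ (``the inequality can be sharpened'') relies on essentially the same false monotonicity and also fails for the example above, where the partial sums stay below $1-d$ while one may take the single inner interval $I_1=[0,1]_\civita$, so that $\sum_n l(I_n)=1$. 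Any correct proof requires a genuinely stronger hypothesis, for instance that the partial sums of $\sum_n l(K_n)$ are unbounded in the sense of Definition \ref{def diverge}.
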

\begin{proof}
	Let $\{I_n\}_{n \in \N}$ be a sequence of mutually disjoint intervals such that $\sum_{n \in \N} l(I_n)$ strongly converges and $\bigcup_{n \in \N} I_n \subseteq A$.
	By strong convergence of $\sum_{n \in \N} l(I_n)$, there exists $l \in \civita$ such that for every $\varepsilon\in\civita$, $\varepsilon>0$ there exists $k_\varepsilon \in \N$ satisfying $l - \sum_{n \leq k} l(I_n) < \varepsilon$ for every $k > k_\varepsilon$.
	Since $\sum_{n \in \N} l(K_n)$ does not strongly converge and since $l(K_n) \geq 0$ for all $n \in \N$, there exists $\varepsilon\in\civita$, $\varepsilon>0$ and $j \in \N$ such that $\left| \sum_{n \leq k} l(K_n) - l \right| > \varepsilon$ for every $k > j$.
	Moreover, since $\bigcup_{n \in \N} I_n \subseteq \bigcup_{n \in \N} K_n$, the above inequality can be sharpened to $\sum_{n \leq k} l(K_n)  - l > \varepsilon$ for every $k > j$.
	
	Similarly, for every sequence of mutually disjoint intervals $\{J_n\}_{n \in \N}$ such that $\sum_{n \in \N} l(J_n)$ strongly converges and $A \subseteq \bigcup_{n \in \N} J_n$, by monotonicity  of the measure $\m$ we have the inequality $\sum_{n \leq k} l(K_n) < \sum_{n \in \N} l(J_n)$ for all $k \in \N$ .
	
	Putting the various inequalities together, if $k > \max\{k_\varepsilon,j\}$ then
	$$
		\sum_{n \in \N} l(I_n) < \sum_{n \leq k} l(K_n) < \sum_{n \in \N} J_n.
	$$
	
	We deduce that every sequence of mutually disjoint intervals $\{I_n\}_{n \in \N}$ and $\{J_n\}_{n \in \N}$ such that $\bigcup_{n \in \N} I_n \subseteq A \subseteq \bigcup_{n \in \N} J_n$ cannot satisfy condition (3) of Definition \ref{measurable sets}.
\end{proof}

The hypotheses of the above Lemma are satisfied for instance if $l(K_n)=l\ne0$ for all $n \in \N$.
For other examples of measurable and non measurable sets and for a more detailed development of the measure theory on $\civita$, we refer to \cite{moreno,shamseddine2012,berz+shamseddine2003}.

As in the real measure theory, it is convenient to introduce a notion of property that is true \emph{almost everywhere}.
If $\mu$ is a real measure over $\R$ and if $P$ is a property of real numbers, then one has the equivalences
$$
	\mu(\{x \in A : P(x) \text{ is true}\}) = 1 \Longleftrightarrow \mu(\{x \in A : P(x) \text{ is false}\}) = 0
$$
However, for the uniform measure on $\civita$ the above equivalence is in general false.
In \cite{shamseddine2012} it is explicitly observed that, given two functions $f, g : A \rightarrow \civita$, the two properties
\begin{enumerate}
	\item $\m(\{x \in A : f (x) = g(x)\}) = \m(A)$ and
	\item $\m(\{x \in A : f (x) \ne g(x)\}) = 0$
\end{enumerate}
are not equivalent: while (1) implies (2), in general (2) does not imply (1).
An example is obtained by choosing $A=[0,1]_\civita$, $f=\chi_{[0,1]_\civita}$ and $g=f-\chi_{[0,1]_{\Q}}$: these functions satisfy (2), but not (1).
This happens because $\{x \in A : f (x) = g(x)\} = [0,1]_\civita\setminus [0,1]_\Q$ and we have already recalled that the set $[0,1]_\civita \setminus [0,1]_{\Q}$ is not measurable.
In \cite{shamseddine2012} it is suggested that, in the case of equality, it is more convenient to use definition (1) instead of the weaker (2).
Inspired by this choice, we define the notion of property that is true almost everywhere on $A$ as a property that is true on a measurable $B \subseteq A$ with $\m(B)=\m(A)$.

\begin{definition}\label{def ae}
		Aa property $P$ holds almost everywhere on a measurable set $A\subset \civita$ if and only if $T_P=\{x\in A: P(x)\text{ is true}\}$ is measurable and $\m\left(T_P\right) = \m(A)$.
\end{definition}

Notice that, as a consequence of this definition, the two assertions ``$f=g$ a.e.\ on $A$'' and ``$f\ne g$ a.e.\ on $A$'' are not equivalent.

Thanks to the properties of the measure, if two properties hold almost everywhere on $A$, then both their conjunction and their disjunction hold almost everywhere on $A$.

\begin{lemma}\label{lemma prop a.e.}
	If $P$ and $Q$ hold almost everywhere on a measurable set $A\subset \civita$, then both $P \land Q$ and $P \lor Q$ hold almost everywhere on $A$.
\end{lemma}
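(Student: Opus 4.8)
The plan is to work directly with the truth sets, using Definition \ref{def ae}. By hypothesis $T_P = \{x \in A : P(x) \text{ is true}\}$ and $T_Q = \{x \in A : Q(x) \text{ is true}\}$ are measurable and satisfy $\m(T_P) = \m(T_Q) = \m(A)$. The key observation is the set-theoretic identities $T_{P \land Q} = T_P \cap T_Q$ and $T_{P \lor Q} = T_P \cup T_Q$, so that the lemma reduces to two statements about measures of intersections and unions. Measurability of both $T_P \cap T_Q$ and $T_P \cup T_Q$ is immediate from the closure properties recalled just after Definition \ref{definizione misura}, namely that the intersection and union of two measurable sets are again measurable.

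It remains to compute the two measures. First I would handle the union by a monotonicity squeeze: since $T_P \subseteq T_P \cup T_Q \subseteq A$ and all three sets are measurable, the sup-characterization of $\m$ recalled after Definition \ref{definizione misura} gives $\m(A) = \m(T_P) \leq \m(T_P \cup T_Q) \leq \m(A)$, whence $\m(T_P \cup T_Q) = \m(A)$; this is exactly the claim that $P \lor Q$ holds almost everywhere. For the intersection I would invoke the inclusion-exclusion identity $\m(T_P \cup T_Q) = \m(T_P) + \m(T_Q) - \m(T_P \cap T_Q)$, also recorded in the excerpt. Substituting the values just obtained yields $\m(T_P \cap T_Q) = \m(T_P) + \m(T_Q) - \m(T_P \cup T_Q) = \m(A) + \m(A) - \m(A) = \m(A)$, so that $P \land Q$ holds almost everywhere as well.

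There is essentially no hard step here; the argument is a direct computation once the right identities are in place. The only point that requires care is conceptual rather than technical: one must resist the temptation to argue via complements, since in $\civita$ the condition $\m(T) = \m(A)$ is \emph{not} equivalent to $\m(A \setminus T) = 0$ (this is precisely the asymmetry emphasized before Definition \ref{def ae}, where $A \setminus T$ need not even be measurable). The proof above sidesteps this difficulty entirely by reasoning only with the measures of the truth sets $T_P$, $T_Q$ and their Boolean combinations, never passing to complements.
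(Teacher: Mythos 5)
Your proof is correct and takes essentially the same route as the paper's: identify $T_{P\land Q}=T_P\cap T_Q$ and $T_{P\lor Q}=T_P\cup T_Q$, invoke closure of measurability under finite intersections and unions, and combine inclusion--exclusion with monotonicity to conclude that both measures equal $\m(A)$. The only cosmetic difference is the order in which the two equalities are extracted from those identities; your closing remark about why one must not argue via complements is a correct and worthwhile observation, though not needed for the argument.
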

\begin{proof}
	Define $T_P$ and $T_Q$ as in Definition \ref{def ae}.
	Then $T_P$ and $T_Q$ are measurable and $\m(T_P)=\m(T_Q)=\m(A)$.
	Moreover, $T_{P\land Q} = T_{P} \cap T_Q$, and $T_{P \lor Q} = T_P \cup T_Q$.
	Both $T_{P\land Q}$ and $T_{P \lor Q}$ are measurable as a consequence of Propositions 2.7 and 2.6 of \cite{berz+shamseddine2003}.
	Since $\m(T_P \cup T_Q) = \m(T_P) + \m(T_Q) - \m(T_P \cap T_Q)$ and since $\m(T_P) = \m(T_Q) = \m(A)$, we deduce that also $\m(T_P \cap T_Q) = \m(T_P \cup T_Q) = \m(A)$.
\end{proof}

We conclude our discussion of measurable sets by showing that,
if a set $A$ is measurable and if $\{I_n\}_{n \in \N}$ is a sequence of pairwise disjoint intervals such that the measure of $\bigcup_{n \in \N} I_n$ is of the same magnitude as the measure of $A$, then there must be at least an interval $I_n$ whose measure has the same magnitude as the measure of $A$.

\begin{lemma}\label{lemma dimensione insiemi}
	Let $A \subset \civita$ be measurable.
	If $\lambda(\m(A))=q$ and if $\{I_n\}_{n \in \N}$ is a sequence of pairwise disjoint intervals satisfying
	\begin{enumerate}
		\item $\bigcup_{n \in \N} I_n \subseteq A$;
		\item $\sum_{n \in \N} l(I_n)$ strongly converge;
		\item $\lambda\left(\m(A)- \sum_{n \in \N} l(I_n)\right)>q$;
	\end{enumerate}
	then
	\begin{enumerate}[label=(\roman*)]
		\item $\lambda\left(l(I_n)\right) \geq q$ for all $n \in \N$ and
		\item there exists $n \in \N$ such that $\lambda\left(l(I_n)\right)=q$.
	\end{enumerate}
\end{lemma}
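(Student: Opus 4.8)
The plan is to reduce the statement to Lemma \ref{lemma tecnico serie a termini positivi} on strongly convergent series with non-negative terms, applied to the series $\sum_{n\in\N} l(I_n)$. Once one knows that this series has valuation exactly $q$, conclusions (i) and (ii) are \emph{literally} assertions (1) and (2) of that lemma; so the only genuine step is the valuation computation. First I would set $S = \sum_{n\in\N} l(I_n)$, which exists in $\civita$ by hypothesis (2), and record that every term satisfies $l(I_n) > 0$ (each $I_n$ is an interval $I(a_n,b_n)$ with $a_n < b_n$, whence $l(I_n) = b_n - a_n > 0$). Thus $\{l(I_n)\}_{n\in\N}$ is a sequence of non-negative terms whose series strongly converges to $S$.

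The key claim is that $\lambda(S) = q$. I would write $S = \m(A) - h$, where $h = \m(A) - S$. Hypothesis (3) states exactly that $\lambda(h) > q = \lambda(\m(A))$, so the two summands $\m(A)$ and $-h$ have distinct valuations. By the ultrametric behaviour of the valuation recorded in Lemma \ref{lemma lambda valuation}, when the two valuations differ the valuation of the sum equals their minimum, so $\lambda(S) = \min\{\lambda(\m(A)),\lambda(h)\} = q$. With this in hand I would invoke Lemma \ref{lemma tecnico serie a termini positivi} with $a_n = l(I_n)$ and $l = S$: its three hypotheses (non-negativity of the terms, strong convergence of the series, and $\lambda(S) = q$) are precisely what has just been verified. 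Conclusion (1) of that lemma gives $\lambda(l(I_n)) \geq q$ for all $n$, which is assertion (i), and conclusion (2) produces an $n$ with $\lambda(l(I_n)) = q$, which is assertion (ii).

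The main obstacle, modest as it is, is the valuation computation $\lambda(S) = q$: one must read hypothesis (3) as saying that the approximation error $\m(A) - S$ sits at strictly higher valuation than $\m(A)$ itself, so that no cancellation occurs at the leading order and $S$ inherits the valuation of $\m(A)$. (Note that $S \leq \m(A)$ by monotonicity of $\m$, so the error is in fact non-negative, though this is not needed for the valuation argument.) Everything downstream of this observation is a direct citation of the earlier technical lemma, so I expect no further difficulty.
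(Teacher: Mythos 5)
Your proposal is correct and is essentially identical to the paper's own proof: both reduce the statement to Lemma \ref{lemma tecnico serie a termini positivi} with $a_n = l(I_n)$ and $l = \sum_{n\in\N} l(I_n)$, after observing that hypothesis (3) forces $\lambda\left(\sum_{n\in\N} l(I_n)\right) = \lambda(\m(A)) = q$. The only difference is that you spell out the valuation computation explicitly, which the paper leaves as a one-line remark.
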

\begin{proof}
	Both results can be obtained as a consequence of Lemma \ref{lemma tecnico serie a termini positivi}: in this case, $a_n = l(I_n)$ and $l = \sum_{n \in \N} l(I_n)$.
	Notice that hypothesis (3) implies that $\lambda\left(\sum_{n \in \N} l(I_n)\right) = \lambda(\m(A))=q$.
\end{proof}

\subsection{Measurable functions}\label{section measurable functions}

In analogy with the Lebesgue measure, the family of measurable functions is obtained from a family of simple functions.

For the measure on the Levi-Civita field, a meaningful choice is not the family of step functions, that are instead used for the the definition of Lebesgue measurable functions.
This choice and the properties of the order topology of $\civita$
would lead to a narrow class of measurable functions.

Instead, it has been proposed a different notion of simple functions: these can be chosen to be any family of continuous functions that have an antiderivative and with some monotonicity requirements.

\begin{definition}\label{simple functions}
	A family of functions defined over $I(a,b)$ is called simple iff
	\begin{enumerate}
		\item it is an algebra over $\civita$ that contains the identity function;
		\item every function of the family is continuous and has an antiderivative;
		\item every differentiable simple function with null derivative is constant, and differentiable simple functions with non-negative derivative are nonincreasing.
	\end{enumerate}
\end{definition}

It has been observed by Shamseddine and Berz that requirement (3), which is trivially satisfied by any real differentiable function, might not be satisfied for some functions defined on the Levi-Civita field. For some examples and a more detailed discussion, we refer to \cite{berz+shamseddine2003}.

From Definition \ref{simple functions} it can be readily obtained that the smallest family of simple functions is the algebra of polynomials over $I(a,b)$.
However, an even richer family of simple functions, already proposed by Shamseddine and Berz, is the family of Lipschitz continuous power series that weakly converge on some closed interval $[a,b]_{\civita}$.

For the remainder of the paper, we will work with the family of simple functions $\P=\bigcup_{a,b \in \civita, a<b}\P(I(a,b))$: as a consequence, a function $f$ is simple iff there exists an interval $I\subset \civita$ such that $\supp f= I$ and $f$ is a power series that converges for every $x\in I$.
We remark that we do not require Lipschitz continuity, assumed in \cite{shamseddine2012, berz+shamseddine2003} and used to prove that a simple function defined over an interval $(a,b)_\civita$ has a continuous extension to the interval $[a,b]_\civita$. However, thanks to Proposition \ref{lemma divergenza}, if a function is simple on an open interval $(a,b)_\civita$, then it has a simple extension to the closed interval $[a,b]_\civita$.

\begin{proposition}\label{prop simple}
	If $f$ is simple on $I(a,b)$, then there exists a unique simple function $g:[a,b]_\civita\rightarrow \civita$ such that $g_{|I(a,b)}=f$.
\end{proposition}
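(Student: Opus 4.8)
The plan is to show that the power series defining $f$ already converges at whichever of the endpoints $a,b$ happen to be missing from $I(a,b)$. The extension $g$ will then be precisely that same power series regarded as a function on all of $[a,b]_\civita$, its restriction to $I(a,b)$ being $f$ for free, and uniqueness will follow from continuity.

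Write $f=\sum_{n\in\N}a_n(x-x_0)^n$ with center $x_0\in\civita$, weakly convergent for every $x\in I(a,b)$. I would treat the right endpoint $b$, the endpoint $a$ being symmetric. If $b\in I(a,b)$ there is nothing to do, and if $x_0=b$ the series collapses to $a_0$ and converges trivially, so assume $b\notin I(a,b)$ and $x_0\ne b$, whence $\lambda(b-x_0)<\infty$. I would then argue by contradiction: suppose $\sum_{n\in\N}a_n(b-x_0)^n$ does not converge. Part (2) of Proposition \ref{lemma divergenza}, applied at $x=b$, yields $s\in\Q$ such that $\sum_{n\in\N}a_n(b+h-x_0)^n$ fails to converge for \emph{every} $h$ with $\lambda(h)>s$. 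The key move is to produce one such $h$ that lands back inside the region of convergence. I would choose $h<0$ with $\lambda(h)>\max\{s,\lambda(b-a)\}$ (for instance $h=-d^{N}(b-a)$ with $N\in\N$ large). Then $|h|\ll b-a$ forces $a<b+h<b$, so $b+h\in(a,b)_\civita\subseteq I(a,b)$, where convergence of the series is guaranteed; this contradicts the non-convergence of $\sum_{n\in\N}a_n(b+h-x_0)^n$. Hence the series converges at $b$, and likewise at $a$.

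Granting convergence at both endpoints, $g:=\sum_{n\in\N}a_n(x-x_0)^n$ belongs to $\P([a,b]_\civita)$, i.e.\ it is a simple function on the closed interval, and $g|_{I(a,b)}=f$ since it is literally the same series; this settles existence. For uniqueness, suppose $g'$ is any simple function on $[a,b]_\civita$ with $g'|_{I(a,b)}=f$. Both $g$ and $g'$ are continuous by clause (2) of Definition \ref{simple functions}, and they already coincide on $(a,b)_\civita\subseteq I(a,b)$. Each endpoint is a strong limit of interior points, via the sequences $a+d^{n}(b-a)$ and $b-d^{n}(b-a)$, which strongly converge to $a$ and $b$. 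Evaluating $g$ and $g'$ along these sequences and invoking sequential continuity together with uniqueness of strong limits gives $g(a)=g'(a)$ and $g(b)=g'(b)$, so $g=g'$ throughout $[a,b]_\civita$.

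The main obstacle is the existence step, and specifically the correct deployment of Proposition \ref{lemma divergenza}(2): its force is that non-convergence at $b$ would have to propagate to \emph{all} sufficiently nearby points $b+h$, including some lying just inside $(a,b)_\civita$ where convergence is known — an immediate contradiction. The only delicate point is verifying that a single $h$ can meet both requirements at once, namely $\lambda(h)>s$ (so that the proposition applies) and $|h|\ll b-a$ (so that $b+h$ stays inside the open interval); choosing $\lambda(h)$ larger than both $s$ and $\lambda(b-a)$ achieves this. Once existence is in hand, uniqueness is routine, resting only on the continuity of simple functions and on the endpoints being strong limits of interior points.
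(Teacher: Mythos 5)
Your proposal is correct and follows essentially the same route as the paper: both apply Proposition \ref{lemma divergenza}(2) to show that non-convergence of the defining power series at a missing endpoint would propagate to a nearby point inside $I(a,b)$, contradicting simplicity there, and then take $g$ to be the same series on $[a,b]_\civita$. Your version is slightly more explicit on two points the paper leaves implicit — choosing $\lambda(h)>\max\{s,\lambda(b-a)\}$ so that the perturbed point actually lands in the interval, and deriving uniqueness from continuity at endpoints approached by strongly convergent interior sequences — but these are refinements of the same argument, not a different one.
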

\begin{proof}
	Let $f(x) = \sum_{n \in \N} a_n \frac{(x-x_0)^n}{n!}$ for some $a_n \in \civita$.
	If $\sum_{n \in \N} a_n \frac{(a-x_0)^n}{n!}$ does not converge, then by Proposition \ref{lemma divergenza} there exists $h \in M_o$, $h>0$ such that $\sum_{n \in \N} a_n \frac{(a+h-x_0)^n}{n!}$ does not converge.
	However, this would contradict that $f$ is simple, i.e.\ analytic, on $I(a,b)$.
	A similar argument applies to $\sum_{n \in \N} a_n \frac{(b-x_0)^n}{n!}$.
	
	Since both $\sum_{n \in \N} a_n \frac{(a-x_0)^n}{n!}$ and $\sum_{n \in \N} a_n \frac{(b-x_0)^n}{n!}$ converge weakly, the desired extension of $f$ to $[a,b]_\civita$ is
	$$
		g(x) = \left\{
		\begin{array}{ll}
		f(x) & \text{if } x \in I(a,b)\\
		\sum_{n \in \N} a_n \frac{(a-x_0)^n}{n!} & \text{if } x = a\\
		\sum_{n \in \N} a_n \frac{(b-x_0)^n}{n!} & \text{if } x = b.
		\end{array}
		\right.
	$$
	As a consequence of this definition, $g$ is a power series, thus simple, and unique.
\end{proof}

With a slight abuse of notation, if $f$ is simple on $I(a,b)$, we will still denote by $f$ the simple function defined on $[a,b]_\civita$ that coincides with $f$ on $I(a,b)$.

From the algebra of simple functions it is possible to define the family of measurable functions.
In contrast to what happens in \cite{shamseddine2012, berz+shamseddine2003}, we will not require that measurable functions must be bounded.

\begin{definition}\label{measurable functions}
	Let $A\subset \civita$ be measurable and let $f: A \rightarrow \civita$.
	The function $f$ is measurable iff for all $\varepsilon\in \civita$, $\varepsilon > 0$, there exists a sequence of mutually disjoint intervals $\{I_n\}_{n\in\N}$ such that
	\begin{enumerate}
		\item $\bigcup_{n\in\N} I_n \subseteq A$;
		\item $\sum_{n\in\N} l(I_n)$ strongly converges in $\civita$;
		\item $\m(A)-\sum_{n\in\N} l(I_n)<\varepsilon$;
		\item for all $n \in \N$, $f$ is simple on $I_n$.
	\end{enumerate}

	We will denote by $\meas(A)$ the set of measurable functions on $A$.
\end{definition}

By removing the boundedness hypothesis from the definition of measurable functions, in principle some of the known results on measurable functions might not be valid in our development of the uniform measure over $\civita$.
However, this hypothesis is not used to prove any results of  \cite{berz+shamseddine2003} up to the definition of integral of a measurable function.

We recall here the main properties of measurable functions.

\begin{lemma}\label{3.13}
	For all measurable $A \subset \civita$, $\meas(A)$ is a vector space over $\civita$.
\end{lemma}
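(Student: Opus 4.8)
The plan is to verify the three defining properties of a $\civita$-vector space: that $\meas(A)$ contains the zero function, that it is closed under multiplication by scalars in $\civita$, and that it is closed under addition. Throughout, the essential algebraic input is that for every interval $I$ the family $\P(I)$ of weakly convergent power series is an algebra over $\civita$, and that a power series weakly converging on $I$ still converges on every subinterval $I' \subseteq I$, so that its restriction lies in $\P(I')$. The zero function is simple on every interval, since the constant $0$ belongs to each $\P(I)$; taking any single interval as the cover shows $0 \in \meas(A)$. For a scalar $c \in \civita$ and $f \in \meas(A)$, the very same intervals $\{I_n\}_{n \in \N}$ witnessing the measurability of $f$ also witness that of $cf$: properties (1)--(3) of Definition \ref{measurable functions} are untouched, and $cf$ is simple on each $I_n$ because $\P(I_n)$ is closed under $\civita$-scalar multiplication.

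The substantive case is closure under addition. Given $f, g \in \meas(A)$ and $\varepsilon > 0$, I would first extract from the measurability of $f$ a sequence $\{I_n\}_{n \in \N}$ of disjoint intervals with $U := \bigcup_n I_n \subseteq A$, with $\sum_n l(I_n)$ strongly convergent and $\m(A) - \sum_n l(I_n) < \varepsilon/2$, and $f$ simple on each $I_n$; similarly a sequence $\{J_m\}_{m \in \N}$ with $V := \bigcup_m J_m \subseteq A$, $\m(A) - \sum_m l(J_m) < \varepsilon/2$, and $g$ simple on each $J_m$. The candidate cover for $f+g$ is the doubly-indexed family $\{I_n \cap J_m\}_{(n,m) \in \N^2}$ (discarding empty intersections and enumerating $\N^2$ as $\N$). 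Each $I_n \cap J_m$ is again an interval, and these are mutually disjoint since the $I_n$ are disjoint and the $J_m$ are disjoint. On $I_n \cap J_m$ both $f$ and $g$ restrict to weakly convergent power series (restriction to a subinterval preserves convergence), so $f + g$ is simple there because $\P(I_n \cap J_m)$ is an algebra; this secures property (4).

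It remains to check properties (1)--(3) for this cover, and here I expect the main obstacle to be the \emph{strong} convergence required in property (2), which is a genuinely non-Archimedean subtlety rather than a formality. Property (1) is clear since $\bigcup_{n,m}(I_n \cap J_m) = U \cap V \subseteq A$. For property (2) I would argue via the Cauchy criterion: since $\sum_n l(I_n)$ and $\sum_m l(J_m)$ strongly converge, for each $\delta > 0$ there are $N, M$ with $\sum_{n > N} l(I_n) < \delta$ and $\sum_{m > M} l(J_m) < \delta$; using $\sum_m l(I_n \cap J_m) \leq l(I_n)$ (the sets $I_n \cap J_m$ are disjoint subintervals of $I_n$), the total length of all intersected intervals with $n > N$ or $m > M$ is bounded by $2\delta$, so the partial sums of $\sum_{n,m} l(I_n \cap J_m)$ are strongly Cauchy and converge in the sequentially complete field $\civita$. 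Finally, since $U$ and $V$ are measurable with $\m(U) = \sum_n l(I_n)$ and $\m(V) = \sum_m l(J_m)$ (a disjoint interval union with convergent total length is measurable via Definition \ref{definizione misura}), and $U \cup V \subseteq A$, inclusion--exclusion gives
\[
\m(A) - \m(U \cap V) = \m(A) - \m(U) - \m(V) + \m(U \cup V) \leq \bigl(\m(A) - \m(U)\bigr) + \bigl(\m(A) - \m(V)\bigr) < \varepsilon,
\]
and since $U \cap V = \bigcup_{n,m}(I_n \cap J_m)$ is a disjoint interval cover, $\m(U \cap V) = \sum_{n,m} l(I_n \cap J_m)$, which is exactly property (3). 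This shows $f + g \in \meas(A)$ and completes the verification that $\meas(A)$ is a $\civita$-vector space.
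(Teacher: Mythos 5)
Your proof is correct, but note that the paper does not actually carry out this verification: its ``proof'' consists of citing Proposition 3.9 of \cite{berz+shamseddine2003} and remarking that the argument there is unaffected by the two definitional changes (dropping Lipschitz continuity of simple functions and boundedness of measurable functions). What you have written out is essentially that standard argument in full: scalar multiples reuse the same interval cover, and closure under addition is obtained by refining two covers of error $\varepsilon/2$ each to the doubly indexed family of intersections $I_n\cap J_m$, checking strong convergence of the total length via the Cauchy criterion and tail estimates, and recovering condition (3) of Definition \ref{measurable functions} by inclusion--exclusion together with monotonicity ($\m(U\cup V)\leq\m(A)$). You correctly isolate the one genuinely non-Archimedean point, namely that strong convergence of $\sum_{n,m} l(I_n\cap J_m)$ must be argued through Cauchy tails rather than monotone boundedness, since bounded increasing sequences need not converge in $\civita$. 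Two cosmetic slips: a single interval does not in general witness $0\in\meas(A)$, because condition (3) requires the cover to exhaust $\m(A)$ up to $\varepsilon$ --- one should instead take any inner approximation of $A$ furnished by Definition \ref{measurable sets}, on each interval of which $0$ is trivially simple; and besides empty intersections one should also discard those $I_n\cap J_m$ that degenerate to a single point, which are not intervals in the sense of the paper but contribute zero length, so nothing is lost.
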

\begin{proof}
	See Proposition 3.9 of \cite{berz+shamseddine2003}.
	Notice that the proof of this proposition does not depend upon the hypothesis that simple functions are Lipschitz continuous or that measurable functions are bounded.
\end{proof}

In \cite{berz+shamseddine2003} it is given a necessary condition for a function $f$ to be measurable.

\begin{proposition}\label{measurable implies locally simple}
	A measurable function on $A$ is locally a simple function almost everywhere on $A$.
\end{proposition}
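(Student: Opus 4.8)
The plan is to read the local simplicity directly out of Definition \ref{measurable functions} and then to patch together the approximating interval families, one for each precision $\varepsilon = d^k$, into a single measurable set of full measure whose points are all points of local simplicity of $f$.

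First I would apply Definition \ref{measurable functions} with $\varepsilon = d^k$ for each $k \in \N$, obtaining a sequence of mutually disjoint intervals $\{I_n^k\}_{n \in \N}$ with $\bigcup_n I_n^k \subseteq A$, with $\sum_n l(I_n^k)$ strongly convergent, with $\m(A) - \sum_n l(I_n^k) < d^k$, and with $f$ simple on each $I_n^k$. Writing $E_k = \bigcup_n I_n^k$ and $B = \bigcup_{k \in \N} E_k$, every point of $B$ lies in some interval $I_n^k$ on which $f$ coincides with a weakly convergent power series, so $B$ is contained in the set $S = \{x \in A : f \text{ is locally simple at } x\}$; by construction $B \subseteq S \subseteq A$.

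The core step is to show that $B$ is measurable with $\m(B) = \m(A)$, and here I would argue directly from Definition \ref{measurable sets} rather than invoke closure under countable unions, which can fail in $\civita$ (Lemma \ref{lemma countable union}). Given $\varepsilon > 0$, I would choose $k$ with $d^k < \varepsilon/2$ and use $\{I_n^k\}_{n}$ as the inner family for $B$, which is legitimate since $\bigcup_n I_n^k \subseteq E_k \subseteq B$; for the outer family I would take a sequence of mutually disjoint intervals $\{J_n\}_n$ covering the measurable set $A$ with $\sum_n l(J_n) - \m(A) < \varepsilon/2$, furnished by the infimum characterization of $\m(A)$. Since $\bigcup_n I_n^k \subseteq B \subseteq A \subseteq \bigcup_n J_n$ and $\sum_n l(J_n) - \sum_n l(I_n^k) < \varepsilon$, this verifies the measurability of $B$; the inner estimate $\m(B) \geq \m(A) - d^k$ for every $k$, together with $B \subseteq A$, then forces $\m(B) = \m(A)$.

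Finally, from $B \subseteq S \subseteq A$ and $\m(B) = \m(A)$ I would apply Lemma \ref{lemma A c B c C} to conclude that $S$ itself is measurable with $\m(S) = \m(A)$, which is precisely the assertion that $f$ is locally simple almost everywhere on $A$ in the sense of Definition \ref{def ae}. I expect the measurability of the countable union $B$ to be the main obstacle: because countable unions of measurable sets need not be measurable in $\civita$, the argument cannot be purely set-theoretic and must instead exhibit explicit inner and outer interval approximations, relying on $d^k \to 0$ strongly and on the inclusion $B \subseteq A$ to control the outer cover.
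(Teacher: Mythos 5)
The paper does not actually prove this proposition: it only states it and defers to Shamseddine and Berz \cite{berz+shamseddine2003}, so there is no in-paper argument to compare against. Your proof is correct and self-contained, and it is assembled entirely from the paper's own toolkit. The two points that genuinely require care are precisely the ones you isolate. First, the union $B=\bigcup_{k}E_k$ cannot be declared measurable by appealing to closure under countable unions, which fails in $\civita$ (Lemma \ref{lemma countable union}); you sidestep this correctly by verifying Definition \ref{measurable sets} directly, using a single inner family $\{I^k_n\}_n$ with $d^k<\varepsilon/2$ (such a $k$ exists because $\lambda(d^k)=k$ eventually exceeds $\lambda(\varepsilon)$) and an outer family covering all of $A$ supplied by the measurability of $A$ itself. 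Second, the passage from $B$ to the possibly larger set $S$ of points of local simplicity is handled by Lemma \ref{lemma A c B c C}, which is exactly the sandwiching device the paper provides for this purpose; the conclusion then matches Definition \ref{def ae} verbatim. The step $\m(A)-\m(B)\leq d^k$ for all $k$ implies $\m(B)=\m(A)$ is sound, since a non-negative element of $\civita$ dominated by $d^k$ for every $k\in\N$ would have valuation exceeding every natural number and must therefore be $0$. The only remaining point is interpretive rather than mathematical: ``locally simple at $x$'' should be read as ``$x$ belongs to some interval on which $f$ is simple'' (so that endpoints of the $I^k_n$ qualify), which is the reading consistent with the statement and with its later uses in the paper.
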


As a consequence of this property, measurable functions are continuous almost everywhere. Moreover, if two measurable functions are also differentiable and have the same derivative, then they differ by a constant. For the proofs of these statements, we refer to \cite{berz+shamseddine2003}.

Proposition \ref{measurable implies locally simple} can be further sharpened by using Lemma \ref{lemma dimensione insiemi}.

\begin{proposition}\label{measurable implies simple on a big set}
	If $A\subset\civita$ is a measurable set
	and if $f: A \rightarrow \civita$ is measurable, then $f$ is simple on an interval $I\subseteq A$ with $\lambda(l(I))=\lambda(m(A))$.
\end{proposition}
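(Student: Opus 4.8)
The plan is to combine the definition of measurable function (Definition \ref{measurable functions}) with the decomposition result of Lemma \ref{lemma dimensione insiemi}. Throughout I would assume $\m(A) > 0$, so that $q := \lambda(\m(A))$ is a genuine rational number; if $\m(A) = 0$ then $\lambda(\m(A)) = \infty$ and no interval $I(a,b)$ can satisfy $\lambda(l(I)) = \infty$, so the statement is to be read as vacuous in that degenerate case.

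First I would invoke the measurability of $f$ at a carefully chosen precision $\varepsilon$. Definition \ref{measurable functions} furnishes, for every $\varepsilon > 0$, a sequence of mutually disjoint intervals $\{I_n\}_{n\in\N}$ with $\bigcup_{n\in\N} I_n \subseteq A$, with $\sum_{n\in\N} l(I_n)$ strongly convergent, with $\m(A) - \sum_{n\in\N} l(I_n) < \varepsilon$, and with $f$ simple on each $I_n$. I would select $\varepsilon$ with $\lambda(\varepsilon) > q$, for instance $\varepsilon = d^{\,q+1}$.

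The key step is to verify that this sequence $\{I_n\}$ satisfies the three hypotheses of Lemma \ref{lemma dimensione insiemi}. Conditions (1) and (2) are immediate from the definition. For condition (3), note that the supremum characterization of the measure recalled after Definition \ref{definizione misura} applies to the inner approximation $\{I_n\}$ and gives $\sum_{n\in\N} l(I_n) \le \m(A)$, whence $0 \le \m(A) - \sum_{n\in\N} l(I_n) < \varepsilon$. Since for non-negative elements $0 \le a < \varepsilon$ forces $\lambda(a) \ge \lambda(\varepsilon)$ (otherwise $\lambda(a) < \lambda(\varepsilon)$, i.e.\ $\varepsilon \ll a$, which would give $a > \varepsilon$), I obtain $\lambda\!\left(\m(A) - \sum_{n\in\N} l(I_n)\right) \ge \lambda(\varepsilon) > q$, which is exactly hypothesis (3).

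Lemma \ref{lemma dimensione insiemi}(ii) then produces an index $n$ with $\lambda(l(I_n)) = q = \lambda(\m(A))$, and property (4) of the measurability definition guarantees that $f$ is simple on this interval. Setting $I = I_n$ completes the argument. I expect the only subtlety to be the valuation estimate underlying hypothesis (3)—in particular the bound $\sum_{n\in\N} l(I_n) \le \m(A)$ and the implication that $a < \varepsilon$ entails $\lambda(a) \ge \lambda(\varepsilon)$ for non-negative $a$; everything else is a direct transcription of the two results being invoked.
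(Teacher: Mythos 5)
Your proposal is correct and follows essentially the same route as the paper: apply Definition \ref{measurable functions} with an $\varepsilon$ satisfying $\lambda(\varepsilon)>\lambda(\m(A))$ and then invoke Lemma \ref{lemma dimensione insiemi}(ii) to extract an interval of the right valuation, on which $f$ is simple by construction. The only differences are that you spell out the verification of hypothesis (3) of the lemma (via the bound $\sum_{n}l(I_n)\leq\m(A)$ and the implication from $0\leq a<\varepsilon$ to $\lambda(a)\geq\lambda(\varepsilon)$) and flag the degenerate case $\m(A)=0$, both of which the paper leaves implicit.
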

\begin{proof}
	Let $\{I_n\}_{n \in\N}$ be a sequence of pairwise sets satisfying Definition \ref{measurable functions} and hypotheses (1)-(3) of Lemma \ref{lemma dimensione insiemi}.
	Lemma \ref{lemma dimensione insiemi} entails the existence of some $n \in\N$ such that $\lambda(l(I_n))=\lambda(\m(A))$, as desired.
\end{proof}

A partial converse of Proposition \ref{measurable implies locally simple} is that every real function that is not locally analytic on an interval cannot be obtained as the restriction of a measurable function.

\begin{proposition}\label{lemma not analytic}
	If $f$ is not locally analytic at any point in $[a,b]_{\R}$, then for every $g\in \meas([a,b]_\civita)$, there exists an interval $I \subseteq [a,b]_{\R}$ such that $f(x)\not \approx g(x)$ for almost every $x \in I$.
\end{proposition}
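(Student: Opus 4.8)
The plan is to exploit that any measurable $g$ is, on a suitably large subinterval, rigid enough (being locally a power series) that its real shadow is real-analytic; since $f$ is nowhere locally analytic, $f$ must then differ from $g$ in the sense of $\approx$ on an entire subinterval.

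First I would invoke Proposition \ref{measurable implies simple on a big set} to find a Levi-Civita interval $J = I(c,d) \subseteq [a,b]_\civita$ on which $g$ is simple, with $\lambda(l(J)) = \lambda(\m([a,b]_\civita)) = \lambda(b-a) = 0$; thus $d-c$ is appreciable, so $\sh{c} < \sh{d}$, and the real interval $I := (\sh{c},\sh{d})_\R$ is nondegenerate and contained in $[a,b]_\R$. A short computation with the valuation shows that every real $s$ with $\sh{c} < s < \sh{d}$ satisfies $c < s < d$, so $I \subseteq J$ and $g$ restricts on $I$ to a weakly convergent power series.

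Next I would define $F : I \to \R$ by $F(s) = g(s)[0]$ and prove that $F$ is real-analytic. Fixing a real $r_0 \in I$ and re-expanding the simple function $g$ around $r_0$ (using the re-expandability of weakly convergent power series inside their domain of convergence), I would write $g(x) = \sum_n c_n (x-r_0)^n$ with $c_n \in \civita$. For real $t$ with $r_0 + t \in I$ one has $(c_n t^n)[0] = c_n[0]\, t^n$, so by the weak convergence criterion (Theorem \ref{weak convergence criterion}) the $[0]$-component of the weak limit yields $F(r_0+t) = \sum_n c_n[0]\, t^n$, a convergent \emph{real} power series. Hence $F$ is real-analytic on a real neighborhood of each $r_0$, and therefore on all of $I$. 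I would then compare $f$ and $g$ through $\approx$: examining the three cases $\lambda(g(s)) < 0$, $\lambda(g(s)) = 0$, $\lambda(g(s)) > 0$ and recalling that $f(s)$ is real (so $\lambda(f(s)) \in \{0,\infty\}$), one checks in each case that $f(s) \approx g(s)$ forces $f(s) = g(s)[0] = F(s)$; thus $\{s \in I : f(s) \approx g(s)\} \subseteq \{s \in I : f(s) = F(s)\}$. Since $F$ is analytic while $f$ is not locally analytic at any point of $I \subseteq [a,b]_\R$, the two cannot coincide on any subinterval, so $\{f \ne F\}$ is a nonempty open (for continuous $f$) subset of $I$ and contains an interval $I^\ast$ on which $f \not\approx g$ everywhere; this $I^\ast$ is the interval required by the statement.

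The main obstacle I anticipate is the analyticity of $F$: it requires that the re-expansion of $g$ around an interior real point converge weakly on a genuine real neighborhood, so that the componentwise passage to the limit produces a real power series with positive radius of convergence. A secondary delicate point is the final logical step, since the clean extraction of $I^\ast$ uses continuity of $f$ to make $\{f \ne F\}$ open; for a merely measurable $f$ one must instead argue that $\{f = F\}$ cannot have full measure on any subinterval, invoking the almost-everywhere convention of Definition \ref{def ae}, in order to conclude $f \not\approx g$ almost everywhere on $I$ itself.
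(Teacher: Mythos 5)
Your proposal is correct and follows essentially the same route as the paper's proof: both locate (via Lemma \ref{lemma dimensione insiemi}, packaged in your case as Proposition \ref{measurable implies simple on a big set}) an interval of appreciable length on which $g$ is a weakly convergent power series around a real point, pass to the real part of that series to obtain a real-analytic shadow $F$, and derive a contradiction with the hypothesis that $f$ is nowhere locally analytic. The only differences are cosmetic: the paper splits into two cases according to whether some coefficient has negative valuation (in which case $g$ is infinitely large away from the expansion point and $f\not\approx g$ at once), whereas you absorb that case into the uniform observation that $f(s)\approx g(s)$ forces $f(s)=g(s)[0]=F(s)$; and the delicate final extraction of the interval $I^\ast$, which you flag explicitly, is handled no more rigorously in the paper's own argument, which likewise only refutes ``$f\approx g$ at every point of $I_n\cap\R$''.
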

\begin{proof}
	Let $g\in\meas([a,b]_\civita)$ and let $\{I_n\}_{n \in \N}$ satisfy conditions 1-4 of Definition \ref{measurable functions} for $g$ and for some $\varepsilon \approx 0$.
	As a consequence $\lambda(l(I_n))\geq 0$ for all $n \in \N$.
	By Lemma \ref{lemma dimensione insiemi}, there is $n \in \N$ such that $\lambda(l(I_n))=0$: otherwise the condition of strong convergence of $\sum_{n \in \N} l(I_n)$ and the choice of $\varepsilon \approx 0$ would entail that $\m\left(\bigcup_{n\in\N} I_n \right)\approx 0$, against the hypothesis $(b-a)-\m\left(\bigcup_{n\in\N} I_n \right)<\varepsilon$.
	
	Let $r \in I_n \cap \R$ and let $g(x)=\sum_{n \in \N} a_n (x-r)^n$ whenever $x \in I_n$.
	If $\lambda(a_n)<0$ for some $n \in \N$, then for every $x \in I_n$, $x \not \sim r$, $\lambda(g(x))<0$.
	Thus $g(x) \not \approx f(x)$ for every $x \in I_n \cap \R$, $x \ne r$.
	As a consequence, the desired interval $I$ can be obtained as one of the connected components of $(I_n \setminus \{r\}) \cap \R$.
	
	Suppose now that $\lambda(a_n) \geq 0$ for all $n\in\N$.
	We want to prove that the hypothesis $f(x)\approx g(x)$ for all $x \in I_n\cap \R$ entails that $f$ is analytic at every point in $I_n\cap \R$, against the hypotheses.

	If $a_n \in \R$ for all $n \in \N$, then the desired result is trivially true.
	
	If $\lambda(a_n)\geq 0$ for all $n \in \N$, then $a_n \approx a_n[0]$ for all $n \in \N$.
	Hence, if $x \in I_n \cap \R$, then $a_0[0] \in \R$ and $\sum_{n \in \N} a_n[0](x-r)^n\in\R$, so that $f(r) = a_0[0] \approx g(r)$ and $f(x)=\sum_{n \in \N} a_n[0](x-r)^n \approx g(x)$ for all $x \in I_n \cap \R$.
	As a consequence, $f$ is locally analytic at $r$, a contradiction.
\end{proof}

As observed in \cite{shamseddine2012,berz+shamseddine2003}, a function that is locally simple almost everywhere on $A$ is in general not measurable.
As a novel counterexample, we will prove that the order $n$ continuation of any function that is not locally analytic is not measurable on any measurable set of non-infinitesimal length, even if it is locally a simple function at every point of its domain.

\begin{lemma}\label{lemma order n extension not measurable}
	If $f \in C^n([a,b]_{\R})$ and if $f$ is not locally analytic at any point in $[a,b]_{\R}$, then $\ext{n}{f}\not\in\meas(A)$ for any measurable set $A\subseteq [a,b]_\civita$ with $\m(A)\not\approx0$.
\end{lemma}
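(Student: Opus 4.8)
\section*{Proof proposal}

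The plan is to argue by contradiction. Suppose $\ext{n}{f}\in\meas(A)$ for some measurable $A\subseteq[a,b]_\civita$ with $\m(A)\not\approx0$, and write $g=\ext{n}{f}$; the goal is to force $f$ to agree with a real polynomial on a genuine real subinterval of $[a,b]$, contradicting the hypothesis that $f$ is nowhere locally analytic. The first step is to apply Proposition \ref{measurable implies simple on a big set} to obtain an interval $I=I(\alpha,\beta)\subseteq A$ on which $g$ is simple and with $\lambda(l(I))=\lambda(\m(A))$. Since $\m(A)\not\approx0$ gives $\lambda(\m(A))\leq0$, while $I\subseteq[a,b]_\civita$ forces $0<l(I)\leq b-a$ and hence $\lambda(l(I))\geq\lambda(b-a)=0$, I conclude $\lambda(l(I))=0$. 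Thus $I$ has non-infinitesimal length, its endpoints have finite standard parts, and I may fix a real point $r$ in the interior of $I$ (for instance $r\in(\sh{\alpha},\sh{\beta})$), so that a non-infinitesimal disk around $r$ is contained in $I$ and in particular $\mu(r)\subseteq I$.

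Next I would exploit that, being simple, $g$ is a weakly convergent power series on $I$. Re-expanding $g$ around the interior point $r$ yields $g(r+\varepsilon)=\sum_{k\geq0}b_k\varepsilon^k$, valid on a non-infinitesimal disk around $r$ and in particular for every $\varepsilon\in M_o$. On the other hand, the definition of the order-$n$ extension gives, for every $\varepsilon\in M_o$,
$$
g(r+\varepsilon)=\sum_{i=0}^{n}f^{(i)}(r)\frac{\varepsilon^i}{i!},
$$
which is a polynomial of degree at most $n$ \emph{with real coefficients}, since $r$ is real and each $f^{(i)}(r)\in\R$. The crux of the argument is to conclude that these two expansions have identical coefficients, so that $b_k=f^{(k)}(r)/k!$ for $k\leq n$ and $b_k=0$ for $k>n$; equivalently, the re-expansion \emph{is} the polynomial $\varepsilon\mapsto\sum_{i=0}^{n}f^{(i)}(r)\varepsilon^i/i!$, whence $g(x)=\sum_{i=0}^{n}f^{(i)}(r)(x-r)^i/i!$ on the whole disk of convergence of the re-expansion.

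Granting this, the contradiction is immediate. The real points of that non-infinitesimal disk form a real interval $J_r\subseteq[a,b]$ of positive length containing $r$, and there, recalling that $\ext{n}{f}$ extends $f$ (so $g(r')=f(r')$ for real $r'$), one has
$$
f(r')=g(r')=\sum_{i=0}^{n}f^{(i)}(r)\frac{(r'-r)^i}{i!}\qquad\text{for all real }r'\in J_r.
$$
The right-hand side is a genuine real polynomial in $r'$, so $f$ coincides with a polynomial, hence is analytic, on $J_r$; this contradicts the assumption that $f$ is not locally analytic at $r$.

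The main obstacle is the identity step, namely passing from ``the two power series agree at every $\varepsilon\in M_o$'' to ``their coefficients coincide.'' I would establish this through a uniqueness principle for weakly convergent power series over $\civita$: if $E(\varepsilon)=\sum_k e_k\varepsilon^k$ vanishes for all $\varepsilon\in M_o$ but some $e_k\neq0$, then, letting $k_0$ be minimal with $e_{k_0}\neq0$ and choosing $\varepsilon$ with $\lambda(\varepsilon)$ large enough, the term $e_{k_0}\varepsilon^{k_0}$ strictly dominates the regular tail in the sense of $\lambda$, so $\lambda(E(\varepsilon))=\lambda(e_{k_0})+k_0\lambda(\varepsilon)<\infty$ and $E(\varepsilon)\neq0$, a contradiction in the spirit of Lemma \ref{lemma tecnico serie a termini positivi}. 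Alternatively one can peel off the coefficients successively as strong limits of difference quotients, using term-by-term differentiability of power series and the squeeze theorem for strong convergence. Care is needed throughout, since weak convergence behaves poorly under such limiting operations, so the argument should be phrased via orders of magnitude and the regularity of convergent series rather than via naive coefficient comparison.
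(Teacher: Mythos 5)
Your proof is correct and follows essentially the same route as the paper's: both reduce measurability of $\ext{n}{f}$ to its being simple on an interval of non-infinitesimal length (via Lemma \ref{lemma dimensione insiemi}, of which your appeal to Proposition \ref{measurable implies simple on a big set} is just a packaged form) and then rule this out using the hypothesis that $f$ is nowhere locally analytic. The only difference is that the paper asserts the incompatibility of the order-$n$ extension with simplicity on a non-infinitesimal interval as immediate ``by definition,'' whereas you supply the re-expansion and coefficient-uniqueness argument justifying it; that filled-in step is sound.
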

\begin{proof}
	This Lemma can be obtained as a consequence of Proposition \ref{lemma not analytic}. We provide another proof that does not depend upon this result.
	
	By definition of the order $n$ extension of $f$ and by the hypothesis that $f$ is not locally analitic in $[a,b]_{\R}$, for every $I\subseteq [a,b]_\civita$ with $l(I)\not \approx 0$, $f$ is not simple over $I$.
	Consequently, if a family of mutually disjoint intervals $\{I_n\}_{n \in \N}$ satisfies condition 4 of Definition \ref{measurable functions}, then $l(I_n)\approx 0$ for all $n \in\N$.
	
	Suppose towards a contradiction that $\sum_{n\in\N} l(I_n)$ strongly converges to $\m(A)$.
	Lemma \ref{lemma dimensione insiemi} entails that there exists $m \in \N$ such that $\lambda(l(I_m)) = \lambda(A)$, contradicting $l(I_n)\approx 0$ for all $n \in \N$.
	Consequently, if $\sum_{n\in\N} l(I_n)$ strongly converges, it does not converge to $\m(A)$.
	We deduce that $\ext{n}{f}\not\in\meas(A)$, as desired.
\end{proof}

Since the functions $\ext{n}{f}$ are continuous but not measurable, we deduce that some continuous functions on the Levi-Civita field are not measurable.
This is in contrast with the well-known result that all real continuous functions are Lebesgue measurable.

Another class of locally simple functions that are not measurable are the reciprocal of simple functions on a neighbourhood of one of its zeroes.

\begin{proposition}\label{prop nonmeas}
	If $f: [a,b]_\civita \rightarrow \civita$ is a simple function, if $f(x) \ne 0$ for all $x \ne a$ and if $f(a)=0$, then the function $x \mapsto \frac{1}{f(x)}$ is not measurable on any interval of the form $I(a,c)\subseteq(a,b]_\civita$ with $a<c\leq b$.
\end{proposition}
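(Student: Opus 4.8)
The plan is to argue by contradiction. The decisive structural fact will be that the pole of $1/f$ at $a$ prevents $1/f$ from being simple on any interval that straddles different orders of magnitude of the distance to $a$; combined with a counting argument, this will force the covering intervals of Definition \ref{measurable functions} to miss an appreciable portion of $I(a,c)$. Throughout, set $q=\lambda(c-a)=\lambda(\m(I(a,c)))$ and introduce the leading-coefficient map $\phi(x)=(x-a)[q]\in\R$ for $x\in(a,c]$. Using Lemma \ref{lemma lambda valuation}, $\phi$ is monotone nondecreasing, with $\phi(x)>0$ iff $\lambda(x-a)=q$ and $\phi(x)=0$ iff $\lambda(x-a)>q$.

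The first step is a Key Lemma: if $1/f$ is simple on $I(\alpha,\beta)\subseteq(a,b]_\civita$, then $\lambda(\alpha-a)=\lambda(\beta-a)$, i.e.\ $\lambda(x-a)$ is constant on the interval (the equivalence follows from the sandwich $\alpha-a\le x-a\le\beta-a$ and Lemma \ref{lemma lambda valuation}). To prove it I would write $f(x)=(x-a)^k v(x)$ with $v$ simple and $v(a)\ne0$, so that $a$ is the only point of $[a,b]_\civita$ at which $1/f$ fails to be analytic. Expanding $1/f$ about $\alpha$ and factoring out the leading power, the singular factor is the real-coefficient series $(1+\rho)^{-k}=\sum_j\binom{-k}{j}\rho^j$ in $\rho=(x-\alpha)/(\alpha-a)$, of radius $1$; by the weak-convergence criterion for real power series recalled before Definition \ref{def ext}, $1/f$ can converge at $x$ only if $|x-\alpha|<\alpha-a$ and $|x-\alpha|\not\sim\alpha-a$. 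If instead $\lambda(\alpha-a)>\lambda(\beta-a)$, then $\alpha-a\ll\beta-a$, hence $\beta-\alpha\sim\beta-a$ and $\beta-\alpha>\alpha-a$, so the series diverges at $\beta$, contradicting simplicity on $I(\alpha,\beta)$. The delicate point is to confirm that the analytic, nonvanishing factor $1/v$ cannot enlarge the radius past the pole at $a$; I would settle this through the re-expansion properties of weakly convergent power series and Proposition \ref{lemma divergenza}, the nearest failure of analyticity to $\alpha$ being the pole itself.

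For the counting step, suppose $1/f$ is measurable on $I(a,c)$ and pick $\varepsilon>0$ with $\lambda(\varepsilon)>q$. Definition \ref{measurable functions} yields disjoint intervals $\{I_n\}_{n\in\N}$ with $\bigcup_n I_n\subseteq(a,c]$, with $\sum_n l(I_n)$ strongly convergent and deficit $(c-a)-\sum_n l(I_n)<\varepsilon$, and with $1/f$ simple on each $I_n=I(\alpha_n,\beta_n)$. By the Key Lemma each $I_n$ lies in a single shell $\lambda(x-a)=m_n\ge q$, so $\lambda(l(I_n))\ge m_n\ge q$, whence $(l(I_n))[q]=0$ unless $\lambda(l(I_n))=q$, in which case $(l(I_n))[q]=\phi(\beta_n)-\phi(\alpha_n)>0$. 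Since $\sum_n l(I_n)$ converges strongly its terms are strongly null, so $\lambda(l(I_n))\to\infty$ and only finitely many indices satisfy $\lambda(l(I_n))=q$. Strong convergence implies weak, hence coefficientwise convergence by Theorem \ref{weak convergence criterion}, so
\[
\Big(\sum_n l(I_n)\Big)[q]=\sum_n (l(I_n))[q]=\sum_{\lambda(l(I_n))=q}\big(\phi(\beta_n)-\phi(\alpha_n)\big),
\]
a finite sum of lengths of pairwise disjoint real subintervals of $(0,C]$, where $C=(c-a)[q]>0$. These finitely many subintervals have left endpoints $\phi(\alpha_n)\ge\delta$ for some real $\delta>0$, so their total length is at most $C-\delta$. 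Since $\bigcup_n I_n\subseteq(a,c]$ forces $\sum_n l(I_n)\le c-a$ and hence $\lambda\big((c-a)-\sum_n l(I_n)\big)\ge q$, the deficit has $q$-th coefficient $C-(\sum_n l(I_n))[q]\ge\delta>0$, so its valuation is exactly $q$. This contradicts the deficit being $<\varepsilon$ with $\lambda(\varepsilon)>q$.

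I expect the main obstacle to be the Key Lemma for a general simple $f$, and specifically the verification that the analytic nonvanishing factor $1/v$ does not push the radius of weak convergence beyond the pole at $a$; the counting argument, by contrast, is routine once monochromaticity of the simplicity intervals and the behaviour of $\phi$ under strong convergence are in hand.
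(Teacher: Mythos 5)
Your overall strategy is correct, and it is in fact more elaborate than the proof in the paper, which is very short: the paper only shows (via Proposition \ref{prop simple} and Proposition \ref{lemma divergenza}) that no interval of an admissible covering can have $a$ as its \emph{left endpoint} --- if $1/f$ were simple on some $I(a,c')$, its power series would converge at $a$ by Proposition \ref{prop simple}, which is impossible since $f(a)=0$, and the resulting non-convergence propagates to all $a+h$ with $\lambda(h)>s$. The paper contains nothing like your Key Lemma or your counting step. Your proposal correctly recognizes that excluding left endpoint $a$ is not by itself conclusive: one must also exclude coverings by intervals that approach $a$ without containing it (already the single interval $[a+h,c]_\civita$ with $0<h\ll c-a$ covers $I(a,c)$ up to a deficit $h$ of valuation greater than $\lambda(c-a)$, and the paper's claim says nothing about it). Your Key Lemma --- that any interval on which $1/f$ is simple lies in a single shell $\lambda(x-a)=\mathrm{const}$ --- is exactly what rules such coverings out, and your subsequent projection argument via $\phi(x)=(x-a)[q]$ is correct and complete: strong convergence forces all but finitely many $I_n$ to have $\lambda(l(I_n))>q$, coefficients pass through strong limits by Theorem \ref{weak convergence criterion}, the Key Lemma gives $\phi(\alpha_n)>0$ for the finitely many remaining intervals, and monotonicity of $\phi$ bounds their total $q$-coefficient by $C-\delta$, so the deficit has valuation exactly $q$.

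The genuine gap is the one you flag yourself, the Key Lemma, and two points there deserve attention. First, you use the criterion ``$\sum a_n(x-x_0)^n$ converges whenever $|x-x_0|<R$ and $|x-x_0|\not\sim R$'' as if it were also necessary; it is only sufficient (the paper's own example $\sum x^n$ converges at $x=1/2$, where $|x|\sim 1$). This is harmless for your purposes, since for the Key Lemma you only need divergence in the regime $|x-\alpha|\gg\alpha-a$, where the terms of the binomial series are unbounded in valuation. Second, the factor $1/v$ does need to be controlled, and the cleanest repair stays close to the paper's own toolkit: if $1/f$ is simple on $I(\alpha,\beta)$ with series $g$, then $gf$ is simple (the family $\P$ is an algebra) and equals $1$ on an interval of positive length, so by uniqueness of power series expansions $g$ re-expanded about a point $x_1\in I(\alpha,\beta)$ in the deepest shell is the \emph{formal} reciprocal of $f(x_1+t)=(x_1-a+t)^k v(x_1+t)$; its $n$-th coefficient has valuation $-(k+n)\lambda(x_1-a)-\lambda(c_k)$ (the leading term of the reciprocal recursion is $c_k^{-1}\binom{-k}{n}(x_1-a)^{-k-n}$ and does not cancel), so at any $x$ with $\lambda(x-x_1)<\lambda(x_1-a)$ the terms are unbounded and the series diverges, exactly as in Proposition \ref{lemma divergenza}(1). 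Verifying that the lower-order contributions of $v$ really are lower order requires $x_1$ to lie strictly deeper than the shell of $b$, which is automatic in the case $\lambda(\alpha-a)>\lambda(\beta-a)$ you need to contradict. With that step written out, your proof is complete.
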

\begin{proof}
	Let $g : (a,b]\rightarrow \civita$ be defined by $g(x)=\frac{1}{f(x)}$.
	Let also $\{I_n\}_{n\in\N}$ be a family of pairwise disjoint intervals such that $g$ is simple over $I_n$.
	We claim that for every $n \in \N$ $I_n \ne I(a,c)$ for any $a<c\leq b$: otherwise there would be a power series $\sum_{n \in \N} a_n (x-x_0)^n$ defied over an interval $I(a,c)$ such that $g(x) = \sum_{n \in \N} a_n (x-x_0)^n$ whenever $x-x_0 \in I(a,c)$.
	However, the hypotheses over $f$ entail that $\sum_{n \in \N} a_n (a-x_0)^n$ does not converge, so that by Proposition \ref{lemma divergenza} there exists $s \in \Q$ such that for every $h \in \civita$ with $h>0$ and $\lambda(h)>s$, $\sum_{n \in \N} a_n (a+h-x_0)^n$ does not converge, against the hypothesis that $\sum_{n \in \N} a_n (x-x_0)^n$ is convergent over $(a,c)_\civita$.
\end{proof}

We conclude with an example of a nonmeasurable, locally analytic function that does not belong to the categories of nonmeasurable functions discussed above.

\begin{example}\label{remark counterexample loc an not measurable}
	Another locally analytic function that is not measurable is constructed as follows. Let $A=\left[0,d^{-1}\right]$ and let $f(x)=\sin\left(x_{|[0,\infty]_\Q}\right)$. Since $f$ is locally a composition of analytic functions, $f$ is locally analytic \cite{analytic1}.
	However, $f$ is not measurable.
	In order to see that this is the case, consider the sets
	$$G(r,q)=\{y \in \civita: \lambda(rd^q-y) \geq 0 \} = \{y \in \civita: y=rd^q+h \text{ for some } h\in\civita \text{ with } \lambda(h)\geq 0 \},$$ defined for $r \in \R\setminus \{0\}$ and for $q\in(-1,0)_{\Q}$. They satisfy the following properties:
	\begin{itemize}
		\item $\bigcup_{r \in \R\setminus \{0\},\ q \in (-1,0)_{\Q}} G(r,q) \subset [0,d^{-1}]$;
		\item if $r,s \in \R\setminus \{0\}$ and $r \ne s$, then $G(r,p) \cap G(s,q) = \emptyset$ for all $p,q \in (-1,0)_{\Q}$;
		\item for every $r \in \R\setminus \{0\}$ and $q\in(-1,0)_{\Q}$, $f$ is analytic over $G(r,q)$ and it is not analytic over any set $S \supset G(r,q)$;
		\item by Lemma \ref{lemma monad}, each set $G(r,q)$ is not measurable, but for every $n\in\N$ it contains measurable sets of measure at least $n$.
	\end{itemize}
	As a consequence of these properties, any family of disjoint intervals satisfying conditions (1) and (4)
	of Definition \ref{measurable functions} must either include a refinement of the uncountable family $\{G(r,q)\}_{r\in\R\setminus \{0\},\ q \in(-1,0)_{\Q}}$ or not satisfy condition (3) of Definition \ref{measurable functions}. 
\end{example}

\subsection{Integrals}\label{sec integrals}

Since condition 2 of Definition \ref{simple functions} ensures that simple functions have an antiderivative, it is possible to define the integral of a simple function over an interval by imposing the validity of the fundamental theorem of calculus.
The integral of a measurable function over a measurable set can then be obtained as a limit of the integrals of simple functions over a sequence of intervals satisfying Definition \ref{measurable functions}.

\begin{definition}\label{def integral}
	If $f$ is a simple function over $I(a,b)$ whose antiderivative is $F$, then
	$$
		\int_{I(a,b)} f(x)  = \lim_{x\rightarrow b} F(x) - \lim_{x \rightarrow a} F(x).
	$$
	Notice that the two limits in the previous equality are well-defined, since $F$ is simple on $I(a,b)$ and, thanks to Proposition \ref{prop simple}, $F$ can be extended to a simple function on $[a,b]_\civita$.
	
	If $A \subset \civita$ is a measurable set and $f: A \rightarrow \civita$ is a measurable function, then define
	$$
		\mathcal{I}(f,A) = \left\{ \{I_n\}_{n\in\N} : \bigcup_{n \in \N} I_n \subseteq A,\ I_n \text{ are mutually disjoint and } f \text{ is simple on } I_n\ \forall n \in \N \right\}.
	$$
	The integral of $f$ over $A$ is defined as
	$$
		\int_{A} f(x)  = \lim_{\{I_n\}_{n\in\N}\in \mathcal{I}(f,A),\ \sum_{n \in \N} I_n \rightarrow \m(A)} \left( \sum_{n \in \N}  \int_{I_n} f(x) \right)
	$$
	whenever the limit on the right side of the equality is defined (and possibly equal to $\pm \infty$ whenever the sequence $k \mapsto \sum_{n \leq k}  \int_{I_n} f(x)$ diverges), and it is undefined otherwise.
\end{definition}

\begin{remark}
	Even if there exist some measurable functions with an infinite or undefined integral, all of the results on the integrals of measurable functions discussed in  \cite{berz+shamseddine2003,shamseddine2012} hold for every function with a well-defined integral.
	For instance, if $f$ and $g$ have a well-defined and finite integral over the disjoint measurable sets $A$ and $B$, then for every $a, b \in \civita$
	$$
		\int_A (af + bg) = a \int_A f + b \int_A g
		\text{ and }
		\int_{A \cup B} f = \int_A f + \int_B f.
	$$
\end{remark}

The only assertion that needs to be verified for unbounded measurable functions is Theorem 3.7 of \cite{shamseddine2012}.
The next proposition ensures that it is still true for measurable unbounded functions.

\begin{proposition}\label{proposizione uguali q.o.}
	Let $A \subset \civita$ be measurable and let $f, g : A \rightarrow \civita$ satisfy $f = g$
	a.e. on $A$. Then $f$ is measurable on $A$ if and only if $g$ is measurable on $A$; moreover
	\begin{enumerate}
		\item $\int_A f$ is defined if and only if $\int_A g$ is defined;
		\item if $\int_A f$ is defined, then $\int_A f = \int_A g$ (including the case $\int_A f = \int_A g = \pm \infty$).
	\end{enumerate}
\end{proposition}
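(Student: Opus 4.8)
The plan is to localize the hypothesis $f=g$ a.e.\ to a measurable agreement set and to reduce every statement to intervals on which \emph{both} functions are simple. Write $B=\{x\in A: f(x)=g(x)\}$; by Definition \ref{def ae} the set $B$ is measurable with $\m(B)=\m(A)$. The first thing I would record is that $A\setminus B$ is then measurable and null: choosing an outer interval cover of $A$ of total length below $\m(A)+\delta$ and an inner interval cover of $B$ of total length above $\m(A)-\delta$, their difference is a union of intervals of measure at most $2\delta$ containing $A\setminus B$, so $A\setminus B$ has outer measure $0$ for every $\delta$. In particular $A\setminus B$ contains no interval of positive length, whence $Q\cap B$ is dense in every interval $Q\subseteq A$. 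The structural fact that drives the whole argument is the consequence: \emph{if both $f$ and $g$ are simple on an interval $Q\subseteq A$, then $f=g$ on all of $Q$}, since they are then continuous and agree on the dense set $Q\cap B$. Hence on any interval where both functions are simple, their interval-integrals from Definition \ref{def integral} coincide.

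For measurability, assume $f\in\meas(A)$ and fix $\varepsilon>0$. Take a family $\{I_n\}$ witnessing Definition \ref{measurable functions} for $f$ with $\sum_n l(I_n)$ within $\varepsilon$ of $\m(A)$, and an inner interval cover $\{K_m\}\subseteq B$ with $\sum_m l(K_m)$ within $\varepsilon$ of $\m(B)=\m(A)$. The common refinement $\{I_n\cap K_m\}$ consists of pairwise disjoint intervals contained in $B$ on which $f$, and therefore $g$ (as $f=g$ on $B$), is simple; writing $U=\bigcup_n I_n$ and $V=\bigcup_m K_m$, its total length is $\m(U\cap V)\ge \m(U)+\m(V)-\m(A)$, which lies within $2\varepsilon$ of $\m(A)$. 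This witnesses $g\in\meas(A)$, and the converse is symmetric.

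For the integral, the structural fact lets me compare the two defining limits through \emph{bi-simple} families, i.e.\ families of intervals on which both functions are simple. These are cofinal (any inner cover of $B$ is bi-simple with total length approaching $\m(A)$), and on them the $f$- and $g$-integral sums agree termwise. Thus, assuming $\int_A f$ is defined and equal to $L$, bi-simple families with total length near $\m(A)$ already force the $g$-sums near $L$. It remains to treat an arbitrary $\{J_k\}\in\mathcal{I}(g,A)$ of length near $\m(A)$: refining it by an $f$-cover $\{I_n\}$ of length near $\m(A)$ makes the pieces $\{J_k\cap I_n\}$ bi-simple, so their $g$-sum equals their $f$-sum, which is near $L$. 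Setting $W=\bigcup_k J_k$ and $U=\bigcup_n I_n$, this yields $\sum_k\int_{J_k}g = L + \int_{W\setminus U}g$ up to an error controlled by $\int_A f=L$.

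The main obstacle is exactly the leftover $\int_{W\setminus U}g$: the region $W\setminus U$ has measure tending to $0$ as the $f$-cover is refined, but since $g$ may be \emph{unbounded} this does not by itself bound the integral. I would control it in two moves. First, because $(W\setminus U)\setminus B\subseteq A\setminus B$ is null and $g$ is simple, hence bounded on each component of $W\setminus U$ by the extreme value theorem for power series, the part of $W\setminus U$ lying in $A\setminus B$ contributes nothing; intersecting with a further $f$-cover $U'$ and using the structural fact on the bi-simple pieces reduces $\int_{W\setminus U}g$ to $\int_{(W\setminus U)\cap U'}f$ up to a residual term. Second, the definedness of $\int_A f$ yields the quantitative control $\int_{R}f\to 0$ uniformly over unions $R$ of $f$-simple intervals with $\m(R)\to 0$: extend $R$ to a near-full $f$-cover $U\supseteq R$ and subtract the two near-$L$ $f$-sums of $U$ and of $U\setminus R$. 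Applied to $R=(W\setminus U)\cap U'$, this drives $\int_{W\setminus U}g\to 0$ and gives $\int_A g=L$. The point needing the most care is precisely this interplay of nested covers for an unbounded integrand, i.e.\ squeezing to measure zero the residual region on which only $g$ is known to be simple while keeping every intermediate integral controlled through bi-simple refinements. The equality $\int_A f=\int_A g$ follows, and the $\pm\infty$ cases are identical, since divergence of the $f$-sums along the cofinal bi-simple families transfers termwise to the $g$-sums.
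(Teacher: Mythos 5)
Your route is genuinely different from the paper's. The paper dispatches the proposition in three strokes: the bounded case is delegated to Theorem 3.7 of \cite{shamseddine2012}; the decomposition $A=B\sqcup S$ with $S=A\setminus B$ null is combined with the additivity of the integral (Proposition 4.13 of \cite{berz+shamseddine2003}); and the whole unboundedness issue evaporates because a null set contains no interval of positive length, so $\mathcal{I}(F,S)$ is trivial and $\int_S F=0$ vacuously for \emph{any} measurable $F$. Your self-contained replacement is built on a nice and correct structural lemma (two simple functions agreeing on the dense set $Q\cap B$ of an interval $Q$ coincide on $Q$, by continuity of power series and the fact that $A\setminus B$ contains no nontrivial interval), and your proof of the measurability equivalence via the common refinement $\{I_n\cap K_m\}$ is essentially sound --- you should only add a word on why $\sum_{n,m}l(I_n\cap K_m)$ strongly converges (it does, e.g.\ because $U\cap V$ is measurable and Lemma \ref{lemma countable union} applies in the contrapositive; non-negativity and boundedness of partial sums alone do not give strong convergence in $\civita$).

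The integral part, however, has a genuine gap, and it sits exactly where you flag ``up to a residual term.'' After refining an arbitrary $g$-family $\{J_k\}$ by an $f$-cover $U$ and then by a second $f$-cover $U'$, you control $\int_{(W\setminus U)\cap U'}f$ via your quantitative lemma, but the term $\int_{(W\setminus U)\setminus U'}g$ is introduced and never bounded. The set $(W\setminus U)\setminus U'$ only shrinks \emph{in measure}, and smallness of measure does not control $\int g$ over it when $g$ is unbounded --- this is precisely the phenomenon of Example \ref{example infty and nonmeas}, where a measurable function on a union of intervals of arbitrarily small total length has infinite or undefined integral. The quantitative control you invoke ($\int_R F\to 0$ as $\m(R)\to 0$ over unions of $F$-simple intervals) is derived from the definedness of $\int_A F$; for $F=g$ that definedness is the conclusion you are trying to reach, so using it there would be circular, and iterating the refinement only reproduces the same residual at every stage. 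Your parenthetical remark that the part of $W\setminus U$ in $A\setminus B$ ``contributes nothing'' is true but beside the point: the problematic region is the part of $W\setminus U$ on which $g$ is simple but $f$ is not known to be, and agreement of $f$ and $g$ on a dense subset of it does not make $f$ simple there. To close the gap you essentially need the additivity statement the paper cites (so that $\int_A g=\int_B g+\int_S g$ with the second term vacuously zero), or some other mechanism that eliminates, rather than merely shrinks, the region where only $g$ is simple.
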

\begin{proof}
	If $f$ and $g$ are bounded, then the desired result is a consequence of Theorem 3.7 of \cite{shamseddine2012}.
	
	We only need to address the case when $f$ and/or $g$ are not bounded.
	As in the proof of Theorem 3.7 of \cite{shamseddine2012}, let $B = \{ x \in A : f(x) = g(x)\}$ and $S=A\setminus B$.
	Then $B$ and $S$ are measurable, $\m(B)=\m(A)$ and $\m(S) = 0$.
	Since $f = g$ on $B$, it is clear that $\int_B f$ is defined if and only if $\int_B g$ is defined and that $\int_B f = \int_B g$.
	
	In light of Proposition 4.13 of \cite{berz+shamseddine2003}, it is sufficient to prove that for every measurable function $F: A \rightarrow \civita$ such that $\int_A F$ is defined, then $\int_S F = 0$ for every $S \subset A$ with $\m(S) = 0$.
	However, this is a consequence of the fact that, if $\m(S) = 0$, then the family $\mathcal{I}(f,S)$ introduced in Definition \ref{def integral} contains only the empty set.
\end{proof}

Even if measurable functions are not bounded, continuous measurable functions over intervals are bounded almost everywhere.

\begin{lemma}\label{lemma bounded ae}
	Let $f: I(a,b)\rightarrow \civita$ be measurable and continuous.
	Then $f$ is bounded almost everywhere in $I(a,b)$.
\end{lemma}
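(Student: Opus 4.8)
The plan is to exploit the local simplicity of a measurable function to produce local bounds via the extreme value theorem, and then to promote these local bounds to a single bound holding on a set of full measure; concretely, the goal is to exhibit $M \in \civita$ for which $\{x \in I(a,b) : |f(x)| \leq M\}$ is measurable of measure $\m(I(a,b))$.

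First I would use measurability to build a good interval cover. Applying Definition \ref{measurable functions} with $\varepsilon = d^k$ for each $k \in \N$ produces, for every $k$, a family of mutually disjoint intervals on which $f$ is simple and whose total length differs from $\m(I(a,b))$ by at most $d^k$. Collecting all these intervals over $k$ and refining them into a single countable family $\{I_n\}_{n \in \N}$ of mutually disjoint intervals (intersections and differences of intervals are finite unions of intervals, and the restriction of a weakly convergent power series to a subinterval is again a power series, hence simple), I set $B = \bigcup_n I_n$. Since $B \subseteq I(a,b)$ gives $\m(B) \leq \m(I(a,b))$ while $\m(I(a,b)) - \m(B) \leq d^k$ for every $k$, the difference has valuation larger than every rational and must vanish; thus $\m(B) = \m(I(a,b))$, so $B$ has full measure and $f$ is simple on each $I_n$.

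Next I would bound $f$ on each piece. By Proposition \ref{prop simple} the power series representing $f$ on $I_n$ extends to a simple function on the closed interval $\overline{I_n}$, and the extreme value theorem for power-series functions recalled in Section \ref{sec introcivita} guarantees that $f$ attains a maximum and a minimum there. Hence for each $n$ there is $M_n \in \civita$ with $|f(x)| \leq M_n$ for all $x \in I_n$.

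The main obstacle is passing from these countably many local bounds to one bound valid almost everywhere. One cannot simply take a supremum: in $\civita$ a countable bounded set need not possess a least upper bound, and bounded sequences need not admit convergent subsequences, so if $f$ is genuinely unbounded the $M_n$ cannot be dominated by a single element. The resolution must invoke continuity of $f$ on all of $I(a,b)$, the one hypothesis still unused. The plan is to fix a threshold $M$ and show that the union $U_M = \bigcup\{\,I_n : \sup_{I_n}|f| > M\,\}$ of the pieces carrying large values has total length that is not merely infinitesimal but \emph{null}. Here continuity enters at the point level — at each $x_0$ it yields $|f(x)| \leq |f(x_0)| + 1$ on a whole ball about $x_0$, so $f$ is locally bounded everywhere — but the delicate point, and the heart of the argument, is that by Lemma \ref{lemma monad} these balls are nonmeasurable and are never intervals, so this monad-scale local boundedness must be reconciled with the interval cover $\{I_n\}$. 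I would control the total length of $U_M$ using Lemma \ref{lemma tecnico serie a termini positivi} and Lemma \ref{lemma dimensione insiemi}: since $\sum_n l(I_n) = \m(B)$ strongly converges, if $U_M$ had non-null measure for every $M$, then Lemma \ref{lemma dimensione insiemi} would force an interval of length comparable to $\m(I(a,b))$ on which $|f|$ exceeds every $M$, contradicting the local bound supplied there by continuity and the extreme value theorem. Choosing $M$ so that the complement of $U_M$ still exhausts the full measure of $I(a,b)$ would then yield $|f| \leq M$ almost everywhere. I expect this reconciliation of monad-scale continuity with interval-based measurability to be the step requiring the most care.
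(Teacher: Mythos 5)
Your setup (a full-measure disjoint interval decomposition $B=\bigcup_n I_n$ with $f$ simple on each piece, local bounds $M_n$ from Proposition \ref{prop simple} and the extreme value theorem, and the correct observation that one cannot take a supremum of countably many bounds in $\civita$) is reasonable, and you have correctly located the difficulty. But the step you yourself flag as the heart of the argument is never carried out, and the mechanism you sketch for it does not work. You want a threshold $M$ for which $U_M=\bigcup\{I_n:\sup_{I_n}|f|>M\}$ is null, and you propose to derive a contradiction from Lemma \ref{lemma dimensione insiemi} by producing ``an interval of length comparable to $\m(I(a,b))$ on which $|f|$ exceeds every $M$.'' Applied to $U_M$, that lemma only yields, for each \emph{fixed} $M$, some interval $I_{n(M)}$ with $\lambda(l(I_{n(M)}))=\lambda(\m(U_M))$ --- not $\lambda(\m(I(a,b)))$, since $\m(U_M)$ may be far smaller --- and with $\sup_{I_{n(M)}}|f|>M$ for that one $M$. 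The index $n(M)$ varies with $M$, so no single interval on which $|f|$ exceeds every bound is produced; and $\sup_{I_n}|f|>M$ on a single $I_n$ contradicts nothing, since $f$ is bounded there by $M_n$, which may simply exceed $M$. The scenario you must exclude --- the $M_n$ cofinal in $\civita$ while every piece carrying a large value has positive length, so that $U_M$ is non-null for \emph{every} $M$ --- is essentially the content of the lemma, and your argument does not rule it out; continuity on $I(a,b)$, the hypothesis you rightly single out as essential, is in the end never brought to bear on the interval cover. (The preliminary refinement step also glosses over a genuine issue: disjointifying the covers for $\varepsilon=d^1,d^2,\dots$ into one countable interval family whose lengths sum with \emph{strong} convergence requires an argument, because increasing bounded partial sums need not converge strongly in $\civita$.)

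For comparison, the paper's proof takes a different and much shorter route: it fixes a single $\varepsilon$-cover $\{I(a_n,b_n)\}$, invokes boundedness of a weakly convergent power series on each interval (Lemma 4.7 of \cite{reexp}), uses Proposition \ref{prop simple} to obtain finite one-sided limits at all endpoints, and uses continuity only to match these limits where two intervals of the cover abut; it then concludes that $f$ is bounded on the union of that single cover and appeals to the arbitrariness of $\varepsilon$. It never introduces a threshold $M$ or attempts to exhibit one bound on one full-measure set. So your proposal is not a reconstruction of the paper's argument but an alternative strategy whose decisive step is missing.
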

\begin{proof}
	Let $\varepsilon \in \civita$, $\varepsilon > 0$ and let $\{I(a_n,b_n)\}_{n\in\N}$ be a sequence of mutually disjoint intervals satisfying conditions (1)--(4) of Definition \ref{measurable functions}.
	By Lemma 4.7 of \cite{reexp}, $f$ is bounded on each interval $I(a_n, b_n)$.
	Moreover, by Proposition \ref{prop simple}, for every $n\in\N$ $\lim_{x\rightarrow a_n^+} f(x) \in \civita$ and $\lim_{x\rightarrow b_n^-} f(x) \in \civita$, i.e. both limits exist and are finite.
	By continuity of $f$, whenever $a_m = b_n$ for some $m, n \in \N$, then $\lim_{x\rightarrow a_m^-} f(x) = \lim_{x \rightarrow b_n^+} f(x) \in \civita$.
	As a consequence, $f$ is bounded on $\bigcup_{n\in\N} I(a_n, b_n)$.
	By the arbitrariness of $\varepsilon$, $f$ is bounded almost everywhere on $I(a,b)$.
\end{proof}

Thanks to the previous result, we can show that the antiderivative of a continuous, measurable function $f$ can be defined from the integral of $f$.

\begin{proposition}\label{proposition fundamental theorem of calculus}
	Let $f: I(a,b)\rightarrow \civita$ be measurable and continuous.
	Then the function $F: I(a,b) \rightarrow \civita$ defined by $F(x) = \int_{I(a,x)} f$ is well-defined and measurable.
	Moreover, $F'(x) = f(x)$ for almost every $x \in I(a,b)$.
\end{proposition}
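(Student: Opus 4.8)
The plan is to prove the three assertions—that $F$ is well-defined, that $F$ is measurable, and that $F' = f$ almost everywhere—in that order, leaning on the integration theory for simple functions together with the bounded-almost-everywhere property of Lemma \ref{lemma bounded ae}.

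First I would establish that $F$ is well-defined, i.e.\ that $\int_{I(a,x)} f$ exists and is finite for every $x \in I(a,b)$. For each such $x$, the restriction of $f$ to $I(a,x)$ is again measurable and continuous, so by Lemma \ref{lemma bounded ae} it is bounded almost everywhere on $I(a,x)$. By Proposition \ref{proposizione uguali q.o.} the integral is unaffected by the null set on which $f$ may be unbounded, so $\int_{I(a,x)} f$ coincides with the integral of a bounded measurable function and is therefore defined and finite by the theory of \cite{berz+shamseddine2003,shamseddine2012}. This yields $F(x) \in \civita$ for all $x$.

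Next, for the measurability of $F$, I would reuse the very family of intervals that witnesses the measurability of $f$. Fix $\varepsilon > 0$ and a sequence $\{I_n\}_{n\in\N}$ of mutually disjoint intervals satisfying conditions (1)--(4) of Definition \ref{measurable functions} for $f$. On each $I_n$ the function $f$ is simple, hence a weakly convergent power series with a simple antiderivative $G_n$ (condition (2) of Definition \ref{simple functions}). For $a < y < x$ in $I_n$, additivity of the integral over disjoint sets gives $F(x) - F(y) = \int_{I(y,x)} f$, and since $f$ is simple on $I(y,x) \subseteq I_n$, Definition \ref{def integral} for simple functions yields $\int_{I(y,x)} f = G_n(x) - G_n(y)$. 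Fixing a reference point $y_0 \in I_n$ then shows $F = G_n + c_n$ on $I_n$ for the constant $c_n = F(y_0) - G_n(y_0)$; as $G_n$ is a power series and adding a constant preserves this property, $F$ is simple on each $I_n$. Thus the same family $\{I_n\}_{n\in\N}$ satisfies conditions (1)--(4) of Definition \ref{measurable functions} for $F$ as well, proving $F \in \meas(I(a,b))$.

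Finally, for the identity $F' = f$, the computation $F = G_n + c_n$ on each $I_n$ already gives $F'(x) = G_n'(x) = f(x)$ pointwise on $\bigcup_n I_n$, since power series are differentiated term by term. The delicate point is the qualifier \emph{almost everywhere}: one cannot simply take a countable union of the sets $\bigcup_n I_n$ over a sequence $\varepsilon \to 0$ and conclude, because by Lemma \ref{lemma countable union} such a union need not be measurable. I would instead invoke Proposition \ref{measurable implies locally simple} directly, which provides a measurable set $B \subseteq I(a,b)$ with $\m(B) = \m(I(a,b))$ on which $f$ is locally simple; repeating the additivity argument on a simple neighbourhood of each $x \in B$ shows $F'(x) = f(x)$ there, so $F' = f$ holds on the full-measure measurable set $B$, which is exactly the assertion of Definition \ref{def ae}. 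The main obstacle is precisely this measurability-of-the-exceptional-set issue, and appealing to Proposition \ref{measurable implies locally simple} rather than constructing the full-measure set by hand is the cleanest way around it.
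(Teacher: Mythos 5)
Your proposal is correct. The first two parts (well-definedness via Lemma \ref{lemma bounded ae} and Proposition \ref{proposizione uguali q.o.}, and measurability by showing $F$ differs from an antiderivative of $f$ by a constant on each $I_n$) follow the paper's proof essentially verbatim. Where you genuinely diverge is the last step, $F'=f$ a.e. The paper re-derives the differentiation from scratch: it writes $\frac{F(x+h)-F(x)}{h}=\frac{1}{h}\int_{[x,x+h]}f$, traps this between $m_h=\min_{[x,x+h]}f$ and $M_h=\max_{[x,x+h]}f$ (which exist by the extreme value theorem for analytic functions), and applies the squeeze theorem for strong convergence; it then concludes ``almost everywhere'' simply by the arbitrariness of $\varepsilon$. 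You instead observe that $F=G_n+c_n$ already gives $F'=G_n'=f$ on each $I_n$, and you handle the a.e.\ qualifier by passing to the full-measure measurable set supplied by Proposition \ref{measurable implies locally simple} (implicitly finishing with Lemma \ref{lemma A c B c C} to get measurability of $\{x:F'(x)=f(x)\}$). Both routes work. Yours is shorter and makes the measurability of the exceptional set explicit, which the paper leaves terse; note, though, that the paper's ``arbitrariness of $\varepsilon$'' can also be justified directly, since $\{x:F'(x)=f(x)\}$ admits the inner approximations $\bigcup_n I_n$ (of measure within $\varepsilon$ of $b-a$) and the outer approximation $I(a,b)$ itself, so it satisfies Definition \ref{measurable sets} without any countable-union pathology. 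One small point common to both arguments: at an endpoint of an $I_n$ (or of a one-sided simple neighbourhood) only a one-sided derivative is obtained, but such points form a null set and do not affect the conclusion.
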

\begin{proof}
	We begin our proof by showing that $F(x)$ is defined for every $x\in I(a,b)$.
	By Proposition 3.8 of \cite{berz+shamseddine2003}, $f$ is measurable on each set of the form $I(a,x)$ for $x\in I(a,b)$.
	Since $f$ is continuous, by Lemma \ref{lemma bounded ae} it is bounded almost everywhere in $I(a,b)$.
	Thus there exists a bounded measurable function $g$ over $I(a,b)$ such that $g(x)=f(x)$ for almost every $x \in I(a,b)$.
	Corollary 4.6 of \cite{berz+shamseddine2003} entails that $\int_{I(a,x)} g$ is defined for every $x \in I(a,b)$.
	By Proposition \ref{proposizione uguali q.o.}, we conclude that $F(x)=\int_{I(a,x)} f$ is also defined and not infinite for every $x\in I(a,b)$.
	
	We will now prove that $F$ is measurable on $I(a,b)$.
	Let $\varepsilon \in \civita$, $\varepsilon > 0$ and let $\{I_n\}_{n\in\N}$ be a sequence of mutually disjoint intervals satisfying conditions (1)--(4) of Definition \ref{measurable functions} for $f$.
	Then for every $n \in \N$ the restriction of $f$ to $I_n$ is analytic.
	As a consequence, for every $n\in\N$ there exists an antiderivative $F_n$ of $f$ such that for every $c, d \in I_n$, $c<d$, $\int_{[c,d]_\civita} f = F_n(d) - F_n(c)$.
	Since $\int_{[c,d]_\civita} f = \int_{[a,d]_\civita} f - \int_{[a,c]_\civita} f$, $F-F_n$ is constant over $I_n$, so that $F$ is a difference of simple functions. By Lemma \ref{3.13}, $F$ is also simple on each of the intervals $I_n$.
	We conclude that $F$ satisfies Definition \ref{measurable functions}, hence it is measurable.
	
	In order to prove that $F'(x) = f$ for almost every $x \in I(a,b)$, consider $\varepsilon$ and $\{I_n\}_{n\in\N}$ as in the previous part of the proof.
	Let also $x \in I_n$ such that there exist $h_x > 0$ that satisfies the inclusion $[x,x+h]\subset I_n$.
	As a consequence, if $0<h<h_x$,
	$$
		\frac{F(x+h)-F(x)}{h} = \frac{1}{h} \int_{[x,x+h]} f.
	$$
	Let $m_h = \min_{y\in[x,x+h]}\{f(y)\}$ and $M_h = \max_{y\in[x,x+h]}\{f(y)\}$.
	Notice that $m_h$ and $M_h$ exist by Theorem 3.3 of \cite{evt}.
	Moreover, continuity of $f$ entails that $\slim_{h\rightarrow0^+} m_h = \slim_{h\rightarrow0^+} M_h = f(x)$.
	However,
	$$
		m_h \leq \frac{1}{h} \int_{[x,x+h]} f \leq M_h,
	$$
	so that, by the squeeze theorem for strong convergence,
	$$
		\slim_{h\rightarrow0^+} \frac{F(x+h)-F(x)}{h}
		=
		\slim_{h\rightarrow0^+} \frac{1}{h} \int_{[x,x+h]} f
		=
		f(x).
	$$
	A similar argument applies to the case when there exist $h_x > 0$ that satisfies the inclusion $[x-h,x]\subset I_n$.
	Thus $F'(x) = f(x)$ for every $x \in \bigcup_{n\in\N} I_n$.
	By the arbitrariness of $\varepsilon$, we conclude that $F'(x) = f(x)$ almost everywhere in $I(a,b)$.
\end{proof}

We have recalled that Shamseddine and Berz proved that bounded measurable functions always have a well-defined integral. However, there are some unbounded measurable functions whose integral is infinite or undefined.

\begin{example}\label{example infty and nonmeas}
	Let $\{I_n\}_{n\in\N}$ be a sequence of mutually disjoint intervals satisfying
	\begin{itemize}
		\item $I_n \ne \emptyset$ for all $n \in \N$;
		\item $l(I_n)<d^{-n}$ for all $n \in\N$.
	\end{itemize}
	Let $A = \bigcup_{n \in \N}I_n$: by our choice of $\{I_n\}_{n\in\N}$, $A$ is measurable.
	
	Define $f: A \rightarrow \civita$ by $f(x) = \frac{1}{l(I_n)^2}$ whenever $x \in I_n$.
	Then $f$ is measurable by definition, but
	$$\int_A f(x) = \sum_{n \in \N} \int_{I_n} f(x) = \sum_{n \in \N} \frac{l(I_n)}{l(I_n)^2} = +\infty.$$
	
	Now define $g: A \rightarrow \civita$ by $g(x) = \frac{1}{l(I_n)}$ whenever $x \in I_n$.
	Then $g$ is also measurable, but
	$$\int_A g(x) = \sum_{n \in \N} \int_{I_n} g(x) = \sum_{n \in \N} \frac{l(I_n)}{l(I_n)}$$
	and the last series of the above equality is undefined, since it neither converges strongly nor diverges.
\end{example}

The existence of measurable functions with infinite or undefined integral is one of the motivations for the introduction of the $\L^p$ spaces over the Levi-Civita field.

\section{$\L^p$ spaces on the Levi-Civita field}\label{sec lp}

We have seen in the previous section that, according to Definitions \ref{measurable functions} and \ref{def integral}, some measurable functions do not have a well-defined integral or have an infinite integral.
This feature suggests that it would be meaningful to introduce spaces of functions whose $p$-th power is measurable and has a finite integral.
In order to do so, we need to be able to identify functions that are equal almost everywhere, as in the case of real Lebesgue measurable functions.
This identification can be successfully introduced as a consequence of Proposition \ref{proposizione uguali q.o.} and of the nontrivial fact that equality almost everywhere is an equivalence relation.

\begin{proposition}\label{prop = q.o. equivalenza}
	Let $A \subset \civita$ be a measurable set.
	The relation $\cong$ over $\meas(A)$ defined by $f \cong g$ iff $f=g$ a.e. is an equivalence relation.
\end{proposition}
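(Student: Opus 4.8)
The plan is to verify the three defining properties of an equivalence relation, recalling from Definition \ref{def ae} that $f \cong g$ means precisely that the set $T_{f=g} = \{x \in A : f(x) = g(x)\}$ is measurable and satisfies $\m(T_{f=g}) = \m(A)$. Reflexivity is immediate, since $T_{f=f} = A$ is measurable with $\m(A) = \m(A)$. Symmetry is equally clear, as the condition $f(x) = g(x)$ is symmetric in $f$ and $g$, so that $T_{f=g} = T_{g=f}$ as sets.

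The only substantial point is transitivity. Suppose $f \cong g$ and $g \cong h$; I must show $f \cong h$. Writing $T_1 = \{x \in A : f(x) = g(x)\}$ and $T_2 = \{x \in A : g(x) = h(x)\}$, both sets are measurable with measure $\m(A)$ by hypothesis. The naive real-analysis argument — passing to complements and invoking that a subset of a null set is null together with subadditivity — is unavailable here: as emphasized before Definition \ref{def ae}, in the Levi-Civita setting the statement that $\{x : f(x)=h(x)\}$ has full measure is strictly stronger than the statement that its complement is null, and the family of measurable sets is not even an algebra, so one cannot freely take complements. I therefore argue directly with the sets on which the equalities hold.

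The main obstacle is to establish that $T_3 = \{x \in A : f(x) = h(x)\}$ is measurable at all; once this is known, its measure will follow from a sandwich argument. First I observe the inclusion $T_1 \cap T_2 \subseteq T_3 \subseteq A$, which holds because $f(x) = g(x)$ and $g(x) = h(x)$ together force $f(x) = h(x)$. Since the properties ``$f = g$'' and ``$g = h$'' each hold almost everywhere on $A$, Lemma \ref{lemma prop a.e.} applied to their conjunction shows that $T_1 \cap T_2$ is measurable and $\m(T_1 \cap T_2) = \m(A)$.

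Now the triple $T_1 \cap T_2 \subseteq T_3 \subseteq A$ consists of a measurable lower bound and a measurable upper bound of equal measure $\m(A)$. Applying Lemma \ref{lemma A c B c C} with the lemma's $B$ taken to be $T_1 \cap T_2$, its $C$ taken to be $T_3$, and its ambient set taken to be $A$, I conclude at once that $T_3$ is measurable and that $\m(T_3) = \m(A)$. Hence $f \cong h$, which completes the verification of transitivity and therefore the proof that $\cong$ is an equivalence relation.
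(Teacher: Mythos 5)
Your proof is correct and follows essentially the same route as the paper: both establish transitivity via the inclusion $T_1 \cap T_2 \subseteq T_3 \subseteq A$, apply Lemma \ref{lemma prop a.e.} to get that $T_1 \cap T_2$ is measurable of full measure, and then invoke Lemma \ref{lemma A c B c C} to conclude. Your added remark on why the complement-and-null-sets argument is unavailable is a useful clarification but does not change the argument.
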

\begin{proof}
	Clearly, $\cong$ is reflexive and symmetric.
	We only need to prove that it is transitive.
	Let $f_1 \cong f_2$ and $f_2 \cong f_3$ in $\meas(A)$.
	Define $A_1 = \{ x \in A : f_1(x) = f_2(x) \}$, $A_2 = \{ x \in A : f_2(x) = f_3(x) \}$ and $A_3 = \{ x \in A : f_3(x) = f_1(x) \}$
	Then $A_1 \cap A_2 \subseteq A_3 \subseteq A$. By Lemma \ref{lemma prop a.e.}, $A_1 \cap A_2$ is measurable and $\m(A_1 \cap A_2)=\m(A)$.
	Lemma \ref{lemma A c B c C} ensures then that $A_3$ is measurable and $\m(A_3) = \m(A)$, so that $f_1 = f_3$ a.e. on $A$.
\end{proof}

From now on, we will identify a measurable function $f:A\rightarrow\civita$ with the equivalence class
$$
[f] = \{ g : A\rightarrow\civita : f(x)=g(x) \text{ for almost every } x\in A\}
$$
and, with an abuse of notation, we will often write $f$ instead of $[f]$.

As a consequence of the above identification, it is possible to refine the notion of measurable function with the introduction of $L^p$ spaces.

\begin{definition}\label{def lp}
	Let $A \subset \civita$ be a measurable set.
	
	If $1\leq p < \infty$, we define
	$$\L^p(A)=\left\{ [f] : f\in \meas(A), f^p\in \meas(A), \int_A |f|^p \text{ is defined and } \int_A |f|^p  < + \infty \right\}.$$
	If $f \in \L^p(A)$, then we define $\norm{f}_p = \left(\int_A |f|^p \right)^{1/p}$.
	
	For $p=\infty$, we define a function $\norm{\cdot}_\infty : \meas(A) \rightarrow \civita \cup \{\text{undefined}\}$ by posing
	$$\norm{f}_\infty = \inf_{c\in\civita}\{|f(x)|\leq c \text{ for almost every } x\in A\}$$
	and we define $\L^\infty$ as the set of measurable functions with a well-defined $\infty$-norm:
	$$\L^\infty(A)=\left\{ [f] : f\in \meas(A) \text{ and } \norm{f}_\infty \ne \text{ undefined} \right\}.$$
\end{definition}

Notice that the functions $\norm{\cdot}_p$ are well-defined thanks to Proposition \ref{proposizione uguali q.o.} and to Proposition \ref{prop = q.o. equivalenza}.

\begin{remark}
	It is possible to extend Definition \ref{def lp} by considering the vector space over $\mathcal{C}$ of $\mathcal{C}$-valued measurable functions, where $\mathcal{C}$ denotes the complexification of $\civita$.
	For the properties of $\mathcal{C}$, we refer to \cite{berz,calculusnumerics,analysislcf,convergence} and references therein.
\end{remark}

\begin{remark}\label{remark linfty}
If a function belongs to $\L^\infty$, then it is essentially bounded, but the converse is in general false.
In fact, it is not possible to define the $\L^\infty$ space and the function $\norm{\cdot}_\infty$ by defining
$$\L^\infty(A)=\left\{ [f] : f \in \meas(A) \text{ and } f \text{ is essentially bounded} \right\}$$
and then posing
$$\norm{f}_\infty = \inf_{c\in\civita}\{|f(x)|\leq c \text{ for almost every } x\in A\}$$
for all $f \in \L^\infty(A)$.
Indeed, with such a definition $\norm{f}_\infty$ would not be defined for all functions in $\L^\infty$.

A counterexample is constructed as follows: let $I_n = \left[n-{d^n}, n+{d^n}\right]$.
Then $A=\bigcup_{n \in \N} I_n$ is measurable, and the function $f: A \rightarrow \civita$ defined by $f_{|I_n} = n$ is measurable and essentially bounded, since $|f(x)|<d^{-1}$ for all $x \in A$.
However, $\norm{f}_\infty$ is not well-defined, since 
$$\{c \in \civita: f(x) < c \text{ for almost every } x \in A\} = \{c \in \civita: \lambda(c)<0\},$$
and it is well-known that the latter set is bounded from below but has no infimum.
\end{remark}

\begin{remark}\label{lp ne measurable}
	The distinction between $\L^p$ functions and measurable ones is significant, i.e. $\L^p(A) \ne \meas(A)$.
	For instance, let $\{I_n\}_{n\in\N}$, $A$ and $g$ be defined as in Example \ref{example infty and nonmeas}.
	We have already argued that $g$ is measurable, but $g \not \in \L^p(A)$ for any $p \geq 1$, since $g$ is not bounded and
	$$\int_A |g(x)|^p = \sum_{n \in \N} \int_{I_n} |g(x)|^p = \sum_{n \in \N} \frac{l(I_n)}{l(I_n)^p},$$
	and the latter sum is undefined if $p=1$ and diverges $+\infty$ if $p > 1$.
	Similarly, if $a > 1$, $g^{\frac{1}{a}} \in \L^p(A)$ for all $1\leq p < a$.
\end{remark}

Many properties of the $\L^p$ spaces are a consequence of the following H\"older's inequality, that is similar to the one valid for real $L^p$ spaces.

\begin{lemma}[H\"older's inequality]\label{holder inequality}
	Suppose that $A \subset \civita$ is measurable.
	For every $1\leq p \leq q \leq \infty$ with $1/p+1/q=1$ (allowing also for $p=1$ and $q=\infty$), for every $f \in \L^p(A)$ and $g\in L^q(A)$, then $fg\in L^1(A)$ and
	$$
		\norm{fg}_1 \leq \norm{f}_p \norm{g}_q.
	$$
\end{lemma}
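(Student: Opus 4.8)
The plan is to follow the classical proof of Hölder's inequality, adapted to the $\civita$-valued integral, splitting into the boundary case $p=1$, $q=\infty$ and the generic case $1<p<\infty$. Throughout I read $|f|^p$ and $|g|^q$ as the measurable functions supplied by the definitions of $\L^p(A)$ and $\L^q(A)$, and I first record that $fg\in\meas(A)$: since the simple functions form an algebra (condition (1) of Definition \ref{simple functions}), on the common refinement of two families of intervals witnessing the measurability of $f$ and of $g$ the product $fg$ is again a product of simple functions, hence simple; because the intersection of two families of almost-full measure is still of almost-full measure, this refinement witnesses $fg\in\meas(A)$.

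For the boundary case $p=1$, $q=\infty$, the hypothesis $g\in\L^\infty(A)$ means that $\norm{g}_\infty$ is defined and $|g(x)|\le\norm{g}_\infty$ for almost every $x\in A$. Hence $|f(x)g(x)|\le\norm{g}_\infty\,|f(x)|$ almost everywhere, so that $fg\in\L^1(A)$ with $\int_A|fg|\le\norm{g}_\infty\int_A|f|=\norm{f}_1\norm{g}_\infty$ once the comparison principle discussed below is in hand.

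For the generic case $1<p<\infty$, I dispose first of the degenerate subcase: if $\norm{f}_p=0$ then $f=0$ a.e.\ (Proposition \ref{proposizione uguali q.o.}), so $fg=0$ a.e.\ and the inequality is trivial, and likewise if $\norm{g}_q=0$. Otherwise both norms are strictly positive elements of $\civita$, and I normalise by setting $F=f/\norm{f}_p$ and $G=g/\norm{g}_q$, so that $\int_A|F|^p=\int_A|G|^q=1$ by linearity of the integral. The heart of the argument is the pointwise Young inequality $xy\le p^{-1}x^p+q^{-1}y^q$ for non-negative $x,y\in\civita$; granting it, $|F(x)G(x)|\le p^{-1}|F(x)|^p+q^{-1}|G(x)|^q$ for almost every $x$, and integrating the right-hand member (which is integrable) yields $\int_A|FG|\le p^{-1}+q^{-1}=1$. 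Undoing the normalisation gives $\int_A|fg|=\norm{f}_p\norm{g}_q\int_A|FG|\le\norm{f}_p\norm{g}_q$, and in particular $fg\in\L^1(A)$.

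Two points carry the real weight of the proof. The first is Young's inequality itself: for a fixed rational pair $(p,q)$ the statement $\forall x,y\ge0\,(xy\le p^{-1}x^p+q^{-1}y^q)$ can be rephrased, after clearing the $n$-th roots that encode the rational powers, as a first-order sentence in the language of ordered fields; since it holds in $\R$ and $\civita$ is real closed, hence elementarily equivalent to $\R$, it transfers to $\civita$ (this is legitimate even though, as stressed in the introduction, there is no transfer of arbitrary \emph{functions} from $\R$ to $\civita$). Alternatively one can argue directly inside $\civita$ by minimising $a\mapsto p^{-1}a^p+q^{-1}y^q-ay$ using the extreme and mean value theorems available for power-series functions. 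Some care is needed to interpret $x^p$ for irrational $p$, since $x^p\in\civita$ forces $p\lambda(x)\in\Q$; this is exactly the constraint already built into the requirement $f^p\in\meas(A)$ in Definition \ref{def lp}. The second delicate point is the passage from the pointwise bound to the integral bound: I must know that a non-negative measurable function dominated almost everywhere by an integrable one is itself integrable with no larger integral. Bounded measurable functions are integrable by Corollary 4.6 of \cite{berz+shamseddine2003}, but $|fg|$ need not be bounded, so this comparison principle for the $\civita$-integral — which does not come for free, the integral being a limit of partial sums that may fail to exist — is where I expect the main obstacle to lie, and it is the step I would establish most carefully.
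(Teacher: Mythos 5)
Your proposal follows essentially the same route as the paper's proof: dispose of the degenerate case where a norm vanishes, settle $p=1$, $q=\infty$ via the essential bound on $g$ (the paper cites Corollary 4.12 of \cite{berz+shamseddine2003}), and for $1<p<\infty$ apply the pointwise Young inequality in $\civita$ and integrate after normalising by $\norm{f}_p\norm{g}_q$. The monotonicity-of-the-integral step you rightly flag as the main obstacle is exactly what the paper discharges by citing Corollary 4.11 of \cite{berz+shamseddine2003}, so your outline is correct and complete once that reference is invoked.
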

\begin{proof}
	If $fg(x)=0$ for almost every $x \in A$, then the proof is trivial.
	Suppose then that $fg(x) \ne 0$ on a measurable set of positive measure.
	
	If $p=1$ and $q=\infty$, then the result is a consequence of Corollary 4.12 of \cite{berz+shamseddine2003}.
	
	Otherwise, notice that the Young's inequality $ab\leq \frac{a^p}{p} + \frac{b^q}{q}$ is true for every $a, b \in \civita$.
	As a consequence, the usual proof for the H\"older inequality still applies to this setting: since $\norm{f}_p \ne 0$ and $\norm{g}_q \ne 0$ by hypothesis, then for every $x \in A$
	$$
		\frac{|f(x)g(x)|}{\norm{f}_p \norm{g}_q} \leq \frac{|f(x)|^p}{p\norm{f}_p^p} + \frac{|g(x)|^q}{q\norm{g}_q^q}.
	$$
	Corollary 4.11 of \cite{berz+shamseddine2003} entails the inequality
	$$
		\frac{1}{\norm{f}_p\norm{g}_q}\cdot \int_A |f(x)g(x)|  \leq  \frac{1}{\norm{f}_p^p} \cdot \int_A \frac{|f(x)|^p}{p}  + \frac{1}{\norm{g}_q^q} \cdot \int_A \frac{|g(x)|^q}{q} = 1,
	$$
	that is equivalent to the H\"older's inequality.
\end{proof}

\begin{corollary}\label{corollary norm}
	For all measurable sets $A\subset \civita$ and for all $1 \leq p \leq \infty$, the maps $\norm{\cdot}_p : \L^p(A) \rightarrow \civita$ are norms.
\end{corollary}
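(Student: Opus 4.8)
The plan is to verify the three defining properties of a norm—non-negativity together with positive definiteness, absolute homogeneity, and the triangle inequality—treating the triangle inequality (Minkowski's inequality) as the main content and extracting it from H\"older's inequality (Lemma \ref{holder inequality}). Absolute homogeneity is immediate: for $1 \leq p < \infty$ the scalar linearity of the integral recorded after Definition \ref{def integral} gives $\int_A |\alpha f|^p = |\alpha|^p \int_A |f|^p$, and since $\civita$ is real closed the positive $p$-th root exists and is multiplicative, so $\norm{\alpha f}_p = |\alpha|\,\norm{f}_p$; for $p = \infty$ the identity follows directly from the definition of the essential supremum. That $\norm{f}_p \geq 0$ is clear because the integrand is non-negative.

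For positive definiteness I would need the nontrivial implication $\norm{f}_p = 0 \Rightarrow f = 0$ in $\L^p(A)$, that is, $f = 0$ almost everywhere in the sense of Definition \ref{def ae} (the converse being immediate from Proposition \ref{proposizione uguali q.o.}). Choosing a family $\{I_n\}_{n \in \N}$ as in Definition \ref{measurable functions} on which $f$ is simple and realizing the integral, so that $\int_A |f|^p = \sum_{n \in \N} \int_{I_n} |f|^p = 0$, we have a strongly convergent series of non-negative terms with sum $0$; Lemma \ref{lemma tecnico serie a termini positivi} forces $\int_{I_n} |f|^p = 0$ for every $n$. On each $I_n$ the function $|f|^p$ is continuous and non-negative, so a fundamental-theorem-of-calculus and squeeze argument of the type used in Proposition \ref{proposition fundamental theorem of calculus} shows it vanishes identically on $I_n$; hence $f = 0$ on $\bigcup_n I_n$, a set of full measure, giving $f = 0$ a.e.

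The heart of the argument is the triangle inequality. First I would record that $\L^p(A)$ is closed under addition: from $|f+g|^p \leq 2^p(|f|^p + |g|^p)$ and monotonicity of the integral one gets $\int_A |f+g|^p \leq 2^p(\norm{f}_p^p + \norm{g}_p^p) < \infty$, so that $f+g \in \L^p(A)$ and the statement is meaningful. For $1 < p < \infty$ I would then run the classical Minkowski computation, writing $\norm{f+g}_p^p = \int_A |f+g|\,|f+g|^{p-1} \leq \int_A |f|\,|f+g|^{p-1} + \int_A |g|\,|f+g|^{p-1}$ and applying H\"older (Lemma \ref{holder inequality}) to each summand with the conjugate exponents $p$ and $p' = p/(p-1)$. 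Using the exponent identity $(p-1)p' = p$, one has $\norm{|f+g|^{p-1}}_{p'} = \norm{f+g}_p^{p-1}$, whence $\norm{f+g}_p^p \leq (\norm{f}_p + \norm{g}_p)\,\norm{f+g}_p^{p-1}$, and dividing by $\norm{f+g}_p^{p-1}$ yields the claim. The cases $p=1$ and $p=\infty$ follow directly from linearity of the integral and from the definition of $\norm{\cdot}_\infty$.

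The main obstacle is ensuring that these manipulations, all routine over $\R$, remain valid in the non-Archimedean ordered field $\civita$. Two points require care. First, the division step must be preceded by disposing of the trivial case $\norm{f+g}_p = 0$ (covered by the positive definiteness already established), so that $\norm{f+g}_p^{p-1}$ is a nonzero, hence invertible, element of the field $\civita$. Second, the identity $\norm{|f+g|^{p-1}}_{p'}^{p'} = \int_A |f+g|^p$ relies on the power arithmetic $(p-1)p' = p$ together with the fact that $|f+g|^{p-1}$ is again a measurable function of finite $p'$-norm, which I would justify via the local-simplicity structure of measurable functions (Proposition \ref{measurable implies locally simple}) and the bound already obtained for $\int_A |f+g|^p$. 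Once these non-Archimedean bookkeeping issues are dispatched, H\"older's inequality carries the entire weight of the proof.
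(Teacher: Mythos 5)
Your proposal is correct and follows essentially the same route as the paper: the paper's proof is a short sketch that derives the triangle inequality from H\"older's inequality (Lemma \ref{holder inequality}) and linearity of the integral exactly as in the classical Minkowski argument, obtains homogeneity from linearity, and attributes positive definiteness to the identification of a.e.\ equal functions. Your write-up supplies the details the paper omits (including a self-contained definiteness argument via Lemma \ref{lemma tecnico serie a termini positivi} and continuity of simple functions, where the paper instead defers to Theorem 3.7 of an external reference), and correctly flags the only genuinely non-Archimedean points, namely invertibility of the nonzero element $\norm{f+g}_p^{p-1}$ of $\civita$ and the measurability and exponent arithmetic behind $\norm{|f+g|^{p-1}}_{p'}$.
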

\begin{proof}
	We need to prove that, for all $f, g \in \L^p$ and for all $a \in \civita$,
	\begin{enumerate}
		\item $\norm{f+g}_p \leq \norm{f}_p + \norm{g}_p$;
		\item $\norm{af}_p = |a|\norm{f}_p$;
		\item $\norm{f}_p = 0$ iff $f=0$.
	\end{enumerate}
	
	Thanks to linearity of the integral on the Levi-Civita field, established in Proposition 4.3 of \cite{berz+shamseddine2003}, and thanks to the H\"older's inequality established in Lemma \ref{holder inequality}, the proof of property 1 is analogous to the proof that the norms of the usual $L^p$ spaces are subadditive.
	
	Property 2 can be obtained from the definition of integral and by Proposition 4.3 of \cite{berz+shamseddine2003}.
	
	Property 3 is a consequence of the identification of functions a.e.\ equal and of Theorem 3.7 of \cite{shamseddine2012}.
\end{proof}

A result by Shamseddine and Berz entails an inclusion between the $\L^p$ spaces.

\begin{lemma}\label{inclusion Linfty Lp}
	For all measurable sets $A\subset \civita$, if $f$ is bounded then $f\in \L^p(A)$ for all $1 \leq p$.
	As a consequence, $\L^\infty(A)\subseteq \L^p(A)$ for all $1 \leq p < \infty$.
\end{lemma}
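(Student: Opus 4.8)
The plan is to reduce everything to the single fact, due to Shamseddine and Berz, that a bounded measurable function always has a well-defined and finite integral (Corollary 4.6 of \cite{berz+shamseddine2003}). First I would fix $1 \leq p < \infty$ and a bounded $f$, so that there is $c \in \civita$, $c > 0$, with $|f(x)| \leq c$ for almost every $x \in A$; by Proposition \ref{proposizione uguali q.o.} I may replace $f$ by an everywhere-bounded representative of its class without changing measurability or any integral. To conclude $f \in \L^p(A)$ I must verify the three requirements of Definition \ref{def lp}: that $|f|^p$ is measurable, that $\int_A |f|^p$ is defined, and that it is finite.

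For the measurability of $|f|^p$ I would argue locally. Since $f$ is measurable it is locally simple almost everywhere (Proposition \ref{measurable implies locally simple}), so on each interval $I_n$ of a covering as in Definition \ref{measurable functions} the restriction of $f$ is a convergent power series, and hence so is $f^2 \geq 0$. On the portion of $I_n$ where $f$ does not vanish, $|f|^p = (f^2)^{p/2}$ is a composition of the analytic map $t \mapsto t^{p/2}$ with the simple function $f^2$, and is therefore again simple (cf.\ \cite{analytic1}); refining the covering accordingly shows that $|f|^p$ satisfies Definition \ref{measurable functions}. This is the step I expect to be the main obstacle: it requires controlling the zero set of a nonzero analytic function on $\civita$ and making sense of the real power $t \mapsto t^{p/2}$ for non-integer $p$, so that $|f|^p$ is genuinely locally simple on a set of full measure.

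Once $|f|^p$ is known to be measurable, the rest is immediate: from $|f| \leq c$ I get $0 \leq |f|^p \leq c^p$, so $|f|^p$ is a bounded measurable function, whence $\int_A |f|^p$ is defined by Corollary 4.6 of \cite{berz+shamseddine2003}, and monotonicity of the integral (Corollary 4.11 of \cite{berz+shamseddine2003}) gives $\int_A |f|^p \leq c^p\, \m(A) < +\infty$. This yields $f \in \L^p(A)$. For the stated consequence I would note that every $f \in \L^\infty(A)$ has a defined $\norm{f}_\infty$, so $|f| \leq \norm{f}_\infty$ almost everywhere; thus $f$ agrees almost everywhere with a bounded function, and the first part together with Proposition \ref{proposizione uguali q.o.} gives $f \in \L^p(A)$ for every $1 \leq p < \infty$, i.e.\ $\L^\infty(A) \subseteq \L^p(A)$.
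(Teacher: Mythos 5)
Your proposal follows the paper's own route: everything is reduced to the bound $\int_A |f|^p \leq M^p\, \m(A)$, which is finite because measurable sets have finite measure, and the inclusion $\L^\infty(A)\subseteq\L^p(A)$ is read off by taking $M=\norm{f}_\infty$. (The paper quotes Corollary 4.12 of the Shamseddine--Berz paper where you combine Corollaries 4.6 and 4.11; that is immaterial.) The place where you go beyond the paper is the measurability of $|f|^p$: the paper disposes of it with ``it is easy to check'', whereas you offer an argument and correctly flag it as the main obstacle. Unfortunately, the argument you sketch does not close that step for non-integer $p$.

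The difficulty is that $t\mapsto t^{p/2}$ is not entire: its power series at $t_0>0$ has radius of convergence $|t_0|$, so when the simple function $f^2$ takes values of more than one order of magnitude on an interval, the composition $(f^2)^{p/2}$ need not be simple there, and the offending region cannot always be excised at infinitesimal cost. Concretely, take $A=[0,1]_\civita$, $f(x)=x$ and $p=3/2$, so $|f|^p=x^{3/2}$. Any weakly convergent power series agreeing with $x^{3/2}$ on an interval squares to $x^3$, hence (by the identity theorem and uniqueness of formal square roots) must be, up to sign, the binomial series $x_0^{3/2}\sum_n\binom{3/2}{n}\left((x-x_0)/x_0\right)^n$. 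This series fails to converge weakly at any $x$ with $x/x_0\approx 0$: writing $t=-1+h$ with $h\in M_o$, $h>0$, the $[2\lambda(h)]$-component of the partial sums $\sum_{n\leq N}\binom{3/2}{n}t^n$ contains the divergent piece $(h[\lambda(h)])^2\sum_{n\leq N}\binom{3/2}{n}(-1)^n\binom{n}{2}$, whose terms are positive and of size $n^{-1/2}$. Hence $x^{3/2}$ is simple on $[\alpha,\beta]$ only when $\alpha/\beta\not\approx 0$; since a strongly convergent sum of interval lengths has only finitely many non-infinitesimal terms (Lemma \ref{lemma tecnico serie a termini positivi}), every covering by such intervals misses an entire interval $[0,\rho]$ with $\rho$ a positive real, up to an infinitesimal correction, and condition (3) of Definition \ref{measurable functions} fails for infinitesimal $\varepsilon$. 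So your composition step (and, to be fair, the paper's ``easy to check'') breaks down for fractional exponents, and with Definition \ref{def lp} requiring $f^p\in\meas(A)$ this puts the statement itself in doubt outside the integer case. For integer $p$ your plan does work, and more simply than you present it: $|f|^p$ is $f^p$ for even $p$ and $|f|\cdot f^{p-1}$ for odd $p$, so on each interval of a covering for $f$ one only needs to delete the finitely many zeros of the nonzero power series representing $f$ there, which costs arbitrarily little measure. I would either restrict the exponent accordingly (integer $p$ covers every later use of the lemma) or supply a genuinely different argument for the measurability of $|f|^p$.
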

\begin{proof}
	Recall that if $A$ is measurable, then $\m(A)\in\civita$ (i.e.\ the measure of $A$ is not infinite).
	It is easy to check that if $f$ is measurable, then $|f|$ and $|f|^p$ are also measurable for all $1 \leq p < \infty$.
	
	By Corollary 4.12 of \cite{berz+shamseddine2003}, for all bounded $f$, $$\int_A |f|^p  \leq M^p \cdot \m(A),$$
	where $M$ is a bound for $|f|$, so that $f \in \L^p(A)$ for all $1 \leq p < \infty$.
	
	If $f \in \L^\infty(A)$, we can choose $M = \norm{f}_\infty$.
\end{proof}

\begin{remark}\label{remark if bounded then trivial}
	Lemma \ref{inclusion Linfty Lp} entails that,
	under the hypothesis that measurable functions are bounded, then the theory of $\L^p$ spaces over the Levi-Civita field becomes trivial.
	In fact, all bounded measurable functions have a well-defined integral. Moreover, since the $p-$th power of a bounded measurable function is still bounded and measurable, we would have $\meas(A) = L^p(A)$ for all $1\leq p < \infty$.
\end{remark}

It is possible to prove that the usual inclusions between the real $L^p$ spaces are valid also for the $\L^p$ spaces.

\begin{lemma}\label{inclusion lp spaces}
	If $A \subset \civita$ is measurable and if $1\leq p \leq q \leq \infty$, then $\L^q(A)\subseteq \L^p(A)$.
\end{lemma}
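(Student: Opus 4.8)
The plan is to reduce everything to the fact that $\m(A)$ is a \emph{finite} element of $\civita$, which puts us in the analogue of a finite measure space; the inclusion then follows from H\"older's inequality exactly as over $\R$. First I would dispose of the case $q=\infty$: if $f\in\L^\infty(A)$ then $f$ is bounded almost everywhere by $\norm{f}_\infty$, so Lemma \ref{inclusion Linfty Lp} gives $f\in\L^p(A)$ for every $1\leq p<\infty$, while $p=q=\infty$ is trivial. Thus I may assume $1\leq p<q<\infty$, the case $p=q$ being vacuous.

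For the main case, fix $f\in\L^q(A)$ and introduce the conjugate exponents $s=q/p$ and $s'=q/(q-p)$, so that $1/s+1/s'=1$ with $s>1$. The idea is to write $|f|^p=|f|^p\cdot 1$ and apply Lemma \ref{holder inequality} to the pair $|f|^p\in\L^s(A)$ and $1\in\L^{s'}(A)$. The constant function $1$ is bounded, hence lies in $\L^{s'}(A)$ by Lemma \ref{inclusion Linfty Lp}. For the factor $|f|^p$, note that $(|f|^p)^s=|f|^q$, whose integral over $A$ is finite by hypothesis; so, once measurability of $|f|^p$ is in hand, $|f|^p\in\L^s(A)$. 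H\"older's inequality then yields $|f|^p=|f|^p\cdot 1\in\L^1(A)$ together with the bound $\norm{f}_p^p=\norm{|f|^p}_1\leq\norm{|f|^p}_s\,\norm{1}_{s'}=\norm{f}_q^p\,\m(A)^{1/s'}$, a finite element of $\civita$ since $A$ is measurable. Because membership in $\L^1(A)$ already encodes that $\int_A|f|^p$ is defined and finite, this establishes $f\in\L^p(A)$.

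The step I expect to be the main obstacle is verifying that $|f|^p\in\meas(A)$, i.e.\ that raising a measurable function to the power $p$ preserves measurability: this is automatic over $\R$ but must be checked here, and it is needed even to make sense of the application of H\"older above. I would argue locally: on each interval $I_n$ on which $f$ is simple (a weakly convergent power series) $f$ is continuous, so where $f\neq 0$ it has constant sign and $|f|$ coincides with $\pm f$, again a weakly convergent power series with non-vanishing value. Raising such a non-vanishing analytic function to the power $p$ yields an analytic function by the results on composition and re-expansion of power series recalled in Section \ref{sec introcivita}, so $|f|^p$ is simple on $I_n$ away from the locally isolated zeros of $f$; discarding infinitesimal neighbourhoods of those zeros leaves families of intervals of arbitrarily close total length, whence $|f|^p$ is measurable on $A$.

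As an alternative route to finiteness one could invoke the pointwise inequality $|f|^p\leq 1+|f|^q$, which holds for every $x$ because $p\leq q$ (split according to whether $|f(x)|\leq 1$ or $|f(x)|>1$), together with monotonicity of the integral. I would nonetheless prefer the H\"older argument, since its conclusion $|f|^p\in\L^1(A)$ simultaneously guarantees that $\int_A|f|^p$ is \emph{defined}; in the non-Archimedean setting this does not follow from mere boundedness of the partial sums, as Example \ref{example infty and nonmeas} shows.
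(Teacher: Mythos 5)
Your proof is correct and follows essentially the same route as the paper's: dispose of $q=\infty$ via Lemma \ref{inclusion Linfty Lp}, then apply H\"older's inequality with conjugate exponents $q/p$ and $q/(q-p)$ to $|f|^p\cdot 1$ and use that $\m(A)$ is a finite element of $\civita$. Your additional discussion of why $|f|^p$ remains measurable is a welcome extra precaution (the paper dispatches this with a one-line remark inside the proof of Lemma \ref{inclusion Linfty Lp}), but it does not change the argument.
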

\begin{proof}
	If $q = \infty$, then $f \in \L^p(A)$ by Lemma \ref{inclusion Linfty Lp}.
	Suppose then that $q < \infty$.
	
	Thanks to the H\"older's inequality, the proof is the same as in the case of the Lebesgue measure: if $f \in \L^q(A)$, then
	\begin{eqnarray*}
	\int_A |f|^p  &\leq & \left( \int_A |f|^{pq/p}   \right)^{p/q} \left(\int_A 1  \right)^{1-p/q}\\
	&=& \norm{f}_q^p \cdot \m(A)^{1-p/q}.
	\end{eqnarray*}
	The hypothesis $f \in L^q(A)$ and the fact that $\m(A)\in\civita$ for any measurable set $A$ entail that $\m(A)^{1-p/q} \in \civita$, so that indeed $f \in \L^p(A)$.
\end{proof}

We conclude with some coherence results for the integral on the Levi-Civita field and the Lebesgue integral.

\begin{lemma}\label{lemma coherence}
	If $a, b \in \R$ and $f \in \an([a,b])$, then $\ext{\infty}{f} \in \L^1([a,b]_{\civita})$ and
	$$\int_{[a,b]} \ext{\infty}{f}  = \int_{[a,b]} f(x) dx .$$
	Moreover, for every $c, d \in \civita$ satisfying $a \leq c < d \leq b$,
	$$\int_{I(c,d)}\ext{\infty}{f}  \approx \int_{[c[0],d[0]]} f(x) dx .$$
\end{lemma}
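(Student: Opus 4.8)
The plan is to reduce everything to the fact that, on a compact real interval, an analytic function is covered by finitely many intervals on each of which it is a genuine power series, so that $\ext{\infty}{f}$ becomes a bounded, piecewise-simple measurable function whose integral can be computed by the fundamental theorem of calculus on each piece.

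First I would establish boundedness and measurability. Since $f$ is analytic on the compact set $[a,b]$, it is bounded there, say $|f|\leq M$, and its radius of convergence $R_x$ at $x\in[a,b]$ is bounded below by some real $R_\ast>0$. For boundedness of $\ext{\infty}{f}$: every point of $[a,b]_\civita$ is nearstandard and can be written as $r+\varepsilon$ with $r=x[0]\in[a,b]$ and $\varepsilon$ infinitesimal; by Definition \ref{def ext} the terms of $\ext{\infty}{f}(r+\varepsilon)-f(r)$ all have valuation $\geq\lambda(\varepsilon)>0$, so $\ext{\infty}{f}(r+\varepsilon)\approx f(r)$ and hence $|\ext{\infty}{f}|\leq M+1$ on $[a,b]_\civita$. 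For measurability I would choose a partition $a=t_0<t_1<\dots<t_N=b$ of mesh smaller than $R_\ast$, so that each subinterval lies strictly inside the radius of convergence at its midpoint $x_j=(t_j+t_{j+1})/2$. The key point is that on the entire closed $\civita$-interval $[t_j,t_{j+1}]_\civita$ the real-coefficient series $\sum_n \frac{f^{(n)}(x_j)}{n!}(x-x_j)^n$ converges: for such $x$ one has $|x-x_j|\leq (t_{j+1}-t_j)/2<R_\ast$ with $|x-x_j|\not\approx R_{x_j}$, which is exactly the convergence regime for real power series recalled before Proposition \ref{lemma divergenza}. By the re-expandability of power series and the uniqueness of the canonical extension, $\ext{\infty}{f}$ coincides with this series on $[t_j,t_{j+1}]_\civita$, so it is simple there. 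Taking the disjoint family $[t_0,t_1]_\civita,(t_1,t_2]_\civita,\dots,(t_{N-1},t_N]_\civita$, whose total length is exactly $b-a=\m([a,b]_\civita)$, verifies Definition \ref{measurable functions}, so $\ext{\infty}{f}$ is measurable; combined with boundedness and Lemma \ref{inclusion Linfty Lp} this gives $\ext{\infty}{f}\in\L^1([a,b]_\civita)$.

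Next I would compute the integral on this same family. On each piece the antiderivative of $\ext{\infty}{f}$ is the termwise-integrated series, which again has real coefficients; evaluated at the real endpoints $t_j,t_{j+1}$ it produces a real number equal to the Riemann integral $\int_{t_j}^{t_{j+1}}f\,dx$, since the $\civita$-antiderivative restricts to the real antiderivative of $f$ on the reals. As $\ext{\infty}{f}$ is bounded and measurable, its integral is well-defined and may be evaluated on any exhausting family (Definition \ref{def integral}); so by additivity $\int_{[a,b]_\civita}\ext{\infty}{f}=\sum_{j}\int_{t_j}^{t_{j+1}}f\,dx=\int_a^b f\,dx$, proving the first displayed equality. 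For the ``moreover'' part I would reduce $I(c,d)$ to the interval between the standard parts: writing $c=c[0]+\gamma$ and $d=d[0]+\delta$ with $\gamma,\delta$ infinitesimal (possible since $c,d\in[a,b]_\civita$ are nearstandard), additivity gives $\int_{I(c,d)}\ext{\infty}{f}=\int_{[c[0],d[0]]_\civita}\ext{\infty}{f}$ plus integrals over the two intervals of infinitesimal length joining $c$ to $c[0]$ and $d$ to $d[0]$. Each correction is bounded in absolute value by $(M+1)$ times an infinitesimal length, hence is infinitesimal, while the main term equals $\int_{c[0]}^{d[0]}f\,dx$ by the first part applied to $[c[0],d[0]]$. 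Thus the two quantities differ by an infinitesimal, which yields the claimed relation $\approx$.

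I expect the main obstacle to be the measurability step, specifically justifying that $\ext{\infty}{f}$ agrees with one convergent real-coefficient power series on each entire closed $\civita$-subinterval: this is where the precise boundary behaviour of the convergence criterion (the condition $|x-x_j|\not\approx R_{x_j}$ rather than merely $|x-x_j|<R_{x_j}$) and the re-expansion property of power series in $\civita$ must be invoked carefully. Once this is secured, the boundedness argument, the integral computation, and the endpoint-correction estimate are all routine.
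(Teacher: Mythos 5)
Your proof is correct, and it reaches the same computational core as the paper (the integral of a simple piece is evaluated via a real-coefficient antideriva\-tive at real endpoints, so it reproduces the Riemann integral), but it gets there by a genuinely different and more cautious route. The paper's proof is a one-liner: it asserts that $\ext{\infty}{f}$ is simple on all of $[a,b]_\civita$, invokes Theorem~3.14 of the cited work on power series to say that $\ext{\infty}{F}$ is an antiderivative of $\ext{\infty}{f}$, and applies Definition~\ref{def integral} directly to the single interval $[a,b]_\civita$; the ``moreover'' part is handled by $\ext{\infty}{F}(c)\approx F(c[0])$ and $\ext{\infty}{F}(d)\approx F(d[0])$. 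You instead partition $[a,b]$ into finitely many subintervals each lying within the radius of convergence at its midpoint, argue via re-expansion that $\ext{\infty}{f}$ agrees with a single convergent real-coefficient series on each closed $\civita$-subinterval, verify Definition~\ref{measurable functions} with a family of total length exactly $b-a$, and then sum the piecewise FTC computations. What your version buys is robustness: for a function such as $1/(1+x^2)$ on $[-2,2]$, no single Taylor series converges on the whole interval, so the paper's blanket claim that $\ext{\infty}{f}$ is ``simple over $[a,b]_\civita$'' (a single element of $\P([a,b]_\civita)$) needs precisely the finite-cover argument you supply; in that sense your proof fills in a step the paper elides. What the paper's version buys is brevity and a cleaner statement of the ``moreover'' part through the extended antiderivative, whereas you bound the endpoint corrections by $(M+1)$ times an infinitesimal length --- both are fine. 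Your one point of care about the convergence criterion (using $|x-x_j|\not\approx R_{x_j}$ rather than the $\not\sim$ condition printed before Proposition~\ref{lemma divergenza}) is well placed: the $\not\approx$ form is the one consistent with the paper's own example and with the underlying results of Shamseddine--Berz.
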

\begin{proof}
	If $f \in \an([a,b])$, then $\ext{\infty}{f}$ is simple over $[a,b]_\civita$.
	Let $F$ be an antiderivative of $f$: then, by Theorem 3.14 of \cite{analytic1}, $\ext{\infty}{F}$ is an antiderivative of $\ext{\infty}{f}$.
	By definition \ref{def integral},
	$$
	\int_{[a,b]} \ext{\infty}{f} 
	=
	\ext{\infty}{F}(b)-\ext{\infty}{F}(a),
	$$
	while from real analysis it is well-known that $$\int_{[a,b]} f(x) dx =F(b)-F(a).$$
	Since $a, b \in \R$, the desired equality is a consequence of the property $\ext{\infty}{F}(r)=F(r)$ for all $r \in \R$.
	
	As for the second result, by definition of $\ext{\infty}{F}$ we have $\ext{\infty}{F}(c)\approx F(c[0])$ and $F(d)\approx\ext{\infty}{F}(d[0])$: these hypotheses are sufficient to entail the desired result.
\end{proof}

From the previous lemma we deduce that the extension of a real analytic function in $L^p$ belongs to the space $\L^p$ also in the Levi-Civita field.

\begin{corollary}\label{corollario piccolo}
	If $a, b \in \R$ and $f \in \an([a,b])$, then $\ext{\infty}{f} \in \L^p(A)$ for all measurable $A \subseteq [a,b]_\civita$ and for all $1\leq p \leq \infty$.
	Moreover, $\norm{f}_p = \norm{\ext{\infty}{f}}_p$, where the former is the norm in the space $L^p([a,b])$ and the latter is the norm in $\L^p([a,b]_\civita)$.
\end{corollary}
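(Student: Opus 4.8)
The plan is to separate the statement into the membership claim and the norm equality, reducing both to results already in hand, chiefly Lemma \ref{inclusion Linfty Lp} and Lemma \ref{lemma coherence}.

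First I would settle membership. Since $f\in\an([a,b])$, the canonical extension $\ext{\infty}{f}$ is a power series that weakly converges on the closed interval $[a,b]_\civita$, hence is simple there and in particular continuous on a closed interval. By the extreme value theorem (Theorem 3.3 of \cite{evt}) the function $|\ext{\infty}{f}|$ attains a maximum $M\in\civita$ on $[a,b]_\civita$, so $\ext{\infty}{f}$ is bounded. Lemma \ref{inclusion Linfty Lp} then immediately gives $\ext{\infty}{f}\in\L^p(A)$ for every measurable $A\subseteq[a,b]_\civita$ and every $1\le p<\infty$. For $p=\infty$ one still has $|\ext{\infty}{f}(x)|\le M$ for all $x\in A$; because $M$ is a genuinely attained value, and not merely an essential bound of the pathological type appearing in Remark \ref{remark linfty}, the infimum defining $\norm{\ext{\infty}{f}}_\infty$ exists, so $\ext{\infty}{f}\in\L^\infty(A)$ as well.

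For the norm equality with $1\le p<\infty$ I would take $A=[a,b]_\civita$ and compare $\norm{\ext{\infty}{f}}_p^p=\int_{[a,b]_\civita}|\ext{\infty}{f}|^p$ with $\norm{f}_p^p=\int_{[a,b]}|f|^p\,dx$ via Lemma \ref{lemma coherence}. The clean case is $f$ nonvanishing: then $|f|^p=e^{p\log|f|}$ is a composition of analytic functions, hence $|f|^p\in\an([a,b])$, and since the canonical extension commutes with analytic composition one has $\ext{\infty}{(|f|^p)}=|\ext{\infty}{f}|^p$ (here $\ext{\infty}{f}$ keeps constant sign because $\ext{\infty}{f}(x)\approx f(\sh{x})\ne0$). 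Lemma \ref{lemma coherence} then yields the exact equality $\int_{[a,b]_\civita}|\ext{\infty}{f}|^p=\int_{[a,b]}|f|^p\,dx$. In general $f$ is either identically zero, in which case the claim is trivial, or has finitely many isolated zeros $z_1<\dots<z_m$; I would use additivity of the integral to split $[a,b]_\civita$ at these zeros and apply the nonvanishing case on each maximal interval of constant sign.

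I expect the genuine obstacle to be precisely the behaviour at the zeros of $f$: for non-integer $p$ the function $|f|^p$ fails to be analytic at a zero of $f$, so Lemma \ref{lemma coherence}, which needs analyticity on a \emph{closed} interval, does not apply directly on the pieces $[z_i,z_{i+1}]_\civita$, and its second ($\approx$) part would only deliver $\norm{\ext{\infty}{f}}_p\approx\norm{f}_p$ rather than exact equality. The way around it is to observe that on each interval of constant sign the antiderivative of $|\ext{\infty}{f}|^p$ still extends continuously to the zero endpoint with the correct limit (for instance $\slim_{x\to0^+}x^{p+1}=0$), so that Definition \ref{def integral} produces the exact real value $\int_{z_i}^{z_{i+1}}|f|^p\,dx$ on each piece; summing recovers $\norm{\ext{\infty}{f}}_p=\norm{f}_p$ exactly. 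Finally, for $p=\infty$ I would argue that $\norm{\ext{\infty}{f}}_\infty$ equals the maximum of $|\ext{\infty}{f}|$ over $[a,b]_\civita$: this maximum is an upper bound a.e., while for any $c$ below it continuity forces $\{|\ext{\infty}{f}|>c\}$ to contain an interval of positive length, hence of positive measure. This maximum in turn equals $\max_{[a,b]}|f|=\norm{f}_\infty$, since near a real argmax of $|f|$ the extension cannot exceed the real value, whereas at standard parts that are not argmaxima it stays below by a non-infinitesimal gap.
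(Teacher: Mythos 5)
The paper gives no proof of this corollary at all --- it is presented as an immediate consequence of Lemma \ref{lemma coherence} --- so there is no official argument to measure you against, and most of what you write is a sound and more careful version of what the author presumably had in mind. The membership claim via boundedness on the closed interval together with Lemma \ref{inclusion Linfty Lp}, the observation that an attained maximum rescues the well-definedness of $\norm{\cdot}_\infty$ against the pathology of Remark \ref{remark linfty}, the identification of $\norm{\ext{\infty}{f}}_\infty$ with $\max_{[a,b]}|f|$, and the norm equality for nonvanishing $f$ (or, after splitting at the zeros, for every integer $p$, where $|f|^p=\pm f^p$ is analytic on each closed subinterval $[z_i,z_{i+1}]$ with real endpoints and Lemma \ref{lemma coherence} applies piece by piece) are all correct.

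The gap is exactly at the spot you flag as the genuine obstacle, and your workaround does not close it. For non-integer $p$ and a zero $z$ of $f$ of order $m$ with $mp\notin\N$, the continuity of the antiderivative of $|\ext{\infty}{f}|^p$ up to the endpoint $z$ is not what Definition \ref{def integral} asks for: the antiderivative formula is available only for \emph{simple} functions on an interval, and otherwise one must first show that $|\ext{\infty}{f}|^p$ is \emph{measurable} on the piece, i.e.\ approximable up to arbitrarily small (hence infinitesimal) $\varepsilon$ by disjoint intervals on which it is a single weakly convergent power series, with strongly convergent total length. Near $z$ the function behaves like $|x-z|^{mp}$, whose expansion centred at $x_0$ converges only for $|x-x_0|$ bounded away (in the sense of $\not\approx$) from $|x_0-z|$; so a covering interval reaching to distance $d^q$ from $z$ has length of order $d^q$, the covering must use intervals whose lengths realise a dense set of valuations, and $\sum_n l(I_n)$ cannot converge strongly. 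This is the same mechanism as Proposition \ref{prop nonmeas} and as the remark immediately following the corollary about $x^{-1/a}$ on $(0,1)_\civita$, and it cuts against you rather than for you: it indicates that $|\ext{\infty}{f}|^p$ is not even measurable on such a piece, so the integral you propose to evaluate by a limit of the antiderivative is undefined rather than merely delicate. In other words, your proof is complete whenever $|f|^p$ is piecewise analytic on closed subintervals (in particular for all integer $p$, for nonvanishing $f$, and for $p=\infty$), but for the remaining values of $p$ the statement itself --- and the unproved assertion inside Lemma \ref{inclusion Linfty Lp} that $|f|^p$ is measurable whenever $f$ is --- needs either a restriction in the hypotheses or an argument of a different kind.
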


In the previous results, it is essential that the intervals under consideration are closed.
In fact, as a consequence of Proposition \ref{prop nonmeas}, functions of the form $x^{-\frac{1}{a}}$ with $a > 1$ are not in $L^{p}(0,1)$ for any $p\geq 1$, even if their restriction to $\R$ is in $\an(0,1)\cap L^p(0,1)$ for $1\leq p < a$.

\subsection{Convergence in the $\L^p$ spaces}

In the $\L^p$ spaces it is possible to consider sequences of functions that converge with respect to the $\L^p$ norm.
Strong and weak convergence on the Levi-Civita field yield two corresponding notions of convergence in norm.

\begin{definition}\label{cauchy+convergence}
	Let $A \subset \civita$ be a measurable set and let $\{f_n\}_{n \in \N}$ be a sequence of functions with $f_n \in \L^p(A)$ for all $n \in \N$.
	
	The sequence $\{f_n\}_{n \in \N}$ is strongly Cauchy in $\L^p(A)$ iff
	$$\forall \varepsilon \in \civita, \varepsilon > 0, \exists n_\varepsilon \in \N: \forall m, n > n_\varepsilon\ \norm{f_n-f_m}_p < \varepsilon.$$
	If there exists $f \in \L^p(A)$ such that
	$\lim_{n\rightarrow\infty} \norm{f_n-f}_p = 0$,
	then we will say that $\{f_n\}_{n \in \N}$ converges strongly to $f$ with respect to the $\L^p$ norm.
	
	The sequence $\{f_n\}_{n \in \N}$ is weakly Cauchy in $\L^p(A)$ iff
	$$\forall \varepsilon \in \R, \varepsilon > 0, \exists n_{\varepsilon} \in \N : \forall m, n > n_{\varepsilon}\ \max_{q \in \Q, q\leq \varepsilon^{-1}} \norm{f_n-f_m}_p[q] < \varepsilon.$$
	If there exists $f \in \L^p(A)$ such that
	$\wlim_{n\rightarrow\infty} \norm{f_n-f}_p = 0$,
	then we will say that $\{f_n\}_{n \in \N}$ converges weakly to $f$ with respect to the $\L^p$ norm.
\end{definition}

As a consequence of the fact that every sequence that converges strongly also converges weakly, if a sequence is strongly Cauchy then it is also weakly Cauchy.

In the next sections we will show that the $\L^p$ spaces are not sequentially complete with respect to both notions of convergence.

\subsection{Sequential completeness of the $\L^p$ spaces with respect to strong convergence}\label{sec completeness strong}

Some results addressing sequential completeness of the space $\L^\infty$ with respect to strong convergence have been obtained in \cite{shamseddine2012,berz+shamseddine2003}.

\begin{lemma}\label{lemma linfty not complete}
	If $A \subset \civita$ is measurable and $\m(A) > 0$, then $\L^\infty(A)$ is not sequentially complete.
\end{lemma}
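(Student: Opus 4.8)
The plan is to prove non-completeness by exhibiting an explicit strongly Cauchy sequence in $\L^\infty(A)$ that admits no strong limit in $\L^\infty(A)$. First I would pin down \emph{what kind} of obstruction to look for. If $\{f_k\}$ is strongly Cauchy in $\L^\infty(A)$, then by the reverse triangle inequality (a consequence of Corollary \ref{corollary norm}) the sequence $\{\norm{f_k}_\infty\}$ is strongly Cauchy in $\civita$, hence strongly converges to some $L$ by sequential completeness; moreover the pointwise limit $g(x)=\slim_k f_k(x)$ exists almost everywhere, since $|f_k(x)-f_j(x)|\le\norm{f_k-f_j}_\infty$ for almost every $x$, and it satisfies $|g(x)|\le L$ a.e. Thus the candidate limit is always essentially bounded, and its would-be norm is $L$; the only way completeness can fail is that $g$ is not almost everywhere equal to any \emph{measurable} function. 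This is exactly the phenomenon the construction must realize.

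Using Proposition \ref{measurable implies simple on a big set} I would reduce to the case where $A$ contains an interval $I=[a,b]_\civita$ with $\lambda(l(I))=\lambda(\m(A))$; the sequence will be supported on $I$ and extended by $0$ on $A\setminus I$, so the hypothesis $\m(A)>0$ is used precisely to guarantee $\ell:=l(I)>0$. For each $k\in\N$ let $\theta_k:I\to\civita$ be the function obtained by extending each linear piece of a real triangle wave on $[a[0],b[0]]$ of period $\ell/k$ and amplitude $1$: on each of the $k$ subintervals $\theta_k$ coincides with a degree-one polynomial, hence is simple there, so $\theta_k\in\L^\infty(I)$ with $\norm{\theta_k}_\infty\le1$, while at each (real) breakpoint $\theta_k$ is continuous but not analytic. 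Define
$$ f_K=\sum_{k=1}^{K} d^{k}\,\theta_k . $$
Each $f_K$ is a finite $\civita$-linear combination of measurable functions, hence lies in $\L^\infty(I)$, and for $K>J$ every term of $f_K-f_J$ has valuation $>J$, so $\norm{f_K-f_J}_\infty\le d^{J+1}/(1-d)\to0$ strongly; thus $\{f_K\}$ is strongly Cauchy. Its pointwise limit is $g=\sum_{k} d^{k}\theta_k$, which is well defined because $d^{k}\theta_k(x)\to0$ strongly.

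The hard part, and the crux of the argument, is to show that $g$ is not almost everywhere equal to any measurable function, so that $\{f_K\}$ cannot converge strongly in $\L^\infty(A)$ (a strong limit $h\in\L^\infty$ would force $f_K\to h$ pointwise a.e., whence $h=g$ a.e.). The key device is coefficient extraction: for real $r$ one has $g(r)[k]=\theta_k(r)$, a non-analytic function of $r$. If some measurable $h$ satisfied $h=g$ a.e., then $h$ would be simple on a family of intervals of total measure $\m(A)$, hence (again by Proposition \ref{measurable implies simple on a big set}) simple on some interval $J\subseteq I$ with $\lambda(l(J))=\lambda(\ell)$; on such a $J$ the weakly convergent power series representing $h$ yields, for each $k$, a \emph{real-analytic} function $r\mapsto h(r)[k]$ on $J\cap\R$. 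Choosing $k$ large enough that $\ell/k$ is far below $l(J)$, the breakpoints of $\theta_k$ cut $J\cap\R$ into several affine pieces. Since $[a[0],b[0]]_\R$ is not $\m$-negligible — any interval covering it has length at least $\ell$ — the set where $h=g$ has positive outer measure in $J$ and therefore meets at least two of these pieces in sets with accumulation points, forcing the analytic function $h(\cdot)[k]$ to agree with two different affine functions, a contradiction. Hence no measurable representative of $g$ exists and $\L^\infty(A)$ is not sequentially complete. The technical heart of the proof is this final non-negligibility/analytic-continuation step, together with justifying termwise coefficient extraction from the weakly convergent power series representing $h$.
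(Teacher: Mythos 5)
Your route is genuinely different from the paper's: the paper disposes of this lemma by citing Example 3.10 of \cite{berz+shamseddine2003}, which already exhibits a strongly Cauchy sequence in $\L^\infty([0,1]_\civita)$ with no measurable limit, and then rescales that example to an interval $I(a,b)\subseteq A$ with $a\ne b$. You instead build the counterexample from scratch; the construction $f_K=\sum_{k\le K}d^k\theta_k$, the Cauchy estimate, and the identification of the candidate limit $g$ are fine, and a self-contained proof is a real gain. The cost is that you must carry out the non-measurability argument yourself, and there three steps lean on intuition from real measure theory that fails for $\m$.

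(i) Passing from ``$f_K\to h$ strongly in $\L^\infty$'' to ``$h=g$ a.e.'' requires intersecting countably many full-measure sets. Measurable subsets of $\civita$ do not form a $\sigma$-algebra --- the paper's own example $[0,1]_\civita\setminus\Q$ is a countable intersection of full-measure sets that is not measurable --- so this inference is not available. It is avoidable: for the contradiction at level $k$ you only need $\lambda(h-g)>k$ on a single full-measure set, which follows from one instance $\norm{f_K-h}_\infty<d^{k+1}$ together with the everywhere bound $|f_K-g|\le d^{K+1}/(1-d)$. (ii) ``The set where $h=g$ has positive outer measure in $J$ and therefore meets at least two pieces in sets with accumulation points'' is false as stated: $[0,1]_\civita\setminus\R$ has outer measure $1$ and contains no real point at all. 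What you actually have is that the agreement set is measurable with \emph{full} measure, hence admits an inner approximation by disjoint intervals; Lemma \ref{lemma dimensione insiemi} then yields one such interval of non-infinitesimal length, which contains a real subinterval $[c,e]$ with $e>c$, and only then can you choose $k$ with $\ell/k<e-c$ and run the identity-theorem argument on the real-analytic function $r\mapsto h(r)[k]$. (iii) The coefficient extraction $g(r)[k]=\theta_k(r)$ and the real-analyticity of $h(\cdot)[k]$ presuppose that $a$, $\ell$ and the breakpoints are real and that $I$ contains a nondegenerate real interval; this fails whenever $\lambda(\m(A))\ne0$. You need to precompose everything with the affine map $x\mapsto(x-a)/\ell$, which preserves simplicity and measurability, so as to reduce to $I=[0,1]_\civita$ --- exactly the rescaling the paper applies to the cited example --- and this should be said explicitly. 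With these repairs the argument goes through.
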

\begin{proof}
	For a proof that $\L^\infty([0,1])$ is not sequentially complete, see example 3.10 of \cite{berz+shamseddine2003}.
	From this example, one can obtain a proof that $\L^\infty(A)$ is not sequentially complete for every set $A \subset \civita$ by rescaling the argument of example 3.10 to an interval $I(a,b) \subseteq A$ with $a \ne b$.
\end{proof}

From this result, one immediately obtains that the $\L^p$ spaces are not sequentially complete with respect to strong convergence.

\begin{proposition}\label{main result strong convergence}
	For every measurable $A\subset\civita$ with $\m(A)>0$ and for every $1 \leq p \leq \infty$, the spaces $L^p(A)$ are not sequentially complete.
\end{proposition}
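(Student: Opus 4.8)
The plan is to reduce to the case $p = \infty$, which is exactly Lemma \ref{lemma linfty not complete}, and to treat $1 \leq p < \infty$ by reusing the explicit sequence that witnesses the failure of completeness of $\L^\infty(A)$. Concretely, after rescaling the construction of example 3.10 of \cite{berz+shamseddine2003} to an interval $I(a,b) \subseteq A$ with $a \neq b$ (as in the proof of Lemma \ref{lemma linfty not complete}), I would fix a sequence $\{f_n\}_{n \in \N}$ in $\L^\infty(A)$ that is strongly Cauchy for $\norm{\cdot}_\infty$ but has no strong limit in $\L^\infty(A)$. A strongly Cauchy sequence is bounded in the strong topology, so there is a single $M \in \civita$ with $\norm{f_n}_\infty \leq M$ for all $n$; in particular $|f_n(x)| \leq M$ for almost every $x$ and every $n$, and by Lemma \ref{inclusion Linfty Lp} each $f_n$ lies in $\L^p(A)$.

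First I would show that $\{f_n\}_{n \in \N}$ is strongly Cauchy in $\L^p(A)$. Applying the estimate behind Lemma \ref{inclusion Linfty Lp} to the difference $f_n - f_m \in \L^\infty(A)$ gives
$$ \norm{f_n - f_m}_p \leq \norm{f_n - f_m}_\infty \, \m(A)^{1/p}, $$
where $\m(A)^{1/p} \in \civita$ is a fixed positive element (it exists because $\civita$ is real closed and $\m(A) > 0$). Since $\norm{f_n - f_m}_\infty$ strongly converges to $0$ and multiplication by the fixed element $\m(A)^{1/p}$ preserves strong convergence to $0$, the right-hand side tends strongly to $0$, so $\{f_n\}_{n \in \N}$ is strongly Cauchy for $\norm{\cdot}_p$.

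The main obstacle is to rule out a strong $\L^p$-limit, and here I would argue that the sequence already has a canonical pointwise limit that is \emph{not} measurable, with which any $\L^p$-limit would be forced to coincide. From $|f_n(x) - f_m(x)| \leq \norm{f_n - f_m}_\infty$ for almost every $x$, the sequence $\{f_n(x)\}_{n \in \N}$ is strongly Cauchy in $\civita$ for almost every $x$, hence strongly convergent by sequential completeness of $\civita$ to a value $f^\ast(x)$, with $|f_n(x) - f^\ast(x)| \leq \delta_n$ a.e.\ for some $\delta_n$ with $\slim_{n \to \infty} \delta_n = 0$. If $f^\ast$ were measurable it would belong to $\L^\infty(A)$ (being bounded by $M$) and $f_n$ would converge to it for $\norm{\cdot}_\infty$, contradicting the choice of $\{f_n\}$; hence $f^\ast$ is non-measurable. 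Now, supposing towards a contradiction that $f_n \to f$ strongly in $\L^p(A)$ for some $f \in \L^p(A)$, I would combine $|f - f^\ast| \leq |f - f_n| + \delta_n$ with a Chebyshev-type estimate $\m(\{ |f - f_n| \geq \eta \}) \leq \eta^{-p} \int_A |f - f_n|^p$, which follows from monotonicity of the integral; since the right-hand side tends strongly to $0$, the set $\{ |f - f^\ast| \geq \eta \}$ would have measure $0$ for every positive threshold $\eta$, forcing $f^\ast$ to agree almost everywhere with the measurable function $f$ and thus to be measurable, a contradiction.

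The delicate points I expect to have to nail down are precisely the measure-theoretic ones peculiar to $\civita$: the superlevel sets $\{ |f - f_n| \geq \eta \}$ must be shown measurable for the Chebyshev estimate to apply, and the passage from ``differ on a set of measure $0$'' to ``equal almost everywhere'' in the sense of Definition \ref{def ae} is not automatic over the Levi-Civita field, so it would have to be verified for the specific functions at hand. These are the steps where the weakness of the non-Archimedean measure, rather than any difficulty of the functional-analytic argument, does the real work.
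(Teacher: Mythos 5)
Your reduction to the $\L^\infty$ case and the bound $\norm{f_n-f_m}_p \leq \norm{f_n-f_m}_\infty\,\m(A)^{1/p}$ reproduce the first two steps of the paper's proof (which writes the same estimate as $\norm{f_n-f_m}_p^p \leq \norm{f_n-f_m}_\infty^p\cdot\m(A)$ and concludes via the squeeze theorem for strong convergence), so up to the point where $\{f_n\}_{n\in\N}$ is shown to be strongly Cauchy in $\L^p(A)$ you are on the paper's track. The divergence is in the final step: the paper simply asserts that, the uniform limit $f$ of the sequence from Example 3.10 of \cite{berz+shamseddine2003} being non-measurable, the sequence cannot converge in $\L^p(A)$; you instead try to prove that any hypothetical $\L^p$-limit would have to coincide almost everywhere with the non-measurable pointwise limit $f^\ast$, via a Chebyshev estimate.

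That route contains a genuine gap which you flag but which cannot be closed as stated. Even granting measurability of the superlevel sets $\{x : |f(x)-f_n(x)|\geq\eta\}$ --- which is itself established nowhere in the paper, and is not to be taken for granted given how many naturally defined subsets of $\civita$ fail to be measurable (cf.\ Lemma \ref{lemma monad}) --- the most your argument can deliver is $\m(\{x : f(x)\neq f^\ast(x)\})=0$. In this measure theory that is strictly weaker than ``$f=f^\ast$ a.e.''\ in the sense of Definition \ref{def ae}, which requires $\{x : f(x)=f^\ast(x)\}$ to be measurable of full measure, and only the latter condition feeds into Proposition \ref{proposizione uguali q.o.} to transfer measurability from $f$ to $f^\ast$. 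The inference you need --- ``differs from a measurable function on a null set, hence measurable'' --- is in fact false: the paper's own example of $\chi_{[0,1]_\civita}$ versus $\chi_{[0,1]_\civita}-\chi_{[0,1]_{\Q}}$ exhibits a measurable function and a non-measurable one differing exactly on the null set $[0,1]_{\Q}$. A secondary slip of the same kind occurs earlier, where you claim that $f^\ast$, if measurable and bounded, would lie in $\L^\infty(A)$: Remark \ref{remark linfty} shows that bounded measurable functions may fail to have a well-defined $\infty$-norm. To be fair, the paper's own proof is laconic at exactly the point you are trying to shore up --- it rules out an $\L^p$-limit by appealing only to the non-measurability of $f$, without explaining why no \emph{other} measurable function could serve as the $p$-norm limit --- but it does not commit to the Chebyshev argument, whose last inference is the one that provably fails over $\civita$.
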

\begin{proof}
	By Lemma \ref{lemma linfty not complete}, $\L^\infty(A)$ is not sequentially complete.
	Let $\{f_n\}_{n\in\N}$ be a strongly Cauchy sequence in $\L^\infty(A)$ that does not converge in $\L^\infty(A)$, and let $f$ be its uniform limit. Recall that $f$ is not measurable.
	
	Lemma \ref{inclusion lp spaces} ensures that $\L^\infty(A) \subseteq \L^p(A)$; thus by the H\"older's inequality
	$$\norm{f_n-f_m}_p^p = \int |f_n-f_m|^p \leq \norm{|f_n-f_m|^p}_\infty \cdot \m(A) = \norm{f_n-f_m}_\infty^p \cdot \m(A).$$
	Since $\{f_n\}_{n\in\N}$ is strongly Cauchy in $\L^\infty(A)$, from the previous inequality and from the squeeze theorem for the strong convergence we obtain that $\{f_n\}_{n\in\N}$ is strongly Cauchy also in $\L^p(A)$ for every $1 \leq p \leq \infty$.
	However, since $f \not \in \meas(A)$, $\{f_n\}_{n\in\N}$ is strongly Cauchy but does not converge in $\L^p(A)$ for any $1 \leq p \leq \infty$.
	We conclude that the spaces $\L^p(A)$ are not sequentially complete.
\end{proof}

Despite these negative results, strong convergence of a sequence $\{f_n\}_{n\in\N}$ to a function $f$ in $\L^\infty$ implies strong convergence of $\{f_n\}_{n\in\N}$ to $f$ in $\L^1$.

\begin{theorem}
	If $A \subset \civita$ is measurable and if $\{f_n\}_{n \in \N}$
	strongly converges to $f$ in $\L^\infty(A)$, then 
	$f\in \L^1(A)$ and $\lim_{n \rightarrow \infty} \int_A f_n  = \int_A f$.
\end{theorem}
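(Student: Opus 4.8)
The plan is to reduce everything to two facts already established in the excerpt: the inclusion $\L^\infty(A)\subseteq\L^1(A)$, and the elementary bound that controls the $\L^1$-size of a difference by its $\L^\infty$-norm times the measure of $A$. First I would unpack the hypothesis. By Definition \ref{cauchy+convergence}, saying that $\{f_n\}_{n\in\N}$ strongly converges to $f$ in $\L^\infty(A)$ means precisely that $f\in\L^\infty(A)$ and $\slim_{n\rightarrow\infty}\norm{f_n-f}_\infty=0$; indeed the mere fact that each quantity $\norm{f_n-f}_\infty$ is a well-defined element of $\civita$ already forces $f_n-f\in\L^\infty(A)$, and since $\L^\infty(A)$ is a vector space containing every $f_n$, it forces $f=f_n-(f_n-f)\in\L^\infty(A)$. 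Lemma \ref{inclusion lp spaces} with $p=1$ and $q=\infty$ then gives $f\in\L^1(A)$, which is the first assertion; in particular $\int_A f$ and every $\int_A f_n$ are defined and finite.

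For the convergence of the integrals I would estimate, for each $n\in\N$,
$$
\left| \int_A f_n - \int_A f \right| \leq \int_A |f_n - f| \leq \norm{f_n-f}_\infty \cdot \m(A),
$$
where the first inequality is monotonicity of the integral (Corollary 4.11 of \cite{berz+shamseddine2003}) applied to $f_n-f\in\L^1(A)$, and the second is the bound $\int_A|g|\leq M\cdot\m(A)$ valid whenever $|g|\leq M$ a.e.\ (Corollary 4.12 of \cite{berz+shamseddine2003}), taken with $M=\norm{f_n-f}_\infty$; this is exactly the type of estimate already used in the proof of Proposition \ref{main result strong convergence}. Since $\m(A)\in\civita$ is fixed and $\slim_{n\rightarrow\infty}\norm{f_n-f}_\infty=0$, the right-hand side strongly converges to $0$, while the left-hand side is nonnegative. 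The squeeze theorem for strong convergence then yields $\slim_{n\rightarrow\infty}\left|\int_A f_n-\int_A f\right|=0$, that is $\slim_{n\rightarrow\infty}\int_A f_n=\int_A f$, which is the stated limit.

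There is no single deep obstacle here; the proof is short once the hypothesis is read correctly. The two points that genuinely require care are the following. The first is making sure the hypothesis really places $f$ inside $\L^\infty(A)$, hence inside $\L^1(A)$, rather than merely asserting a uniform limit that could fail to be measurable --- this is the cautionary situation of Proposition \ref{main result strong convergence}, and it is what distinguishes convergence \emph{in} $\L^\infty(A)$ from being merely strongly Cauchy. The second is checking that the monotonicity and the $M\cdot\m(A)$ bound remain available for the possibly unbounded measurable functions of our setting; this causes no trouble because $f_n-f$ is in fact bounded a.e., being an element of $\L^\infty(A)$, so both cited corollaries apply directly. With these two checks in place the conclusion follows mechanically from the squeeze theorem.
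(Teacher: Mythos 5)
Your proof is correct. The paper itself gives no argument for this theorem: its ``proof'' is the single line ``See Theorem 3.9 of \cite{shamseddine2012}'', so the result is imported from the literature rather than derived. Your direct argument is the natural one, and it is in fact exactly the argument the paper deploys immediately afterwards to generalize the statement to all $p$ in Proposition \ref{main result strong convergence 2}: the inclusion $\L^\infty(A)\subseteq\L^1(A)$ from Lemma \ref{inclusion lp spaces}, the estimate $\left|\int_A f_n-\int_A f\right|\leq\int_A|f_n-f|\leq\norm{f_n-f}_\infty\cdot\m(A)$ via Corollaries 4.11 and 4.12 of \cite{berz+shamseddine2003}, and the squeeze theorem for strong convergence. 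Your reading of the hypothesis is also the right one: convergence \emph{in} $\L^\infty(A)$ presupposes $f\in\L^\infty(A)$, which is precisely what fails for the merely Cauchy sequences of Lemma \ref{lemma linfty not complete}. The one point I would make explicit (the paper glosses over it in the same way in Proposition \ref{main result strong convergence}) is that $\norm{\cdot}_\infty$ is defined as an infimum in $\civita$ that need not be attained, so $|f_n-f|\leq\norm{f_n-f}_\infty$ a.e.\ is not literally guaranteed; the inequality $\int_A|g|\leq\norm{g}_\infty\cdot\m(A)$ is nevertheless valid because $\int_A|g|\leq c\cdot\m(A)$ holds for \emph{every} essential bound $c$ of $|g|$, and a quantity dominated by $c\cdot\m(A)$ for all such $c$ is dominated by $\norm{g}_\infty\cdot\m(A)$ whenever the defining infimum exists --- which it does for $g=f_n-f\in\L^\infty(A)$. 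With that remark your argument is complete.
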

\begin{proof}
	See Theorem 3.9 of \cite{shamseddine2012}.
\end{proof}

This result can be extended to show that strong convergence of a sequence $\{f_n\}_{n\in\N}$ to a function $f$ in $\L^\infty$ implies strong convergence of $\{f_n\}_{n\in\N}$ to $f$ in $\L^p$.

\begin{proposition}\label{main result strong convergence 2}
	If $A \subset \civita$ is measurable and if $\{f_n\}_{n \in \N}$ 
	strongly converges to $f$ in $\L^\infty(A)$, then $f\in \L^p(A)$ and $\lim_{n \rightarrow \infty} \norm{f_n - f}_p  = 0$ for every $1 \leq p \leq \infty$.
\end{proposition}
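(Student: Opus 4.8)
The plan is to adapt the bounding argument from the proof of Proposition \ref{main result strong convergence}, turning the non-completeness estimate there into a convergence statement. First I would note that, by Definition \ref{cauchy+convergence}, saying that $\{f_n\}_{n\in\N}$ converges strongly to $f$ in $\L^\infty(A)$ already presupposes $f \in \L^\infty(A)$, so that $f$ is measurable and $\norm{f}_\infty$ is well-defined. Applying Lemma \ref{inclusion lp spaces} with $q=\infty$ then yields $f \in \L^p(A)$ for every $1 \leq p \leq \infty$, which is the first assertion. The case $p=\infty$ of the convergence claim is nothing but the hypothesis, so only the range $1 \leq p < \infty$ remains to be handled.

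For that range I would use that, by Corollary \ref{corollary norm}, $\L^\infty(A)$ is a genuine normed vector space; in particular $f_n - f \in \L^\infty(A)$ and $\norm{f_n-f}_\infty$ is well-defined for every $n$. The function $g = |f_n-f|^p$ is then bounded and measurable with essential bound $\norm{f_n-f}_\infty^p$, since $t\mapsto t^p$ is monotone on the non-negatives. Feeding $g$ into the estimate $\int_A |g| \leq \norm{g}_\infty \cdot \m(A)$ (Corollary 4.12 of \cite{berz+shamseddine2003}, the same bound already exploited in Lemma \ref{inclusion Linfty Lp}) gives
$$
\norm{f_n-f}_p^p = \int_A |f_n-f|^p \leq \norm{f_n-f}_\infty^p \cdot \m(A).
$$
Extracting $p$-th roots of this inequality, which involves only non-negative quantities, produces the clean bound $\norm{f_n-f}_p \leq \m(A)^{1/p}\,\norm{f_n-f}_\infty$, where $\m(A)^{1/p} = \norm{1}_p \in \civita$ is a fixed constant, the constant function $1$ lying in $\L^p(A)$ by Lemma \ref{inclusion Linfty Lp}.

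Finally I would invoke the squeeze theorem for strong convergence: since $\norm{f_n-f}_\infty \to 0$ strongly by hypothesis and $\m(A)^{1/p}$ is a fixed element, the product $\m(A)^{1/p}\,\norm{f_n-f}_\infty$ tends strongly to $0$, and as $0 \leq \norm{f_n-f}_p$ the squeeze forces $\norm{f_n-f}_p \to 0$ as well. I do not expect a serious obstacle here: the argument runs parallel to Proposition \ref{main result strong convergence}, and the only points demanding attention are bookkeeping, namely confirming that each intermediate object ($f_n-f$, its $p$-th power, and the associated integrals) legitimately lies in the relevant space so that the $\L^\infty$ bound applies, and taking the root of the inequality \emph{before} applying the squeeze, so that one multiplies $\norm{f_n-f}_\infty$ only by the fixed constant $\m(A)^{1/p}$ and avoids having to argue separately that the $p$-th root is continuous with respect to strong convergence along a sequence.
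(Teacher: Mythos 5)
Your proof is correct and follows essentially the same route as the paper's: membership $f\in\L^p(A)$ via Lemma \ref{inclusion lp spaces}, the bound $\norm{f_n-f}_p^p\leq\norm{f_n-f}_\infty^p\cdot\m(A)$ from the H\"older-type estimate, and the squeeze theorem for strong convergence. Your extra care in taking $p$-th roots before squeezing (so as not to rely on continuity of the root map) is a minor tidying of the same argument, not a different approach.
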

\begin{proof}
	We proceed as in the proof of Proposition \ref{main result strong convergence}.
	The fact that $f \in \L^p(A)$ is a consequence of Lemma \ref{inclusion lp spaces}.
	By the H\"older's inequality,
	$$\norm{f_n-f_m}_p  \leq \norm{f_n-f}_\infty^p \cdot \m(A).$$
	Since $\slim_{n \rightarrow \infty} \norm{f_n-f}_\infty = 0$, we have also $\slim_{n \rightarrow \infty} \norm{f_n-f}_\infty^p = 0$.
	By the squeeze theorem for the strong convergence in $\civita$, we obtain the desired conclusion.
\end{proof}

\subsection{Sequential completeness with respect to weak convergence}\label{sec completeness weak}

There are many arguments against sequential completeness of the $\L^p$ spaces with respect to weak convergence.
At a fundamental level, the field $\civita$ itself is not sequentially complete with respect to weak convergence: thus if $\{a_n\}_{n\in\N}$ is a weakly Cauchy sequence that does not converge in $\civita$ and if $A$ is a measurable subset of $\civita$, then the sequence of functions $\{f_n\}_{n\in\N}$ defined by $f_n(x) = a_n$ for all $x \in A$ satisfies
\begin{itemize}
	\item $f_n \in \L^p(A)$ for every $1\leq p < \infty$;
	\item $\{f_n\}_{n\in\N}$ is weakly Cauchy;
	\item $\{f_n\}_{n\in\N}$ does not converge in $\L^p(A)$, otherwise $\wlim_{n \rightarrow \infty} a_n \in \civita$, against the hypothesis.
\end{itemize}

Moreover, we have seen in Section \ref{sec completeness strong} that the $\L^p$ spaces are not even sequentially complete with respect to strong convergence.
Since strongly Cauchy sequences are also weakly Cauchy, then from Proposition \ref{main result strong convergence} one immediately obtains that the $\L^p$ spaces are not sequentially complete also with respect to weak convergence.

\subsection{A completion of the $\L^p$ spaces with respect to strong convergence}\label{section completion}

In Proposition \ref{main result strong convergence} we proved that the $\L^p$ spaces are not sequentially complete with respect to the strong or the weak convergence.
In this section we will introduce the completions of the $\L^p$ spaces with respect to strong convergence.
The resulting spaces will be sequentially complete with respect to strong convergence, and the completion of $\L^2$ will also have a well-defined inner product, making it a Hilbert space.

The completion of the $\L^p$ spaces with respect to strong convergence is obtained by considering all sequences of $\L^p$ functions that are strongly Cauchy with respect to the $p$-norm, and by identifying those sequences whose difference strongly converges to zero.

\begin{definition}\label{def pre slp}
	Let $A$ be a measurable set.
	Define
	$$B_s^p(A)=\{ \{f_n\}_{n\in\N} : f_n \in \L^p(A) \text{ for all } n \in\N \text{ and } \{f_n\}_{n\in\N} \text{ is strongly Cauchy}\},$$
	and define the relation $\cong_s$ over $B_s^p \times B_s^p$ by posing
	$$
		\{f_n\}_{n\in\N} \cong_s \{g_n\}_{n\in\N} \text{ iff } \slim_{n \rightarrow \infty} \norm{f_n-g_n}_p = 0.
	$$
\end{definition}

In order to consider the quotient with respect to the relation $\cong_s$, it is necessary to ensure that $\cong_s$ is an equivalence relation.

\begin{lemma}\label{lemma strong congruence}
	For every measurable $A \subset \civita$, the relation $\cong_s$ is an equivalence relation over $B_s^p(A)$.
\end{lemma}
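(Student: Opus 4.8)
The plan is to verify directly the three defining properties of an equivalence relation — reflexivity, symmetry, and transitivity — for $\cong_s$ on $B_s^p(A)$. Throughout I would use that $\L^p(A)$ is a $\civita$-vector space and that $\norm{\cdot}_p$ is a genuine norm, both established in Corollary \ref{corollary norm}; in particular, for $\{f_n\}_{n\in\N}, \{g_n\}_{n\in\N} \in B_s^p(A)$ each difference $f_n - g_n$ again lies in $\L^p(A)$, so that $\norm{f_n-g_n}_p$ is well-defined. I would also freely use the standard properties of strong convergence recalled in Section \ref{sec prelim}: that strong limits are linear (the strong limit of a sum is the sum of the strong limits) and that the squeeze theorem holds for strong convergence.

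Reflexivity is immediate: for any $\{f_n\}_{n\in\N}\in B_s^p(A)$ one has $\norm{f_n-f_n}_p = \norm{0}_p = 0$ for every $n\in\N$, hence $\slim_{n\to\infty}\norm{f_n-f_n}_p = 0$ and $\{f_n\}_{n\in\N}\cong_s\{f_n\}_{n\in\N}$. Symmetry is equally quick: by the homogeneity property $\norm{af}_p = |a|\norm{f}_p$ of the norm (Corollary \ref{corollary norm}), taking $a = -1$ gives $\norm{g_n-f_n}_p = \norm{f_n-g_n}_p$ for all $n$, so $\slim_{n\to\infty}\norm{f_n-g_n}_p = 0$ forces $\slim_{n\to\infty}\norm{g_n-f_n}_p = 0$, i.e.\ $\{f_n\}_{n\in\N}\cong_s\{g_n\}_{n\in\N}$ implies $\{g_n\}_{n\in\N}\cong_s\{f_n\}_{n\in\N}$.

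Transitivity is the only step with any content. Assume $\{f_n\}_{n\in\N}\cong_s\{g_n\}_{n\in\N}$ and $\{g_n\}_{n\in\N}\cong_s\{h_n\}_{n\in\N}$, so that both $\slim_{n\to\infty}\norm{f_n-g_n}_p = 0$ and $\slim_{n\to\infty}\norm{g_n-h_n}_p = 0$. The triangle inequality from Corollary \ref{corollary norm} gives, for every $n\in\N$,
$$
0 \leq \norm{f_n-h_n}_p \leq \norm{f_n-g_n}_p + \norm{g_n-h_n}_p .
$$
By linearity of the strong limit the right-hand side strongly converges to $0$, and the constant sequence on the left strongly converges to $0$ as well, so the squeeze theorem for strong convergence yields $\slim_{n\to\infty}\norm{f_n-h_n}_p = 0$, i.e.\ $\{f_n\}_{n\in\N}\cong_s\{h_n\}_{n\in\N}$. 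The main (and essentially only) obstacle is to make sure the tools being invoked are legitimately available: that $\norm{\cdot}_p$ really is a norm satisfying the triangle inequality and homogeneity, and that the squeeze theorem and the additivity of strong limits hold in $\civita$ — all of which are supplied by Corollary \ref{corollary norm} and the preliminaries on strong convergence, so no genuine difficulty remains.
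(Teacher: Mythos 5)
Your proof is correct and follows essentially the same route as the paper: reflexivity and symmetry are immediate from the definition, and transitivity is obtained from the triangle inequality for $\norm{\cdot}_p$ together with the squeeze theorem for strong convergence. The only difference is that you spell out the easy cases in more detail than the paper does.
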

\begin{proof}
	From the definition it is immediate to prove that $\cong_s$ is symmetric and reflexive.
	In order to prove that it is transitive, suppose that $\{f_n\}_{n\in\N} \cong_s \{g_n\}_{n\in\N}$ and that $\{g_n\}_{n\in\N} \cong_s \{h_n\}_{n\in\N}$.
	Since $\norm{f_n-z_n}_p = \norm{f_n-g_n+g_n-z_n}_p \leq \norm{f_n-g_n}+\norm{g_n-z_n}_p$, by the squeeze theorem for strong convergence we conclude that $\cong_s$ is also transitive.	
\end{proof}

Thanks to the above Lemma, we can define the completion of the $\L^p$ spaces with respect to strong convergence.

\begin{definition}\label{def slp}
	We define $\sL^p(A) = B_s^p(A) / \cong_s$.
	If $\{f_n\}_{n\in\N} \in B_s^p(A)$, we will denote its equivalence class in $\sL^p(A)$ by $\langle f_n \rangle$.
\end{definition}

These completions are vector spaces over $\civita$.

\begin{lemma}\label{lemma slp vector spaces}
	For every measurable $A \subset \civita$, $\sL^p(A)$ is a vector space over $\civita$.
\end{lemma}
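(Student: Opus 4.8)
The plan is to equip $\sL^p(A)$ with the componentwise operations inherited from $\L^p(A)$, namely $\langle f_n \rangle + \langle g_n \rangle = \langle f_n + g_n \rangle$ and $a \langle f_n \rangle = \langle a f_n \rangle$ for $a \in \civita$, and then to verify three things: that these operations are internal to $B_s^p(A)$, that they descend to well-defined operations on the quotient $\sL^p(A) = B_s^p(A)/\cong_s$, and that the vector space axioms hold. Observe first that $\L^p(A)$ is itself a vector space over $\civita$: subadditivity and homogeneity of its norm (Corollary \ref{corollary norm}) already force closure under sums and scalar multiples, since $\norm{f+g}_p \leq \norm{f}_p + \norm{g}_p < \infty$ and $\norm{af}_p = |a|\,\norm{f}_p < \infty$. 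Hence each $f_n + g_n$ and each $a f_n$ again lies in $\L^p(A)$, and only the strong Cauchy condition and the passage to the quotient require genuine argument.

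For closure, given $\{f_n\}, \{g_n\} \in B_s^p(A)$, the triangle inequality of Corollary \ref{corollary norm} gives, for all $m, n$,
$$\norm{(f_m + g_m) - (f_n + g_n)}_p \leq \norm{f_m - f_n}_p + \norm{g_m - g_n}_p.$$
Since both summands on the right strongly converge to $0$ as $m, n \to \infty$, the squeeze theorem for strong convergence forces the left side to do the same, so $\{f_n + g_n\} \in B_s^p(A)$. For scalar multiplication, homogeneity gives $\norm{a f_m - a f_n}_p = |a|\,\norm{f_m - f_n}_p$, which likewise strongly converges to $0$, so $\{a f_n\} \in B_s^p(A)$. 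It is essential here that we work with strong convergence, whose squeeze theorem is available; the analogous argument would fail for weak convergence, as Example \ref{example squeeze} shows.

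Next I would check well-definedness on the quotient. Suppose $\{f_n\} \cong_s \{f_n'\}$ and $\{g_n\} \cong_s \{g_n'\}$. The same triangle inequality yields
$$\norm{(f_n + g_n) - (f_n' + g_n')}_p \leq \norm{f_n - f_n'}_p + \norm{g_n - g_n'}_p,$$
whose right side strongly converges to $0$ by hypothesis, so $\{f_n + g_n\} \cong_s \{f_n' + g_n'\}$; homogeneity gives $\{a f_n\} \cong_s \{a f_n'\}$ in the same manner. Thus both operations are independent of the chosen representatives, using here that $\cong_s$ is an equivalence relation (Lemma \ref{lemma strong congruence}).

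Finally, the vector space axioms transfer directly: the class $\langle 0 \rangle$ of the constant zero sequence is the neutral element, $\langle -f_n \rangle$ is the additive inverse of $\langle f_n \rangle$, and commutativity, associativity, and the distributive laws all hold because they hold componentwise in the $\civita$-vector space $\L^p(A)$, and each such identity, valid for every index $n$, survives the passage to the quotient. No step presents a real obstacle: the whole argument reduces to the norm axioms of Corollary \ref{corollary norm} together with the squeeze theorem for strong convergence, the only point demanding care being the consistent use of strong rather than weak convergence throughout.
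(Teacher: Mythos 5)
Your proof is correct and follows essentially the same route as the paper: closure of $B_s^p(A)$ under linear combinations via the triangle inequality and homogeneity from Corollary \ref{corollary norm}, together with the squeeze theorem for strong convergence. You additionally verify well-definedness of the operations on the quotient and the vector space axioms, which the paper's proof leaves implicit; this is a welcome completion rather than a different approach.
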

\begin{proof}
	Corollary \ref{corollary norm} entails that $\sL^p(A)$ is closed under linear combinations: if $\{f_n\}_{n\in\N}$ and $\{g_n\}_{n\in\N}$ are sequences in $B_s^p(A)$ and if $\alpha, \beta \in \civita$, then the linear combination $\{\alpha f_n + \beta g_n\}_{n\in\N}$ satisfies the inequality
	$$
		\norm{\alpha f_n + \beta g_n}_p \leq |\alpha|\norm{f_n}_p + |\beta|\norm{g_n}_p.
	$$
	As a consequence,
	$$
	\norm{\alpha f_n + \beta g_n -(\alpha f_m + \beta g_m)}_p \leq |\alpha|\norm{f_n-f_m}_p + |\beta|\norm{g_n-g_m}_p.
	$$
	Since $\{f_n\}_{n\in\N}$ and $\{g_n\}_{n\in\N} \in B_s^p(A)$, we deduce that also $\{\alpha f_n + \beta g_n\}_{n\in\N} \in B_s^p(A)$, as desired.
\end{proof}

It is possible to introduce a norm on each of the $\sL^p$ spaces: the $\sL^p$ norm of $\langle f_n \rangle$ is defined as the strong limit of the $\L^p$ norms of $f_n$.
At first we will prove that this function does not depend on the representative of $\langle f_n \rangle$, and then that it is indeed a norm over $\sL^p$.

\begin{lemma}\label{lemma norma ben def}
	The function $\norm{ \cdot }_p : \sL^p(A) \rightarrow \civita$ defined by $\norm{\langle f_n \rangle}_p = \slim_{n \rightarrow \infty} \norm{f_n}_p$
	is well-defined, i.e.\ it does not depend on the representative of the equivalence class $\langle f_n \rangle$.
\end{lemma}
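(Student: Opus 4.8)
We must show that the map $\norm{\langle f_n\rangle}_p = \slim_{n\to\infty}\norm{f_n}_p$ is well-defined on $\sL^p(A) = B_s^p(A)/\cong_s$. This has two components that I would address in order: first, that the strong limit $\slim_{n\to\infty}\norm{f_n}_p$ actually exists in $\civita$ for every representative $\{f_n\}_{n\in\N}\in B_s^p(A)$; and second, that this limit is independent of the representative, i.e. if $\{f_n\}_{n\in\N}\cong_s\{g_n\}_{n\in\N}$ then the two limits agree.

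\begin{proof}
We first show that $\slim_{n\to\infty}\norm{f_n}_p$ exists for every $\{f_n\}_{n\in\N}\in B_s^p(A)$. By the triangle inequality for $\norm{\cdot}_p$, established in Corollary \ref{corollary norm}, we have the reverse triangle inequality
$$
\left| \norm{f_n}_p - \norm{f_m}_p \right| \leq \norm{f_n - f_m}_p
$$
for all $m, n \in \N$. Since $\{f_n\}_{n\in\N}$ is strongly Cauchy in $\L^p(A)$, for every $\varepsilon \in \civita$, $\varepsilon > 0$, there exists $n_\varepsilon \in \N$ such that $\norm{f_n - f_m}_p < \varepsilon$ for all $m, n > n_\varepsilon$. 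Hence the sequence $\{\norm{f_n}_p\}_{n\in\N}$ of elements of $\civita$ is itself strongly Cauchy. Since $\civita$ is sequentially complete with respect to strong convergence, $\slim_{n\to\infty}\norm{f_n}_p$ exists in $\civita$, so $\norm{\langle f_n\rangle}_p$ is at least a well-defined element of $\civita$ for each fixed representative.

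It remains to prove independence of the representative. Suppose $\{f_n\}_{n\in\N} \cong_s \{g_n\}_{n\in\N}$, so that $\slim_{n\to\infty}\norm{f_n - g_n}_p = 0$. Applying the reverse triangle inequality again,
$$
\left| \norm{f_n}_p - \norm{g_n}_p \right| \leq \norm{f_n - g_n}_p,
$$
and the right-hand side strongly converges to $0$. By the squeeze theorem for strong convergence in $\civita$, we conclude that $\slim_{n\to\infty}\norm{f_n}_p = \slim_{n\to\infty}\norm{g_n}_p$. Therefore $\norm{\langle f_n\rangle}_p$ does not depend on the chosen representative of the equivalence class, as desired.
\end{proof}

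The main subtlety here is not the independence argument, which is a routine application of the reverse triangle inequality together with the squeeze theorem for strong convergence already cited in the preliminaries; rather, it is the preliminary existence claim. One must verify that $\{\norm{f_n}_p\}_{n\in\N}$ is strongly Cauchy and then invoke sequential completeness of $\civita$ under strong convergence. Both the reverse triangle inequality (from the norm axioms in Corollary \ref{corollary norm}) and the completeness and squeeze properties of strong convergence in $\civita$ are available from the earlier parts of the paper, so no new machinery is needed.
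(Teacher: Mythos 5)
Your proof is correct and follows essentially the same route as the paper's: both hinge on the triangle inequality for $\norm{\cdot}_p$ from Corollary \ref{corollary norm} together with the basic properties of strong limits (the paper writes $\norm{f_n}_p \leq \norm{f_n-g_n}_p+\norm{g_n}_p$ and deduces the two opposite inequalities by symmetry, while you package the same estimate as the reverse triangle inequality combined with the squeeze theorem for strong convergence). Your preliminary verification that $\slim_{n\rightarrow\infty}\norm{f_n}_p$ actually exists --- via strong Cauchyness of the sequence $\{\norm{f_n}_p\}_{n\in\N}$ and sequential completeness of $\civita$ --- is a point the paper's proof leaves implicit, and it is a worthwhile addition.
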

\begin{proof}
	Suppose that $\langle f_n \rangle = \langle g_n \rangle$ in $\sL^p(A)$.
	Then
	$\slim_{n \rightarrow \infty}\norm{f_n - g_n}_p = 0$, so that
	$$
	\slim_{n \rightarrow \infty}\norm{f_n}_p 
	=
	\slim_{n \rightarrow \infty} \norm{f_n - g_n + g_n}_p
	\leq
	\slim_{n \rightarrow \infty} \norm{f_n - g_n}_p + \slim_{n \rightarrow \infty} \norm{g_n}_p
	=
	\slim_{n \rightarrow \infty} \norm{g_n}_p.$$
	By reversing the roles of $\{f_n\}_{n\in\N}$ and $\{g_n\}_{n\in\N}$, it is possible to obtain the other inequality $\slim_{n \rightarrow \infty}\norm{g_n}_p \leq \slim_{n \rightarrow \infty}\norm{f_n}_p$.
	These inequalities entail also $\slim_{n \rightarrow \infty}\norm{f_n}_p = \slim_{n \rightarrow \infty}\norm{g_n}_p$.
	As a consequence, the function $\norm{ \cdot }_p$ does not depend upon the representative of $\langle f_n \rangle$.
\end{proof}

\begin{lemma}\label{lemma norm p}
	For every measurable $A \subset \civita$ and for every $1\leq p \leq \infty$, the function $\norm{ \cdot }_p : \sL^p(A) \rightarrow \civita$ is a norm over $\sL^p(A)$.
\end{lemma}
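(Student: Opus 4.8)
The plan is to verify the three defining properties of a norm---homogeneity, subadditivity, and positive definiteness---for $\norm{\langle f_n \rangle}_p = \slim_{n \rightarrow \infty} \norm{f_n}_p$, reducing each one to the corresponding property of the $\L^p$-norm established in Corollary \ref{corollary norm} by passing to the strong limit in $n$. Two preliminary facts about strong convergence carry the whole argument. First, the limit defining $\norm{\langle f_n \rangle}_p$ genuinely exists: since $\{f_n\}_{n\in\N}$ is strongly Cauchy in $\L^p(A)$, the reverse triangle inequality $\bigl|\,\norm{f_n}_p-\norm{f_m}_p\,\bigr|\leq\norm{f_n-f_m}_p$ (a consequence of Corollary \ref{corollary norm}) shows that $\{\norm{f_n}_p\}_{n\in\N}$ is strongly Cauchy in $\civita$, which is sequentially complete with respect to strong convergence, so its strong limit exists; Lemma \ref{lemma norma ben def} already guarantees that the value is independent of the representative. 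Second, strong limits preserve weak inequalities: if $a_n\leq b_n$ for all $n$ and both sequences converge strongly, then $\slim_{n \rightarrow \infty} a_n\leq\slim_{n \rightarrow \infty} b_n$, because the strong limit $\ell$ of a non-negative sequence must be non-negative (otherwise $\varepsilon=-\ell/2>0$ would force $b_n-a_n<0$ eventually, contradicting $b_n-a_n\geq 0$).

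With these in hand, homogeneity is immediate: for $a\in\civita$ one has $a\langle f_n\rangle=\langle a f_n\rangle$, and $\norm{a f_n}_p=|a|\norm{f_n}_p$ for each $n$ by Corollary \ref{corollary norm}, so $\slim_{n \rightarrow \infty}\norm{a f_n}_p=|a|\slim_{n \rightarrow \infty}\norm{f_n}_p$, using that scalar multiplication commutes with the strong limit. For subadditivity I would use that $\langle f_n\rangle+\langle g_n\rangle=\langle f_n+g_n\rangle$ together with the pointwise (in $n$) triangle inequality $\norm{f_n+g_n}_p\leq\norm{f_n}_p+\norm{g_n}_p$ from Corollary \ref{corollary norm}; applying the order-preservation of strong limits recorded above then yields $\norm{\langle f_n+g_n\rangle}_p\leq\norm{\langle f_n\rangle}_p+\norm{\langle g_n\rangle}_p$.

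Finally, positive definiteness is essentially tautological once the zero element of $\sL^p(A)$ is identified as $\langle 0\rangle$, the class of the zero sequence. Indeed $\norm{\langle f_n\rangle}_p=0$ means $\slim_{n \rightarrow \infty}\norm{f_n}_p=\slim_{n \rightarrow \infty}\norm{f_n-0}_p=0$, which is exactly the statement that $\{f_n\}_{n\in\N}\cong_s\{0\}_{n\in\N}$, that is, $\langle f_n\rangle=\langle 0\rangle$; the converse direction is read off the same equivalence.

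I do not expect a genuine obstacle here, since the content is simply the transfer of the $\L^p$-norm axioms across the strong limit. The only points requiring care are the existence of that limit and the fact that strong limits respect the order of $\civita$, both of which follow from the sequential completeness of $\civita$ and the good behaviour of strong convergence---in particular the squeeze theorem---recalled after Definition \ref{definition strong convergence}.
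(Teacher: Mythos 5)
Your proposal is correct and follows essentially the same route as the paper: each norm axiom is transferred from Corollary \ref{corollary norm} through the strong limit, using preservation of inequalities under strong limits for subadditivity and reading positive definiteness directly off the definition of $\cong_s$. The only difference is that you spell out two details the paper leaves implicit (the existence of $\slim_{n\rightarrow\infty}\norm{f_n}_p$ via the reverse triangle inequality and sequential completeness, and the order-preservation of strong limits), which is a welcome but not substantively different addition.
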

\begin{proof}
	We need to prove that, for all $\langle f_n \rangle, \langle g_n \rangle \in \L^p$ and for all $a \in \civita$,
	\begin{enumerate}
		\item $\norm{\langle f_n \rangle+\langle g_n \rangle}_p \leq \norm{\langle f_n \rangle}_p + \norm{\langle g_n \rangle}_p$;
		\item $\norm{a\langle f_n \rangle}_p = |a|\norm{\langle f_n \rangle}_p$;
		\item $\norm{\langle f_n \rangle}_p = 0$ iff $\langle f_n \rangle=0$.
	\end{enumerate}
	The first property
	is a consequence Corollary \ref{corollary norm} and of the fact that inequalities are preserved under strong limits.
	
	For the proof of the second property, notice that
	$$\norm{\langle af_n \rangle}_p =
	\slim_{n \rightarrow \infty} \norm{a f_n}_p =
	\slim_{n \rightarrow \infty} |a|\norm{f_n}_p =
	|a| \slim_{n \rightarrow \infty} \norm{f_n}_p = |a| \norm{\langle f_n\rangle}_p,$$
	so that $\norm{\langle af_n \rangle}_p = |a| \norm{\langle f_n\rangle}_p$.
	
	Finally recall that, by definition, if $\norm{\langle f_n \rangle} = 0$, then $\slim_{n\rightarrow \infty} \norm{f_n}_p = 0$, so  $\{f_n\}_{n\in\N} \cong_s \{0\}_{n\in\N}$, that is $\langle f_n \rangle = 0$.
\end{proof}

It turns out that the spaces $\sL^p$ are sequentially complete with respect to the norm defined above.

\begin{proposition}\label{proposition banach spaces}
	For every measurable $A \subset \civita$, $\sL^p(A)$ is sequentially complete with respect to the norm $\norm{\cdot}_p$.
	As a consequence, it is a Banach space over $\civita$.
\end{proposition}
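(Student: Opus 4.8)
The plan is to run the classical argument that the completion of a normed space is complete, taking care that every limit is a strong limit in the sequentially complete field $\civita$. First I would fix a strongly Cauchy sequence $\{\Phi_k\}_{k\in\N}$ in $\sL^p(A)$ and, for each $k$, choose a representative $\{f^k_n\}_{n\in\N}\in B_s^p(A)$ with $\Phi_k=\langle f^k_n\rangle$. Since $\{f^k_n\}_{n}$ is strongly Cauchy, I would select an index $n_k$ with $\norm{f^k_n-f^k_m}_p<d^k$ for all $n,m\geq n_k$, and set $g_k=f^k_{n_k}$. Writing $\iota(g)$ for the class of the constant sequence equal to $g\in\L^p(A)$, the map $\iota$ is an isometry (its norm is the strong limit of a constant sequence), and the well-definedness of the norm from Lemma \ref{lemma norma ben def} gives $\norm{\Phi_k-\iota(g_k)}_p=\slim_{m\to\infty}\norm{f^k_m-g_k}_p\leq d^k$, where the passage from the termwise strict bound to the limiting weak bound uses that strong limits preserve weak inequalities.

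Next I would show that the diagonal sequence $\{g_k\}_{k\in\N}$ is strongly Cauchy in $\L^p(A)$. Using that $\iota$ is an isometry and the triangle inequality of Lemma \ref{lemma norm p},
$$\norm{g_k-g_l}_p\leq\norm{\iota(g_k)-\Phi_k}_p+\norm{\Phi_k-\Phi_l}_p+\norm{\Phi_l-\iota(g_l)}_p\leq d^k+\norm{\Phi_k-\Phi_l}_p+d^l.$$
Because $\slim_{k\to\infty}d^k=0$ (as $\lambda(d^k)=k\to\infty$) and $\{\Phi_k\}_k$ is strongly Cauchy, the right-hand side can be made smaller than any prescribed $\varepsilon>0$ for $k,l$ large. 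Hence $\{g_k\}_k$ is strongly Cauchy and defines an element $\Phi=\langle g_k\rangle\in\sL^p(A)$.

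Finally I would verify that $\slim_{k\to\infty}\norm{\Phi_k-\Phi}_p=0$. Applying the triangle inequality once more,
$$\norm{\Phi_k-\Phi}_p\leq\norm{\Phi_k-\iota(g_k)}_p+\norm{\iota(g_k)-\Phi}_p\leq d^k+\slim_{l\to\infty}\norm{g_k-g_l}_p.$$
The strong Cauchy property of $\{g_k\}_k$ bounds the last strong limit by any $\delta>0$ once $k$ is large enough (again invoking that strong limits preserve weak inequalities), and combined with $\slim_{k\to\infty}d^k=0$ this forces $\norm{\Phi_k-\Phi}_p$ to strongly converge to $0$. This shows every strongly Cauchy sequence in $\sL^p(A)$ converges; together with Lemma \ref{lemma norm p} it follows that $\sL^p(A)$ is a Banach space over $\civita$.

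The main obstacle is the diagonal extraction together with the bookkeeping that every quantity involved is a genuine strong limit. One must check at each stage that the relevant sequences of $\civita$-valued norms are themselves strongly Cauchy, so that their strong limits exist by sequential completeness of $\civita$, and repeatedly pass from termwise strict bounds to limiting weak bounds. The choice of the infinitesimal thresholds $d^k$ is exactly what simultaneously makes the embedded approximants $\iota(g_k)$ strongly approach $\Phi_k$ and keeps the diagonal sequence $\{g_k\}_k$ strongly Cauchy; any threshold $\{\varepsilon_k\}_k$ with $\slim_{k\to\infty}\varepsilon_k=0$ would serve equally well.
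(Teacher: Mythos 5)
Your proposal is correct and follows essentially the same route as the paper: both proofs extract a diagonal sequence $\{g_k\}_{k\in\N}$ in $\L^p(A)$ with $\norm{\Phi_k-\iota(g_k)}_p$ infinitesimal of order $k$, show it is strongly Cauchy via the triangle inequality, and verify that $\langle g_k\rangle$ is the strong limit of $\{\Phi_k\}_{k\in\N}$. Your version is slightly more careful in making the embedding $\iota$ and the passage from termwise strict bounds to limiting weak bounds explicit, but the argument is the same.
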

\begin{proof}
	Taking into account Lemmas \ref{lemma slp vector spaces} and \ref{lemma norm p}, we only need to prove that $\sL^p(A)$ is sequentially complete with respect to the $\sL^p$ norm.
	The proof is analogous to the real case, and hinges upon sequential completeness of $\civita$ with respect to strong convergence.
	Let $\{\phi_k\}_{k\in\N}$ be a sequence in $\sL^p(A)$ such that
	\begin{equation}\label{eqn limite 0}
		\forall \varepsilon \in \civita, \varepsilon > 0\
		\exists k_\varepsilon \in \N : \forall h,k > k_\varepsilon\
		\norm{ \phi_h - \phi_k }_p < \varepsilon.
	\end{equation}
	
	By definition of $\sL^p(A)$, for every $k \in \N$ there exists $g_k \in \L^p(A)$ such that
	\begin{equation}\label{eqn limite 2}
		\norm{g_k - \phi_k}_p \leq d^{-k}.
	\end{equation}
	We claim that $\{g_n\}_{n\in\N}$ is strongly Cauchy in $L^p(A)$ and that
	$\slim_{k \rightarrow \infty}\norm{\phi_k - \langle g_n \rangle}_p = 0$.
	
	Let us show that $\{g_n\}_{n\in\N}$ is strongly Cauchy.
	Notice that, for every $h, k \in \N$,
	$$
		\norm{g_k - g_h}_p \leq
		\norm{g_k - \phi_k}_p
		+
		\norm{\phi_k - \phi_h}_p
		+
		\norm{\phi_h - g_h}_p
	$$
	Thanks to equation \eqref{eqn limite 2}, we have also $\norm{g_k - \phi_k}_p < d^{-k}$ and $\norm{\phi_h - g_h}_p < d^{-h}$.
	Let $\varepsilon \in \civita, \varepsilon > 0$.
	Then if $\max\{h,k\}>-\lambda(\varepsilon)$, $\norm{g_k - \phi_k}_p + \norm{\phi_h - g_h}_p < d^{-k} + d^{-h}<\varepsilon$.
	Similarly, if $h, k > k_\varepsilon$, by equation \eqref{eqn limite 0} we have also $\norm{f_{n,k} - f_{n,h}}_p < \varepsilon$.
	Thus
	$
		\norm{g_k - g_h}_p \leq 2\varepsilon,
	$
	and $\{g_k\}_{k\in\N}$ is strongly Cauchy in $\L_s^p(A)$.
	
	We now want to prove that $\{\phi_k\}_{k\in\N}$ converges to $\langle g_n \rangle$ in the $\sL^p$ norm.
	Observe that for every $k \in \N$
	$$\norm{\phi_k - \langle g_n \rangle}_p
	\leq
	\norm{\phi_k - g_k}_p + \norm{g_k - \langle g_n \rangle}_p
	$$
	By equation \eqref{eqn limite 2}, $\norm{\phi_k - g_k}_p < d^{-k}$ and, by definition of $\{g_k\}_{k\in\N}$, $\slim_{k \rightarrow \infty} \norm{g_k - \langle g_n \rangle}_p = 0$.
	Thus $\slim_{k \rightarrow \infty} \norm{\phi_k - \langle g_n \rangle}_p = 0$, and the proof is concluded.
\end{proof}

It is also possible to define a duality pairing on the spaces $\sL^p$.

\begin{definition}\label{def strong scalar product}
	For every measurable $A \subset \civita$ and for every $1\leq p \leq q \leq \infty$, if $1/p+1/q=1$ (allowing also for $p=1$ and $q=\infty$), define an operator $\int_A : \sL^p(A) \times \sL^q(A) \rightarrow \civita$ by posing
	$$
		\int_A \langle f_n \rangle \langle g_n \rangle = \slim \int_A f_n g_n.
	$$
\end{definition}

The operator is well-defined, i.e.\ it is independent on the choice of the representatives for $\langle f_n \rangle$ and for $\langle g_n \rangle$.

\begin{lemma}
		For every measurable $A \subset \civita$, for every $1\leq p \leq q \leq \infty$ with $1/p+1/q=1$ (allowing also for $p=1$ and $q=\infty$), for every $\langle f_n\rangle \in \sL^p(A)$ and for every, $\langle g_n \rangle \in \sL^q(A)$, if $\langle f_n \rangle = \langle F_n \rangle$ and if $\langle g_n \rangle = \langle G_n \rangle$, then
	$$
		\int_A \langle f_n \rangle \langle g_n \rangle
		=
		\int_A \langle F_n \rangle \langle G_n \rangle.
	$$
\end{lemma}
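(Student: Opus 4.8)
The plan is to reduce the claim to the two hypotheses $\slim_{n\rightarrow\infty}\norm{f_n-F_n}_p=0$ and $\slim_{n\rightarrow\infty}\norm{g_n-G_n}_q=0$, combined with H\"older's inequality and the strong boundedness of the relevant scalar sequences. First I would record that $\{\norm{f_n}_p\}_{n\in\N}$ and $\{\norm{G_n}_q\}_{n\in\N}$ are bounded in the strong topology of $\civita$. Indeed, since $\langle f_n\rangle\in\sL^p(A)$ and $\langle G_n\rangle\in\sL^q(A)$, the sequences $\{f_n\}_{n\in\N}$ and $\{G_n\}_{n\in\N}$ are strongly Cauchy in the respective norms, and the reverse triangle inequality $\bigl|\,\norm{f_n}_p-\norm{f_m}_p\bigr|\leq\norm{f_n-f_m}_p$ (a consequence of Corollary \ref{corollary norm}) shows that $\{\norm{f_n}_p\}_{n\in\N}$ is strongly Cauchy in $\civita$, hence strongly convergent and therefore bounded; the same argument applies to $\{\norm{G_n}_q\}_{n\in\N}$.

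Next I would estimate the difference of the integrands by means of the algebraic splitting $f_ng_n-F_nG_n=f_n(g_n-G_n)+(f_n-F_n)G_n$. Integrating, using $\bigl|\int_A h\bigr|\leq\int_A|h|=\norm{h}_1$ (which follows from monotonicity and linearity of the integral), and applying H\"older's inequality (Lemma \ref{holder inequality}) to each summand yields
\[
\Bigl|\int_A f_ng_n-\int_A F_nG_n\Bigr|
\leq \norm{f_n}_p\,\norm{g_n-G_n}_q+\norm{f_n-F_n}_p\,\norm{G_n}_q.
\]
Here H\"older simultaneously guarantees that each product $f_n(g_n-G_n)$ and $(f_n-F_n)G_n$ lies in $\L^1(A)$, so every integral appearing is defined and finite and the manipulation is legitimate. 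The same estimate, with $(F_n,G_n)$ replaced by $(f_m,g_m)$, shows that $\bigl\{\int_A f_ng_n\bigr\}_{n\in\N}$ is strongly Cauchy in $\civita$, hence strongly convergent, so the limit defining the pairing exists in the first place.

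Finally I would pass to the strong limit. Because $\{\norm{f_n}_p\}_{n\in\N}$ and $\{\norm{G_n}_q\}_{n\in\N}$ are strongly bounded while $\norm{g_n-G_n}_q$ and $\norm{f_n-F_n}_p$ tend strongly to $0$, the product of a strongly bounded sequence with a strongly null one is strongly null, by the squeeze theorem for strong convergence; hence the right-hand side of the displayed inequality tends strongly to $0$. Consequently $\int_A f_ng_n-\int_A F_nG_n$ is strongly null, which gives $\slim_{n}\int_A f_ng_n=\slim_{n}\int_A F_nG_n$, i.e.\ the two defining expressions coincide. I expect the only delicate point to be the verification that the scalar norm sequences are bounded: this is precisely where sequential completeness of $\civita$ with respect to strong convergence is invoked, and without it the bounded-times-null step would not be available. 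Everything else is a direct transcription of the classical argument, made valid in this setting by H\"older's inequality and the linearity and monotonicity of the integral on $\civita$.
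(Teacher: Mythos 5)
Your proof is correct and follows essentially the same route as the paper: the same algebraic splitting of $f_ng_n-F_nG_n$ into two cross terms, each controlled by H\"older's inequality and the squeeze theorem for strong convergence. You are in fact somewhat more careful than the paper, which leaves implicit both the strong boundedness of $\{\norm{f_n}_p\}_{n\in\N}$ and $\{\norm{G_n}_q\}_{n\in\N}$ and the verification that the defining strong limits exist.
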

\begin{proof}
	Let us calculate
	$$
		\int_A \langle f_n \rangle \langle g_n \rangle - \int_A \langle F_n \rangle \langle G_n \rangle
		=
		\slim \int_A f_n g_n
		-
		\slim \int_A F_n G_n.
	$$
	Notice that
	\begin{equation}\label{eq lemma independence scalar product}
	\int_A F_n G_n = \int_A (F_n - f_n) G_n + \int_A f_n G_n
	\end{equation}
	By the H\"older's inequality,
	$$
		\int_A |(F_n - f_n) G_n | \leq \norm{F_n - f_n}_p \norm{G_n}_q
	$$
	and, since $\slim_{n \rightarrow \infty} \norm{F_n - f_n}_p = 0$, then by the squeeze theorem for strong convergence we also have $\slim_{n\rightarrow \infty} \int_A |(F_n - f_n) G_n | = 0$.
	Since $\left| \int_A (F_n-f_n) G_n \right| \leq \int_A |(F_n - f_n) G_n |$, we obtain also $\slim_{n\rightarrow \infty} \int_A (F_n - f_n) G_n  = 0$.
	From equation \eqref{eq lemma independence scalar product}, we conclude that $\int_A F_n G_n = \int_A f_n G_n$.
	
	With a similar argument, it is possible to deduce also $\int_A f_n G_n = \int_A f_n g_n$, as desired.
\end{proof}

Thanks to the properties of the integral over $\civita$ and of the strong convergence, it is possible to prove that $\int$ is an inner product over $\sL^2$.

\begin{lemma}\label{lemma scalar product}
	For every measurable $A \subset \civita$, $\int$ is a scalar product over $\sL^2(A)$.
\end{lemma}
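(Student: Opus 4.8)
The plan is to verify the three defining properties of a scalar product over an ordered field: $\civita$-bilinearity, symmetry, and positive definiteness. Before addressing these, I would confirm that the operator is genuinely $\civita$-valued, i.e.\ that the strong limit in Definition \ref{def strong scalar product} exists. For $\langle f_n \rangle, \langle g_n \rangle \in \sL^2(A)$ the sequence $n \mapsto \int_A f_n g_n$ is strongly Cauchy: writing $f_n g_n - f_m g_m = f_n(g_n - g_m) + (f_n - f_m)g_m$ and applying the H\"older's inequality of Lemma \ref{holder inequality} gives
$$
\left| \int_A f_n g_n - \int_A f_m g_m \right| \leq \norm{f_n}_2 \norm{g_n - g_m}_2 + \norm{f_n - f_m}_2 \norm{g_m}_2.
$$
Since strongly Cauchy sequences are bounded and $\{f_n\}_{n\in\N}$, $\{g_n\}_{n\in\N}$ are strongly Cauchy in norm, the right-hand side strongly converges to $0$, and sequential completeness of $\civita$ guarantees that $\slim \int_A f_n g_n$ exists. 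Independence of this value from the representatives has already been established in the preceding lemma.

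The bilinearity and symmetry are routine and I would dispatch them quickly. Symmetry is inherited from the commutativity of the product in $\civita$, since $f_n g_n = g_n f_n$ pointwise; bilinearity follows from the linearity of the integral over $\civita$ (Proposition 4.3 of \cite{berz+shamseddine2003}) together with the fact that strong limits respect $\civita$-linear combinations, which is precisely the estimate already used in Lemma \ref{lemma slp vector spaces}.

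The substance of the proof is positive definiteness, and the key is the identity linking the quadratic form to the $\sL^2$ norm. Taking $\langle g_n \rangle = \langle f_n \rangle$ gives $\int_A \langle f_n \rangle \langle f_n \rangle = \slim \int_A f_n^2 = \slim \norm{f_n}_2^2$, and since strong convergence is compatible with multiplication in $\civita$ one may factor the limit as $\left( \slim \norm{f_n}_2 \right)^2 = \norm{\langle f_n \rangle}_2^2$. From this identity both halves of positive definiteness follow at once: each $\int_A f_n^2 \geq 0$, being the integral of a non-negative function, and strong limits preserve the non-strict inequality, so $\int_A \langle f_n \rangle \langle f_n \rangle \geq 0$; moreover $\int_A \langle f_n \rangle \langle f_n \rangle = 0$ forces $\norm{\langle f_n \rangle}_2 = 0$, which by Lemma \ref{lemma norm p} is equivalent to $\langle f_n \rangle = 0$.

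I expect the only delicate points to be bookkeeping ones: confirming that the defining strong limit exists and is finite (handled by the H\"older estimate and sequential completeness above), and checking that the manipulations of strong limits — preservation of order and compatibility with products — are legitimate. These all rest on the good behaviour of strong convergence recorded in Section \ref{sec prelim}, in particular the squeeze theorem and sequential completeness of $\civita$, so no genuinely new difficulty arises once the identity $\int_A \langle f_n \rangle \langle f_n \rangle = \norm{\langle f_n \rangle}_2^2$ is in hand.
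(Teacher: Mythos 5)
Your proposal is correct and follows essentially the same route as the paper: symmetry and bilinearity read off from the definition, and positive definiteness reduced to the identity $\int_A \langle f_n \rangle \langle f_n \rangle = \norm{\langle f_n\rangle}_2^2$ combined with Lemma \ref{lemma norm p}. The extra verification that the defining strong limit exists (via the H\"older estimate and sequential completeness of $\civita$) is a sensible addition that the paper leaves implicit in Definition \ref{def strong scalar product}, but it does not change the argument.
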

\begin{proof}
	Symmetry and bilinearity can be obtained from the definition of $\int$.
	In order to prove that $\int \langle f_n \rangle \langle f_n \rangle > 0$ whenever $\langle f_n \rangle \ne 0$, recall that $\int \langle f_n \rangle \langle f_n \rangle = \norm{\langle f_n\rangle}_2^2$ and that, as a consequence of Lemma \ref{lemma norm p}, if $\langle f_n \rangle \ne 0$ then $\norm{\langle f_n\rangle}_2 > 0$.
\end{proof}

As a consequence of the sequential completeness of $\sL^2$ and of the existence of the scalar product $\int$, we conclude that $\sL^2$ is a Hilbert space.

\begin{proposition}
	For every measurable $A \subset \civita$, the space $\sL^2(A)$ is a Hilbert space.
\end{proposition}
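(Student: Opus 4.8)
The plan is to recognise a Hilbert space as a sequentially complete inner product space and to assemble the two ingredients that have already been established. First I would invoke Lemma \ref{lemma scalar product}, which guarantees that $\int$ is a symmetric, bilinear, positive-definite scalar product over $\sL^2(A)$. This equips $\sL^2(A)$ with the structure of an inner product space over $\civita$; here it is relevant that $\civita$ is real closed, so that the square root of the non-negative quantity $\int_A \langle f_n \rangle \langle f_n \rangle$ exists in $\civita$ and the associated norm $\langle f_n \rangle \mapsto \sqrt{\int_A \langle f_n \rangle \langle f_n \rangle}$ is well-defined.

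The key compatibility I would verify is that the norm induced by the inner product coincides with the norm $\norm{\cdot}_2$ of Lemma \ref{lemma norm p}, with respect to which completeness is already known. Indeed, by Definition \ref{def strong scalar product} and by the fact that strong limits commute with products in $\civita$,
\[
	\int_A \langle f_n \rangle \langle f_n \rangle = \slim_{n\rightarrow\infty} \int_A f_n^2 = \slim_{n\rightarrow\infty} \norm{f_n}_2^2 = \left( \slim_{n\rightarrow\infty}\norm{f_n}_2 \right)^2 = \norm{\langle f_n \rangle}_2^2,
\]
so the inner product $\int$ indeed induces exactly $\norm{\cdot}_2$. Finally I would appeal to Proposition \ref{proposition banach spaces}, which asserts that $\sL^2(A)$ is sequentially complete with respect to $\norm{\cdot}_2$. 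Combining this completeness with the inner product structure yields that $\sL^2(A)$ is a sequentially complete inner product space, i.e.\ a Hilbert space over $\civita$, as claimed.

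I do not expect a serious obstacle, since the substantial work has already been carried out in Proposition \ref{proposition banach spaces} and Lemma \ref{lemma scalar product}; the argument is essentially an assembly of these results. The only point requiring a little care is the identification displayed above, namely that the norm derived from the inner product agrees with the norm $\norm{\cdot}_2$ governing completeness, together with the implicit use of the real-closedness of $\civita$ to ensure that the inner-product norm takes values in $\civita$. Once these are noted, the conclusion is immediate.
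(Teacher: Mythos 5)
Your proof is correct and follows essentially the same route as the paper: both combine Lemma \ref{lemma scalar product} (the scalar product on $\sL^2(A)$) with the sequential completeness established in Proposition \ref{proposition banach spaces}. The compatibility $\int_A \langle f_n \rangle \langle f_n \rangle = \norm{\langle f_n\rangle}_2^2$ that you verify explicitly is already noted in the paper's proof of Lemma \ref{lemma scalar product}, so your extra check is a welcome but not divergent addition.
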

\begin{proof}
	In Proposition \ref{proposition banach spaces} we have proved that $\sL^2(A)$ with the scalar product $\int$ is a Banach space.
	Since $\int_A$ is a scalar product over $\sL^2(A)$, as shown in Lemma \ref{lemma scalar product}, $\sL^2(A)$ is a Hilbert space.
\end{proof}

The previous results show that the $\sL^p$ spaces share many good properties with the real $L^p$ spaces based upon the Lebesgue measure.
Despite these positive features, however, they are still not rich enough to represent real functions that are not locally analytic.
More precisely, we will prove that for every non-negative real measurable function $f$ over the interval $[a,b]$, if $f$ is not locally analytic almost everywhere, then there exist no $\phi \in \L_s^p([a,b]_\civita)$ such that
\begin{equation}\label{eqn coherence}
	\int_I |f(x)|^p dx \approx \int_{I_\civita} |\phi|^p \text{ for every interval } I \subseteq [a,b].
\end{equation}

\begin{proposition}\label{prop slp non rappresenta f reali}
	If $f \in L^p([a,b])$ is a non-negative function that is not locally analytic at any point in $[a,b]$, then
	for every $\langle g_n \rangle \in \sL^p([a,b]_\civita)$ there exists an interval $I(c,d)$ with $c, d \in \R$ and $c\ne d$ such that
	$$\norm{\langle g_n \cdot \chi_{I(c,d)} \rangle}_p
	\not\approx
	\norm{ |f(x)| \cdot \chi_{[c,d]}}_p .$$
\end{proposition}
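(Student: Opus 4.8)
The plan is to argue by contradiction, reducing the statement about the abstract completion element $\langle g_n \rangle$ to a statement about a single measurable function, to which the structure theory of Section \ref{section measurable functions} applies. Suppose there were some $\langle g_n \rangle \in \sL^p([a,b]_\civita)$ with $\norm{\langle g_n \chi_{I(c,d)}\rangle}_p \approx \norm{|f|\chi_{[c,d]}}_p$ for every $c,d \in \R$, $a \le c < d \le b$, and write $\rho(c,d) = \norm{|f|\chi_{[c,d]}}_p = \left(\int_c^d |f|^p\,dx\right)^{1/p} \in \R$. Since $f \ge 0$ is not locally analytic at any point, it cannot vanish almost everywhere on a real subinterval (otherwise it would agree a.e.\ with the analytic zero function there), so $\rho(c,d) > 0$ and $\lambda(\rho(c,d)) = 0$ for all such $c,d$.

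First I would perform the reduction to a single function. The sequence $\{g_n\}_{n\in\N}$ is strongly Cauchy in $\norm{\cdot}_p$, and by the reverse triangle inequality $\left|\,\norm{g_n \chi_I}_p - \norm{g_m \chi_I}_p\right| \le \norm{(g_n - g_m)\chi_I}_p \le \norm{g_n - g_m}_p$ \emph{simultaneously} for every interval $I$. Fixing a positive infinitesimal $\varepsilon$ and choosing $N$ with $\norm{g_n - g_m}_p < \varepsilon$ for $n,m \ge N$, the single function $g := g_N \in \L^p([a,b]_\civita)$ then satisfies $\left|\,\norm{g\chi_{I(c,d)}}_p - \norm{\langle g_n \chi_{I(c,d)}\rangle}_p\right| \le \varepsilon$ for all real intervals at once, using that $\norm{\langle g_n\chi_{I(c,d)}\rangle}_p = \slim_m \norm{g_m\chi_{I(c,d)}}_p$. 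Combined with $\norm{\langle g_n\chi_{I(c,d)}\rangle}_p \approx \rho(c,d)$ and $\lambda(\rho(c,d)) = 0$, Lemma \ref{lemma lambda valuation} forces $\norm{g\chi_{I(c,d)}}_p \approx \rho(c,d)$ for every real $c < d$. The problem is thus reduced to showing that no single measurable $g \in \L^p([a,b]_\civita)$ can match the local $p$-norms of $f$ up to $\approx$ on all real subintervals; this use of the uniformity of strong Cauchyness is the step that makes the abstract completion tractable.

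Next I would invoke the structure theory. By Proposition \ref{measurable implies simple on a big set}, $g$ is simple, i.e.\ a weakly convergent power series, on some interval $I_0 \subseteq [a,b]_\civita$ with $\lambda(l(I_0)) = \lambda(\m([a,b]_\civita)) = 0$; such an $I_0$ has non-infinitesimal length and hence contains a real subinterval $[c_0,d_0]$, $c_0 < d_0$. Re-expanding around a real point $r \in [c_0,d_0]$, write $g(x) = \sum_k a_k (x-r)^k$ on $I_0$. If some coefficient had $\lambda(a_k) < 0$, then exactly as in the proof of Proposition \ref{lemma not analytic} one gets $\lambda(g(x)) < 0$ for real $x \ne r$, so $|g|^p$ would be infinite on a real subinterval avoiding $r$, making $\int_{I(c,d)} |g|^p$ infinite there and contradicting both $g \in \L^p$ and $\norm{g\chi_{I(c,d)}}_p \approx \rho(c,d)$. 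Hence $\lambda(a_k) \ge 0$ for all $k$, so $a_k \approx a_k[0]$ and $\tilde{P}(s) = \sum_k a_k[0](s-r)^k$ is a genuine real analytic function on a neighbourhood of $[c_0,d_0]$.

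Finally I would transfer the norm identity to the real line. On any real $[c,d] \subseteq [c_0,d_0]$ the difference $g - \ext{\infty}{\tilde{P}} = \sum_k (a_k - a_k[0])(x-r)^k$ is infinitesimal, so $|g|^p \approx |\ext{\infty}{\tilde{P}}|^p$, and integrating over the non-infinitesimal interval $I(c,d)$ together with the coherence Lemma \ref{lemma coherence} (applied where $\tilde{P} \ne 0$) gives $\int_{I(c,d)} |g|^p \approx \int_c^d |\tilde{P}|^p\,dx$. Comparing with $\norm{g\chi_{I(c,d)}}_p^p \approx \rho(c,d)^p = \int_c^d |f|^p\,dx$, and using that two positive reals which are $\approx$ are equal, yields $\int_c^d |f|^p = \int_c^d |\tilde{P}|^p$ for all subintervals of a real interval on which $\tilde{P} \ne 0$ (one exists, since $\tilde{P} \not\equiv 0$ has isolated zeros). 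Therefore $|f| = |\tilde{P}|$ almost everywhere there, so $f$ agrees a.e.\ with the locally analytic function $|\tilde{P}|$, contradicting the hypothesis. I expect the main obstacle to be precisely this coherence step: passing from the infinitesimal closeness of $g$ and $\ext{\infty}{\tilde{P}}$ to the integral relation requires controlling $\int_{I(c,d)}\bigl(|g|^p - |\ext{\infty}{\tilde{P}}|^p\bigr)$ uniformly and restricting to a subinterval of constant sign of $\tilde{P}$, where $|\tilde{P}|^p$ is analytic so that Lemma \ref{lemma coherence} and Corollary \ref{corollario piccolo} genuinely apply.
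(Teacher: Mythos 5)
Your proposal is correct and follows essentially the same route as the paper's proof: reduce by density of $\L^p$ in $\sL^p$ to a single measurable function $g$, locate a non-infinitesimal interval where $g$ is simple with $\lambda(g)\geq 0$, pass to the real analytic shadow of $g$ via the standard parts of the coefficients, transfer the local $p$-norms by the coherence Lemma \ref{lemma coherence}, and contradict the non-local-analyticity of $f$. You merely spell out two details the paper leaves implicit (the uniformity of the reverse triangle inequality over all subintervals in the density reduction, and the restriction to a subinterval where the shadow is nonvanishing so that $|\tilde{P}|^p$ is genuinely analytic), which is fine.
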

\begin{proof}
	By Proposition \ref{proposition banach spaces}, $\L^p([a,b]_\civita)$ is dense in $\sL^p([a,b]_\civita)$.
	As a consequence, it is sufficient to prove that
	for every $g \in \L^p([a,b]_\civita)$ there exists an interval $I(c,d)$ with $c, d \in \R$ and $c\ne d$ such that
	$$\norm{ g \cdot \chi_{I(c,d)} }_p
	\not\approx
	\norm{ |f(x)| \cdot \chi_{[c,d]}}_p .$$
	
	If $\lambda(g(x))<0$ for every $x \in [a,b]_\civita$, then the desired inequality is trivial, since in this case $\lambda \left(\norm{ g \cdot \chi_{I} }_p \right)< 0$.
	Suppose then that there exists $I \subseteq [a,b]_\civita$ with $l(I) \not \sim 0$ such that $g$ is analytic over $I$ and $\lambda(g(x))\geq 0$ for every $x \in I$.
	Then it is possible to define a real function $G : I\cap\R \rightarrow \R$ by posing $G(r) = g(r)[0]$ for every $r \in I\cap\R$.
	Moreover, by our choice of $I$, $G$ is analytic in $I\cap\R$.
	As a consequence, by Lemma \ref{lemma coherence}, $\norm{G \cdot \chi_{J\cap\R}}_p \approx \norm{g \cdot \chi_{J}}_p$ for every interval $J \subseteq I$.
		However, since $f$ is not locally analytic at every point of its domain, there exist $c, d \in I$ such that
	$$
		\norm{G \cdot \chi_{[c,d]}}_p \ne \norm{f \cdot \chi_{[c,d]}}_p
	$$
	for every $1 \leq p \leq \infty$.
	This is sufficient to conclude the proof.
\end{proof}

\begin{remark}
	We have chosen to formalize the notion of \emph{representation of a real function by an element of $\L_s^p$} with condition \eqref{eqn coherence} since for any pair of non-negative functions $f, g \in L^p([a,b])$ the property
	$$
	\int_I |f(x)|^p dx = \int_{I} |g(x)|^p \text{ for every interval } I \subseteq [a,b]
	$$
	is equivalent to $f = g$ a.e.\ in $[a,b]$.
\end{remark}

\section{Representing real functions and distributions}\label{section representation}

We have seen in Proposition \ref{prop nonmeas} that both the space of measurable functions and the $\L^p$ spaces over the Levi-Civita field are not expressive enough to represent real continuous functions.
Similarly, Proposition \ref{prop slp non rappresenta f reali} entails that it is not possible to represent real functions that are not locally analytic as strongly converging sequences of $\L^p$ functions.
However, it is possible to represent a large class of real functions as weakly Cauchy sequences of measurable functions.
This, in turn, will enable the representation of real distributions with measurable functions on the Levi-Civita field.

\subsection{Representing real functions as weakly Cauchy sequences of $\L^p$ functions}
Even if it is not possible to represent real continuous functions with elements of $\sL^p$, weakly Cauchy sequences of $\L^p$ functions are expressive enough to represent not only real continuous functions, but every real function whose $p$-th power is summable.

\begin{theorem}\label{mainthm extension}
	Let $I \subset \R$ be an interval.
	For every real $1 \leq p \leq \infty$ and for every $f \in L^p(I)$, there exists a weakly Cauchy sequence $\{\s_n\}_{n\in\N}$ of measurable functions in $\meas(I_\civita)$ such that
	\begin{enumerate}
		\item\label{mainthm1} $\wlim_{n \rightarrow \infty} \norm{\s_n}_p = \norm{f}_p$;
		\item\label{mainthm3} for every interval $J \subseteq I$, $\wlim_{n \rightarrow \infty} \norm{ \s_n \cdot \chi_{J_\civita}}_p = \norm{f \cdot \chi_J}_p$; and
		\item\label{mainthm2} for a.e.\ $x \in I$ there exists a subsequence $\{\s_{n_k}\}_{k\in\N}$ such that $\wlim_{k \rightarrow \infty} \s_{n_k}(x) = f(x)$.
	\end{enumerate}
\end{theorem}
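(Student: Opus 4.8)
The plan is to transport a real $L^p$-approximation of $f$ into $\civita$ and then let Corollary \ref{corollario standard} do the work, exploiting that all the $\civita$-norms we produce will in fact be real numbers. Assume $I$ is bounded with real endpoints, so that $I_\civita$ is measurable. For $1\le p<\infty$ I would first pick real step functions $f_n=\sum_k c^{(n)}_k\chi_{J^{(n)}_k}$, with real coefficients $c^{(n)}_k$ and a finite partition of $I$ into real subintervals $J^{(n)}_k$, such that $f_n\to f$ in $L^p(I)$; these exist because step functions are dense in $L^p(I)$ for finite $p$. I would then define $\s_n\in\meas(I_\civita)$ to be the piecewise-constant function taking the value $c^{(n)}_k$ on $(J^{(n)}_k)_\civita$. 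Being constant, hence simple, on each of finitely many disjoint $\civita$-intervals whose union is $I_\civita$ up to a set of measure zero, each $\s_n$ is measurable by Definition \ref{measurable functions}. A direct computation with the integral on $\civita$ (a special case of Lemma \ref{lemma coherence}) yields the coherence identity $\norm{\s_n\cdot\chi_{K_\civita}}_p=\norm{f_n\cdot\chi_K}_p$ for every real subinterval $K\subseteq I$, the left-hand side being an $\L^p(I_\civita)$-norm and the right-hand side a real $L^p$-norm; in particular each such norm is a real number.

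With this identity the three conclusions follow quickly. For the weakly Cauchy property I would note that $\norm{\s_n-\s_m}_p=\norm{f_n-f_m}_p$ is a real number, so its only nonzero component sits at $q=0$; since $\{f_n\}$ is Cauchy in $L^p(I)$ this real quantity tends to $0$, which is precisely the condition in Definition \ref{cauchy+convergence}. For \ref{mainthm1} and \ref{mainthm3} I would use that $\norm{\s_n\cdot\chi_{J_\civita}}_p=\norm{f_n\cdot\chi_J}_p\to\norm{f\cdot\chi_J}_p$ as real numbers (by $L^p$-convergence on $J$ and the reverse triangle inequality), so that Corollary \ref{corollario standard} promotes this real convergence to weak convergence in $\civita$ toward the real number $\norm{f\cdot\chi_J}_p$; taking $J=I$ gives \ref{mainthm1}. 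For \ref{mainthm2} I would pass to a subsequence $f_{n_k}\to f$ pointwise almost everywhere, available since $L^p$-convergence always has an almost-everywhere convergent subsequence; then for almost every real $x\in I$ one has $\s_{n_k}(x)=f_{n_k}(x)\to f(x)$ in $\R$, and Corollary \ref{corollario standard} gives $\wlim_{k\to\infty}\s_{n_k}(x)=f(x)$.

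The main obstacle is the endpoint $p=\infty$, where the scheme above genuinely strains. Step functions — indeed any functions that are piecewise analytic on intervals — are not dense in $L^\infty(I)$ for the sup-norm: for a nowhere-dense set $E$ of positive measure, $\chi_E$ cannot be matched on every subinterval by an interval-piecewise-constant function, so $\norm{f_n-f_m}_\infty$ need not tend to $0$ and the weakly Cauchy argument above collapses. For $p=\infty$ I would therefore drop norm convergence in favour of almost-everywhere convergence with the uniform control $\norm{f_n}_\infty\le\norm{f}_\infty$ (choosing the $c^{(n)}_k$ as averages of $f$, so that Lebesgue differentiation gives $f_n\to f$ a.e.), which still delivers \ref{mainthm1} and \ref{mainthm2} through the same a.e. argument. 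The delicate point — and the part I expect to require the most care — is \ref{mainthm3} for $p=\infty$, since averaging leaks mass across the endpoints of $J$ and can keep $\norm{\s_n\cdot\chi_{J_\civita}}_\infty$ away from $\norm{f\cdot\chi_J}_\infty$ when $f$ jumps at $\partial J$; controlling this boundary behaviour uniformly over all subintervals $J$ is the crux of the $p=\infty$ case.
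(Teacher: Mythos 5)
For $1\leq p<\infty$ your argument coincides with the paper's own proof: the paper likewise takes real step functions $s_n\to f$ in $L^p(I)$, writes each $s_n$ as a finite sum of functions constant on real subintervals, extends each piece canonically to the corresponding $\civita$-interval to obtain $\s_n\in\meas(I_\civita)$, invokes Lemma \ref{lemma coherence} to get the coherence of norms $\norm{\s_n}_p=\norm{s_n}_p$ (and its localized version on subintervals $J$), and then concludes items \eqref{mainthm1}--\eqref{mainthm3} from Corollary \ref{corollario standard} together with an a.e.\ convergent subsequence of $\{s_n\}_{n\in\N}$. There is nothing to add for finite $p$.

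The divergence, and the genuine gap, is at $p=\infty$. Your diagnosis is correct: functions that are constant on finitely many intervals are not dense in $L^\infty(I)$, and your example $\chi_E$ with $E$ nowhere dense of positive measure is a valid witness. It is worth noting that the paper's proof does not treat this case separately --- it simply posits simple functions with $\norm{s_n-f}_p\to0$ and then decomposes them into pieces constant on intervals, which for $p=\infty$ is exactly the step that cannot be carried out for general $f$; so the defect you make explicit is present, silently, in the paper's own argument. However, your proposed repair does not close the gap: with local averages the sequence $\{\s_n\}_{n\in\N}$ need not be weakly Cauchy in the $\L^\infty$ norm (a requirement of the statement, per Definition \ref{cauchy+convergence}), since $\norm{f_n-f_m}_\infty$ need not tend to $0$, and you concede that item \eqref{mainthm3} remains out of reach. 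As written, then, the case $p=\infty$ is not proved in your proposal; the honest conclusion is that the construction establishes the theorem for $1\leq p<\infty$, while the endpoint either requires a genuinely different construction or a weakening of the claim (for instance, for $f\in L^\infty(I)$ one still gets all three conclusions in every finite $p$-norm, since $I$ is bounded and $L^\infty(I)\subseteq L^p(I)$).
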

\begin{proof}
	Let $f \in L^p(I)$ and let $\{s_n\}_{n\in\N}$ be a sequence of simple functions satisfying $\lim_{n\rightarrow\infty} \norm{s_n-f}_p =0$.
	Since $s_n$ is simple for every $n \in\N$, there exists $k_n$ such that $s_n = \sum_{i=1}^{k_n} s_{n,i}$, with $s_{n,i}$ constant over an interval $I_{n,i} \subseteq I$.
	Then, if we let $\s_{n,i}$ be the canonical extension of $s_{n,i}$ to $(I_{n,i})_\civita$, we can define
	$
		\s_n = \sum_{i=1}^{k_n} \s_{n,i}.
	$
	The functions $\s_{n,i}$ are measurable by Lemma \ref{lemma coherence}, so that $\s_n$ is also measurable.
	Moreover, the same Lemma entails that
	\begin{equation}\label{equazione uguaglianza norme mainthm}
		\norm{\s_n}_p = \norm{\sum_{i=1}^{k_n} \s_{n,i}}_p
		=
		\norm{\sum_{i=1}^{k_n} s_{n,i}}_p
		=
		\norm{s_n}.
	\end{equation}

	This equality, the hypothesis that $\lim_{n \rightarrow \infty} \norm{s_n- f}_p = 0$ and Corollary \ref{corollario standard} entail that
	\begin{itemize}
		\item $\{\s_n\}_{n\in\N}$ is weakly Cauchy in $\L^p(I_\civita)$ and
		\item $\wlim_{n \rightarrow \infty} \norm{\{\s_n\}_{n\in\N}}_p = \norm{f}_p$.
	\end{itemize}
	This is sufficient to prove \eqref{mainthm1}.
	The proof of equality \eqref{mainthm3} can be obtained by localizing the above proof to any interval $J \subseteq I$.

	Property \eqref{mainthm2} can be obtained from Corollary \ref{corollario standard} and from the hypothesis that $s_n$ converges to $f$ in the $L^p$ norm.
	The latter entails that for a.e.\ $x \in I$ there exists a subsequence $\{s_{n_k}\}_{k\in\N}$ such that $\lim_{k\rightarrow\infty} s_{n_k}(x) = f(x)$.
	Thus by Corollary \ref{corollario standard} we also have $\wlim_{k\rightarrow\infty} \s_{n_k}(x) = f(x)$.
\end{proof}

As a consequence of the above theorem, we can represent $L^p$ functions as weakly Cauchy sequences of measurable functions.

\begin{remark}\label{remark nonorm}
	Theorem \ref{mainthm extension} might suggest that it could be convenient to define not only the spaces $\sL^p$, but also some $\wL^p$ spaces obtained by substituting strong limits with weak limits in Definitions \ref{def pre slp} and \ref{def slp}.
	However, this route presents many obstacles.
	The principal is that weak convergence does not satisfy a squeeze theorem, so many results of Section \ref{section completion} turn out to be false for the $\wL^p$ spaces.
	For instance, if $\{f_n\}_{n\in\N}$ is a weakly Cauchy sequence such that $\wlim_{n \rightarrow \infty} \norm{f_n}_p \in \civita$, one might define $\norm{ \langle f_n \rangle }_p = \wlim_{n \rightarrow \infty} \norm{f_n}_p$.
	However, with this definition there would be sequences $\{f_n\}_{n\in\N}$ satisfying $\norm{f_n}_p > 0$ for all $n \in\N$, but $\norm{ \langle f_n \rangle }_p < 0$ (one of such sequences can be obtained from Example \ref{example convergence x norm}).
	Consequently, the map $\norm{ \cdot }_p$ would not be a norm or even a seminorm.
	
	Moreover, the value $\norm{ \langle f_n \rangle }_p$ would not be independent from the representative $\{f_n\}_{n\in\N}$.
	This happens because there are sequences of measurable functions $\{f_n\}_{n\in\N}$ and $\{g_n\}_{n\in\N}$ such that
	$\wlim_{n \rightarrow \infty} \norm{ f_n - g_n }_p = 0$,
	but	$\wlim_{n \rightarrow \infty} \norm{f_n}_p \ne \wlim_{n \rightarrow \infty} \norm{g_n}_p$.
	
	A possible workaround could be that of defining a function $\norm{ \{f_n\}_{n\in\N} }_p = \lim_{n\rightarrow \infty} \sh{\norm{f_n}_p}$ and in identifying two sequences $\{f_n\}_{n\in\N}$ and $\{g_n\}_{n\in\N}$ whenever $\lim_{n \rightarrow \infty} \sh{\norm{ f_n - g_n }_p}=0$.
	However, such a definition would entail the identification of all functions whose values differ up to an infinitesimal.
\end{remark}

We remark that some classes of measurable functions could have a more meaningful representation than the one obtained in the proof of Theorem \ref{mainthm extension}.
The strongest result can be obtained for real continuous functions defined over compact intervals.

\begin{proposition}\label{proposition extension of continuous functions}
	For every $f\in C^0([a,b])$ and for every $n\in\N$, there exists a sequence of measurable functions $\{\ext{n}{p}\}_{n\in\N}$ that satisfies the following properties for every $1 \leq p \leq \infty$:
	\begin{enumerate}
		\item\label{contfun1} $\{\ext{n}{p}\}_{n\in\N}$ is weakly Cauchy in $\L^p([a,b]_\civita)$;
		\item\label{contfun2} $\wlim_{n \rightarrow \infty} \norm{\ext{n}{p}}_p = \norm{f}_p$;
		\item\label{contfun3} $\wlim_{n \rightarrow \infty} \ext{n}{p}(x) = f(x)$ for every $x \in [a,b]$.
	\end{enumerate}
\end{proposition}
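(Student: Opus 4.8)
The plan is to transport a uniform polynomial approximation of $f$ into $\civita$ through the canonical extension, relying on the fact that canonical extensions of analytic functions are measurable and norm-preserving. First I would apply the Weierstrass approximation theorem on the compact interval $[a,b]$ to obtain polynomials $\{p_n\}_{n\in\N}$ with $\lim_{n\rightarrow\infty}\norm{p_n-f}_\infty=0$ (real sup-norm on $[a,b]$). Each $p_n$ lies in $\an([a,b])$, so I let $\ext{n}{p}$ denote the canonical extension of $p_n$ to $[a,b]_\civita$. By Corollary \ref{corollario piccolo}, $\ext{n}{p}\in\L^p([a,b]_\civita)$ is measurable for every $1\leq p\leq\infty$ and satisfies the norm identity $\norm{\ext{n}{p}}_p=\norm{p_n}_p$, the latter being the real $L^p$ norm of $p_n$.

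Next I would upgrade the uniform convergence to $L^p$ convergence for every exponent at once. On the bounded interval $[a,b]$ one has $\norm{p_n-f}_p\leq\norm{p_n-f}_\infty(b-a)^{1/p}$ for finite $p$, while the case $p=\infty$ is the hypothesis itself; hence the real sequence $\norm{p_n-f}_p$ tends to $0$ for every $p$, and in particular $\{p_n\}$ is $L^p$-Cauchy in $\R$. Using linearity of the canonical extension together with the norm identity of Corollary \ref{corollario piccolo}, $\norm{\ext{n}{p}-\ext{m}{p}}_p=\norm{p_n-p_m}_p$, a real number that tends to $0$ as $m,n\rightarrow\infty$. Since a real null sequence is weakly null by Corollary \ref{corollario standard}, the sequence $\{\ext{n}{p}\}_{n\in\N}$ is weakly Cauchy in $\L^p([a,b]_\civita)$, which is property \eqref{contfun1}. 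Property \eqref{contfun2} follows from the same identity: $\norm{\ext{n}{p}}_p=\norm{p_n}_p\rightarrow\norm{f}_p$ in $\R$, so Corollary \ref{corollario standard} gives $\wlim_{n\rightarrow\infty}\norm{\ext{n}{p}}_p=\norm{f}_p$.

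Property \eqref{contfun3} is where the continuity of $f$ yields the strongest conclusion. For a real point $x\in[a,b]$ the canonical extension agrees with the polynomial, so $\ext{n}{p}(x)=p_n(x)$, and uniform convergence forces $p_n(x)\rightarrow f(x)$ in $\R$ for \emph{every} $x$; one last application of Corollary \ref{corollario standard} gives $\wlim_{n\rightarrow\infty}\ext{n}{p}(x)=f(x)$ for every $x\in[a,b]$. This is the improvement over Theorem \ref{mainthm extension}: no exceptional null set and no passage to a subsequence are needed.

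The only genuine subtlety I anticipate is that a single sequence must serve all exponents $1\leq p\leq\infty$ simultaneously, and this is exactly what uniform (rather than merely $L^p$) approximation secures, since the sup-norm dominates every $L^p$ norm on $[a,b]$ and the norm-preservation of Corollary \ref{corollario piccolo} is uniform in $p$. I would also flag that closedness of $[a,b]_\civita$ is essential: the identities $\norm{\ext{n}{p}}_p=\norm{p_n}_p$ rest on Lemma \ref{lemma coherence} and Corollary \ref{corollario piccolo}, which require a closed interval, and would fail on an open interval in view of Proposition \ref{prop nonmeas}.
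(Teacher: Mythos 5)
Your proposal is correct and follows essentially the same route as the paper: approximate $f$ uniformly by polynomials, take canonical extensions, and transport the real convergence of norms and pointwise values into $\civita$ via Lemma \ref{lemma coherence}, Corollary \ref{corollario piccolo} and Corollary \ref{corollario standard}. The only cosmetic difference is that the paper uses the best uniform approximation polynomials $\p{n}{f}$ from \cite{best approx} where you invoke Weierstrass; either choice supplies the uniformly convergent polynomial sequence that the argument needs.
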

	\begin{proof}
	Let $[a,b]\subset\R$ and let $\rP_n([a,b])$ the space of real polynomials of degree at most $n$ defined over $[a,b]$:
	$$\rP_n([a,b])=\left\{ p: [a,b] \rightarrow\R : \exists a_0, \ldots, a_n \in \R \text{ such that } p(x)=\sum_{i=0}^n a_n x^n\right\}.$$ 
	Recall that for every $f\in C^0([a,b])$ and for every $n\in\N$, there exists a unique $\p{n}{f} \in \rP_n$ such that
	\begin{equation}\label{limite approx 1}
		\norm{f-\p{n}{f} }_\infty \leq \norm{f-p}_\infty \text{ for all } p \in \rP_n
	\end{equation}
	and
	\begin{equation}\label{limite approx 2}
		\lim_{n \rightarrow \infty} \norm{\p{n}{f}}_p  = \norm{f}_p
	\end{equation}
	for every $1 \leq p \leq \infty$.
	For a proof of these statements, we refer for instance to \cite{best approx}.
	
	For every $n \in\N$ define $\ext{n}{p}$ as the canonical extension of the polynomial ${\p{n}{f}}$ to $[a,b]_\civita$.
	Properties \eqref{contfun1} and \eqref{contfun2} can be obtained from equation \eqref{limite approx 2}, from Lemma \ref{lemma coherence} and from Corollary \ref{corollario standard}: since
	$$
	\int_{[a,b]_\civita} |\ext{n}{p}(x)|^p =  \int_{[a,b]} \left|\p{n}{f}(x)\right|^p dx
		\text{ and } 
	\lim_{n \rightarrow \infty} \int_{[a,b]} \left|\p{n}{f}(x)\right|^p \ dx = \int_{[a,b]} |f(x)|^p \ dx,
	$$
	then $\wlim_{n \rightarrow \infty} \norm{\ext{n}{p}}_p = \norm{f}_p$.
	We deduce that the sequence $\{\ext{n}{p}\}_{n\in\N}$ is weakly Cauchy in $\L^p([a,b]_\civita)$
	and that
	$\slim_{n \rightarrow \infty} \norm{\ext{n}{p}}_p = \norm{f}_p$.
	
	Property \eqref{contfun3} is a consequence of the equality $\ext{n}{p}(x) = \p{n}{f}(x)$ for all $x \in [a,b]$, of uniform convergence of $\p{n}{f}$ to $f$ and of Corollary \ref{corollario standard}.
\end{proof}

For reciprocals of simple functions that diverge at a point, like the ones discussed in Proposition \ref{prop nonmeas}, it might be more convenient to work with a different approximation, as shown in the next example.

\begin{example}
	Let $a > 1$ and consider the function $f(x) = x^{-\frac{1}{a}}$ for $x \in (0,1]_\civita$.
	Recall that, by Proposition \ref{prop nonmeas}, $f(x)$ is not measurable over $(0,1]_\civita$.
	
	Define the functions $f_n = f_{|[n^{-1},1]_\civita}$ for every $n \in \N$.
	By Lemma \ref{lemma coherence},
	$$
	\int |f_n|^p = \int_{n^{-1}}^1 x^{-\frac{p}{a}} dx.
	$$
	It is well-known that the real sequence $n \mapsto \int_{n^{-1}}^1 x^{-\frac{p}{a}} dx$ converges for every $1 \leq p < a$: consequently, under these conditions over $p$, we have
	\begin{itemize}
		\item $\{f_n\}_p$ is weakly Cauchy in $L^p((0,1]_\civita)$;
		\item $\wlim_{n \rightarrow \infty} \norm{ f_n }_p = \norm{f}_p$.
	\end{itemize}
	In addition, we have $\wlim_{n \rightarrow \infty} f_n(x) = f(x)$ for every $x \in (0,1]_\civita$, $x \not \in M_o$ (i.e.\ for every nearstandard $x \in (0,1]_\civita$).
\end{example}

\subsection{Dirac-like measurable functions}\label{sec distributions}

The Dirac distribution centred at $r \in \R$
is a continuous linear functional defined over $C^0(\R)$ by the formula
$$
	\langle D_r, \varphi \rangle = \varphi(r)
$$
for every $\varphi\in C^0(\R)$ (where $\langle \cdot, \cdot, \rangle$ denotes the duality between a distribution and a test function).

Measurable functions that represent the Dirac distribution have already been studied by Shamseddine and Flynn \cite{shamflin2}.
The authors defined some specific representatives of the Dirac distributions and applied them for the description of physical phenomena. For instance, they discussed the measurable solutions to the Poisson equation $y''(t)=f(t)$ and determined the Green function of a damped driven harmonic oscillator.

In this section we will define a family of measurable functions over the Levi-Civita field that represent the Dirac distribution, and study some of their theoretical properties.
We will also discuss the product between the Dirac and the Heaviside distribution, a well-known problem of the nonlinear theory of distributions (for a discussion of this problem and for a solution in an algebra of generalized functions, we refer to \cite{grid functions}).
This problem cannot be treated in the space of real distributions; instead it requires algebras of generalized functions, such as Colombeau algebras \cite{colombeau}, asymptotic functions \cite{todorov}, ultrafunctions \cite{ultrafunctions} and grid functions \cite{grid functions}.
Notice that these algebras have non-Archimedean rings of scalars and, if one exclude Colombeau algebras, the ring of scalar of the other algebras of generalized functions are fields obtained with techniques of Robinson's analysis with infinitesimals.
The non-Archimedean rings of scalars underlying these spaces of generalized functions suggests that nonlinear theories of distributions require a non-Archimedean setting.
Inspired by this observation, we aim at  proving that that the results concerning the product between the Dirac and the Heaviside distribution obtained with these algebras of generalized functions can also be obtained with measurable functions on on the Levi-Civita field.

The Dirac measurable functions discussed by Shamseddine and Flynn are non-negative measurable functions, their support is included in an infinitesimal neighbourhood of a point and that their integral equals $1$.
We will say that a measurable function with these properties is a Dirac-like function.

\begin{definition}\label{def dirac-like}
	A measurable function $\delta_r \in \L^1(A)$ is Dirac-like at $r \in A$ iff
	\begin{enumerate}
		\item $\delta_{r}(x) \geq 0$ for all $x \in A$;
		\item there exists $h\in M_o$, $h>0$ such that $\supp \delta_{r} \subseteq [r-h,r+h]_{\civita} \subseteq A$;
		\item $\norm{\delta_r}_1  = 1$.
	\end{enumerate}
\end{definition}

By using the representation of real continuous functions of Proposition \ref{proposition extension of continuous functions}, we will now argue that every Dirac-like function represents the real Dirac distribution.

\begin{proposition}\label{prop dirac-like}
	Let $r \in [a,b]$, $r \ne a$ and $r \ne b$.
	For all Dirac-like measurable functions $\delta_r \in \L^1([a,b]_\civita)$ and for all $f \in C^0([a,b])$,
	if $\ext{n}{p}$ is defined as in Proposition \ref{proposition extension of continuous functions},
	then
	$$
	\lim_{n\rightarrow \infty} \sh{\left( \int_{[a,b]} \delta_r \cdot \ext{n}{p}\right)}
	=
	\lim_{n\rightarrow \infty} \left( \int_{[a,b]} \delta_r \cdot \ext{n}{p}\right)[0]
	=
	f(r).$$
\end{proposition}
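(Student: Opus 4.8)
The plan is to fix $n$ throughout, reduce the integral to the infinitesimal support of $\delta_r$, and on that support replace $\ext{n}{p}$ by its constant Taylor term $\p{n}{f}(r)$, which is a real number. The resulting correction is infinitesimal, so for \emph{every} finite $n$ the zeroth coefficient of the integral is \emph{exactly} $\p{n}{f}(r)$; the final limit is then just the pointwise convergence of the best-approximation polynomials to $f$ at $r$. The key design feature is that I extract $[0]$ (equivalently the standard part) at each fixed $n$ before letting $n\to\infty$, so no interchange of limits is needed.

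First I would invoke condition (2) of Definition \ref{def dirac-like} to get $h\in M_o$, $h>0$, with $\supp\delta_r\subseteq[r-h,r+h]_\civita$, so that $\int_{[a,b]}\delta_r\cdot\ext{n}{p}=\int_{[r-h,r+h]_\civita}\delta_r\cdot\ext{n}{p}$. Since $\ext{n}{p}$ is the canonical extension of the polynomial $\p{n}{f}$, Definition \ref{def ext} gives, for $x=r+\varepsilon$ with $|\varepsilon|\leq h$,
$$
\ext{n}{p}(r+\varepsilon)-\p{n}{f}(r)=\sum_{i=1}^{\deg \p{n}{f}}\frac{(\p{n}{f})^{(i)}(r)}{i!}\,\varepsilon^i .
$$
Each $(\p{n}{f})^{(i)}(r)$ is real, the sum is finite, and $h^i\leq h$ for $0<h<1$ and $i\geq 1$, so $|\ext{n}{p}(r+\varepsilon)-\p{n}{f}(r)|\leq K_n h$, where $K_n=\sum_{i\geq 1}|(\p{n}{f})^{(i)}(r)|/i!$ is a finite positive real. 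The point is that this bound is a \emph{real} multiple of the infinitesimal $h$.

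Next I would split the integral by linearity, pulling out the constant $\p{n}{f}(r)$:
$$
\int_{[r-h,r+h]_\civita}\delta_r\cdot\ext{n}{p}=\p{n}{f}(r)\int_{[r-h,r+h]_\civita}\delta_r+\int_{[r-h,r+h]_\civita}\delta_r\bigl(\ext{n}{p}-\p{n}{f}(r)\bigr).
$$
The first integral equals $\p{n}{f}(r)$ by conditions (1) and (3) of Definition \ref{def dirac-like} (namely $\delta_r\geq 0$ and $\norm{\delta_r}_1=1$). For the second, monotonicity of the integral (Corollary 4.11 of \cite{berz+shamseddine2003}) together with the bound above gives
$$
\left|\int\delta_r\bigl(\ext{n}{p}-\p{n}{f}(r)\bigr)\right|\leq K_n h\int\delta_r=K_n h,
$$
which is infinitesimal. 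Hence $\int_{[a,b]}\delta_r\cdot\ext{n}{p}=\p{n}{f}(r)+g_n$ with $\lambda(g_n)\geq\lambda(h)>0$. Since addition is coefficientwise and $\p{n}{f}(r)$ is real, the zeroth coefficient is exactly $\p{n}{f}(r)$, and because the valuation of the integral is nonnegative, its standard part agrees with this zeroth coefficient by Definition \ref{def sh}; this settles the first equality in the statement.

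Finally I would pass to the limit: for each finite $n$ both inner quantities equal the real number $\p{n}{f}(r)$, so both limits reduce to $\lim_{n\to\infty}\p{n}{f}(r)=f(r)$, using the uniform (hence pointwise) convergence of $\p{n}{f}$ to $f$ recorded in the proof of Proposition \ref{proposition extension of continuous functions}. The one place where care is needed — and the only real obstacle — is the $n$-dependence of the constant $K_n$: it is harmless precisely because the standard part is taken at each fixed $n$, where the error term $g_n$ is genuinely infinitesimal, and the limit in $n$ is applied only afterward to the resulting real numbers $\p{n}{f}(r)$.
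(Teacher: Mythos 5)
Your proof is correct and follows essentially the same route as the paper's: localize to the infinitesimal support $[r-h,r+h]_\civita$, show that $\int\delta_r\cdot\ext{n}{p}$ differs from the real number $\p{n}{f}(r)$ only by an infinitesimal (using $\int\delta_r=1$), read off the zeroth coefficient at each fixed $n$, and only then let $n\to\infty$. The sole difference is cosmetic: where the paper sandwiches the integral between $m_n=\min\ext{n}{p}$ and $M_n=\max\ext{n}{p}$ on $[r-h,r+h]_\civita$ (citing the extreme value theorem) and observes $m_n[0]=M_n[0]=\p{n}{f}(r)$, you obtain the same infinitesimal error bound directly from the finite Taylor expansion of the polynomial, which also lets you dispense with the paper's (unnecessary) preliminary case $r\notin\supp f$.
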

\begin{proof}
	If $r \not \in \supp f$, then by uniform convergence of $\p{n}{f}$ to $f$ in $[a,b]$, we have also $\wlim_{n \rightarrow \infty} \ext{n}{p} = 0$ for almost every $x \sim r$, $x \in [a,b]_\civita$.
	As a consequence, the desired assertion is true.
	
	Suppose then that $r \in \supp f$ and let $h \in M_o$ such that $\supp \delta_r \subseteq [r-h,r+h]_\civita$.
	Since $\ext{n}{p}$ is analytic over $[a,b]_\civita$, for all $n \in \N$ the numbers
	\begin{eqnarray*}
	m_n = \min_{x \in [r-h,r+h]} \ext{n}{p}(x)
	&
	\text{and}
	&
	M_n = \max_{x \in [r-h,r+h]} \ext{n}{p}(x)
	\end{eqnarray*}
	are well-defined \cite{evt}.
	As a consequence of these inequalities and thanks to Corollary 4.6 of \cite{berz+shamseddine2003},
	\begin{equation}\label{inequalities}
	m_n = m_n \int_{[r-h,r+h]_\civita} \delta_r \leq \int_{[r-h,r+h]_\civita} \delta_r \ext{n}{p}  \leq M_n \int_{[r-h,r+h]_\civita} \delta_r = M_n
	\end{equation}
	for all $n \in \N$.
	Thus it is sufficient to prove that $\lim_{n \rightarrow \infty} M_n[0] = \lim_{n \rightarrow \infty} m_n[0] = f(r)$.
	The desired result is a consequence of the equalities $M_n[0]=m_n[0]=\ext{n}{p}(r) = \p{n}{f}(r)$ and of Proposition \ref{proposition extension of continuous functions}. 
\end{proof}

\subsection{Measurable representatives of the Heaviside distribution}\label{sec heaviside}

In principle, the Heaviside distribution could be represented by any non decreasing measurable function $H$ such that $H(x) = 0$ whenever $x < 0$, $x \not \in M_o$ and $H(x)=1$ whenever $0<x$, $x \not \in M_o$.
However, it will be more convenient to define it as the antiderivative of a continuous Dirac-like distribution centred at $0$.

\begin{definition}\label{def heaviside}
	Let $\delta_0$ be a continuous
	Dirac-like measurable function.
	Define $H : \civita \rightarrow \civita$ as 
	$$H(x) = \int_{[-1,x]} \delta_0.$$
\end{definition}

We remark that every continuous Dirac-like measurable function yields a corresponding function $H$. With a slight abuse of notation, we will not explicitly denote the dependence of $H$ upon $\delta_0$.

The functions obtained with Definition \ref{def heaviside} represent the Heaviside distribution.

\begin{proposition}
	For all $f \in C^0([a,b])$,
	if $\ext{n}{p}$ is defined as in Proposition \ref{proposition extension of continuous functions},
	then
	$$
	\lim_{n\rightarrow \infty} \sh{\left(\int_{[a,b]} H \cdot \ext{n}{p} \right)}
	=
	\lim_{n\rightarrow \infty} \left(\int_{[a,b]} H \cdot \ext{n}{p} \right)[0]
	=
	\int_0^{+\infty} f(x)\ dx.$$
\end{proposition}
\begin{proof}
	If $f \in C^0([a,b])$ and if $H_\R$ is the restriction of $H$ to $\R$, then $H_\R \cdot f$ is equal to the restriction of $f$ to $[a,b] \cap (0,\infty)$.
	
	Let $h\in M_o$, $h>0$ satisfy $\supp(\delta_0)\subseteq [-h,h]_\civita$.
	Then
	$$
	\int_{[a,b]} H \cdot \ext{n}{p}
	=
	\int_{[a,h]} H \cdot \ext{n}{p}
	+
	\int_{[h,b]} \ext{n}{p}.
	$$
	Since $h \in M_0$ and since both $h$ and $\ext{n}{p}$ are bounded by some real number $r \in \R$,
	$$
	\left| \int_{[a,h]} H \cdot \ext{n}{p} \right| \leq h \cdot r \approx 0.
	$$
	Moreover, $H(x) \cdot \ext{n}{p}(x) = \p{n}{f}(x)$ for every $x \in [a,b]_\civita \cap [h,\infty)$.
	Thus, by Lemma \ref{lemma coherence},
	$$
	\sh{\left(\int_{[h,b]} H \cdot \ext{n}{p} \right)}
	=
	\int_{[0,b]} \p{n}{f}\ dx.
	$$
	The desired equality can then be obtained from uniform convergence of $\left\{\p{n}{f}\right\}_{n\in\N}$ to $f$ over $[a,b] \cap [0,\infty)$ with an argument similar to the one used in the proof of Theorem \ref{mainthm extension}.
\end{proof}

One of the many advantages of working with representatives of the Heaviside distribution defined over non-Archimedean domains is that they allow for some calculations that are not possible with the real distributions.
For instance, in the description of shock waves Colombeau works with a representative of the Heaviside distribution and of the Dirac distribution that satisfy the equality
\begin{equation}\label{eqn Colombeau}
\int_{\R} (H^m-H^n)\cdot \delta_0 = \frac{1}{m+1}- \frac{1}{n+1}.
\end{equation}
As recalled in \cite{grid functions}, this calculation is not justified in the theory of distributions.
A first obstacle is that, in the real case, the powers $H^n$ of the Heaviside distribution are all equal to the original function $H$.
However, this does not happen for the measurable representatives introduced in Definition \ref{def heaviside}.

\begin{lemma}\label{lemma heaviside}
	For every $n \in \N$, and for every $f \in C^0([a,b])$,
	$$
	\lim_{n\rightarrow \infty} \sh{\left(\int_{[a,b]} H^n \cdot \ext{n}{p} \right)}
	=
	\lim_{n\rightarrow \infty} \left(\int_{[a,b]} H^n \cdot \ext{n}{p} \right)[0]
	=
	\int_0^{+\infty} f(x)\ dx.$$
	However, $H^n = H^m$ if and only if $n = m$.
\end{lemma}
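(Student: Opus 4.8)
The plan is to treat the two assertions separately. For the first equality I would observe that the argument is, up to an infinitesimal correction, the one used in the previous proposition, once one notes that every power $H^k$ with $k\ge 1$ coincides with $H$ away from the infinitesimal interval on which $\delta_0$ is supported. Writing $h\in M_o$, $h>0$ for a radius with $\supp\delta_0\subseteq[-h,h]_\civita$, one has $H^n(x)=0$ for $x\le -h$ and $H^n(x)=1^n=1$ for $x\ge h$, while $0\le H^n(x)\le 1$ throughout. I would then split
$$\int_{[a,b]} H^n \cdot \ext{n}{p} = \int_{[a,h]} H^n \cdot \ext{n}{p} + \int_{[h,b]} \ext{n}{p},$$
where on $[h,b]$ I used $H^n=1$. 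Since $H^n$ vanishes on $[a,-h]$, the first integral reduces to one over $[-h,h]_\civita$, and because $0\le H^n\le 1$ and $\ext{n}{p}$ is bounded by a real number $r$, it is bounded in absolute value by $2hr\approx 0$. Hence $\int_{[a,b]} H^n \cdot \ext{n}{p} \approx \int_{[h,b]} \ext{n}{p}$, and the conclusion then follows exactly as in the previous proposition, via Lemma \ref{lemma coherence}, uniform convergence of $\{\p{n}{f}\}_{n\in\N}$ to $f$, and Proposition \ref{proposition extension of continuous functions} (the finiteness of these integrals being what makes $\sh{(\cdot)}$ agree with the $[0]$-coefficient).

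For the second assertion the converse is immediate, so the content is that $H^n=H^m$ forces $n=m$. My strategy is to exhibit a single point $x_0$ at which $H$ takes a value strictly between $0$ and $1$. Since $\norm{\delta_0}_1=1\ne 0$, the continuous function $\delta_0$ is positive somewhere, and by Proposition \ref{measurable implies locally simple} it is simple and strictly positive on some subinterval $J=(c,d)_\civita$. On $J$ one has $H'=\delta_0>0$, so $H$ is simple and strictly increasing there with $0\le H(c)<H(d)\le 1$. Invoking the intermediate value theorem for simple functions on $\civita$, I would locate $x_0$ with $0<H(x_0)<1$ by cases: if $H(c)>0$ take $x_0=c$ (then $H(x_0)<H(d)\le 1$); if $H(c)=0$ and $H(d)<1$ take $x_0=d$ (then $0<H(d)<1$); and if $H(c)=0$ and $H(d)=1$ use the intermediate value theorem to find $x_0\in J$ with $H(x_0)=\tfrac{1}{2}$.

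With such an $x_0$ in hand, set $y=H(x_0)$, so $0<y<1$ in $\civita$. Because $\civita$ is an ordered field, $0<y<1$ gives $y^{k+1}=y\cdot y^k<y^k$ for every $k$, so $k\mapsto y^k$ is strictly decreasing and therefore injective. Consequently $H^n(x_0)=y^n=y^m=H^m(x_0)$ forces $n=m$, which is the contrapositive of the desired implication.

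I expect the main obstacle to be the second assertion, specifically guaranteeing that $H$ actually assumes a value strictly inside $(0,1)$. Since $\civita$ is totally disconnected one cannot appeal to connectedness; the argument must instead rest on the local simplicity of $\delta_0$ together with the non-Archimedean intermediate value theorem, with the delicate case being the one in which all the mass of $\delta_0$ is concentrated so tightly that $H$ rises from $0$ to $1$ across a single simple arc, where the intermediate value theorem is indispensable.
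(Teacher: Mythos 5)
Your proposal is correct and follows essentially the same route as the paper: the identical splitting $\int_{[a,b]} H^n\cdot \ext{n}{p} = \int_{[a,h]} H^n\cdot \ext{n}{p} + \int_{[h,b]} \ext{n}{p}$ with the infinitesimal bound on the first piece, and the observation that $H$ attains a value strictly between $0$ and $1$, which forces $H^n\ne H^m$ for $n\ne m$. The only difference is that the paper simply asserts the existence of a point with $0<H(x)<1$, whereas you justify it via local simplicity of $\delta_0$ and the intermediate value theorem — a welcome extra level of detail, not a divergence in method.
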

\begin{proof}
	Let $h \in M_o$ such that $\supp \delta_0 \subseteq [-h,h]_\civita$.
	Then for every $m, n \in \N$
	\begin{enumerate}
		\item\label{1lemma} if $x \leq -h$, then $H^n(x) = H^m(x) = 0$.
		\item\label{2lemma} if $x \geq h$ then $H^n(x)=H^m(x) = 1$
		\item\label{3lemma} there exists $x \in [-h,h]$ such that $0 < H(x) < 1$.
	\end{enumerate}
	By point \eqref{3lemma}, we immediately conclude that $H^n = H^m$ if and only if $n = m$.

	Thanks to properties \eqref{1lemma} and \eqref{2lemma}, if $b< -h$ or $a > h$ the equality $\lim_{n\rightarrow \infty} \left(\int_{[a,b]} H^n \cdot \ext{n}{p} \right)[0] = \int_0^{+\infty} f(x)\ dx$ is trivially true.
	
	Suppose then that $a \leq h \leq b$. In this case, we can write
	$$
		\int_{[a,b]} H^n \cdot \ext{n}{p}
		=
		\int_{[a,h]} H^n \cdot \ext{n}{p}
		+
		\int_{[h,b]} \ext{n}{p}.
	$$
	Since $h \in M_0$, by Lemma \ref{lemma coherence} and by equation \eqref{limite approx 2} we have
	$$
		\lim_{n\rightarrow \infty} \sh{\left(\int_{[h,b]} \ext{n}{p} \right)}
		=
		\lim_{n\rightarrow \infty} \left(\int_{[h,b]} \ext{n}{p} \right)[0]
		=
		\int_0^{+\infty} f(x)\ dx.
	$$
	Similarly, since $f$ is bounded and since $0 \leq H^n(x) \leq 1$ for every $n \in\N$ and for every $x \in \civita$,
	$$
		\int_{[a,h]} H^n \cdot \ext{n}{p} \approx 0.
	$$
	We conclude
	$$
		\lim_{n\rightarrow \infty} \left(\int_{[a,b]} H^n \cdot \ext{n}{p} \right)[0]
		=
		\lim_{n\rightarrow \infty} \left(\int_{[h,b]} \ext{n}{p} \right)[0]
		+
		\lim_{n\rightarrow \infty} \left(\int_{[a,h]} H^n \cdot \ext{n}{p} \right)[0]
		=
		\int_0^{+\infty} f(x)\ dx.
	$$
	
	The case where $a \leq -h \leq b$ can be treated in a similar way.
\end{proof}

The second obstacle for the evaluation of the expression \eqref{eqn Colombeau} is that the product between the Heaviside distribution and the Dirac distribution is not defined.
However, the measurable representatives of the Dirac and of the Heaviside distribution allow to evaluate the product $H \cdot \delta_0$, similarly to what happens in Colombeau algebras \cite{colombeau} and in spaces of generalized functions of nonstandard analysis \cite{grid functions, todorov}.

\begin{proposition}
	For all $f \in C^0([a,b])$ and for all $1 \leq p \leq \infty$,
	if $\ext{n}{p}$ is defined as in Proposition \ref{proposition extension of continuous functions},
	then
	$$
	\lim_{n\rightarrow \infty} \sh{\left( \int_{[a,b]} (H \cdot \delta_0) \cdot \ext{n}{p} \right)}
	=
	\lim_{n\rightarrow \infty} \left( \int_{[a,b]} (H \cdot \delta_0) \cdot \ext{n}{p} \right)[0]
	=
	\frac{1}{2} f(0)
	$$
	and
	$$
		\int_{[a,b]} (H^m-H^n)\cdot \delta_0 = \frac{1}{m+1}-\frac{1}{n+1}.
	$$
\end{proposition}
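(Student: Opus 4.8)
The plan is to reduce both identities to the fundamental theorem of calculus for powers of $H$, exploiting that $H$ is by construction (Definition \ref{def heaviside}) an antiderivative of $\delta_0$, so $H'=\delta_0$ almost everywhere by Proposition \ref{proposition fundamental theorem of calculus}. First I would record the structural facts that make the integrands tractable: on each interval $I_n$ on which $\delta_0$ is simple, $H$ is itself a power series, being an antiderivative, hence for every $k\in\N$ both $H^{k}\delta_0$ and $H^{k+1}$ are power series on $I_n$, so $H^{k}\delta_0$ is measurable and $H^{k+1}$ is measurable and continuous. Since $\supp\delta_0\subseteq[-h,h]_\civita$ for some $h\in M_o$, $h>0$, while $0\le H\le 1$ and $\norm{\delta_0}_1=1$, all integrands below lie in $\L^1([a,b]_\civita)$ and their integrals over $[a,b]_\civita$ coincide with the integrals over $[-h,h]_\civita$.

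I would establish the exact identity (the second display) first, since it supplies the normalisation needed for the first. On each $I_n$ one has $(H^{k+1})'=(k+1)H^{k}\delta_0$, so $H^{k+1}$ is a measurable antiderivative of the measurable continuous function $(k+1)H^{k}\delta_0$. Setting $F(x)=\int_{[a,x]}(k+1)H^{k}\delta_0$, Proposition \ref{proposition fundamental theorem of calculus} gives $F'=(k+1)H^{k}\delta_0=(H^{k+1})'$ almost everywhere; since two measurable functions with the same derivative differ by a constant (the remark following Proposition \ref{measurable implies locally simple}) and $F(a)=0$, this yields $\int_{[a,b]}H^{k}\delta_0=\tfrac{1}{k+1}\bigl(H^{k+1}(b)-H^{k+1}(a)\bigr)$. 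Because $\supp\delta_0\subseteq[-h,h]_\civita\subseteq[a,b]_\civita$ and $\norm{\delta_0}_1=1$, one checks $H(a)=0$ and $H(b)=1$, whence $\int_{[a,b]}H^{k}\delta_0=\tfrac{1}{k+1}$. Subtracting the cases $k=m$ and $k=n$ gives $\int_{[a,b]}(H^{m}-H^{n})\delta_0=\tfrac{1}{m+1}-\tfrac{1}{n+1}$, and the case $k=1$ records the normalisation $\int_{[a,b]}H\delta_0=\tfrac12$.

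For the first identity I would mimic the squeeze argument of Proposition \ref{prop dirac-like}, applied to the non-negative measurable function $H\delta_0$ in place of $\delta_0$. Since $H\delta_0\ge 0$ is supported in $[-h,h]_\civita$ with $\int_{[a,b]}H\delta_0=\tfrac12$, writing $m_n=\min_{x\in[-h,h]}\ext{n}{p}(x)$ and $M_n=\max_{x\in[-h,h]}\ext{n}{p}(x)$ (which exist by the extreme value theorem) and using monotonicity of the integral gives
$$
\tfrac12 m_n \le \int_{[a,b]}(H\delta_0)\,\ext{n}{p} \le \tfrac12 M_n .
$$
Because $\ext{n}{p}$ is the canonical extension of the polynomial $\p{n}{f}$ and $h$ is infinitesimal, $\ext{n}{p}(x)\approx\p{n}{f}(0)$ for every $x\in[-h,h]_\civita$, so $m_n[0]=M_n[0]=\p{n}{f}(0)$. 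Taking standard parts in the displayed inequality forces $\sh{\bigl(\int_{[a,b]}(H\delta_0)\,\ext{n}{p}\bigr)}=\tfrac12\p{n}{f}(0)$; letting $n\to\infty$ and invoking uniform convergence $\p{n}{f}\to f$ from Proposition \ref{proposition extension of continuous functions} yields the limit $\tfrac12 f(0)$.

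The hard part will be the rigorous justification of the fundamental theorem of calculus for the products $H^{k}\delta_0$ in the non-Archimedean setting: one must verify that $H^{k+1}$ is genuinely measurable, continuous, and differentiable almost everywhere with derivative $(k+1)H^{k}\delta_0$, and that the comparison ``equal derivatives imply differ by a constant'' survives across the countable family of intervals of the measurable partition despite the measure-zero exceptional set. By contrast, the remaining steps — the $\L^1$ membership of the integrands, the evaluations $H(a)=0$ and $H(b)=1$, and the standard-part squeeze — are routine consequences of the results recalled above.
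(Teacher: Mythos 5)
Your proposal is correct and follows essentially the same route as the paper: both rest on $H'=\delta_0$, the product rule $(H^{k+1})'=(k+1)H^k\delta_0$, the evaluations $H(-h)=0$, $H(h)=1$, and a squeeze against $\min$ and $\max$ of $\ext{n}{p}$ on $[-h,h]_\civita$. The only cosmetic difference is that the paper observes $(H^2)'=2H\delta_0$ is itself a Dirac-like function in the sense of Definition \ref{def dirac-like} and invokes Proposition \ref{prop dirac-like} directly, whereas you re-run that squeeze argument for $H\delta_0$ with mass $\tfrac12$; the content is identical.
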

\begin{proof}
	Recall that we have supposed that $\delta_0$ is continuous: thanks to Proposition \ref{proposition fundamental theorem of calculus}, $H \in C^1(\civita)$ and $H'(x) = \delta_0(x)$ for a.e.\ $x \in \civita$.
	Recall also that the well-known product formula for the derivative is still true for differentiable functions defined on the Levi-Civita field \cite{calculusnumerics}: consequently, we have
	$(H^{m+1})'=(m+1)H^m\cdot \delta_0$ for all $m \in\N$.
	Taking $m=1$, we deduce that $\int_{[a,b]} (H \cdot \delta_0) \cdot \ext{n}{p} = \frac{1}{2}\int_{[a,b]} (H^2)' \cdot \ext{n}{p}$.
	Now notice that
	\begin{enumerate}
		\item $(H^2)'(x)\geq 0$ for every $x \in \civita$;
		\item there exists $h \in M_o$, $h>0$ such that $\supp (H^2)' \subseteq [-h,h]$;
		\item by definition of the integral, $\int_{[-h,h]} (H^2)' = H^2(h)-H^2(-h)$.
	\end{enumerate}
	We have already observed in the proof of Lemma \ref{lemma heaviside} that $H(h)=1$ and $H(-h)=0$, so that also $H^2(h)=1$ and $H^2(-h)=0$.
	Thus $(H^2)'$ is a Delta-like measurable function in the sense of Definition \ref{def dirac-like}.
	Proposition \ref{prop dirac-like} ensures that
	$$
	\lim_{n\rightarrow \infty} \sh{\left( \frac{1}{2}\int_{[a,b]} (H^2)' \cdot \ext{n}{p} \right)}
	=
	\lim_{n\rightarrow \infty} \left( \frac{1}{2}\int_{[a,b]} (H^2)' \cdot \ext{n}{p} \right)[0]
	=
	\frac{1}{2} f(0), 
	$$
	as desired.
	
	In order to prove the second equality, let $h \in M_o$ such that $\supp \delta_0 \subseteq [-h,h]$.
	Since $(H^{m+1})'=(m+1)H^m\cdot \delta_0$,
	\begin{eqnarray*}
	\int_{[a,b]} (H^m-H^n)\cdot \delta_0
	&=&
	\int_{[-h,h]} \left(\frac{H^{m+1}}{m+1}-\frac{H^{n+1}}{n+1}\right)'\\ \\
	&=&
	\left(\frac{H^{m+1}(h)}{m+1}-\frac{H^{n+1}(h)}{n+1}\right) - \left(\frac{H^{m+1}(-h)}{m+1}-\frac{H^{n+1}(-h)}{n+1}\right).
	\end{eqnarray*}
	Since $H^{m}(-h) = H^n(-h) = 0$ and $H^{m}(h) = H^n(h) = 1$ for all $m,n \in \N$, we obtain the desired result.
\end{proof}

The previous Proposition shows that, whenever $\delta_0$ is a Dirac-like continuous and measurable function, the product $H\cdot\delta_0$ is well-defined and is equal to $\frac{1}{2}\delta_0$.
This result agrees with similar calculations obtained with other generalized functions.
Moreover, the evaluation of the integral \eqref{eqn Colombeau} is justified also in the Levi-Civita field.
For a more detailed discussion on these topics from the nonlinear theory of distributions, we refer to \cite{grid functions}.

\subsection{Derivatives of the Dirac distribution}\label{sec derivatives heaviside}

In this section, we will show how the derivatives of Dirac-like measurable functions represent the distributional derivatives of the Dirac distributions.
In order to do so, we need to establish some results on the representation of the derivative of a real continuous function by means of sequences of measurable functions in the Levi-Civita field.

Recall that, if $F \in C^1([a,b])$ and if $\{P_n\}_{n\in\N}$ uniformly converges to $F$ over $[a,b]$, then the sequence $\{P_n'\}_{n\in\N}$ might not converge to $F'$ over $[a,b]$.
This property suggests a subtler approach to the representation of the derivative of a differentiable function.

If $f \in C^0([a,b])$, denote by $F \in C^1([a,b])$ the function defined by $F(x) = \int_a^x f(t)\ dt$.
It is well-known that, if $\p{n}{f}$ are defined as in the proof of Proposition \ref{proposition extension of continuous functions} and if $P_n(x) = \int_a^x \p{n}{f}(t) \ dt$, then the sequence $\{P_n\}_{n\in\N}$ uniformly converges to $F$ over $[a,b]$.
Let $\ext{n}{P}$ be the canonical extension of $P_n$ over $[a,b]_\civita$.
Then, as a consequence of Theorem \ref{mainthm extension} and of Proposition \ref{proposition extension of continuous functions}, the sequence $\{\ext{n}{P}\}_{n\in\N}$ satisfies the conditions
\begin{enumerate}
	\item\label{contint1} $\{\ext{n}{P}\}_{n\in\N}$ is weakly Cauchy in $\L^p([a,b]_\civita)$ for every $1 \leq p \leq \infty$;
	\item\label{contint2} $\wlim_{n \rightarrow \infty} \norm{\ext{n}{P}}_p = \norm{F}_p$; 
	\item\label{contint3} $\wlim_{n \rightarrow \infty} \ext{n}{P}(x) = F(x)$ for every $x \in [a,b]$.
\end{enumerate}
From this representation of $F$ we can obtain a measurable representation of the derivative of the Dirac distribution.

\begin{proposition}\label{prop derivata dirac-like}
	Let $r \in [a,b]$, $r \ne a$ and $r \ne b$ and let $\delta_r$ be a Dirac-like measurable function of class $C^1$.
	Then for every $f \in C^0([a,b])$,
	$$
	\lim_{n\rightarrow \infty} \sh{\left( \int_{[a,b]} \delta_r' \cdot \ext{n}{P}\right)}
	=
	\lim_{n\rightarrow \infty} \left( \int_{[a,b]} \delta_r' \cdot \ext{n}{P}\right)[0]
	=
	-f(r).$$
\end{proposition}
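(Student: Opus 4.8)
The plan is to reduce the statement to Proposition \ref{prop dirac-like} by an integration by parts, after which only a sign and a limit remain. First I would use that $\delta_r$ is of class $C^1$ and that $\ext{n}{P}$ is the canonical extension of a polynomial, hence analytic (so in particular $C^\infty$ and simple). On each interval where $\delta_r$ is locally simple it is a convergent power series, so there the product $\delta_r\cdot\ext{n}{P}$ is a product of analytic functions and is again simple, and $\delta_r'$ is locally simple as well (power series differentiate term by term); thus $\delta_r'$, $\delta_r\cdot(\ext{n}{P})'$ and $\delta_r'\cdot\ext{n}{P}$ are all measurable and, being bounded almost everywhere by Lemma \ref{lemma bounded ae}, have well-defined integrals. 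The product rule for the derivative over $\civita$, valid for differentiable functions by \cite{calculusnumerics}, gives
$$
\left(\delta_r \cdot \ext{n}{P}\right)' = \delta_r' \cdot \ext{n}{P} + \delta_r \cdot \left(\ext{n}{P}\right)'.
$$
Integrating over $[a,b]_\civita$ by means of the fundamental theorem of calculus (Definition \ref{def integral} and Proposition \ref{proposition fundamental theorem of calculus}), summed over a cover of intervals of almost full measure, yields the integration by parts formula
$$
\int_{[a,b]} \delta_r' \cdot \ext{n}{P} = \delta_r(b)\,\ext{n}{P}(b) - \delta_r(a)\,\ext{n}{P}(a) - \int_{[a,b]} \delta_r \cdot \left(\ext{n}{P}\right)'.
$$

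Next I would dispose of the boundary term and identify the remaining derivative. By Definition \ref{def dirac-like} there is $h \in M_o$, $h > 0$, with $\supp \delta_r \subseteq [r-h,r+h]_\civita$; since $r \ne a$ and $r \ne b$, the points $a$ and $b$ lie at a non-infinitesimal distance from $r$, hence outside $[r-h,r+h]_\civita$, so $\delta_r(a) = \delta_r(b) = 0$ and the boundary term vanishes. Because $P_n(x) = \int_a^x \p{n}{f}(t)\,dt$ is a polynomial with $P_n' = \p{n}{f}$, and the canonical extension commutes with differentiation for analytic functions (the differential counterpart of Theorem 3.14 of \cite{analytic1} used in Lemma \ref{lemma coherence}), we obtain $\left(\ext{n}{P}\right)' = \ext{n}{p}$, with $\ext{n}{p}$ exactly the extension of Proposition \ref{proposition extension of continuous functions} that appears in Proposition \ref{prop dirac-like}. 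Combining these facts gives the clean identity
$$
\int_{[a,b]} \delta_r' \cdot \ext{n}{P} = -\int_{[a,b]} \delta_r \cdot \ext{n}{p}.
$$

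Finally I would extract the $[0]$-coefficient and pass to the limit. Since $x \mapsto x[0]$ is additive and satisfies $(-x)[0] = -x[0]$, the identity above gives $\left(\int_{[a,b]} \delta_r' \cdot \ext{n}{P}\right)[0] = -\left(\int_{[a,b]} \delta_r \cdot \ext{n}{p}\right)[0]$ for every $n$, and the same relation holds after applying $\sh{(\cdot)}$. Proposition \ref{prop dirac-like} states that $\lim_{n\rightarrow\infty}\left(\int_{[a,b]} \delta_r \cdot \ext{n}{p}\right)[0] = f(r)$, so both limits in the statement equal $-f(r)$. This is precisely the distributional identity $\langle D_r', F\rangle = -\langle D_r, F'\rangle = -F'(r) = -f(r)$, where $F(x) = \int_a^x f$ is the continuous function represented by the sequence $\{\ext{n}{P}\}_{n\in\N}$, which explains why the answer is $-f(r)$ rather than $-f'(r)$.

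The step I expect to be the main obstacle is the rigorous justification of the integration by parts formula on $\civita$: one must verify that $\delta_r \cdot \ext{n}{P}$ is measurable and piecewise simple, that its pointwise derivative coincides almost everywhere with the product-rule expression, and — most delicately — that summing the fundamental theorem of calculus over a family of intervals of almost full measure reproduces exactly the global boundary values $\delta_r(b)\ext{n}{P}(b) - \delta_r(a)\ext{n}{P}(a)$ with no residual contribution from the measure-zero complement. This is where the continuity of $\delta_r$ and $\ext{n}{P}$ together with the boundedness arguments of Lemma \ref{lemma bounded ae} and Proposition \ref{proposition fundamental theorem of calculus} are essential; once the integration by parts is secured, the remainder of the argument is a direct appeal to Proposition \ref{prop dirac-like}.
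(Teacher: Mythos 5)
Your proposal is correct and follows essentially the same route as the paper: apply the product rule to $\delta_r \cdot \ext{n}{P}$, note that $\left(\ext{n}{P}\right)' = \ext{n}{p}$, observe that the integral of the derivative reduces to the boundary term $\delta_r(b)\ext{n}{P}(b) - \delta_r(a)\ext{n}{P}(a) = 0$ since $a$ and $b$ lie outside the infinitesimal support of $\delta_r$, and then invoke Proposition \ref{prop dirac-like}. The measurability and integration-by-parts caveats you flag are real but are exactly the points the paper also takes for granted, so nothing in your argument diverges from the published proof.
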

\begin{proof}
	By the product formula for the derivative and by definition of $\ext{n}{P}$, we have
	$$
		\left(\delta_r \cdot \ext{n}{P}\right)' = \delta_r' \cdot \ext{n}{P} + \delta_r \cdot \ext{n}{p}.
	$$
	Thus
	$$
		\int_{[a,b]} \left(\delta_r \cdot \ext{n}{P}\right)'
		=
		\int_{[a,b]} \delta_r' \cdot \ext{n}{P}
		+
		\int_{[a,b]} \delta_r \cdot \ext{n}{p}.
	$$
	The hypothesis that $r\ne a$ and $r\ne b$ entail
	$$\int_{[a,b]} \left(\delta_r \cdot \ext{n}{P}\right)' = \delta_r(b) \cdot \ext{n}{P}(b) - \delta_r(a) \cdot \ext{n}{P}(a) = 0,$$
	so that
	$$
	\int_{[a,b]} \delta_r' \cdot \ext{n}{P}
	=
	-
	\int_{[a,b]} \delta_r \cdot \ext{n}{p}.
	$$
	By Proposition \ref{prop dirac-like} we obtain the desired equality.
\end{proof}

By iterating the previous result, one can obtain a general representation of the $k$-th derivative of the Dirac distribution.

\begin{corollary}\label{corollario derivate dirac}
	Let $r \in [a,b]$, $r \ne a$ and $r \ne b$ and let $\delta_r$ be a Dirac-like measurable function of class $C^k$.
	Let $F \in C^k([a,b])$ and define $f = F^{(k)}$.
	If the sequence of measurable functions $\{\ext{n}{p}\}_{n\in\N}$ represents $f$ in the sense of Proposition \ref{proposition extension of continuous functions}, and if $\{\ext{n}{P}\}_{n\in\N}$ is a sequence of simple functions that satisfy $\ext{n}{P}^{(k)} = \ext{n}{p}$ for every $n \in \N$, then
	$$
	\lim_{n\rightarrow \infty} \sh{\left( \int_{[a,b]} \delta_r^{(k)} \cdot \ext{n}{P}\right)}
	=
	\lim_{n\rightarrow \infty} \left( \int_{[a,b]} \delta_r^{(k)} \cdot \ext{n}{P}\right)[0]
	=
	(-1)^k f(r).$$
\end{corollary}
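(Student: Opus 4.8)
The plan is to prove the corollary by iterating the integration-by-parts argument that underlies Proposition \ref{prop derivata dirac-like}. Fix $n \in \N$ and write $\Phi_j = \ext{n}{P}^{(j)}$ for $0 \le j \le k$, so that $\Phi_0 = \ext{n}{P}$ and, by hypothesis, $\Phi_k = \ext{n}{p}$. The algebraic heart of the proof is to establish the identity
\begin{equation}\label{eqn ibp identity}
	\int_{[a,b]} \delta_r^{(k)} \cdot \ext{n}{P} = (-1)^k \int_{[a,b]} \delta_r \cdot \ext{n}{p},
\end{equation}
after which the conclusion is immediate: by hypothesis $\{\ext{n}{p}\}_{n\in\N}$ represents $f = F^{(k)}$ in the sense of Proposition \ref{proposition extension of continuous functions}, so Proposition \ref{prop dirac-like} applied to $f$ gives that the standard part of the right-hand side of \eqref{eqn ibp identity} converges to $(-1)^k f(r)$.

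To obtain \eqref{eqn ibp identity} I would iterate a single integration-by-parts step. The product formula for the derivative, valid for differentiable functions on $\civita$ \cite{calculusnumerics}, yields $(\delta_r^{(i-1)}\cdot \Phi_{j})' = \delta_r^{(i)}\cdot \Phi_{j} + \delta_r^{(i-1)}\cdot \Phi_{j+1}$; integrating over $[a,b]$ and using the fundamental theorem of calculus built into Definition \ref{def integral} gives
$$
	\int_{[a,b]} \delta_r^{(i)}\cdot \Phi_{j} = \Big[\delta_r^{(i-1)}\cdot\Phi_{j}\Big]_{a}^{b} - \int_{[a,b]} \delta_r^{(i-1)}\cdot \Phi_{j+1}.
$$
Applying this successively with $(i,j) = (k,0),\,(k-1,1),\,\ldots,\,(1,k-1)$ transfers all $k$ derivatives from $\delta_r$ onto $\ext{n}{P}$, each step contributing one factor of $-1$ and replacing $\Phi_j$ by $\Phi_{j+1}$, and after $k$ steps produces exactly \eqref{eqn ibp identity}.

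The crux is to verify that every boundary term $[\delta_r^{(i-1)}\cdot\Phi_{j}]_a^b$ vanishes, and this is where the hypotheses $r \ne a$ and $r \ne b$ enter. By Definition \ref{def dirac-like} there is $h \in M_o$, $h > 0$, with $\supp \delta_r \subseteq [r-h,r+h]_{\civita}$; since $r$ is a real point strictly between the real endpoints $a$ and $b$, the differences $r-a$ and $b-r$ are non-infinitesimal, so $[r-h,r+h]_{\civita}$ is disjoint from a neighbourhood of each endpoint. Thus $\delta_r$ is identically zero near $a$ and near $b$, and because $\delta_r$ is of class $C^k$ all the derivatives $\delta_r^{(i)}$ with $0 \le i \le k-1$ also vanish at $a$ and $b$. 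Hence each boundary contribution is $0$ and the integration-by-parts steps collapse cleanly to \eqref{eqn ibp identity}.

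I expect the main difficulty to be purely a matter of bookkeeping and regularity: at each stage one must check that the products $\delta_r^{(i-1)}\cdot\Phi_{j}$ are $C^1$ measurable functions, so that the product rule and the fundamental theorem of calculus genuinely apply, and that the vanishing of the derivatives of $\delta_r$ at the endpoints is justified from the $C^k$ regularity rather than from the vanishing of $\delta_r$ alone. Once \eqref{eqn ibp identity} is secured, extracting the coefficient of $d^0$ and passing to the limit $n \to \infty$ through Proposition \ref{prop dirac-like} yields both displayed equalities simultaneously, exactly as in the base case $k=1$ handled in Proposition \ref{prop derivata dirac-like}.
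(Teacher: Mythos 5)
Your proposal is correct and follows exactly the route the paper intends: the paper offers no written proof beyond the remark that the corollary is obtained ``by iterating the previous result,'' and your argument is precisely that iteration, transferring the $k$ derivatives from $\delta_r$ to $\ext{n}{P}$ one integration by parts at a time, with the boundary terms killed by the infinitesimal support of $\delta_r$ around the interior real point $r$, and then invoking Proposition \ref{prop dirac-like} for the limit. Your explicit attention to the vanishing of all derivatives $\delta_r^{(i)}$ near the endpoints and to the $C^1$ regularity of the intermediate products is a welcome filling-in of details the paper leaves implicit.
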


\section{Final remarks}\label{sec conclusiva}

In Section \ref{section representation} we have shown that it is possible to represent a class of real measurable functions with sequences of measurable functions in the Levi-Civita field, and to represent some real distributions with measurable functions on the Levi-Civita field.
This approach has allowed us to obtain some classic results from the nonlinear theory of distributions by working with measurable functions on the Levi-Civita field.
Moreover, Corollary \ref{corollario derivate dirac} suggests how the pointwise derivative over $\L^p(A) \cap C^k(A)$ can be used to represent the distributional derivative.

Despite these positive results, we believe that there are some obstacles towards representing every real distribution with measurable functions over $\civita$.
The most evident is that we have not been able to establish a $C^\infty$ structure over the spaces $L^p(A) \cap C^\infty(A)$.
Moreover, recall the following representation theorem for real distributions: if $T$ is a distribution with compact support, then there exists $f \in C^0(\R)$ and $k \in \N$ such that $T = f^{(k)}$, where the derivative is assumed in the sense of distributions \cite{strichartz}.
Thanks to this equality and from the approach used in Section \ref{sec derivatives heaviside} to obtain Corollary \ref{corollario derivate dirac}, if the real function $f$ is locally analytic, then it might be possible to represent the corresponding distribution $T_f$ with a measurable function defined on the Levi-Civita field (but recall that if $f$ is locally analytic, then $T_f$ can be identified with a locally analytic function).
However, recall that the distributional derivative is represented by the pointwise derivative of $C^1$ functions in $\civita$, as argued in Section \ref{sec derivatives heaviside}.
Thus, Proposition \ref{lemma not analytic} and Proposition \ref{prop slp non rappresenta f reali} entail that it is not possible to represent every real distribution with compact support in this way. In particular, if $T=f^{(k)}$ for some $f \in C^0(\R)$ that is not locally analytic at almost every point of its domain, then $T$ does not admit a representation as a measurable function on the Levi-Civita field.

If one is interested in representing distributions of an arbitrary support and whose order is not finite, i.e.\ such that
$
	T = \sum_{n \in \N} f_n^{(k_n)}
$
with $f_n \in C^0(\R)$ for every $n\in\N$ and with $\lim_{n \rightarrow \infty} k_n = +\infty$ (for a more precise definition of order of a distribution see Section 6.2 of \cite{strichartz}), then it might not even possible to use the approach proposed in Section \ref{sec derivatives heaviside}.

Theorem \ref{mainthm extension} and the above examples might suggest that it would be more convenient to use the weak limit instead of the strong limit in the definition of measurable sets and functions.
However, this idea needs to be worked out with great care. As we have seen in Remark \ref{remark nonorm}, it is not possible to define a notion of norm, and consequently also a notion of integral, by using the weak limit instead of the strong limit in the definition of the $\norm{\cdot}_p$ norm (see Lemma \ref{lemma norma ben def}).
In addition, since the weak limit does not satisfy a squeeze theorem, it might happen that $f_n (x) > g_n(x)$ for all $n \in \N$, but that $\wlim_{n \rightarrow \infty} \norm{ f_n }_p < \wlim_{n \rightarrow \infty} \norm{ g_n }_p$. As a consequence, results such as Proposition 4.4 and Corollary 4.5 of \cite{berz+shamseddine2003} would not be true.

A measure defined by replacing the strong limit with the weak limit in Definition \ref{definizione misura} would suffer from similar drawbacks.	
For instance, this ``measure'' would not be monotone, since it would be possible to prove that the monad $\mu(r)$ of a point $r \in \civita$ is measurable with measure $0$ (since it can be obtained as the weak limit of the measure of the sets $(-n^{-1},n^{-1})$.
As a consequence, for every $h \in M_o$, $h>0$ we would have the inclusion $(r-h,r+h)_\civita \subset \mu(r)$ together with the opposite inequality $0=\m(\mu(r)) < \mu((r-h,r+h)_\civita) = 2h$.

In order to overcome these obstacles and to exploit the strength of Theorem \ref{mainthm extension}, we believe that it could be convenient to introduce a uniform real-valued measure over the Levi-Civita field by adapting the L\"oeb measure construction of nonstandard analysis \cite{loeb}.
This real-valued measure might allow for a better representation of real continuous functions and of their duality with measurable functions on the Levi-Civita field.
This idea will be explored in a subsequent paper \cite{forthcoming}.

Notice however that allowing a too large class of functions to be measurable would lead to further difficulties.
For instance, if the measure allows for the existence of non-constant measurable functions with null derivative, then the Mean Value Theorem for the integral would fail, and it would not be possible to represent the distributional derivative with the pointwise derivative on the Levi-Civita field.

\subsection*{Acknowledgements}
The author is grateful to professor Khodr Shamseddine for his encouragement during an early stage of this research.
An anonymous referee provided valuable comments and references to a preliminary version of this paper.

\end{document}